\theoremstyle{definition}
\newtheorem{thm}{Theorem}
\newtheorem{defn}[thm]{Definition}
\newtheorem{prop}[thm]{Proposition}
\newtheorem{cor}[thm]{Corollary}
\newtheorem{rema}[thm]{Remark}
\newtheorem{lemma}[thm]{Lemma}
\newtheorem{ass}[thm]{Assumption}
\newtheorem{conv}[thm]{Convention}
\newtheorem{conj}[thm]{Conjecture}
\numberwithin{equation}{section}
\numberwithin{thm}{section}
\newcommand\void[1]       {}
\newcommand\nn             {\nonumber \\}
\newcommand\be            {\begin{equation}}
\newcommand\ee            {\end{equation}}
\newcommand\bea           {\begin{eqnarray}}
\newcommand\eea         {\end{eqnarray}}
\newcommand\bnu          {\begin{enumerate}}
\newcommand\enu          {\end{enumerate}}
\newcommand{\ble}{\begin{lemma}}
\newcommand{\ele}{\end{lemma}}
\newcommand{\bpf}{\begin{proof}}
\newcommand{\epf}{\end{proof}}
\newcommand\bearll        {\begin{array}{ll}\displaystyle}
\newcommand\eear          {\end{array}}
\newcommand\CHom      {\mathcal{H}\mathrm{om}}
\newcommand\Hom           {\mathrm{Hom}}
\newcommand\id            {\mathrm{id}}
\newcommand\one           {{\bf1}}
\newcommand\op          {\mathrm{op}}
\newcommand\ev          {\mathrm{ev}}
\newcommand\comp          {\mathrm{C}}
\newcommand{\pf}{\begin{proof}}
\newcommand\CC           {\mathcal{C}}
\newcommand\CD           {\mathcal{D}}
\newcommand\CF          {\mathcal{F}}
\newcommand\CG         {\mathcal{G}}
\newcommand\CL           {\mathcal{L}}
\newcommand\CM           {\mathcal{M}}
\newcommand\CN           {\mathcal{N}}
\newcommand\CP           {\mathcal{P}}
\newcommand\CZ           {\mathcal{Z}}
\newcommand\BM           {\mathbf{M}}
\newcommand\BZ           {\mathbf{Z}}
\newcommand\BD          {\mathbf{D}}
\newcommand\bfC    {\mathbf{C}}
\newcommand\bfD    {\mathbf{D}}
\newcommand\bfF     {\mathbf{F}}
\newcommand\bfG    {\mathbf{G}}
\newcommand{\Bm}{\mathbf{m}}
\newcommand{\End}{\mathcal{E}\hspace{-.7pt}\mathrm{nd}}
\newcommand{\Mor}{\mathcal{M}\mathrm{or}}
\newcommand{\Vect}{\mathrm{Vect}}
\newcommand{\Coco}{\mathbf{C}}
\newcommand{\Fun}{\mathcal{F}\mathrm{un}}
\newcommand{\Nat}{\mathrm{Nat}}
\newcommand{\cosp}{\mathcal{C}\hspace{-1pt}osp}
\newcommand{\tdiag}{\mathcal{D}\hspace{-.5pt}iag}
\newcommand{\alg}{\mathcal{A}\hspace{.2pt}lg}
\newcommand{\calg}{\mathcal{C}alg}
\newcommand{\Cosp}{\mathbf{Cosp}}				
\newcommand{\CAlg}{\mathbf{C\hspace{-1pt}Alg}}		
\newcommand{\Mod}{\mathbf{Mod}}
\newcommand{\CALG}{\mathrm{C\hspace{-.8pt}A\hspace{-.3pt}L\hspace{-.4pt}G}}		
\newcommand{\tdiagu}{{\underline{\mathcal{D}\hspace{-.5pt}ia\phantom{a}}}\hspace*{-0.5em}g}
\newcommand{\Cospu}{{\underline{\mathbf{Cos\phantom{a}}}}\hspace*{-0.5em}\mathbf{p}}
\newcommand{\CALGu}{{\underline\CALG}}		
\newcommand{\Alg}{\mathbf{Alg}}
\newcommand\nxt{\noindent\raisebox{.08em}{\rule{.44em}{.44em}}\hspace{.4em}}
\newcommand{\da}{\mbox{-}}
\newcommand{\se}{\mbox{:}}
\begin{document}

\begin{center} \LARGE
Functoriality of the center of an algebra
\end{center}

\vskip 2em
\begin{center}
{\large 
Alexei Davydov$^{a}$,\, Liang Kong$^{b,c}$,\,  Ingo Runkel$^{d}$,\, ~\footnote{Emails: 
{\tt  davydov@ohio.edu, kong.fan.liang@gmail.com, ingo.runkel@uni-hamburg.de}}}
\\[1em]
\it$^a$ Department of Mathematics, Ohio University \\
Athens, OH 45701, USA
\\[1em]
$^b$ Institute for Advanced Study (Science Hall) \\ 
Tsinghua University, Beijing 100084, China
\\[1em]
$^c$ Department of Mathematics and Statistics\\
University of New Hampshire, Durham, NH 03824, USA
\\[1em]
$^d$ Fachbereich Mathematik, Universit\"at Hamburg\\
  Bundesstra\ss e 55, 20146 Hamburg, Germany
\end{center}

\vskip 4em

\begin{abstract}
The notion of the center of an algebra over a 
field $k$ has a far reaching generalization to algebras in monoidal categories. The center then lives in the monoidal center of the original category. This generalization plays an important role in the study of bulk-boundary duality of rational conformal field theories. 

In this paper, we study functorial properties of the center. We show that it gives rise to a 2-functor from the bicategory of semisimple indecomposable module categories over a fusion category to the bicategory of commutative algebras in the monoidal center of this fusion category. Morphism spaces of the latter bicategory are extended from algebra homomorphisms to certain categories of cospans. We conjecture that the above 2-functor arises from a lax 3-functor between tricategories, and that in this setting one can relax the conditions from fusion categories to finite tensor categories.

We briefly outline how one is naturally lead to the above 2-functor when studying rational conformal field theory with defects of all codimensions. For example, the cospans of the target bicategory correspond to spaces of defect fields and to the bulk-defect operator product expansions.
\end{abstract}

\newpage

\tableofcontents

\newpage

\section{Introduction}

The {\em full center} of an algebra $A$ in a monoidal category $\CC$ is a commutative algebra 
$Z(A)$ in the monoidal center $\CZ(\CC)$ of $\CC$, defined in terms of $A$ via a universal property \cite{da}. 
When $\CC$ is the category of finite-dimensional vector spaces over a field $k$,  we have $\CZ(\CC) \cong \CC$ and the full center coincides with the usual center of an algebra over $k$. 
The full center shares in particular the following property
	with
the latter: if two algebras $A,B \in \CC$ are Morita-equivalent, then their full centers $Z(A)$ and $Z(B)$ are isomorphic, and under additional assumptions on $A,B$ and $\CC$, the converse holds as well \cite{morita,da}.
The full center was first introduced in the context of two-dimensional rational conformal field theory (CFT) to capture the relation between boundary fields and bulk fields  \cite{ffrs3,cardy-sew-1}. In \cite{da}, the construction of the full center was formulated in a way applicable to algebras in monoidal categories in general. 

\medskip
In this paper we study functorial properties of the full center. Note that even the classical center construction is not functorial in a straightforward way. Indeed, let $k$ be a field and $\Vect_k$ the category of vector spaces over $k$. Consider the category $\alg(\Vect_k)$ of algebras in $\Vect_k$ (as objects) and algebra homomorphisms (as morphisms). With respect to this category, the assignment $A \mapsto Z(A)$ is not functorial because a homomorphism of algebras $A\to B$ does not induce in general 
a homomorphism of their centers $Z(A)\to Z(B)$. In this paper we propose to remedy this by enlarging the spaces of morphisms between commutative algebras. 
In more detail, we will introduce a bicategory $\CAlg(\Vect_k)$ whose objects are commutative $k$-algebras and whose 
categories of morphisms are certain categories of cospans of algebras. The assignment $A \mapsto Z(A)$, together with properly defined maps on morphisms, gives a lax functor $\BZ: \alg(\Vect_k)\to \CAlg(\Vect_k)$. The vector space example, together with its physical meaning in two-dimensional topological field theory, was explained in detail in \cite{dkr2}; we summarize the results in Sections~\ref{sec:intro-1}.

\medskip

In the present paper we generalize the domain category first to $\alg(\CC)$ for a monoidal category $\CC$, and then further to (a subcategory of) the bicategory $\Mod(\CC)$ of $\CC$-modules (as objects), $\CC$-module functors (as 1-morphisms) and $\CC$-module natural transformations (as 2-morphisms). 
The codomain category is generalized first to $\CAlg(\CZ)$, where $\CZ$ is an abelian braided monoidal category $\CZ$ with right exact tensor product, and then further to a bicategory $\CALGu(\CZ)$ which is truncated from a conjectural tricategory $\CALG(\CZ)$. Our main result (Theorem \ref{thm:center-lax-functor-2}) is that the full center construction provides a lax 2-functor
\be\label{eq:intro-Z}
  \BZ : \BM(\CC) \to \CALGu(\CZ(\CC))  \ ,
\ee
where $\BM(\CC)$ is a suitable sub-bicategory of $\Mod(\CC)$. The construction of $\BZ$ is summarized in Section~\ref{sec:intro-2}. 
We believe that the lax 2-functor $\BZ$ can be lifted to a lax 3-functor from $\Mod(\CC)$ to $\CALG(\CZ(\CC))$ if all internal homs exist, 
but we do not address this question in the present paper.
We prove that $\BZ$ in \eqref{eq:intro-Z} becomes a non-lax 2-functor in (at least) the following two situations:
\begin{itemize}
\item if we restrict the domain to the maximal 2-groupoid $\BM(\CC)^\times$ inside of $\BM(\CC)$ (Theorem \ref{thm:funct-eq}), and 
\item if $\CC$ is a fusion category and we take $\BM(\CC) = \Mod^o(\CC)$, the full sub-bicategory of $\Mod(\CC)$ consisting of only semisimple and indecomposable $\CC$-modules (Theorem \ref{thm:fusion-cat-Z-functor}). 
\end{itemize}

As explained in \cite{dkr2}, the lax functor $\BZ: \alg(\Vect_k)\to \CAlg(\Vect_k)$ arises naturally in two-dimensional {\em topological} field theory. In the application to rational {\em conformal} field theory, $\Vect_k$ gets replaced by a modular tensor category $\CC$, which is the category of modules 
of a rational vertex operator algebra (VOA) (i.e.\ a VOA that satisfies the conditions in \cite{huang-mtc}). The various ingredients of the lax 2-functor $\BZ$ have natural interpretations in rational CFT with defect lines and defect fields. This is outlined in Section~\ref{sec:intro-3}.

\subsection{The center of an algebra over a field}\label{sec:intro-1}

Fix a field $k$; all algebras in this subsection will be unital associative algebras over $k$. 
Recall that the center of an algebra $A$ is the commutative subalgebra: $$Z(A)=\{z\in A\,|\, za = az\ \forall a\in A\}.$$
An algebra homomorphism $f:A\to B$ does in general not give an algebra homomorphism from $Z(A)$ to $Z(B)$, 
	e.g.\ take the inclusion of diagonal $2{\times}2$-matrices into all $2{\times}2$-matrices. 
To remedy this, we will use certain cospans as morphisms between commutative algebras. This is motivated by the following construction:
The centralizer of the image of $f$ defined by
\be  \label{eq:Z-f}
  Z(f) = C_B(f(A)) =\{z\in B \,|\, zf(a) = f(a)z\ \forall a\in A\}
\ee
is also an algebra. Note also that $Z(B)$ is a subalgebra of $Z(f)$, and $f$ 
maps
$Z(A)$ into $Z(f)$, and both $Z(B)$ and $f(Z(A))$ are in the center of $Z(f)$. We summarize this by the following diagram (a {\em cospan} in the category of $k$-algebras)
\be\label{cf}
\raisebox{1em}{\xymatrix@R=0.5em{  
  & Z(f) & \\ Z(A) \ar[ur]^f & & Z(B) \ar[ul]
  }}
\ee
Given two successive algebra homomorphisms $A\stackrel{f}{\to}B\stackrel{g}{\to}C$ we obtain the commutative diagram
\be
\raisebox{1.2em}{\xymatrix@R=0.5em{
& & Z(gf) & & \\ & Z(f) \ar[ur]^g & & Z(g) \ar[ul] & \\ Z(A) \ar[ur]^f & & Z(B) \ar[ul] \ar[ur]^g & &
Z(C) \ar[ul] }} 
\ee
which shows that there is an algebra homomorphism from the push-out $Z(f)\otimes_{Z(B)}Z(g)$ to $Z(gf)$. This structure suggests 
a bicategory $\CAlg(\Vect_k)$ as follows:

\medskip\noindent
\nxt {\bf Objects}: commutative algebras $C,D,\dots$ in $\Vect_k$.

\medskip\noindent
\nxt {\bf 1-morphisms}:
A 1-morphism $C \rightarrow D$ in $\CAlg(\Vect_k)$ is a triple $(S,c,d)$, where $S$ is an algebra (not necessarily commutative) in $\Vect_k$ and $c : C \rightarrow S$ and $d : D \rightarrow S$ are homomorphisms of algebras, whose images lie in the center $Z(S)$ of $S$. This is represented by the following diagram
\be
\xymatrix{
Z(S) \ar[r] & S & Z(S) \ar[l] \\ C\ar[ru]_c \ar[u] && D \ar[lu]^d \ar[u] }
\ee
The composition of $C\xrightarrow{(S,-,-)} D \xrightarrow{(T,-,-)} E$ is defined by the pushout $(C \xrightarrow{(S\otimes_D T,-,-)} E)$:
\be
\raisebox{2em}{\xymatrix@R=0.5em{  
& & \hspace{-1em} S \otimes_D T \hspace{-1em} & & \\ & S \ar[ur] & & T \ar[ul] & \\ C \ar[ur] & & D \ar[ul] \ar[ur] & &
E \ar[ul] }} 
\ee

\medskip\noindent
\nxt {\bf 2-morphisms}: a 2-morphism between two cospans $C \xrightarrow{(S,-,-)} D$ and $C \xrightarrow{(T,-,-)} D$ is a homomorphism of algebras $S \xrightarrow{f} T$ such that the following diagram commutes:
\be
\raisebox{2.2em}{\xymatrix@R=0.7em{  
& S \ar[dd]^f & \\ C \ar[ru] \ar[rd] &  & D \ar[ul] \ar[dl]\\
& T }}
\ee

\medskip

Our goal -- to make the assignment of the center functorial -- is now achieved. More precisely, we obtain a lax functor $Z: \alg(\Vect_k) \to \CAlg(\Vect_k)$ by assigning $A \mapsto Z(A)$ and $(A \xrightarrow{f} B) \mapsto \mbox{cospan (\ref{cf})}$, and defining the unit transformation by identity 2-morphisms and the multiplication transformations by the following 2-morphisms: 
$$
\raisebox{2.7em}{\xymatrix@R=1em{  
 & Z(f)\otimes_{Z(B)}Z(g) \ar[dd] &  \\  Z(A) \ar[ur] \ar[dr]  & & Z(B)~. \ar[ul] \ar[dl] \\  &  Z(gf)
}} 
$$ 
The proof of this claim can be found in \cite[Thm.\,4.12]{dkr2}, but it is also a special case of the general construction we turn to now.

\subsection{The full center for module categories}\label{sec:intro-2}

Motivated by the application in rational CFTs, we generalize the full center construction in two steps.
First we extend the category $\alg(\Vect_k)$ to the bicategory $\Alg(\Vect_k)$ of $k$-algebras (as objects), bimodules (as 1-morphisms) and bimodule maps (as 2-morphisms), and then we generalize further to $\Mod(\CC)$ for a monoidal category $\CC$. Each step brings a modification to the previous constructions. 

\medskip
For an object $A$ in $\Alg(\Vect_k)$, the notion of 
the
center can be defined equivalently by
$$
Z(A) := \Hom_{A\otimes A^\op}(A, A)  \ ,
$$ 
where $\Hom_{A\otimes A^\op}(A, A)$ is the space of $A$-$A$-bimodule maps from $A$ to $A$, and $\text{End}(\id_{\text{$A$-mod}})$ the space of $k$-linear natural transformations from the identity functor to itself. Therefore, for an object $A$ in $\Alg(\Vect_k)$, we can assign its center, i.e. $A \mapsto Z(A)$.

For a $1$-morphism $A \xrightarrow{M} B$ in $\Alg(\Vect_k)$, i.e.\ an $A$-$B$-bimodule $M$, we  define the value of the center functor on $M$ by: 
\be \label{def:Z-M}
Z(M) := \Hom_{A\otimes A^\op}(M, M).
\ee
It fits into a cospan: 
\be \label{eq:Z-M}
\raisebox{1em}{\xymatrix@R=0.5em{  
  & Z(M) & \\ Z(A) \ar[ur] & & Z(B) \ar[ul]
  }}
\ee
where, using $Z(A)=\Hom_{A\otimes A^\op}(A,A)$, the two algebra maps in the cospan 
are defined by 
\bea
(A \xrightarrow{a} A) &\mapsto& (M \simeq A\otimes_A M \xrightarrow{a\otimes_A \id_M} A\otimes_A M \simeq M),  \nn
(B \xrightarrow{b} B) &\mapsto& (M \simeq M\otimes_B B \xrightarrow{\id_m\otimes b} M\otimes_B B \simeq M). \nonumber
\eea
Similar to (\ref{cf}), we assign the cospan (\ref{eq:Z-M}) to a 1-morphism $A\xrightarrow{M} B$ in $\Alg(\Vect_k)$.

Note that for a 2-morphism $\phi$ in $\Alg(\Vect_k)$, i.e. a bimodule map $\phi: M \to N$, there is no algebra map from $Z(M)$ to $Z(N)$ associated to $\phi$ unless $\phi$ is invertible. So 
2-morphisms in $\CAlg(\Vect_k)$ must be replaced by something else in order to obtain a functor. As we 
show in Section~\ref{sec:intro-3}, the intuition from defect junction in RCFT provides a solution to the problem. More precisely, one can consider the following diagram: 
\be \label{eq:2-diag-0}
\raisebox{3em}{\xymatrix@R=2em{  
  & Z(M) \ar[d]^{[M, \phi]} & \\ Z(A) \ar[ur] \ar[rd] & [M,N] & Z(B) \ar[ul] \ar[dl]  \\
  & Z(N) \ar[u]^{[\phi,N]} 
  }}
\ee
where $[M,N]:=\Hom_{A\otimes A^\op}(M, N)$, $[M,\phi]:=\phi\circ -$ and $[\phi, N]:=-\circ \phi$. We show in later sections that the above diagram is actually commutative, and $[M,\phi]$ is a right $Z(M)$-module map, and $[\phi, N]$ is a left $Z(N)$-module map, satisfying additional properties. This leads us to a new definition of a 2-morphism in the target category. We give the precise definition of such a 2-morphism, called a {\it 2-digram}, in Definition~\ref{def:2-diagram+3-cell}. Together with appropriate 3-morphisms between such 2-diagrams (see Definition~\ref{def:2-diagram+3-cell}), we conjecturally obtain a  tricategory $\CALG(\Vect_k)$ and a lax 3-functor
from $\Alg(\Vect_k)$ to $\CALG(\Vect_k)$. 
Now we look at
the second generalization, where we replace $\Vect_k$ by more general monoidal categories.

\medskip
Let $\CC$ be a monoidal category. Instead of working with the bicategory of algebras, bimodules and bimodule maps, we 
pass to the bicategory $\Mod(\CC)$ of $\CC$-module categories.
For a monoidal category $\CC$, the notion of a module category over $\CC$ or a $\CC$-module is simply the categorification of the notion of a module over a ring \cite{ostrik}. In more detail, a left $\CC$-module is a category $\mathcal{M}$ together with a bifunctor $\CC \times \mathcal{M} \rightarrow \mathcal{M}$ and associator and unit transformations subject to coherence conditions (see Definition~\ref{def:mod-cat}). The notion of a $\CC$-module functor and that of a $\CC$-module natural transformation can be defined accordingly (see Definition~\ref{def:C-module-functor} and \ref{def:C-mod-nat-xfer}). $\Mod(\CC)$ is the bicategory of $\CC$-modules (as objects), $\CC$-module functors (as 1-morphisms) and $\CC$-module natural transformations (as 2-morphisms).

For a given monoidal category $\CC$, one cannot define the notion of full center for a $\CC$-algebra (or for a $\CC$-module) as an object in $\CC$ because $\CC$ is not braided in general. But what one can do is to define the center as an object of the monoidal center $\CZ(\CC)$ of $\CC$. A definition of full center for a $\CC$-algebra (or a $\CC$-module) was introduced 
in \cite{da}. In particular, for a $\CC$-module $\CM$, the full center $Z(\CM)$ is defined by an internal hom $[\id_\CM, \id_\CM]$ valued in $\CZ(\CC)$.  This definition also works for any $\CC$-module functor $F: \CM \to \CN$. Namely, we can define the centralizer of a $\CC$-module functor $F$ as an internal hom $Z(F):=[F,F]$ valued in $\CZ(\CC)$. This generalize both (\ref{eq:Z-f}) and (\ref{def:Z-M}). We recall the definition of $[F,F]$ in Section~\ref{sec:cent-module-fun}. Therefore, the cospan (\ref{eq:Z-M}) is generalized to a new cospan:
\be  \label{diag:Z-F}
\raisebox{1em}{\xymatrix@R=0.5em{  
  & Z(F)  & \\ Z(\CM) \ar[ur] & & Z(\CN)~. \ar[ul] }}
\ee
In this case, the construction of 2-diagrams is similar (see the right diagram in (\ref{eq:Z-functor})). Altogether, our full center construction can be summarized in the following multilayered assignment:
\be \label{eq:Z-functor}
\BZ: \,\, \xymatrix{ \CM \ar@/^12pt/[rr]^F \ar@/_12pt/[rr]_G & \Downarrow \phi & \CN}
\quad \longmapsto \quad
\raisebox{3em}{\xymatrix@R=1.3em{  
  & Z(F) \ar[d]^{[F, \phi]} & \\ 
  Z(\CM) \ar@/^8pt/[ur] \ar@/_8pt/[rd] & [F,G] & Z(\CN) \ar@/_8pt/[ul] \ar@/^8pt/[dl]  \\
  & Z(G) \ar[u]^{[\phi,G]} 
  }} \ .
\ee
With a proper definition of 3-morphisms between 2-diagrams (see Definition~\ref{def:2-diagram+3-cell}), we believe that we obtain a tricategory $\CALG(\CZ(\CC))$ of commutative $\CZ(\CC)$-algebras (as objects), cospans 
\eqref{diag:Z-F} (as 1-morphisms), 2-diagrams (as 2-morphisms) and properly defined 3-morphisms, and that \eqref{eq:Z-functor} leads to a lax 3-functor from $\Mod(\CC)$ to $\CALG(\CZ(\CC))$ with appropriate assumptions on $\CC$ and $\CZ(\CC)$. 
We do not deal with the 3-category extension in this paper. Instead we
prove in Section~\ref{sec:under-CALG-Z} that we obtain a bicategory $\CALGu(\CZ(\CC))$ if take 2-morphisms to be  equivalence classes of 2-diagrams, and in Section~\ref{sec:center-func} that $\BZ$ defines a lax 2-functor.

\subsection{Relation to two-dimensional field theory}\label{sec:intro-3}

Physical motivations for the definition of $\CALGu(\CZ(\CC))$ and the full center construction $\BZ$ comes from two-dimensional rational CFTs. A 
natural
way to compare two CFTs (or any two field theories of a given dimension), is to study 
domain walls between them. These are codimension 1 submanifolds on a
two-dimensional surface on which the CFTs are defined.  Similarly a natural
way to compare two domain walls between a fixed pair of CFTs is to study domain walls between domain walls. These are codimension 2 submanifolds which lie inside the codimension 1 domain walls. For two-dimensional theories we have now reached dimension zero, 
and we have to stop here.
For higher-dimensional field theories one can go on to higher codimensions (see for example \cite{kitaev-kong,Fuchs:2012dt,u-lw-mod}).

Let us describe CFTs with domain walls in more detail and show the relation to $\CALGu$ and the full center construction. In doing so we 
gloss over a number of subtleties, which we briefly comment on at the end of this section.

\medskip\noindent
\nxt {\bf CFT without domain walls}: A CFT can be defined in terms of its correlation functions, which are subject to certain consistency conditions called sewing constraints. We use an axiomatic approach via the representation theory of vertex operator algebra (VOA), see for example \cite{kong2} for more details and references therein.

To an oriented compact closed Riemann surface $\Sigma$ with an ordered list of marked points (a point on $\Sigma$ together with a local coordinate and labelled `in' or `out'), a CFT assigns a multilinear map $C(\Sigma) : B \times \cdots \times B^* \times \cdots \rightarrow \mathbb{C}$, called the {\em correlator}. Here $B$ is a vector space (graded by scaling dimension), called the {\em space of bulk fields}, and $B^*$ is its graded dual. The domain of the correlation function $C(\Sigma)$ has one factor of $B$ for each marked point labelled `in' and one factor $B^*$ for each point labelled `out'. The consistency conditions arise via gluing surfaces by identifying neighborhoods of an in-going and out-going marked point using the local coordinates.

To make the connection to the construction in Sections~\ref{sec:intro-1} and \ref{sec:intro-2}, we will consider rational CFTs. In these cases, the subspace of holomorphic fields contains a rational vertex operator algebra (VOA) $V_L$ and the subspace of antiholomorphic fields contains a rational VOA $V_R$ called the left- and right-moving chiral symmetry algebras\footnote{A VOA is defined in terms of formal variables. One can chose to replace the formal variables by holomorphic complex variables $z_1, z_2, \cdots$ (or its complex conjugate $\bar{z}_1, \bar{z}_2, \cdots$) to obtain 
holomorphic fields (or 
anti-holomorphic fields).} . (Anti-)holomorphic fields have the defining property that any correlator involving such a field depends (anti-)holomorphically on the corresponding marked point. By a rational VOA we mean a VOA satisfying certain conditions such that the category $\text{Rep}(V)$ of $V$-modules is a modular tensor category \cite{huang-mtc}. We set $\text{Rep}(V)_+:=\text{Rep}(V)$ and $\text{Rep}(V)_-$ to be the same category as $\text{Rep}(V)$ but with braidings and twists replaced by anti-braidings and anti-twists, respectively. It is clear that $\text{Rep}(V)_-$ is also a modular tensor category. For two rational VOAs $V_L$ and $V_R$, the Deligne product $\text{Rep}(V_L)_+ \boxtimes \text{Rep}(V_R)_-$,  is also a modular tensor category. We abbreviate $\CD = \text{Rep}(V_L)_+ \boxtimes \text{Rep}(V_R)_-$. The space of bulk fields $B$ is an object in $\CD$. 

The correlation functions endow $B$ with an additional structure, for example the Riemann sphere with three marked points labelled `in', `in', and `out' defines an associative product on $B$ in the category $\CD$, and the sphere with one out-going marked point provides a unit, turning $B$ into a (unital, associative) commutative algebra; for more details see for example \cite{frs, kong2}. Thus a CFT without domain walls with chiral symmetry $V_L\otimes_\mathbb{C} V_R$ gives an object in $\CALGu(\CD)$. 

\medskip\noindent
\nxt {\bf CFT with domain walls}: A CFT with domain walls assigns correlators to Riemann surfaces $\Sigma$ with marked points that in addition have an embedded smooth oriented submanifold $\Delta$ of codimension 1, whose connected components are called {\em domain walls} or {\em defect lines}. For the present exposition we restrict our attention to situations where the submanifold has empty boundary, and where the surface is bicolorable in the following sense. 
To each connected component of $\Sigma \setminus \Delta$ we assign one of two possible colors, say `1' and `2', such that `to the left' of each defect line we have color 1 and `to the right' we have color 2 (the orientation of the surface and the submanifold determines `left' and `right'). A marked point may lie in an area of color 1 or 2, or it may lie on a defect line.
The correlators are again subject to sewing constraints. A formulation of CFT with defect lines similar to the functorial approach by Segal \cite{segal} has been given in \cite{r-suszek} and \cite{dkr2}.

Here we restrict our attention to such CFTs with domain walls, for which correlators depend on the position of the embedded submanifold only up to homotopy
\footnote{The homotopy is required to keep marked points fixed, and the submanifold must not intersect itself or cross marked points during the homotopy.}. 
Such domain walls are called `topological' (in the same sense that a field theory is called `topological' if its correlation functions depend on the marked points only up to homotopy).

Depending on wether a marked point lies in an area of color 1 or 2 or on a defect line, the corresponding argument of the correlator lies in one of three state spaces (or their duals), which we denote by $B_1$, $B_2$, and $D$, respectively; $D$ is called the {\em space of defect fields}. Note that if the Riemann surface contains no defect lines (i.e.\ the embedded submanifold is empty), we can color the entire surface either by 1 or 2. Thus a CFT with domain walls contains as a part of its data {\em two} CFTs without domain walls. 

\begin{figure}[bt]
\begin{center}
\raisebox{8em}{a)}
\hspace{.5em}
\scalebox{0.6}{\includegraphics{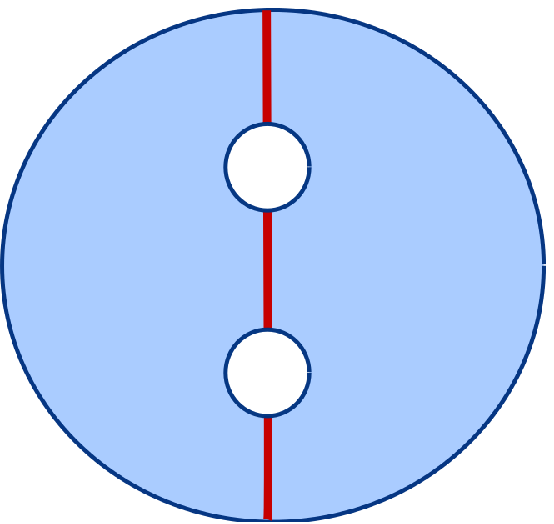}}
\hspace{6em}
\raisebox{8em}{b)}
\hspace{.5em}
\scalebox{0.6}{\includegraphics{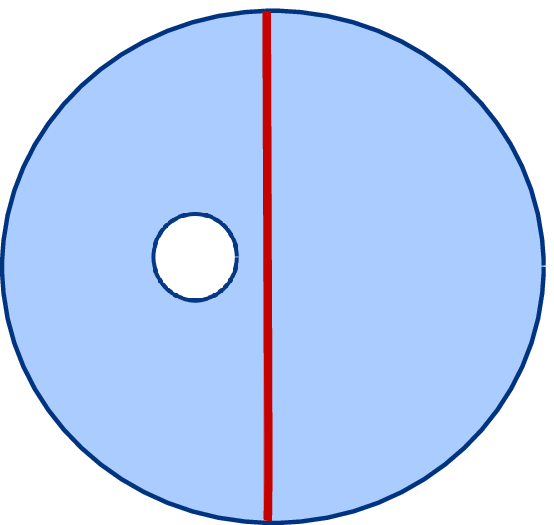}}
\end{center}
\caption{In these figures we cut out little discs around the marked points and mapped the picture to the plane. Figure a) shows the surface which provides the multiplication on the space $D$. Namely, it gives rise to a morphism $D \otimes D \to D$ in the category $\CD = \text{Rep}(V_L)_+ \boxtimes \text{Rep}(V_R)_-$. Figure b) shows the surface that defines the morphism $B_1 \to D$.}
\label{fig:defect-mult+inclusion}
\end{figure}

In the setting of rational CFT we demand in addition that $B_1$ and $B_2$ contain the same chiral symmetry, i.e.\ they are both objects in $\CD$, and we demand that the defect lines are `transparent' to $V_L \otimes_\mathbb{C} V_R$ (the defect line can cross a marked point without affecting the value of the correlation function if the corresponding argument is taken from $V_L \otimes_\mathbb{C} V_R$), which then implies that also $D$ is an object in $\CD$. 

The correlators endow the state spaces with an additional structure. As before, $B_1$ and $B_2$ are commutative associative algebras, and $D$ is a (not necessarily commutative) algebra. For example, the multiplication on $D$ is obtained from a sphere with a defect line placed on the equator and three marked points on the equator labelled `in', `in', and `out', see Figure~\ref{fig:defect-mult+inclusion}\,a.

\begin{figure}[bt]
\begin{center}
\raisebox{8em}{a)}
\hspace{.5em}
\scalebox{0.6}{\includegraphics{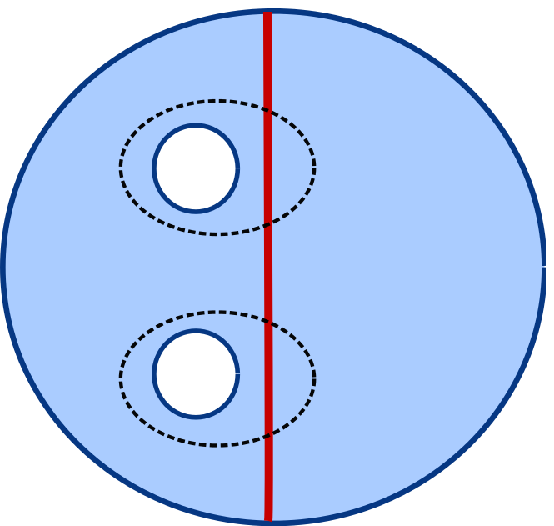}}
\hspace{6em}
\raisebox{8em}{b)}
\hspace{.5em}
\scalebox{0.6}{\includegraphics{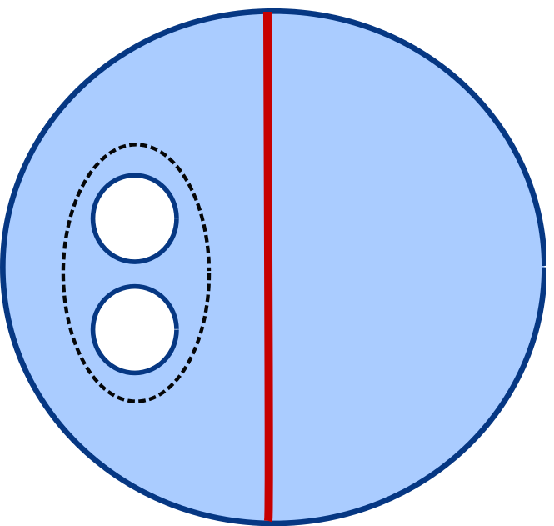}}
\end{center}
\caption{Surfaces can be sewn together using the local coordinates around an in- and out-going marked points. We symbolize this procedure in the above pictures by the dashed circles. For example, figure a) shows a sphere with two in-going and one out-going marked point which has been obtained by sewing two copies of the spheres in Figure~\ref{fig:defect-mult+inclusion}\,b to the in-going punctures of the sphere in Figure~\ref{fig:defect-mult+inclusion}\,a. The two decompositions at different points in the moduli space shown in figure a) and b) above result in the statement that the morphism $B_1 \to D$ in $\CD$ is compatible with the multiplication on $B_1$ and $D$.}
\label{fig:bulk-defect-map-mult} 
\end{figure}

Consider once more the sphere with an equatorial defect line, and take the upper and lower hemispheres to be of color 1 and 2, respectively. Putting an in-going marked point on the north pole and an out-going marked point on the equator defines a map $B_1 \rightarrow D$ (Figure~\ref{fig:defect-mult+inclusion}\,b); the sewing constraints imply that this has to be an algebra map (place two marked points in the upper hemisphere and decompose the surface in different ways -- see Figure~\ref{fig:bulk-defect-map-mult}). Similarly one obtains an algebra map $B_2 \rightarrow D$. 

Thus a CFT with topological domain walls and chiral symmetry $V_L \otimes_\mathbb{C} V_R$ gives a cospan
\be \label{eq:algcospan-via-CFT}
\raisebox{1em}{\xymatrix@R=0.5em{  
& D \\ B_1 \ar[ru] && B_2 \ar[lu]}}
\ee
i.e.\ a 1-morphism in $\CALGu(\CD)$.

\medskip\noindent
\nxt {\bf CFT with domain walls between domain walls}: Next we enlarge the allowed decoration data of 
Riemann surfaces to which a
CFT assigns correlators by including also codimension 2 submanifolds lying inside the codimension 1 submanifolds, i.e.\ distinguished points on the defect lines. These distinguished points are different from the marked points: they do not carry a local coordinate and they do not give rise to a factor in the product of vector spaces that forms the domain of the multilinear map given by the correlator. 

We demand that the defect lines are bicolorable in the following sense. Denote the two colors by `$a$' and `$b$', say. Each connected component of a defect line minus the marked and distinguished points carries color `$a$' or `$b$'. 
The color may change across a distinguished or marked point from `$a$' to `$b$' or vice versa.

As above, we restrict our attention to topological domain walls, and to topological domain walls between domain walls, i.e.\ the correlators do not depend on the position of the distinguished points on the defect lines, as long as one does not move a distinguished point past a marked point. 

For such a CFT we already have a rather long list of state spaces: $B_1$ and $B_2$ for marked points in areas of color 1 or 2; $D_a$ and $D_b$ for marked points on a defect line with color $a$ or $b$ on both sides of the marked point; $D_{ab}$ for a marked point on a defect line across which the color changes from `$a$` to `$b$'; $D_{ab}$ is called the space of defect changing fields. Note also that a CFT with domain walls between domain walls contains as part of its data {\em two} CFTs with domain walls, namely we can color defect lines without distinguished points by `$a$' or `$b$'.

The correlators endow the state spaces with addition structure. 
As before we have cospans $B_1 \rightarrow D_a \leftarrow B_2$ and $B_1 \rightarrow D_b \leftarrow B_2$. 
In addition, take a sphere with equatorial defect, hemispheres colored `1' and `2', and the defect itself split into two semicircles colored `a' and 'b'. These are joined by a distinguished point (for the transition from `$a$' to `$b$') and by an out-going marked point (for the transition from `$b$' to `$a$'). Placing an additional in-going marked point on the semi-circle colored `$a$' (resp.\ `$b$') gives a map $D_a \rightarrow D_{ab}$ (resp,\ $D_b \rightarrow D_{ab}$). These maps can be collected in the diagram
\be
\raisebox{3em}{\xymatrix@R=1.3em{  
& D_a \ar[d] & \\ B_1 \ar[ru] \ar[rd] & D_{ab} & B_2 \ar[ul] \ar[dl]\\
& D_b \ar[u]}}
\ee
The sewing constraints imply that $D_{ab}$ is a $D_a$-$D_b$-bimodule and that the above diagram satisfies all the defining properties of a 2-diagram (see Definition~\ref{def:2-diagram+3-cell}). Therefore, a CFT with chiral symmetry $V_L \otimes_\mathbb{C} V_R$ and with topological domain walls between domain walls, considered up to isomorphism, gives a 2-morphism in $\CALGu(\CD)$.

\medskip

\begin{table}[bt]
\begin{center}
\begin{tabular}{p{10em}p{26em}}
type of CFT 
  & algebraic data in $\mathrm{Rep}(V)$ \\[.3em]
\hline \\[-.7em]
without domain walls 
  & $A$, a (special symmetric Frobenius) algebra. In this case the space of bulk fields is $B=Z(A) \in \CD$, the full center of $A$.
\\[.3em]
with domain walls 
  & $(A_1,A_2,M)$, where $A_1$, $A_2$ are (special symmetric Frobenius) algebras, and $M$ is an $A_1$-$A_2$-bimodule. The space of defect fields $D$ is the internal hom $[F,F]\in \CD$, where $F = (-) \otimes_{A_1} M$ is a functor from $\mbox{$A_1$-mod}$ to $\mbox{$A_2$-mod}$. Both categories are left module categories over $\CD$.
\\[.3em]
\raggedright
  with domain walls \hbox{between} domain walls 
  & $(A_1,A_2,M_a,M_b,\phi)$, where $A_1$, $A_2$ are as above, $M_a$, $M_b$ are $A_1$-$A_2$-bimodules, and $\phi : M_a \rightarrow M_b$ is a bimodule intertwiner. The map $\phi$ provides a natural transformation between the functors $F = (-) \otimes_{A_1} M_a$ and $G = (-) \otimes_{A_2} M_b$. The space of defect changing fields $D_{ab}$ is the internal hom $[F,G]\in \CD$.
\end{tabular}  
\end{center} 
\caption{Different types of CFTs and the algebraic data needed to define them in the description via 3d\,TFT. Since the full center $Z(A)$ of $A$ only depends on the Morita class of $A$ \cite{morita,da},  
we can replace $A_1$ by $\CM:=\mbox{$A_1$-mod}$ and $A_2$ by $\CN:=\mbox{$A_2$-mod}$. The relations between the above data then organize themselves into the two diagrams given in \eqref{eq:Z-functor}.}
\label{tab:cft-via-3dtft-data}
\end{table}

So far we have outlined how the bicategory $\CALGu(\CD)$ appears in the study of CFT in the presence of domain walls. It turns out that the full center construction also arises in this context, as we now describe. Examples of the three types of CFTs discussed above can be constructed with the help of three-dimensional topological field theory (3d\,TFT). For this construction to apply, the chiral symmetry of the CFT needs to obey $V_L = V_R$, where $V := V_L = V_R$ is a rational VOA. In this case, the Deligne produce $\CD=\text{Rep}(V)_+ \boxtimes \text{Rep}(V)_-$ is canonically equivalent to the monoidal center $\CZ(\text{Rep}(V))$ of $\text{Rep}(V)$ \cite{mueger}. The 3d\,TFT in question is the one associated to the category $\mathrm{Rep}(V)$ via the construction of Turaev \cite{turaev}. CFT correlators are constructed as invariants of three-manifolds (with non-empty boundary) and embedded ribbon graphs \cite{frs,ffrs2,Fjelstad:2012mj}. The coloring of the ribbon graph depends on additional algebraic data in $\mathrm{Rep} V$ as listed in Table~\ref{tab:cft-via-3dtft-data}, and the relations between these pieces of data are exactly as given in (\ref{eq:Z-functor}). From this we see that the 3d\,TFT construction of rational CFTs in the presence of domain walls contains within it the full center construction. 

\begin{rema}
It is curious to note that, on the one hand, the functoriality of full center needs defects of all codimensions; on the other hand, the simple statement of this functoriality also summarizes all local structures in an RCFT with topological defects efficiently. This functoriality of full center in rational CFT is not an isolated phenomenon. A categorification of it \cite{u-lw-mod}, which is associated to the extended Turaev-Viro topological field theories, is still conjectural 
but obvious by physical intuition.  In \cite{u-lw-mod}, it was conjectured that this functoriality (with additional nice properties) holds for all extended topological field theories. A related but different 2-functor was constructed in \cite{ENO2009}. Another related result is presented in \cite[Cor.\,2.5.13]{Lurie:2009b}, where to an $\mathbb{E}[k]$-algebra (in a symmetric monoidal $\infty$-category) one assigns its center, which is an $\mathbb{E}[k{+}1]$-algebra in the same category.
\end{rema}

\medskip
This ends our short exposition of the relation between CFT and the full center construction. To conclude let us just list -- without comment -- some of the issues we have left aside to keep the presentation short. For higher genus surfaces one either has to include a line bundle over the moduli space of Riemann surfaces or content oneself with obtaining rays of multilinear maps as correlators (this is due to the conformal anomaly; it can be avoided at genus 0). The formulation of the sewing constraints for CFTs with domain walls requires the introduction of defect junctions of higher valencies, whereas above we have restricted ourselves to valency 2. The construction of correlators via 3d\,TFT is proved only for surfaces of genus 0 and 1, for surfaces of higher genus, it relies on the conjectural equivalence of two modular functors, one obtained from conformal blocks and one from the 3d\,TFT.

\bigskip

The rest of the paper is organized as follows. In Section \ref{sec:act-int-hom} we review the notion of action internal hom; in Section \ref{sec:full-center} we will review the notion of full center of an algebra in a monoidal category $\CC$ and 
set our notations; in Section \ref{sec:two-cat} we will construct two bicategories: $\CAlg(\CZ)$ and $\CALGu(\CZ)$; in Section \ref{sec:center-func} we will present the full center construction; in Section \ref{sec:auto} and Section \ref{sec:fusion-cat} we restrict the domain of the full center construction to $\Mod^\times(\CC)$ and $\Mod^o(\CC)$, respectively, and show that we obtain non-lax 2-functors in both cases. 

\bigskip\noindent
{\bf Acknowledgement}:
We would like to thank Victor Ostrik for explaining some of his earlier works to us. AD thanks Max Planck Institut f\"ur Mathematik (Bonn) for hospitality and excellent working conditions. LK is supported by the Basic Research Young Scholars Program and the Initiative Scientific Research Program of Tsinghua University, and NSFC under Grant No. 11071134. AD and IR thank Tsinghua University for hospitality during a visit where part of this work was completed.
IR is supported in part by the German Science Foundation (DFG) within the Collaborative Research Center 676 ``Particles, Strings and the Early Universe''.

\section{Action internal homs}  \label{sec:act-int-hom}

In this section we recall the definition of action internal homs. Some of their properties are best stated in the language of enriched categories and we will therefore also use this language, even if enriched categories do not feature in the rest of this paper. Most of the results collected in this section can be found e.g.\ in \cite{janelidze:2001,ostrik}.

\subsection{Module categories and internal homs}

Let $\CC$ be a monoidal category with tensor product $\otimes$ and tensor unit $1_\CC$ and let $\CM$ be a left module category over $\CC$ (or a $\CC$-{\em module}, for short). We briefly review the definition of module categories, module functors and natural transformations between them in Appendix~\ref{app:module-cat}. Action internal homs are defined by a universal property as follows.

\begin{defn} \label{def:action-internal-hom}
For $M,N\in \CM$ the {\em action internal hom} $[M,N]$ is an object of $\CC$ equipped with a map $\ev_M: [M, N] \ast M \rightarrow N$ such that $([M, N], \ev_M)$ is terminal among the pairs $(U,f)$, where $U\in\CC$ and $f:U\ast M\to N$ is a morphism in $\CM$. That is, for each such pair there is a unique morphism $\underline{f}: U\to[M,N]$ which makes the diagram
\be  \label{eq:underline-f}
\xymatrix{
U\ast M \ar[dr]_f \ar@{-->}[rr]^{\underline{f}\ast \id_M}  && [M,N]\ast M \ar[dl]^{\ev_M}  \\
& N
}
\ee
commute. Equivalently, the action internal hom $[M, N]$ is the terminal object in the comma-category $A_M\,{\downarrow}\,N$, where $A_M: \CC \rightarrow \CM$ is the functor sending $X \in \CC$ to $X \ast M$. 
\end{defn} 

The notion of action internal homs is a generalization of that of internal hom of a monoidal category. The latter one corresponds to the regular action of a monoidal category on itself.  
For brevity, we will refer to both as `internal hom'. 

\begin{rema}  
(i) The assignment $f \mapsto \underline{f}$ in Definition~\ref{def:action-internal-hom} gives a natural isomorphism 
\be  \label{eq:def-int-hom-cl}
\Hom_\CM( X \ast M, N)\,\, \cong \,\, \Hom_\CC(X, [M, N]). 
\ee
This shows the equivalence of our formulation to that used in \cite{ostrik}, where the internal hom functor $[M, -]: \CM \to \CC$ was defined as the right adjoint of $-\ast M: \CC \to \CM$. 
\\[.5em]
(ii) Let $\Vect_k$ be the monoidal category of (not necessarily finite dimensional) vector spaces over a field $k$. There is a canonical isomorphism $\Hom(U \otimes V , W) \cong \Hom(U, \Hom(V,W))$. Thus, if we consider $\Vect_k$ as a module category over itself, the internal hom is just the usual hom, $[U,V] = \Hom(U,V)$.
\\[.5em]
(iii) 
Internal homs may or may not exist. For example, consider a monoidal category $\CC$ as a module category over itself, and let $\mathcal{G}$ be the full monoidal subcategory of $\CC$ of invertible objects (or even just the tensor unit itself). Then $\CC$ is also a $\mathcal{G}$-module, but may not have internal homs in $\mathcal{G}$. For example, this happens for $\Vect_k$ where $\mathcal{G}$ consists of 1-dimensional vector spaces.
\end{rema}

\begin{defn}  \label{def:C-closed}
A subcategory $\CN\subset\CM$ of a $\CC$-module $\CM$ is called $\CC$-{\em closed} iff the internal hom $[M, N]$ exists for all $M, N \in \CN$. ($\CN$ is not itself required to be a $\CC$-module; it is understood that the condition \eqref{eq:underline-f} -- or \eqref{eq:def-int-hom-cl} -- is to be applied to all morphisms in $\CM$, not only to those in $\CN$.)
\end{defn}

\begin{rema}
(i) The above definition allows to select a sub-class of objects in $\CM$ for which internal Homs exist, the condition of being $\CC$-closed is independent of the morphism sets in $\CN$ (but of course not of those of $\CM$ and $\CC$). This is helpful because we will be interested mainly in module categories that in turn arise as categories of $\CC$-module functors between given $\CC$-modules, and we may want to single out certain sub-categories of functors. For example, if $\CG$ denotes the sub-category of invertible such functors with invertible natural transformations, we can say `assume $\CG$ is closed' rather than `assume that the class of objects in $\CG$ is a closed sub-class' or `assume that the full subcategory containing the same objects as $\CG$ is closed'. Note that in this example, $\CG$ will typically not itself be a module category.
\\[.5em]
(ii) The above example also shows that for a $\CC$-closed $\CC$-module $\CM$, it is possible to have a 
monoidal subcategory
$\CC'$ of $\CC$ such that $\CM$ is not $\CC'$-closed. 
\end{rema}

\begin{conv} \label{conv:f1-noassoc}
In order not to make the commutative diagrams unnecessarily large, we adopt the following conventions.\\
(i) We write $f1$ as an abbreviation for $f\ast \id$ or $f\otimes \id$, and $fg$ for $f\ast g$ or $f\otimes g$. 
\\
(ii) We will usually not spell out the associator and unit isomorphisms in monoidal categories or module categories. For example, given a map $f: V \ast M \rightarrow N$ we may write $(U \otimes V) \ast M \xrightarrow{~1f~} U \ast N$ instead of $(U \otimes V) \ast M \xrightarrow{a_{U,V,M}^{-1}} U \ast  (V \ast M) \xrightarrow{~\id_U \ast f~} U \ast N$.
\end{conv}

\begin{lemma}  \label{lemma:int-hom-bifunctor}  
If $\CM$ is a $\CC$-closed subcategory of a $\CC$-module, the internal hom $[-, -]$ gives a functor from the product category $\CM^\op \times \CM$ into $\CC$. 
\end{lemma}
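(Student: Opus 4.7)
The plan is to construct the action of $[-,-]$ on morphisms via the universal property of Definition~\ref{def:action-internal-hom}, and then to verify the identity and composition axioms by uniqueness. Throughout I will use Convention~\ref{conv:f1-noassoc} to suppress associators and unitors.

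Given morphisms $g: M' \to M$ in $\CM^\op$ (so $g: M' \to M$ in $\CM$) and $h: N \to N'$ in $\CM$ with $M,M',N,N'$ in the $\CC$-closed subcategory, first I would form the morphism
\[
f_{g,h}\;:=\;[M,N]\ast M' \xrightarrow{1 g} [M,N]\ast M \xrightarrow{\ev_M} N \xrightarrow{h} N'
\]
in $\CM$. Since $[M',N']$ exists by $\CC$-closedness, the universal property supplies a unique morphism $[g,h]:[M,N]\to[M',N']$ in $\CC$ characterised by
\[
\ev_{M'}\circ\bigl([g,h]\ast\id_{M'}\bigr)\;=\;f_{g,h}.
\]
This defines the candidate bifunctor on morphisms.

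Next I would check the two functoriality axioms. For identities, the morphism $\id_{[M,N]}$ visibly satisfies $\ev_M\circ(\id_{[M,N]}\ast\id_M)=\ev_M=f_{\id_M,\id_N}$, so uniqueness in \eqref{eq:underline-f} forces $[\id_M,\id_N]=\id_{[M,N]}$. For composition, given further morphisms $g':M''\to M'$ and $h':N'\to N''$, both $[g',h']\circ[g,h]$ and $[g\circ g',\,h'\circ h]$ are morphisms $[M,N]\to[M'',N'']$. By uniqueness in the universal property of $[M'',N'']$ it suffices to show that both induce the same morphism when tensored with $\id_{M''}$ and postcomposed with $\ev_{M''}$. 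For the composite, one expands using the defining equation twice and then applies the bifunctoriality of the action $\ast:\CC\times\CM\to\CM$ to swap $(\id\ast g')$ with $([g,h]\ast\id_{M'})$; the result is exactly $f_{g\circ g',\,h'\circ h}$, which is what the other side computes to by definition.

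The argument is essentially a routine diagram chase driven by the universal property, and no real obstacle is expected. The only point requiring care is that the equality $([g,h]\ast\id_{M'})\circ(\id_{[M,N]}\ast g')=(\id_{[M',N']}\ast g')\circ([g,h]\ast\id_{M''})$ needed in the composition check relies on $\ast$ being a genuine bifunctor on $\CC\times\CM$, together with the suppressed coherence isomorphisms of the $\CC$-module structure on $\CM$; Convention~\ref{conv:f1-noassoc} keeps the resulting diagrams manageable.
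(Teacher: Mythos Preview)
Your proof is correct and uses the same underlying mechanism as the paper --- the universal property of the internal hom together with uniqueness --- but organises the argument slightly differently. The paper first builds the single-variable functors $[M,-]$ and $[-,N]$ separately (these are the maps $[M,f]$ and $[g,N]$ of \eqref{eq:def-fast-gast}), verifies functoriality in each slot, and then shows the interchange law $[g,N']\circ[M',f]=[M,f]\circ[g,N]$; it then appeals to the standard criterion (\cite[Sect.\,II.3, Prop.\,1]{catforwormath}) that a commuting pair of one-variable functors assembles into a bifunctor. You instead define $[g,h]$ directly on pairs and check composition in one step using bifunctoriality of $\ast$. Your route is marginally shorter; the paper's decomposition has the practical advantage that the single-variable maps $[M,f]$ and $[g,N]$ are reused extensively later (e.g.\ in Lemmas~\ref{lem:composition-properties}, \ref{lemma:[F]-funct-1} and throughout Section~\ref{sec:center-func}), so isolating them here pays dividends downstream.
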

\pf
We start by construction two families of functors, $[M,-] : \CM \rightarrow \CC$ and $[-,M] : \CM^\op \to \CC$ for all $M \in \CM$.
On objects both are given by the internal hom. 

Let $f: N \to N'$ and $g: M \to M'$ be morphisms in $\CM$. We define $[M,f]: [M, N] \to [M, N']$ and $[g,N]: [M', N] \to [M, N]$ via the universal property,
\be  \label{eq:def-fast-gast}
\raisebox{2em}{\xymatrix{
[M, N] \ast M \ar[d]^{\ev_M} \ar@{-->}[rr]^{\exists !\, [M,f] 1} && [M, N'] \ast M \ar[d]^{\ev_M} \\
N \ar[rr]^f  && N' 
}}
\quad\quad
\raisebox{2em}{\xymatrix{
[M', N] \ast M \ar[d]^{1g}   \ar@{-->}[rr]^{\exists !\, [g,N] 1} && [M, N] \ast M \ar[d]^{\ev_M} \\
[M', N] \ast M' \ar[rr]^{\ev_{M'}} && N 
}}
\ee
Using the uniqueness requirement in the universal property, one checks that $[M,\id_N]=\id_{[M,N]}$, $[\id_M, N]=\id_{[M,N]}$, 
$[M, f]\circ [M, f'] = [M, f\circ f']$ and $[g', N]\circ [g, N] = [g \circ g', N]$ where $f': N'\to N''$ and $g':M'\to M''$. 

Next we will show that the two functors commute in the sense that $[g,N'] \circ [M',f] = [M,f] \circ [g,N]$, where both are maps $[M',N] \rightarrow [M,N']$. Consider the  diagram
\be
\raisebox{5em}{\xymatrix{
[M', N] \ast M  \ar[ddd]_{[M',f]1} \ar[dr]^{1g} \ar[rrr]^{[g,N]1}  & & & [M,N]\ast M \ar[ddd]_{[M,f]1} \ar[ld]_{\ev_{M}} \\
& [M',N] \ast M' \ar[r]^{~~~~\ev_{M'}}  \ar[d]_{[M',f]1} & N \ar[d]_f \\
& [M',N'] \ast M' \ar[r]^{~~~~\ev_{M'}}  & N'  \\
[M', N'] \ast M  \ar[ur]^{1g} \ar[rrr]^{[g,N']1}  & & & [M,N']\ast M \ar[ul]_{\ev_{M}} \ .
}}
\ee
All but the leftmost square commute by the commutativity of the two diagrams in (\ref{eq:def-fast-gast}). The commutativity of the leftmost square follows from the statement that $\ast$ is a functor $\CC \times \CM \rightarrow \CM$. Thus the two maps $[M', N] \ast M \rightarrow N'$ defined by the outer square (composed with $\ev_{M}$) are equal and the universal property implies $[g,N'] \circ [M',f] = [M,f] \circ [g,N]$.

Altogether this shows that the two families of functors $[M,-]$ and $[-,M]$ define a functor $\CM^\op \times \CM \rightarrow \CC$ (cf.\ \cite[Sect.\,II.3, Prop.\,1]{catforwormath}).
\epf

In $\Vect_k$ there is a canonical homomorphism $X \otimes \Hom(L,M) \rightarrow \Hom(L,X\otimes M)$ taking $x \otimes f$ to the map $l \mapsto x \otimes f(l)$. This map is an isomorphism if $X$ is finite dimensional (choose a basis of $X$), but in general it is not (take $X$ and $L$ infinite dimensional and $M=k$). General internal homs behave similarly. For $X\in \CC$ and $L, M\in \CM$, we have a map 
\be
  \gamma_X: X\otimes [L, M] \to [L, X * M]
\ee  
defined by the universal property of the internal hom: 
\be  \label{diag:def-beta}
\raisebox{2em}{\xymatrix{
(X\otimes [L, M]) \ast L \ar[rd]_{1\ev_L} \ar@{-->}[rr]^{\exists ! \, \gamma_X 1} 
&& [L, X\ast M]\ast L \ar[ld]^{\ev_L}  \\
  & X\ast M
}}
\ee
Here the arrow labelled $1\,\ev_L$ makes sense in view of Convention~\ref{conv:f1-noassoc}.
For $f: X\to X'$, the universal property implies commutativity of the diagram
\be  \label{diag:X-X'-beta}
\raisebox{2em}{\xymatrix{
X \otimes [L, M] \ar[r]^{\gamma_X} \ar[d]_{f1} & [L, X\ast M] \ar[d]^{[L, f1]}  \\
X' \otimes [L, M] \ar[r]^{\gamma_{X'}} & [L, X'\ast M]\, ,
}}
\ee
so that $\gamma$ is a natural transformation from $(-) \otimes [L,M] \rightarrow [L,(-)\ast M]$, both of which are endofunctors of $\CC$.

An object of $\Vect_k$ has a right (or left) dual iff it is finite-dimensional. For general internal homs we have the following

\begin{lemma} \label{lemma:beta-iso}
Let $\CM$ be a $\CC$-module, and let $X\in\CC$ have a right dual $X^\vee$. Suppose that the internal homs $[L,M]$, $[L,X\ast M]$, $[L,X^\vee \ast M]$, $[L,(X^\vee \otimes X)\ast M]$ exist.
Then $\gamma_X : X \otimes [L,M] \to [L,X \ast M]$ is an isomorphism. 
\end{lemma}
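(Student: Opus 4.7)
The plan is to construct an explicit two-sided inverse $\delta_X : [L, X\ast M] \to X \otimes [L,M]$ of $\gamma_X$, using the duality morphisms $\eta_X : 1_\CC \to X \otimes X^\vee$ and $\varepsilon_X : X^\vee \otimes X \to 1_\CC$. Set
\[
\delta_X \,:\, [L, X\ast M] \xrightarrow{\ \eta_X \otimes \id\ } X \otimes X^\vee \otimes [L, X\ast M] \xrightarrow{\ \id \otimes \gamma_{X^\vee}\ } X \otimes [L, (X^\vee \otimes X)\ast M] \xrightarrow{\ \id \otimes [L,\varepsilon_X \ast \id_M]\ } X \otimes [L, M],
\]
where the second arrow makes sense by the hypothesis that $[L,(X^\vee \otimes X)\ast M]$ exists, and the third uses the functoriality of $[L,-]$ established in Lemma \ref{lemma:int-hom-bifunctor} together with $\varepsilon_X \ast \id_M : (X^\vee \otimes X)\ast M \to M$ (after the unitor).

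To verify $\gamma_X \circ \delta_X = \id_{[L, X \ast M]}$, I would use the universal property of $[L, X\ast M]$: it is enough to check that $\ev_L \circ ((\gamma_X \circ \delta_X) \ast \id_L) = \ev_L$ as maps $[L, X\ast M] \ast L \to X \ast M$. Acting on $L$ and expanding with the defining diagram \eqref{diag:def-beta} for both $\gamma_X$ and $\gamma_{X^\vee}$, the left-hand side becomes the composite that first inserts $\eta_X$, then applies $\ev_L$ to get a map into $X \otimes X^\vee \ast X \ast M$, then contracts with $\varepsilon_X$. By the triangle identity $(\id_X \otimes \varepsilon_X)(\eta_X \otimes \id_X) = \id_X$ (together with the associator/unitor coherence of the module action) this composite reduces to $\ev_L$, as desired.

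For the other identity $\delta_X \circ \gamma_X = \id_{X \otimes [L, M]}$, since $X \otimes [L,M]$ carries no direct universal property I would test via Yoneda: for every $Y \in \CC$, the induced map on $\Hom_\CC(Y, X \otimes [L,M])$ must be the identity. Using the adjunction $X^\vee \otimes (-) \dashv X \otimes (-)$ in $\CC$ coming from the right duality, together with the universal property of $[L,M]$, we get the chain of natural bijections
\[
\Hom_\CC(Y, X \otimes [L,M]) \ \cong\ \Hom_\CC(X^\vee \otimes Y, [L,M]) \ \cong\ \Hom_\CM((X^\vee \otimes Y) \ast L, M),
\]
and similarly $\Hom_\CC(Y, [L, X\ast M]) \cong \Hom_\CM(Y \ast L, X \ast M) \cong \Hom_\CM((X^\vee \otimes Y) \ast L, M)$ via the adjunction $X^\vee \ast (-) \dashv X \ast (-)$ on $\CM$ induced by the same duality morphisms. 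A direct diagram chase shows that postcomposition with $\gamma_X$ and $\delta_X$ corresponds under these bijections to the two identifications above; the equality $\delta_X \circ \gamma_X = \id$ then reduces once more to the triangle identity $(\varepsilon_X \otimes \id_{X^\vee})(\id_{X^\vee} \otimes \eta_X) = \id_{X^\vee}$.

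The main obstacle is not any single step but the careful bookkeeping of associators, unitors (suppressed by Convention \ref{conv:f1-noassoc}), and the two triangle identities across the interface between $\otimes$ on $\CC$ and $\ast$ on $\CM$. Naturality of $\gamma$ in its argument, diagram \eqref{diag:X-X'-beta}, is the key tool that lets one commute $\gamma_{X^\vee}$ past the morphism $[L, \varepsilon_X \ast \id_M]$ so that the triangle identities can be applied cleanly.
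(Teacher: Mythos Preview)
Your inverse $\delta_X$ is exactly the paper's $\bar\gamma_X$, and your verification of $\gamma_X\circ\delta_X=\id$ via the universal property of $[L,X\ast M]$ matches the paper's argument. The difference is in the second identity $\delta_X\circ\gamma_X=\id$. You invoke Yoneda and the adjunctions coming from right duality; this works, but the ``direct diagram chase'' you allude to is essentially the whole content of the step, and your closing remark about naturality \eqref{diag:X-X'-beta} being the key tool does not really fit the Yoneda route you outlined.

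The paper instead proves $\bar\gamma_X\circ\gamma_X=\id$ by a short direct computation, after first recording two identities: $\gamma_{1_\CC}=\id_{[L,M]}$ and the multiplicativity $\gamma_{X\otimes Y}=\gamma_X\circ(\id_X\otimes\gamma_Y)$. One then writes out $\bar\gamma_X\circ\gamma_X$, uses functoriality of $\otimes$ to slide $\gamma_X$ past $b_X\otimes 1$, collapses $(1\otimes\gamma_{X^\vee})\circ(1\otimes 1\otimes\gamma_X)$ to $1\otimes\gamma_{X^\vee\otimes X}$ via multiplicativity, applies naturality \eqref{diag:X-X'-beta} with $f=d_X$ to trade $[L,d_X1]\circ\gamma_{X^\vee\otimes X}$ for $\gamma_{1_\CC}\circ(d_X\otimes 1)$, and finishes with the zigzag identity. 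This is cleaner than setting up and checking compatibility of four hom-set bijections, and it makes transparent exactly where naturality of $\gamma$ enters. Your approach is correct but more indirect; if you want to keep it, make explicit which map of hom-sets postcomposition with $\gamma_X$ induces under your identifications.
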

\pf
The following identities for $\gamma_{(-)}$ follow immediately from the universal property:
\be \label{eq:gamma-prop}
\gamma_{1_\CC} =\id_{[L,M]},
\quad  \quad
\gamma_{X\otimes Y}=\gamma_X \circ (\id_X \otimes \gamma_Y).
\ee
Denote by $1_\CC \xrightarrow{b_X} X\otimes X^\vee$ and $X^\vee \otimes X \xrightarrow{d_X} 1_\CC$ the duality maps. We have the commutative diagram
\be  \label{diag:bar-beta-1}
\raisebox{4em}{\xymatrix{
[L, X\ast M] \ast L \ar[rr]^{\ev_L} \ar[d]_{b_X11} && X\ast M \ar[d]_{b_X11} \ar@{=}[rrd]  \\
(X\otimes X^\vee \otimes [L, X\ast M])\ast L \ar[rr]^{~~11\ev_L} \ar[d]_{1\gamma_{X^\vee}1} &&
(X\otimes X^\vee \otimes X)\ast M  \ar[rr]^{~~~1d_X1} & &  X\ast M  \\
(X\otimes [L, (X^\vee {\otimes} X) \ast M]) \ast L \ar[rr]^{~~~1[L,d_X1] 1} \ar[rru]^{1\ev_L} 
& & (X\otimes [L, M])\ast L \ar[urr]_{1\ev_L} & & .  
}}
\ee
Thus, if we define the map $\bar{\gamma}_X: [L, X\ast M] \to X\otimes [L, M]$ as
$\bar{\gamma}_X := (1[L,d_X1]) \circ  (1\gamma_{X^\vee}) \circ (b_X 1)$,  the above
commutative diagram implies commutativity of
\be \label{diag:bar-beta-2}
\raisebox{2em}{\xymatrix{
& [L, X\ast M] \ast L \ar[rd]^{\ev_L} \ar[ld]_{\bar{\gamma}_X1}  \\
(X\otimes [L, M]) \ast L \ar[rr]^{1\ev_L} & & X\ast M \ .
}}
\ee
The universal property of the internal hom immediately gives $\gamma_X \circ \bar{\gamma}_X = \id_{[L, X\otimes M]}$. For the composition in the opposite order, we compute (again not writing out associators and unit isomorphisms)
\bea
\bar{\gamma}_X  \circ \gamma_X 
&\overset{(1)}{=}& (1[L,d_X1]) \circ  (1\gamma_{X^\vee}) \circ (b_X 1) \circ \gamma_X \nn
&\overset{(2)}{=}& (1[L,d_X1]) \circ  (1\gamma_{X^\vee}) \circ (11\gamma_X) \circ (b_X11) \nn
&\overset{(3)}{=}& (1[L,d_X1]) \circ (1\gamma_{X^\vee\otimes X}) \circ (b_X11) \nn
&\overset{(4)}{=}& (1\gamma_{1_\CC}) \circ (1d_X1) \circ (b_X11) 
~\overset{(5)}{=}~ \id_{X\otimes [L, M]}  \ .
\eea
Here step 1 is the definition of $\bar\gamma_X$, step 2 is functoriality of the tensor product of $\CC$,
step 3 uses \eqref{eq:gamma-prop}, step 4 is the naturality \eqref{diag:X-X'-beta} of $\gamma$, and step 5 follows from $\gamma_{1_\CC}=\id$ and the properties of duality maps.
\epf

As we have seen, in $\Vect_k$ internal homs are just homs, which can be composed. Analogously, for general internal homs the universal property allows to define a composition morphism 
\be \label{eq:compos}
\comp_M \equiv \comp_{N,M,L}  : \,\, [M, L] \otimes [N, M] \rightarrow [N, L]
\ee
in terms of the commutative diagram
\be \label{diag:def-compos}
\raisebox{2em}{\xymatrix{
([M, L] \otimes [N, M]) \ast N \ar[d]^{1\ev_N} \ar@{-->}[rr]^(.6){\exists ! \, \comp_M 1} 
 && [N,L]\ast N \ar[d]^{\ev_N} \\
[M, L]\ast M  \ar[rr]^{\ev_M} && L \ .
}}
\ee

\begin{lemma}  \label{lem:composition-properties}
Let $\CM$ be a $\CC$-module, and let $K,K',L,L',M,N \in \CM$.
\\[.5em]
(i) If all internal homs in \eqref{diag:beta-Nev=ev} exist, the composition morphism $\comp_M$ can be factorized as
\be \label{diag:beta-Nev=ev}
\xymatrix{
& [N, [M, L]\ast M] \ar[dr]^{[N, \ev_M]} & \\
[M, L] \otimes [N, M] \ar[ur]^{\gamma_{[M, L]}} \ar[rr]^{\comp_M} & & [N, L]
}
\ee
(ii) If all internal homs in \eqref{eq:asso-int-hom} exist, the composition morphisms are associative in the sense that the diagram
\be \label{eq:asso-int-hom}
\xymatrix{
([M, N] \otimes [K, M]) \otimes [L, K]  \ar[rr]^{\id} \ar[d]_{\comp_M 1} & &  [M, N] \otimes ([K, M] \otimes [L, K])  \ar[d]^{1\comp_K} \\
[K,N] \otimes [L, K] \ar[r]^{\comp_K} & [L,N] & [M,N] \otimes [L,M] \ar[l]_{\comp_M} 
}
\ee
commutes.
\\[.5em]
(iii) Suppose all internal homs in \eqref{eq:hom-comp-compat} exist. Then for $f : M \to M'$ and $g : K' \to K$ we have the commuting squares
\be \label{eq:hom-comp-compat}
\xymatrix{
[L,M]{\otimes}[K,L] \ar[rr]^{[L,f]\,1~} \ar[d]^{\comp_L}
  && [L,M']{\otimes}[K,L] \ar[d]^{\comp_L} \\
[K,M] \ar[rr]^{[K,f]} && [K,M'] \ ,
}
 \hspace{1em}
\xymatrix{
[L,M]{\otimes}[K,L] \ar[rr]^{1\,[g,L]~} \ar[d]^{\comp_L}
  && [L,M]{\otimes}[K',L] \ar[d]^{\comp_L} \\
[K,M] \ar[rr]^{[g,M]} && [K',M]\, .
}
\ee
\end{lemma}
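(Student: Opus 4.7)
My plan is to prove all three parts via the universal property of the internal hom, i.e.\ by showing that after tensoring with the appropriate test object and composing with the evaluation map, both sides of each claimed equality agree in the ambient module category, and then invoking uniqueness in Definition~\ref{def:action-internal-hom}. The central technical tools are the defining triangles for $\gamma_X$ \eqref{diag:def-beta}, for $[N,f]$ \eqref{eq:def-fast-gast}, and for $\comp_M$ \eqref{diag:def-compos}, together with the functoriality of $\ast$ and the Convention~\ref{conv:f1-noassoc} on suppressing associators.

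For (i), both $\comp_M$ and $[N,\ev_M]\circ\gamma_{[M,L]}$ are morphisms $[M,L]\otimes[N,M]\to[N,L]$. After tensoring with $\ast N$ and postcomposing with $\ev_N$, the left-hand side equals $\ev_M\circ(1{\ast}\ev_N):([M,L]\otimes[N,M])\ast N\to L$ by the definition of $\comp_M$. For the right-hand side, the defining diagram for $[N,\ev_M]$ gives $\ev_N\circ([N,\ev_M]\ast 1)=\ev_M\circ\ev_N$, and the defining diagram for $\gamma_{[M,L]}$ gives $\ev_N\circ(\gamma_{[M,L]}\ast 1)=1{\ast}\ev_N$; concatenating these yields the same map $\ev_M\circ(1{\ast}\ev_N)$. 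Uniqueness in the universal property of $[N,L]$ then forces $\comp_M=[N,\ev_M]\circ\gamma_{[M,L]}$.

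For (ii), both ways around \eqref{eq:asso-int-hom} are maps $[M,N]\otimes[K,M]\otimes[L,K]\to[L,N]$; after tensoring with $\ast L$ and composing with $\ev_L:[L,N]\ast L\to N$, each side unfolds into the same iterated evaluation $\ev_M\circ(1{\ast}\ev_K)\circ(1{\ast}1{\ast}\ev_L)$ on $([M,N]\otimes[K,M]\otimes[L,K])\ast L$, by iterating the defining square \eqref{diag:def-compos} of $\comp$ twice in each order. Uniqueness of the induced morphism to $[L,N]$ finishes the argument. For (iii), the two squares are entirely analogous: for the left square one precomposes with $\ast K$, applies $\ev_K$, uses the defining square of $\comp_L$ and then the defining square \eqref{eq:def-fast-gast} of $[L,f]$ and of $[K,f]$; both sides reduce to $f\circ\ev_L\circ(1{\ast}\ev_K)$. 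The right square works symmetrically, using $[g,L]$, $[g,M]$ and the naturality of $\ast$ in its second argument.

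The only place that might cause trouble is bookkeeping: the diagrams involve several nested factors of $\otimes$ and $\ast$, and the suppressed associators have to be re-inserted consistently when one chases a map like $([M,N]\otimes[K,M]\otimes[L,K])\ast L\to N$ through the defining squares in two different orders. I expect this to be the main (but entirely routine) obstacle; MacLane coherence together with Convention~\ref{conv:f1-noassoc} justifies treating the bracketings as equal throughout.
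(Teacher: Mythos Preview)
Your proposal is correct and follows essentially the same strategy as the paper: reduce each identity to an equality of morphisms into the target object by applying $(-)\ast N$ (resp.\ $(-)\ast L$, $(-)\ast K$) and postcomposing with the appropriate evaluation, then invoke uniqueness in the universal property of the internal hom. The one minor difference is in part~(ii): the paper unfolds the left path $\ev_L\circ(\comp_K 1)\circ(\comp_M 11)$ by first invoking the factorization~(i) to replace $\comp_M 11$, whereas you treat both paths symmetrically using only the defining square of $\comp$ together with bifunctoriality of $\ast$; your route is slightly more direct and avoids the dependence on~(i).
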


Note that, while the proof of Lemma~\ref{lemma:beta-iso} required more internal homs to exist than just the ones appearing as source and target of the map $\gamma_X : X \otimes [L,M] \to [L,X \ast M]$, the proof of Lemma~\ref{lem:composition-properties} only needs the existence of the internal homs appearing explicitly in the diagrams \eqref{diag:beta-Nev=ev}, \eqref{eq:asso-int-hom}, and \eqref{eq:hom-comp-compat}.

\pf
(i) By the universal property of internal homs it is enough to check that the two maps $\ev_N \circ (\comp_M1)$ and $\ev_N \circ ([N,\ev_M]1) \circ (\gamma_{[M,L]}1)$ from $([M,L]\otimes [N, M])\ast N$ to $L$ agree. This in turn can be quickly verified by inserting the definitions \eqref{eq:def-fast-gast}, \eqref{diag:def-beta} and \eqref{diag:def-compos} of $[N,-]$, $\gamma$ and $\comp$.
\\[.5em]
(ii) It suffices to check that the two morphisms from $(([M, N] \otimes [K, M]) \otimes [L, K]) \ast L$ to $N$ given by
$a = \ev_L \circ (\comp_M 1) \circ (1 \comp_K 1) $ and $b = \ev_L \circ (\comp_K 1) \circ (\comp_M 11)$ agree. This can be checked by substituting the definition \eqref{diag:def-compos} twice into $a$, while for $b$ one needs to use \eqref{diag:beta-Nev=ev} to replace $(\comp_M 11)$. It is then straightforward to verify $a=b$ from the definition of $\gamma_{[M,N]}$ and $[L,\ev_M]$.
\\[.5em]
(iii) The first identity follows from the universal property together with the commutativity of (we omit `$\otimes$', `$\ast$' and brackets)
\be
\xymatrix{
[L,M][K,L]K \ar[rr]^{[L,f]\,11} \ar[dr]^{1\,\ev_{K}} \ar[dd]^{\comp_L 1} 
  && [L,M'][K,L]K \ar[d]_{1\,\ev_K} \ar[dr]^{\comp_L 1}
  \\
  & [L,M]L \ar[r]^{[L,f]\,1} \ar[d]_{\ev_L}
  & [L,M']L \ar[d]_{\ev_L} & [K,M']K \ar[dl]_{\ev_K} \\
[K,M]K \ar[r]^{\ev_K} \ar[dr]^{[K,f]\,1}
 & M \ar[r]^{f} 
 & M' 
 \\
& [K,M']K \ar[ru]^{\ev_K}
}
\ee
The commutativity of the individual cells holds by definition of $[K,f]$, $[L,f]$ and $\comp_L$. The second square can be checked analogously. 
\epf

Recall the notation $\underline{f}$ from \eqref{eq:underline-f}. From $1_\CC \ast M \xrightarrow{\id_M} M$ (using Conv.\,\ref{conv:f1-noassoc}) we get the map $\underline{\id_M}: 1_\CC \rightarrow [M, M]$, and it is straightforward to check that it makes the following diagram commute,
\be  \label{eq:unit-int-hom}
\xymatrix{
 [N, N] \otimes [M, N] \ar[r]^{\hspace{0.8cm} \comp_N} &   [M, N] & [M, N] \otimes [M, M] \ar[l]_{\hspace{-0.8cm} \comp_M} \\
1_\CC \otimes [M,N] \ar[u]^{\underline{\id_N}1} \ar[ur]_{\id_N} & & [M, N]\otimes 1_\CC  \ar[u]_{1\,\underline{\id_M}} \ar[ul]^{\id_M}\ .
}
\ee

\begin{rema} \label{rema:iso-int-hom}  
(i) An immediate consequence of the above observations is that if the internal hom $[M,M]$ exists, 
$([M, M], \comp_M, \underline{\id_M})$ is an algebra in $\CC$. 
Analogously, $[N, M]$ is a left-$[M,M]$ and right-$[N,N]$ module. 
\\[.5em]
(ii) If $f: M \rightarrow N$ and $f': M' \rightarrow N'$ are isomorphisms in $\CM$, then by Lemma~\ref{lemma:int-hom-bifunctor} $[M, M'] \xrightarrow{[M,f']} [M, N']$ and $[M, N'] \xrightarrow{[f^{-1}, N']} [N, N']$ are isomorphisms in $\CC$. One can also show that the morphism $[f^{-1}, f] : [M, M] \to [N, N]$ is an algebra isomorphism, but we will not need this result. 
\end{rema}

\subsection{Enriched categories}

It is possible to summarize above structures using enriched categories (see \cite{kelly} for definitions). 
We define a category $\CM^\CC$ whose objects are given by objects in $\CM$ and whose morphisms are internal homs, $\Hom_{\CM^\CC}(M, N):= [M, N]$. The compositions is given by the composition morphism \eqref{eq:compos} and the identity is $\underline{\id_M}$. The commutative diagrams \eqref{eq:asso-int-hom} and \eqref{eq:unit-int-hom} simply say that $\CM^\CC$ is a category enriched over $\CC$, or a $\CC$-category for short.

\medskip

Let $\CM$ and $\CM'$ be $\CC$-closed subcategories of two $\CC$-modules $\tilde\CM$ and $\tilde\CM'$, respectively. Let $F: \tilde\CM \rightarrow \tilde\CM'$ be a $\CC$-module functor (see Appendix~\ref{app:module-cat} for definition and conventions), which maps $\CM$ to $\CM'$. 

\medskip

There is a canonical map $[F]_{[M,N]}: [M, N] \rightarrow [F(M), F(N)]$ given by 
\be
  [F]_{[M,N]}=\underline{F(\ev_M) \circ (F_{[M,N],M}^{(2)})^{-1}} \ ,
\ee
i.e.
\be  \label{eq:def-F-MN}
\xymatrix{
[M, N] \ast F(M) 
\ar@{-->}[rr]^(.4){\exists ! \, [F]_{[M,N]}1} 
\ar[d]^{(F_{[M,N],M}^{(2)})^{-1}} 
&& [F(M), F(N)] \ast F(M) \ar[d]^{\ev_{F(M)}} \\
 F([M, N] \ast M) \ar[rr]^{F(\ev_M)} & &  F(N)\ .
}
\ee
Sometimes we abbreviate $[F]_{[M,N]} \equiv [F]$. It is clear that when the functor $F$ is the identity, then $[F]_{[M,N]}$ is the identity map on $[M,N]$.

\begin{lemma}  \label{lemma:[F]-funct-1}
$[F]_{[-,-]}$ satisfies the following functorial properties. For $\phi: K \to L$ and $\psi: M \to N$, the following diagrams
\be  \label{diag:[F]-fun-property}
\xymatrix{
[L, M]  \ar[r]^{\hspace{-0.7cm}[F]}  \ar[d]_{[L, \psi]} & [F(L), F(M)] \ar[d]^{[F(L), F(\psi)]} \\
[L, N]  \ar[r]^{\hspace{-0.7cm}[F]}  & [F(L), F(N)] 
}
\quad \mbox{and} \quad
\xymatrix{
[L,M] \ar[r]^{\hspace{-0.7cm}[F]} \ar[d]_{[\phi,M]} & [F(L), F(M)] \ar[d]^{[F(\phi), F(M)]} \\
[K,M] \ar[r]^{\hspace{-0.7cm}[F]} & [F(K), F(M)] 
}
\ee
are commutative. 
\end{lemma}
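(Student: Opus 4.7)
The strategy is to verify each of the two squares in \eqref{diag:[F]-fun-property} by appealing to the universal property of the internal hom appearing at the lower right corner. That is, to show that two parallel morphisms $U \to [F(X),F(Y)]$ agree, it suffices to show that after acting on $F(X)$ and composing with $\ev_{F(X)}$, the two resulting morphisms $U \ast F(X) \to F(Y)$ coincide.

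\medskip

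For the left square in \eqref{diag:[F]-fun-property}, apply this principle with target $[F(L),F(N)]$. Compose each path with $\ev_{F(L)} \circ (\,-\,\ast \id_{F(L)})$. For the composite $[F(L),F(\psi)] \circ [F]$, two successive applications of the defining diagrams \eqref{eq:def-F-MN} and \eqref{eq:def-fast-gast} rewrite this as $F(\psi) \circ F(\ev_L) \circ (F^{(2)}_{[L,M],L})^{-1}$. For the other composite $[F] \circ [L,\psi]$, the definition of $[F]$ together with the definition of $[L,\psi]$ gives $F(\ev_L) \circ F([L,\psi]\,1) \circ (F^{(2)}_{[L,M],L})^{-1} = F(\psi \circ \ev_L) \circ (F^{(2)}_{[L,M],L})^{-1}$. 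The two outputs agree by functoriality of $F$, and the universal property yields the desired equality.

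\medskip

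For the right square, the argument is analogous but has one extra ingredient: one needs the naturality of the module constraint $F^{(2)}$ in its second argument with respect to the morphism $F(\phi): F(K) \to F(L)$. Chasing the outer rectangle of the diagram that combines \eqref{eq:def-F-MN} (applied for the source pair $(K,M)$) with the definition of $[F(\phi),F(M)]$ via the right square of \eqref{eq:def-fast-gast}, one reduces to a square expressing precisely the naturality of $F^{(2)}_{[L,M],-}$ evaluated on $\phi$. The opposite composite $[F] \circ [\phi,M]$ is rewritten similarly and produces the same morphism $[L,M] \ast F(K) \to F(M)$ after composition with $\ev_{F(K)}$. Invoking the universal property of $[F(K),F(M)]$ concludes the proof.

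\medskip

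I expect the main bookkeeping obstacle to be the $F^{(2)}$ isomorphisms (and the suppressed associators), since they must be inserted and moved past morphisms in the correct order; once this is handled, each square collapses to the functoriality of $F$ together with one of the defining diagrams of Section~\ref{sec:act-int-hom}, so no further input is needed.
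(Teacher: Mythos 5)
Your proof is correct and follows essentially the same route as the paper: both squares are verified via the universal property of the target internal hom by composing with $(-)\ast F(L)$ (resp.\ $(-)\ast F(K)$) and $\ev$, inserting the defining diagrams \eqref{eq:def-F-MN} and \eqref{eq:def-fast-gast}, and reducing to functoriality of $F$ together with naturality of $F^{(2)}$ (which the paper likewise uses, e.g.\ via $F^{(2)}\circ F(1\phi)\circ (F^{(2)})^{-1}=1F(\phi)$ in the second square). The only cosmetic remark is that the same naturality of $F^{(2)}$ (in its $\CC$-argument) is also silently used in your rewriting for the left square, just as it is absorbed into the commuting subdiagrams of the paper's proof.
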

\pf
To prove the first property in (\ref{diag:[F]-fun-property}), we consider the following diagram (omitting $\ast$):
$$
\xymatrix{
[L,M]F(L) \ar[rrr]^{[F] 1} \ar[ddd]_{(F^{(2)})^{-1}} \ar[dr]^{[L,\psi]1} & & & 
[F(L),F(M)] F(L) \ar[dl]_{[F(L),F(\psi)]1} \ar[ddd]_{\ev}  \\
& [L,N] F(L) \ar[r]^{\hspace{-0.5cm}[F]1} \ar[d]_{(F^{(2)})^{-1}}  & [F(L),F(N)] F(L) \ar[d]^{\ev} & \\
& F([L,N] L) \ar[r]^{F(\ev)} & F(N) & \\
F([L,M] L) \ar[rrr]^{F(\ev)} \ar[ur]^{F([L,\psi]1)}  & & & F(M) \ar[ul]_{F(\psi)}~.
}
$$
By (\ref{eq:def-F-MN}) and the first commutative diagram in (\ref{eq:def-fast-gast}), all subdiagrams, including the outer square, except the subdiagram on the top are commutative. Hence, we have
$\ev \circ ([F]1) \circ ([L,\psi]1) = \ev \circ ([F(L),F(\psi)]1) \circ ([F]1)$.  The first property in (\ref{diag:[F]-fun-property}) follows by the universal property of the internal hom.

The proof for the second property in (\ref{diag:[F]-fun-property}) is entirely similar, we only mention the difference. Consider the following diagram (omitting $\ast$): 
$$
\xymatrix{
[L,M] F(K) \ar[rrr]^{[F] 1} \ar[ddd]^{(F^{(2)})^{-1}} \ar[dr]^{[\phi,M]1} & & & 
[F(L),F(M)] F(K) \ar[dl]_{[F(\phi),F(M)]1} \ar[dd]_{1F(\phi)}  \\
& [K,M] F(K) \ar[r]^{\hspace{-0.5cm}[F]1} \ar[d]_{(F^{(2)})^{-1}}  & [F(K),F(M)] F(K) \ar[d]^{\ev} &   \\
& F([K,M] K) \ar[r]^{F(\ev)} & F(M) &  [F(L), F(M)] F(L) \ar[l]_{\ev} \\
F([L,M] K) \ar[rr]^{F(1\phi)} \ar[ur]^{F([\phi,M]1)}  & & F([L,M] L) \ar[u]_{F(\ev)} 
\ar[r]^{F^{(2)}} &  [L,M] F(L)~. \ar[u]^{[F]1}
}
$$
Since $F^{(2)} \circ F(1\phi) (F^{(2)})^{-1} = 1F(\phi)$, it is easy to see that 
the outer square is commutative. Then running the same argument as the proof of the first property, we obtain the proof of the second property in (\ref{diag:[F]-fun-property}). 
\epf

\begin{lemma} \label{lemma:F-C-functor}  
$[F]_{[-,-]}$ defines a $\CC$-functor $F^\CC: \CM^\CC \rightarrow \CM'\,^{\CC}$. Equivalently, the two diagrams
\be \label{eq:unit-C-functor}
\xymatrix{
 & 1_\CC \ar[dl]_{\underline{\id_M}} \ar[dr]^{\underline{\id_{F(M)}}}    \\
 [M, M] \ar[rr]^{[F]_{[M,M]}}   & & [F(M), F(M)]   }
\ee
\be \label{eq:C-functor}
\xymatrix{
[N, P] \otimes [M, N]  \ar[d]^{[F] \otimes [F]} 
   \ar[rr]^{\comp_N}
& &  [M, P] \ar[d]_{[F]} \\
[F(N), F(P)] \otimes [F(M), F(N)] 
  \ar[rr]^{\hspace{3em}\comp_{F(N)}} 
& & [F(M), F(P)] 
}
\ee
commute for all $M, N, P\in \CM$. In particular, the morphism $[F]: [M, M] \rightarrow [F(M), F(M)]$ is an algebra homomorphism. 
\end{lemma}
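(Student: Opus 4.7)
The strategy is to reduce both equations to identities among morphisms targeting $F(?)$ via the universal properties of the internal homs $[F(M),F(M)]$ and $[F(M),F(P)]$: two morphisms into an internal hom $[K,L]$ coincide as soon as they agree after $\ast\,\id_K$ and $\ev_K$. The single translation tool used throughout is the defining equation \eqref{eq:def-F-MN}, which rewrites $\ev_{F(M)} \circ ([F]\ast 1)$ as $F(\ev_M) \circ (F^{(2)})^{-1}$ and thereby converts $\ev$ on the $F$-image side into $\ev$ on the source side.

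For \eqref{eq:unit-C-functor} it suffices to check that $\ev_{F(M)} \circ (([F] \circ \underline{\id_M})\ast 1) = \id_{F(M)}$, since that is the universal property characterizing $\underline{\id_{F(M)}}$. Applying \eqref{eq:def-F-MN} and then naturality of $F^{(2)}$ in the first argument produces $F(\ev_M \circ (\underline{\id_M}\ast 1))\circ (F^{(2)}_{1_\CC,M})^{-1}$. The inner composite is $\id_M$ by the defining property of $\underline{\id_M}$, and the unit-compatibility axiom of the $\CC$-module functor $F$ collapses $(F^{(2)}_{1_\CC,M})^{-1}$ to the identity under Convention \ref{conv:f1-noassoc}, finishing the diagram.

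For \eqref{eq:C-functor}, abbreviate $X = [N,P]$ and $Y = [M,N]$. I would compute both sides composed with $\ev_{F(M)} \circ (- \ast 1)$ and verify they match. On the left, applying \eqref{eq:def-F-MN}, naturality of $F^{(2)}$ in the first slot against $\comp_N$, the associativity coherence splitting $F^{(2)}_{X\otimes Y,M} = (1 \ast F^{(2)}_{Y,M}) \circ F^{(2)}_{X, Y\ast M}$, naturality of $F^{(2)}$ in the second slot against $\ev_M$, the defining relation $\ev_M \circ (\comp_N \ast 1) = \ev_N \circ (1\ast \ev_M)$ of $\comp_N$ from \eqref{diag:def-compos}, and two final applications of \eqref{eq:def-F-MN} (once for $[F]_Y$ inside, once for $[F]_X$ outside), one lands on $\ev_{F(N)} \circ (1 \ast \ev_{F(M)}) \circ (([F]_X \otimes [F]_Y) \ast 1)$. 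On the right, a single application of the defining relation of $\comp_{F(N)}$ produces exactly the same expression. Uniqueness in the universal property of $[F(M),F(P)]$ then yields \eqref{eq:C-functor}. The main obstacle here is the careful bookkeeping of module associators through the chain on the LHS; Convention \ref{conv:f1-noassoc} together with the associativity and unit coherence of $F^{(2)}$ absorbs this, and all remaining manipulations are uniqueness arguments.

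The algebra-homomorphism statement is then an immediate corollary: by Remark \ref{rema:iso-int-hom}(i), $([M,M],\comp_M,\underline{\id_M})$ and $([F(M),F(M)],\comp_{F(M)},\underline{\id_{F(M)}})$ are algebras in $\CC$, and specializing $M=N=P$ in \eqref{eq:C-functor} and \eqref{eq:unit-C-functor} says precisely that $[F]$ preserves the two structure maps.
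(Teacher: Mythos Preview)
Your proof is correct and follows essentially the same approach as the paper: both arguments reduce the two diagrams to equalities of morphisms into $F(M)$ and $F(P)$ by composing with $(-)\ast F(M)$ and the relevant evaluation, then unpack \eqref{eq:def-F-MN}, naturality and coherence of $F^{(2)}$, and the defining equation \eqref{diag:def-compos} for $\comp$. The paper packages the composition case as a single large commutative diagram whose cells are exactly the steps you list (naturality of $F^{(2)}$ in each slot, the associator coherence of $F^{(2)}$ as the central pentagon, and two instances of \eqref{eq:def-F-MN}); your linear chain of rewrites traverses the same cells in the same order.
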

\pf
The commutativity of \eqref{eq:unit-C-functor} follows from the commutative diagram 
\be
\xymatrix{ 
1_\CC \ast F(M) \ar[rr]^{\underline{\id_M}1} \ar[drr]^{F(\underline{\id_M}1)} \ar[ddrr]_{\id}
	& & [M, M] \ast F(M) \ar[rr]^{\hspace{-0.5cm}[F]_{[M,M]}1}  \ar[d]^{(F^{(2)})^{-1}} 
	& &   [F(M), F(M)] \ast F(M) \ar[ddll]^{\ev_{F(M)}} \\
  && F([M,M]\ast M) \ar[d]^{F(\ev_M)}  \\
  && F(M)  }
\ee
together with the universal property of $[F(M), F(M)]$. To check commutativity of \eqref{eq:C-functor}, consider the diagram
(we omit all $\otimes$ and $\ast$ and brackets)
$$
{\small
\xymatrix@C=1em{
[N,P]\,[M,N]\,FM \ar[rr]^{1[F]1} \ar[dr]^{1(F^{(2)})^{-1}} \ar[dd]_{\comp_N 1}
  && [N,P]\,[FM,FN]\,FM \ar[rr]^{[F]11} \ar[d]^{1\, \ev_{FM}}
  && [FN,FP]\,[FM,FN]\,FM \ar[dl]_{1\,\ev_{FM}} \ar[ddd]_{\comp_{FN}1}
\\
& \hspace{-5em}[N,P]\,F([M,N]M) \ar[r]^{1\,F(\ev_M)} \ar[d]^{F(\comp_N) \circ (F^{(2)})^{-1}}
   & [N,P]\,FN \ar[r]^{~[F]1}
   & [FN,FP]\,FN\hspace{-4em} \ar[ddl]^{\ev_{FN}}
\\
[M,P]\,FM \ar[r]^{(F^{(2)})^{-1}} \ar[d]_{[F]1}
  &  F([M,P]\,M) \ar[dr]^{F(\ev_M)}
\\
[FM,FP]\,FM \ar[rr]^{\ev_{FM}}
  & & FP 
  & & [FM,FP]\,FM \ar[ll]_{\ev_{FM}}  \ .
}  }
$$
Here all but the central pentagon are immediate consequences of the definition of $\comp$, $F_{[-,-]}$ and of the fact that $F$ is a $\CC$-module functor. The central pentagon is also easily checked by substituting these definitions.
\epf

\begin{lemma} \label{lemma:G-F=GF}  
Let $\CL$, $\CM$ and $\CN$ be $\CC$-modules and $F: \CL \rightarrow \CM$ and $G: \CM \rightarrow \CN$ be $\CC$-module functors. If the three internal homs in the following diagram exist, then the diagram commutes,
\be
\xymatrix{ 
[M, M']  \ar[r]^(.4){[F]}  \ar[rd]_(.4){[G\circ F]}  &  [F(M), F(M')]  \ar[d]^{[G]}  \\
&  [GF(M),  GF(M')] \ .  
}
\ee
\end{lemma}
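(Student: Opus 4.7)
The plan is to reduce the equality $[G]\circ [F] = [G\circ F]$ to a computation in the defining commutative diagram of the internal hom $[GF(M), GF(M')]$. By the universal property, it suffices to show that the two maps $([G]\circ [F])\ast \id_{GF(M)}$ and $[G\circ F]\ast \id_{GF(M)}$, from $[M,M']\ast GF(M)$ to $[GF(M),GF(M')]\ast GF(M)$, yield the same morphism after post-composition with $\ev_{GF(M)}$.

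First I would expand the left-hand side using the defining diagram \eqref{eq:def-F-MN} for $[G]$: one gets
\[
\ev_{GF(M)}\circ ([G]\ast 1)\circ ([F]\ast 1) = G(\ev_{F(M)})\circ (G^{(2)}_{[FM,FM'],F(M)})^{-1}\circ ([F]\ast \id_{GF(M)}).
\]
The key intermediate step is to slide the $[F]\ast \id$ past $(G^{(2)})^{-1}$ using naturality of the module functor structure $G^{(2)}_{-,F(M)}$ in its first argument. Applied to the morphism $[F]_{[M,M']}\colon [M,M']\to [FM,FM']$, this naturality (after inverting the appropriate isomorphisms) yields
\[
(G^{(2)}_{[FM,FM'],F(M)})^{-1}\circ ([F]\ast \id_{GF(M)}) = G([F]\ast \id_{F(M)})\circ (G^{(2)}_{[M,M'],F(M)})^{-1}.
\]

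Next I would apply the defining diagram \eqref{eq:def-F-MN} for $[F]$ to rewrite $\ev_{F(M)}\circ ([F]\ast \id_{F(M)})$ as $F(\ev_M)\circ (F^{(2)}_{[M,M'],M})^{-1}$, and then use that $G$ is a functor to push it through. The resulting expression is
\[
GF(\ev_M)\circ \bigl(G(F^{(2)}_{[M,M'],M})\bigr)^{-1}\circ (G^{(2)}_{[M,M'],F(M)})^{-1} \;=\; GF(\ev_M)\circ \bigl(G^{(2)}_{[M,M'],F(M)}\circ G(F^{(2)}_{[M,M'],M})\bigr)^{-1}.
\]
By the standard definition of the module functor structure on a composite, $(G\circ F)^{(2)}_{X,M} = G^{(2)}_{X,F(M)}\circ G(F^{(2)}_{X,M})$, so the right-hand side equals $GF(\ev_M)\circ ((G\circ F)^{(2)}_{[M,M'],M})^{-1}$, which is precisely $\ev_{GF(M)}\circ ([G\circ F]\ast 1)$ by the defining diagram for $[G\circ F]$. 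The universal property then forces $[G]\circ [F] = [G\circ F]$.

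The main obstacle is purely bookkeeping: one has to be careful about the direction of the isomorphism $F^{(2)}$ (as fixed in \eqref{eq:def-F-MN}) and about the correct order in the composite $(G\circ F)^{(2)} = G^{(2)}\circ G(F^{(2)})$, since inverting the composite reverses this order and is what makes the naturality step produce exactly the module functor structure of $G\circ F$ rather than its transpose.
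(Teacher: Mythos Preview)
Your proof is correct and follows exactly the approach the paper indicates: reduce to an equality of morphisms $[M,M']\ast GF(M)\to GF(M')$ via the universal property, and then unwind the definitions of $[F]$, $[G]$, $[G\circ F]$ from \eqref{eq:def-F-MN}. The paper's proof is a one-line sketch of precisely this computation; you have simply supplied the bookkeeping (the naturality of $G^{(2)}$ and the identity $(G\circ F)^{(2)}=G^{(2)}\circ G(F^{(2)})$) that the paper leaves implicit.
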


\begin{proof}
As usual, this can be seen by composing with $(-)\ast G(F(M))$ and showing the corresponding identity of morphisms
$[M, M'] \ast G(F(M)) \to G(F(M'))$. The latter follows in a straightforward way when inserting the definitions of
$[F]$, $[G]$ and $[G\circ F]$ from \eqref{eq:def-F-MN}.
\end{proof}

Analogously to the definition of $F$ in the beginning of this subsection, let $G : \tilde\CM \to \tilde\CM'$ be a another $\CC$-module functor which maps the $\CC$-closed subcategory $\CM$ to the $\CC$-closed subcategory $\CM'$.
Given a $\CC$-module natural
transformation $F \xrightarrow{\phi} G$, the map $1_\CC \ast F(M) \xrightarrow{\phi_M} G(M)$ induces a morphism $\underline{\phi_M}: 1_\CC \rightarrow [F(M), G(M)]$ as in \eqref{eq:underline-f}. There are two equivalent ways to realize $\underline{\phi_M}$, namely
\bea \label{eq:phi_M-2expressions}
\underline{\phi_M}&=& \!\Big(\, 1_\CC \xrightarrow{~\underline{\id_{F(M)}}~} [F(M), F(M)]  \xrightarrow{[F(M), \phi_M]} [F(M), G(M)] \,\Big) \ , \nn
\underline{\phi_M}&=& \!\Big(\, 1_\CC \xrightarrow{~\underline{\id_{G(M)}}~} [G(M), G(M)] \xrightarrow{[\phi_M, G(M)]} [F(M), G(M)] \,\Big) \ . 
\eea
Both identities can be checked by composing with $(-) \ast F(M)$ and substituting the definitions.

\begin{lemma}  \label{lem:GF-nat-commute}
The following diagram commutes,
\be  \label{diag:GF-nat-commute}
\xymatrix{
[M, N] \ar[rr]^{[F]} \ar[d]_{[G]}
  &&  [F(M), F(N)] \ar[d]^{[F(M), \phi_N]}   \\
[G(M), G(N)] \ar[rr]^{[\phi_M, G(N)]} 
  && [F(M), G(N)] \ .
}
\ee
\end{lemma}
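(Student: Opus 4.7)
The plan is to prove the identity by invoking the universal property of the internal hom $[F(M),G(N)]$: two morphisms $h,h':[M,N]\to[F(M),G(N)]$ coincide iff $\ev_{F(M)}\circ(h\ast \id_{F(M)})=\ev_{F(M)}\circ(h'\ast \id_{F(M)})$ as maps $[M,N]\ast F(M)\to G(N)$. So I will compute the two composites in \eqref{diag:GF-nat-commute} after applying $(-)\ast F(M)$ and postcomposing with $\ev_{F(M)}$, and show they agree. No clever manipulation with $\comp$ is needed; the computation reduces to two basic facts about $\phi$.

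For the top-right route, the defining square of $[F(M),\phi_N]$ from \eqref{eq:def-fast-gast} lets me replace $\ev_{F(M)}\circ([F(M),\phi_N]\ast 1)$ by $\phi_N\circ\ev_{F(M)}$, and then the defining square \eqref{eq:def-F-MN} of $[F]$ lets me replace $\ev_{F(M)}\circ([F]\ast 1)$ by $F(\ev_M)\circ(F^{(2)}_{[M,N],M})^{-1}$. The total composite becomes
\[
\phi_N\circ F(\ev_M)\circ(F^{(2)}_{[M,N],M})^{-1}\ .
\]
For the bottom-left route, the second square in \eqref{eq:def-fast-gast} replaces $\ev_{F(M)}\circ([\phi_M,G(N)]\ast 1)$ by $\ev_{G(M)}\circ(\id\ast\phi_M)$; slide $\id\ast\phi_M$ past $[G]\ast 1$ using functoriality of $\ast$, and then \eqref{eq:def-F-MN} for $G$ rewrites $\ev_{G(M)}\circ([G]\ast 1)$ as $G(\ev_M)\circ(G^{(2)}_{[M,N],M})^{-1}$. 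The total composite becomes
\[
G(\ev_M)\circ(G^{(2)}_{[M,N],M})^{-1}\circ(\id_{[M,N]}\ast\phi_M)\ .
\]

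The two expressions are matched by two applications of naturality. Ordinary naturality of $\phi$ on the morphism $\ev_M:[M,N]\ast M\to N$ gives $\phi_N\circ F(\ev_M)=G(\ev_M)\circ\phi_{[M,N]\ast M}$. The $\CC$-module naturality condition on $\phi$ (namely, that $\phi$ is compatible with the module-functor structures $F^{(2)}$ and $G^{(2)}$, see Appendix~\ref{app:module-cat}) gives
\[
\phi_{[M,N]\ast M}\circ(F^{(2)}_{[M,N],M})^{-1}=(G^{(2)}_{[M,N],M})^{-1}\circ(\id_{[M,N]}\ast\phi_M)\ .
\]
Combining these two identities shows the two composites agree, and the universal property of $[F(M),G(N)]$ concludes the proof. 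The only real subtlety is book-keeping: one must be careful with the direction of $F^{(2)}$ and $G^{(2)}$ and with the tacit associator/unit isomorphisms hidden by Convention~\ref{conv:f1-noassoc}, but no new ideas beyond the universal property and the two naturality statements for $\phi$ are needed.
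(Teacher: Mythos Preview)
Your proof is correct and follows essentially the same approach as the paper: both compose with $(-)\ast F(M)$, unfold the definitions of $[F]$, $[G]$, $[F(M),\phi_N]$, and $[\phi_M,G(N)]$, and then reduce the identity to the ordinary naturality of $\phi$ on $\ev_M$ together with the $\CC$-module naturality condition \eqref{eq:c-mod-nat-xfer-condition}. The paper packages these same ingredients into a single commutative diagram, but the logical content is identical.
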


\pf
Composing with $(-)\ast F(M)$ and inserting the definition of $[\phi_M, G(N)]$ and $[F(M), \phi_N]$, one finds that
commutativity of the above square follows from commutativity of
\be
\xymatrix{
& [F(M), F(N)] \ast F(M) \ar[rd]^{~\ev_{G(M)}}
\\
[M, N] \ast F(M) \ar[r]_{(F_{[M,N],M}^{(2)})^{-1}}   \ar[d]_{1\,\phi_M} \ar[ru]^{[F]1} 
  & F([M,N]\ast M) \ar[d]^{\phi_{[M,N]\ast M}} \ar[r]_{F(\ev_M)} 
  & F(N) \ar[d]^{\phi_N}  
\\
[M,N]\ast G(M) \ar[rd]_{[G]1} \ar[r]^{(G_{[M,N],M}^{(2)})^{-1}} 
  & G([M,N]\ast M) \ar[r]^{G(\ev_M)} 
  & G(N) 
\\
& [G(M), G(N)] \ast G(M) \ . \ar[ur]_{~\ev_{G(M)}}
}
\ee
The commutativity of the two triangles amounts to the definition of $[F]$ and $[G]$. The left square is condition \eqref{eq:c-mod-nat-xfer-condition} on a $\CC$-natural transformation, and the right square is the naturalness of $\phi$.
\epf

\begin{lemma}  
$\underline{\phi_M}$ defines a $\CC$-natural transformation  $\phi^\CC: F^\CC \rightarrow G^\CC$. Equivalently,
the diagram 
\be  \label{diag:phi-C}
\xymatrix@=1.5em{
& 1_\CC \otimes [M, N] \ar[rr]^(.35){\underline{\phi_N}\otimes [F]} && [F(N), G(N)] \otimes [F(M), F(N)] \ar[dr]_{C_{F(N)}}    \\
[M, N] \ar[ur]^{\cong} \ar[dr]_{\cong} & & & & \hspace{-2em}[F(M), G(N)] \\
& [M,N]\otimes 1_\CC \ar[rr]^(.35){[G]\otimes \underline{\phi_M}}  && [G(M), G(N)] \otimes [F(M), G(M)] \ar[ur]^{C_{G(M)}} 
}
\ee
is commutative. 
\end{lemma}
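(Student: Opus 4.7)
The plan is to reduce the commutativity of \eqref{diag:phi-C} to the already-established Lemma~\ref{lem:GF-nat-commute}, by simplifying both paths with the help of the two expressions for $\underline{\phi_M}$ given in \eqref{eq:phi_M-2expressions} together with the naturality statement of Lemma~\ref{lem:composition-properties}(iii) and the unit property \eqref{eq:unit-int-hom}.

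First I would simplify the upper path. Using the first line of \eqref{eq:phi_M-2expressions}, $\underline{\phi_N} = [F(N),\phi_N]\circ\underline{\id_{F(N)}}$, so the upper composite factors as
\[
\comp_{F(N)}\circ\bigl([F(N),\phi_N]\otimes 1\bigr)\circ\bigl(\underline{\id_{F(N)}}\otimes[F]\bigr)\circ\bigl([M,N]\xrightarrow{\cong}1_\CC\otimes[M,N]\bigr).
\]
The first square of \eqref{eq:hom-comp-compat} (with $f=\phi_N$) rewrites $\comp_{F(N)}\circ([F(N),\phi_N]\otimes 1)$ as $[F(M),\phi_N]\circ\comp_{F(N)}$, and then \eqref{eq:unit-int-hom} collapses $\comp_{F(N)}\circ(\underline{\id_{F(N)}}\otimes 1)$ against the unit isomorphism. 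What remains of the upper path is $[F(M),\phi_N]\circ[F]$.

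By a mirror computation I would treat the lower path, this time invoking the second expression $\underline{\phi_M}=[\phi_M,G(M)]\circ\underline{\id_{G(M)}}$ together with the second square of \eqref{eq:hom-comp-compat} (with $g=\phi_M$) and the right-hand half of \eqref{eq:unit-int-hom}. The lower path reduces to $[\phi_M,G(N)]\circ[G]$.

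Finally, the equality of $[F(M),\phi_N]\circ[F]$ and $[\phi_M,G(N)]\circ[G]$ is precisely the content of Lemma~\ref{lem:GF-nat-commute}, which completes the argument. No step is a genuine obstacle: the work amounts to bookkeeping with the universal property; the only point that deserves care is the correct use of Convention~\ref{conv:f1-noassoc} when pasting together unit isomorphisms with the composition morphisms so that both paths land on comparable morphisms before Lemma~\ref{lem:GF-nat-commute} is applied.
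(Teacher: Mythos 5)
Your argument is correct and is essentially the paper's own proof: the paper establishes the same four ingredients — the two expressions \eqref{eq:phi_M-2expressions}, the unit triangles \eqref{eq:unit-int-hom}, the two squares of Lemma~\ref{lem:composition-properties}\,(iii), and the central square given by Lemma~\ref{lem:GF-nat-commute} — only arranged as one pasted diagram rather than as your step-by-step simplification of the two paths.
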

\pf
Making use of the identities \eqref{eq:phi_M-2expressions}, we see that the lemma will follow once we establish commutativity of all cells in the diagram (we omit `$\otimes$', `$\ast$' and brackets)
\be
{\small
\xymatrix{
[M,N] \ar[r]^{[F]} \ar[d]^{[G]}
  & [FM,FN] \ar[r]^{\underline{\id_{FN}}1} \ar@{=}[dr]
  &  [FN,FN][FM,FN] \ar[d]_{\comp_{FN}} \ar[dr]^{[FN,\phi_N]\,1} 
  \\
[GM,GN] \ar[d]^{1\,\underline{\id_{GM}}} \ar@{=}[dr]
 & & [FM,FN] \ar[d]_{[FM,\phi_N]}
 & \hspace{-4em}[FN,GN] [FM,FN] \ar[dl]^{\comp_{FN}}
 \\
[GM,GN] [GM,GM] \ar[r]^{\comp_{GM}} \ar[dr]_{1\,[\phi_M,GM]}
 & [GM,GN] \ar[r]^{[\phi_M,GN]}
 & [FM,GN]
 \\
& [GM,GN] [FM,GM] \ar[ur]_{\comp_{GM}}
}}
\ee
The two triangles amount to the identity \eqref{eq:unit-int-hom} and the two outer squares are special cases of Lemma~\ref{lem:composition-properties}\,(iii). The central square (which looks like a hexagon) commutes by Lemma~\ref{lem:GF-nat-commute}.
\epf

In the case that the $\CC$-closed subcategory $\CM$ is already the entire $\CC$-module,
the above results can be summarized into the following Theorem.
\begin{thm} \label{thm:2-functor}  
Let $\CC$ be a monoidal category. 
The assignment $\CM \mapsto \CM^\CC$, $F \mapsto F^\CC$ and $\phi \mapsto \phi^\CC$ defines a functor $(-)^\CC$ from the 2-category of $\CC$-closed $\CC$-modules into the 2-category of $\CC$-categories.
\end{thm}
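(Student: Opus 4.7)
The strategy is to observe that all the nontrivial verifications have already been done in the preceding lemmas; what remains is to collect them and check the remaining routine compatibilities with identities and composition of 2-morphisms. Since $\CM$ is the whole $\CC$-module (not merely a $\CC$-closed subcategory), every internal hom used below exists, so no existence caveats intervene.

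First, I would point out that each of the three assignments lands where claimed. Lemma \ref{lemma:int-hom-bifunctor} together with the identities \eqref{eq:asso-int-hom} and \eqref{eq:unit-int-hom} shows that $\CM^\CC$ is a $\CC$-enriched category. Lemma \ref{lemma:F-C-functor} shows that $F^\CC$ defined by $[F]_{[-,-]}$ is a $\CC$-functor $\CM^\CC\to(\CM')^\CC$. The lemma preceding the theorem (commutativity of \eqref{diag:phi-C}) shows that the family $\underline{\phi_M}$ assembles into a $\CC$-natural transformation $\phi^\CC:F^\CC\to G^\CC$.

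Next, I would verify the axioms relating the assignment to the composition in the two 2-categories:
\begin{enumerate}
\item \emph{Identity 1-morphisms.} For $F=\id_\CM$, the defining diagram \eqref{eq:def-F-MN} shows that $[F]_{[M,N]}=\id_{[M,N]}$ (as already noted after that diagram), hence $(\id_\CM)^\CC=\id_{\CM^\CC}$.
\item \emph{Composition of 1-morphisms.} For $\CC$-module functors $\CL\xrightarrow{F}\CM\xrightarrow{G}\CN$, Lemma \ref{lemma:G-F=GF} gives $[G\circ F]=[G]\circ[F]$ on every internal hom, i.e.\ $(G\circ F)^\CC=G^\CC\circ F^\CC$ as $\CC$-functors.
\item \emph{Identity 2-morphisms.} For the identity natural transformation $\id_F:F\to F$, the element $\underline{(\id_F)_M}=\underline{\id_{F(M)}}:1_\CC\to[F(M),F(M)]$, which by definition is the identity 2-morphism of $F^\CC$ in the 2-category of $\CC$-categories.
\item \emph{Vertical composition of 2-morphisms.} Given $F\xrightarrow{\phi}G\xrightarrow{\psi}H$, vertical composition in the target 2-category of $\CC$-categories is given by $\comp_{G(M)}\circ(\underline{\psi_M}\otimes\underline{\phi_M})$ (after the unit isomorphism). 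Using the alternative expressions in \eqref{eq:phi_M-2expressions} and \eqref{eq:unit-int-hom}, this equals $\underline{(\psi\circ\phi)_M}$, so vertical composition is preserved.
\item \emph{Horizontal composition of 2-morphisms.} Given $F,F':\CL\to\CM$ with $\phi:F\to F'$ and $G,G':\CM\to\CN$ with $\psi:G\to G'$, the required identity $(\psi\ast\phi)^\CC=\psi^\CC\ast\phi^\CC$ reduces, after inserting the two expressions in \eqref{eq:phi_M-2expressions}, to Lemmas \ref{lemma:[F]-funct-1}, \ref{lemma:F-C-functor}, \ref{lemma:G-F=GF} and \ref{lem:GF-nat-commute}.
\end{enumerate}

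The first three items are immediate. The main (but still routine) obstacle is item (5): writing the horizontal composite on both sides explicitly and reducing it, via the diagrams \eqref{diag:[F]-fun-property}, \eqref{diag:GF-nat-commute} and the composition-vs.-internal-hom compatibilities of Lemma \ref{lem:composition-properties}\,(iii), to the same element $1_\CC\to[GF(L),G'F'(L)]$. Once these are in hand, the collection of verifications above shows that $(-)^\CC$ is a strict 2-functor, completing the proof.
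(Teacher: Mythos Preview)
Your proposal is correct and follows the same approach as the paper: the paper presents this theorem as a direct summary of the preceding lemmas (``the above results can be summarized into the following Theorem'') without spelling out a separate proof, and you have simply made explicit which lemma handles which 2-functor axiom. Your itemized breakdown (identity and composition of 1-morphisms via the remark after \eqref{eq:def-F-MN} and Lemma~\ref{lemma:G-F=GF}, the 2-morphism compatibilities via \eqref{eq:phi_M-2expressions}, Lemma~\ref{lem:composition-properties}\,(iii), and Lemma~\ref{lem:GF-nat-commute}) is exactly the intended unpacking.
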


\section{Module categories and the monoidal center}  \label{sec:full-center}

In this section, we recall the construction of the full center for a module category over a monoidal category $\CC$ and the centralizer of a $\CC$-module functor. We will also prove some basic properties of them. In particular, the commutativity proved in Proposition\,\ref{prop:comm} will be important for the constructions in later sections.

\subsection{The full center of a module category}\label{sec:full-center-module}

The full center of an algebra $A=(A, m_A, \iota_A)$ in a monoidal category $\CC$ is a commutative algebra in the monoidal center of that category. Instead of the algebra $A$ one can use its category of (right, say) modules $\CC_A$ as the input. Note that $\CC_A$ is an example of a (left) $\CC$-module. More generally, the full center can be defined for arbitrary $\CC$-modules via a universal property. 

To start with, let us recall the definition of the monoidal center (see \cite[Ex.\,2.3]{js} or \cite[Sect.\,VIII.4]{kassel}).

\begin{defn}
Let $\CC$ be a monoidal category. A {\em half-braiding} for an object $Z \in \CC$ is a natural isomorphism $z : Z \otimes (-) \to (-) \otimes Z$ such that $z_{1_\CC} = \id_Z$ and $z_{U \otimes V} = (\id_U \otimes z_V) \circ (z_U \otimes \id_V)$ for all $U,V \in \CC$ 
(recall Convention \ref{conv:f1-noassoc}\,(ii)).
The {\em monoidal center} $\CZ(\CC)$ of $\CC$ is the category where
\begin{itemize}
\item[-] {\bf objects} are pairs $(Z,z)$ where $Z \in \CC$ and $z$ is a half braiding for $Z$,
\item[-] {\bf morphisms} $f : (Y,y) \to (Z,z)$ are morphisms $f : Y \rightarrow Z$ in $\CC$ such that for all $U \in \CC$: $z_U \circ (f \otimes \id_U) = (\id_U \otimes f) \circ y_U$,
\item[-] {\bf composition and identities} are those of $\CC$.
\end{itemize}
\end{defn}

The monoidal center has two important properties. Firstly, there is a natural functor $\CZ(\CC) \to \CC$, namely the forgetful functor taking $(Z,z)$ to $Z$. Secondly, $\CZ(\CC)$ is monoidal and {\em braided} (see \cite[Thm.\,VIII.4.2]{kassel} for a proof). The tensor product and braiding isomorphisms of $\CZ(\CC)$ are
\be
(Y,y) \otimes (Z,z) = (Y \otimes Z , y|z)
\quad \text{where } (y|z)_U = (y_U \otimes \id_Z) \circ (\id_Y \otimes z_U)
\ee
and
\be
c_{(Y,y),(Z,z)} = y_Z : (Y,y) \otimes (Z,z) \to (Z,z) \otimes (Y,y) \ .
\ee

Given a left $\CC$-module $\CM$, {\it $\alpha$-induction} \cite{bek,ostrik} is a monoidal functor $\alpha$ from $\CZ(\CC)$ to the category $\CC_\CM^\ast$ of $\CC$-module endofunctors of $\CM$. The functor sends an object $(Z, z) \in \CZ$ to the functor 
\be
  \alpha(Z): \,\,  \CM \to \CM,\quad \alpha(Z)(M) = Z \ast M\ .
\ee  
with the $\CC$-module structure (cf.\ Definition~\ref{def:C-module-functor})
\be \label{diag:alpha-ind}
\raisebox{2em}{\xymatrix{
\alpha(Z)(X\ast M) \ar@{=}[d] \ar@{-->}[rrr]^{\alpha(Z)^{(2)}_{X,M}} &&& X\ast\alpha(Z)(M) \ar@{=}[d] \\
Z\ast(X\ast M) \ar[r] & (Z\otimes X)\ast M \ar[r]^{z_X1} & (X\otimes Z)\ast M \ar[r] & X\ast(Z\ast M) \ .
}}
\ee

Following \cite{eno05}, we denote the category $\Fun_\CC(\CM, \CM)$ of $\CC$-module functors from $\CM$ to 
$\CM$ by $\CC_\CM^\ast$. It is a monoidal category with tensor product giving by the composition of functors. We denote the monoidal category with the same underlining category and but with the opposite tensor product by $\CC_\CM^\vee$.

\begin{prop} \label{prop:aind-factor}
The $\alpha$-induction $\alpha: \CZ(\CC)\to\CC_\CM^\ast$ factors through $\CZ(\CC_\CM^\ast)$, i.e.
the following diagram of 
	monoidal
functors commutes 
$$\xymatrix{ & \CZ(\CC_\CM^\ast)\ar[dr]^{forget} \\ \CZ(\CC)\ar[rr]^\alpha \ar[ur]^{\tilde{\alpha}} && \CC_\CM^\ast}$$
\end{prop}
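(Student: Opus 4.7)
The plan is to construct, for each $(Z,z)\in\CZ(\CC)$, a half-braiding $\tilde z$ on $\alpha(Z)\in\CC_\CM^\ast$, and then check that everything is natural and monoidal in $(Z,z)$. The obvious candidate for $\tilde z$ comes from the $\CC$-module structure of the functors themselves: for any $F\in\CC_\CM^\ast$, define
\[
\tilde z_F : \alpha(Z)\circ F \;\longrightarrow\; F\circ\alpha(Z),
\qquad
\tilde z_{F,M} \;:=\; \bigl(F^{(2)}_{Z,M}\bigr)^{-1} : Z\ast F(M)\longrightarrow F(Z\ast M).
\]
Naturality in $M$ is just naturality of $F^{(2)}_{Z,-}$. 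The first (and only serious) point to verify is that $\tilde z_F$ is in fact a morphism in $\CC_\CM^\ast$, i.e.\ a $\CC$-module natural transformation between $\alpha(Z)\circ F$ and $F\circ\alpha(Z)$, when each of these is equipped with the composite $\CC$-module structure (using \eqref{diag:alpha-ind} for the $\alpha(Z)$-factor and $F^{(2)}$ for the $F$-factor).

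For the $\CC$-module-naturality condition, the coherence axiom for a $\CC$-module functor gives $F^{(2)}_{U\otimes V, M} = (1\,F^{(2)}_{V,M})\circ F^{(2)}_{U,\,V\ast M}$, and naturality of $F^{(2)}_{-,M}$ applied to the half-braiding $z_X:Z\otimes X\to X\otimes Z$ gives $F^{(2)}_{X\otimes Z, M}\circ F(z_X\,1_M) = (z_X\,1_{F(M)})\circ F^{(2)}_{Z\otimes X, M}$. Combining these and inverting the appropriate pieces yields
\[
F^{(2)}_{X,Z\ast M}\circ F(z_X\,1_M)
\;=\; \bigl(1\,\tilde z_{F,M}\bigr)\circ\bigl(z_X\,1_{F(M)}\bigr)\circ\bigl(1\,F^{(2)}_{X,M}\bigr)\circ F^{(2)}_{Z, X\ast M},
\]
which is exactly the identity $(\id_X\ast\tilde z_{F,M})\circ(\alpha(Z)\circ F)^{(2)}_{X,M} = (F\circ\alpha(Z))^{(2)}_{X,M}\circ\tilde z_{F,\,X\ast M}$. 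This is the main computation and I expect it to be the only nontrivial step; the rest will be formal.

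Once $\tilde z_F$ lives in $\CC_\CM^\ast$, I check the remaining axioms in order. \emph{Naturality in $F$:} for a $\CC$-module natural transformation $\eta:F\Rightarrow G$, the square expressing $\tilde z_G\circ(\alpha(Z)\ast\eta)=(\eta\ast\alpha(Z))\circ\tilde z_F$ follows from applying the $\CC$-module-naturality of $\eta$ (equation~\eqref{eq:c-mod-nat-xfer-condition}) at the object $Z$ and inverting. \emph{Half-braiding axioms:} $\tilde z_{\id_\CM}=\id$ because $\id_\CM^{(2)}=\id$; and the hexagon $\tilde z_{F\circ G}=(F\,\tilde z_G)\circ(\tilde z_F\,G)$ is immediate from $(F\circ G)^{(2)}_{Z,M}=F^{(2)}_{Z,G(M)}\circ F(G^{(2)}_{Z,M})$ upon inverting. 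The unit condition $\tilde z_{1_{\CC_\CM^\ast}}=\id$ is a special case. This gives a lift $\tilde\alpha(Z,z):=(\alpha(Z),\tilde z)\in\CZ(\CC_\CM^\ast)$.

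Finally I promote $\tilde\alpha$ to a monoidal functor. On morphisms $f:(Y,y)\to(Z,z)$ in $\CZ(\CC)$, $\alpha(f):\alpha(Y)\to\alpha(Z)$ is already a $\CC$-module natural transformation, and one verifies directly from the definition of $\tilde y,\tilde z$ that $\alpha(f)$ intertwines them as required in $\CZ(\CC_\CM^\ast)$. For the monoidal constraint, the underlying isomorphisms of functors $\alpha(Y\otimes Z)\cong\alpha(Y)\circ\alpha(Z)$ are inherited from the monoidal structure on $\alpha$; one then checks that the induced half-braiding $\widetilde{(y|z)}$ on $\alpha(Y\otimes Z)$ coincides with $(\tilde y|\tilde z)$, which reduces to the identity $((Y\otimes Z)\ast F(M))^{(2)\,-1}=(Y\ast(Z\ast F(M)))^{(2)\,-1}$ composed coherently — an instance of the same coherence axiom for $F^{(2)}$ used above. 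By construction $\mathrm{forget}\circ\tilde\alpha=\alpha$ as monoidal functors, completing the proof.
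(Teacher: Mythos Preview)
Your proposal is correct and follows essentially the same approach as the paper: both define the half-braiding on $\alpha(Z)$ by $(\tilde z_F)_M=(F^{(2)}_{Z,M})^{-1}$, verify it is a $\CC$-module natural transformation via the coherence of $F^{(2)}$ together with naturality in the first variable applied to $z_X$, check the hexagon from the composition rule $(F\circ G)^{(2)}=F^{(2)}\circ F(G^{(2)})$, and then equip $\tilde\alpha$ with the monoidal structure inherited from $\alpha$. If anything, you are slightly more explicit than the paper in listing the naturality in $F$ and the compatibility of $\alpha(f)$ with the half-braidings, both of which the paper leaves implicit.
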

\pf
Define a half braiding $\delta_{Z,F}:\alpha(Z)\circ F\to F\circ\alpha(Z)$, for $F\in\CC_\CM^\ast$ by 
\begin{equation}\label{hb}
(\delta_{Z,F})_M = \Big(
\xymatrix{
\alpha(Z)\circ F(M) \ar@{=}[r] & Z*F(M) \ar[r]^{\hspace{0.3cm}(F_{Z,M}^{(2)})^{-1}} & F(Z*M) \ar@{=}[r] & F\circ \alpha(Z)(M)
}\Big)
\end{equation}
It is clear that $(\delta_{Z,F})_M$ is natural in $M$.
That $\delta_{Z,F}$ is even a natural transformation of $\CC$-module functors amounts to commutativity of the following diagram:
$$
\xymatrix{
\alpha(Z)(F(X*M)) \ar[dddddd] \ar[rrr]^{(\delta_{Z,F})_{X*M}} \ar@{=}[dr] &&& F(\alpha(Z)(X*M)) \ar[dddddd] \ar@{=}[dl] \\
& Z*F(X*M) \ar[d]_{1*F_{X,M}^{(2)}} & F(Z*(X*M)) \ar[l]^{F_{Z,X*M}^{(2)}} \ar[d]^{F(a_{Z,X,M})} \\
& Z*(X*F(M)) \ar[d]_{a_{Z,X,F(M)}} & F((Z\otimes X)*M) \ar[d]^{F(z_M*1)} \ar[ld]_{F_{ZX,M}^{(2)}}\\
& (Z\otimes X)*F(M) \ar[d]_{z_X*1} & F((X\otimes Z)*M) 
   \ar[d]^{F(a_{X,Z,M}^{-1})} 
\ar[ld]^{F_{XZ,M}^{(2)}}\\
& (X\otimes Z)*F(M) 
  \ar[d]_{a_{X,Z,M}^{-1}} 
& F(X*(Z*M)) \ar[d]_{F_{X,Z*M}^{(2)}}\\
& X*(Z*F(M)) & X*F(Z*M) \ar[l]_{1*F_{Z,M}^{(2)}}\\
X*\alpha(Z)(F(M)) \ar[rrr]_{X*(\delta_{Z,F})_M} \ar@{=}[ur] &&& X*F(\alpha(Z)(M)) \ar@{=}[ul]
}$$
Here the horizontal arrows of the outer square are half braiding isomorphisms for $\alpha(Z)$, while the vertical arrows are the $\CC$-structures of the compositions $\alpha(Z)\circ F$ and $F\circ\alpha(Z)$. 

The hexagon axiom for the half braiding follows from the definition of the composition of $\CC$-module functors:
$$\xymatrix@C=-20pt{
(\alpha(Z)\circ F\circ G)(M) \ar@{=}[dr] 
\ar@/_20pt/[rrddd]_{(\delta_{Z,F})_{G(M)}} \ar[rrrr]^{(\delta_{Z,F\circ G})_M} 
&&&& (F\circ G\circ\alpha(Z))(M) \ar@{=}[dl] \\
 & Z*F(G(M))\ar[dr]_{F_{Z,G(M)}} \ar[rr]^{(F\circ G)_{Z,M}} && F(G(Z*M)) \\
 && F(Z*G(M)) \ar[ur]_{F(G_{Z,M})} \ar@{=}[d]\\
 && (F\circ\alpha(Z)\circ G)(M) \ar@/_20pt/[rruuu]_{F((\delta_{Z,G})_M)}
}$$
The monoidal structure for the functor $\CZ(\CC)\to \CZ(\CC_\CM^\ast)$ is exhibited by the associativity of the action:
$$\xymatrix{
\alpha(Z\otimes U)(M) \ar@{=}[r] & (Z\otimes U)*M \ar[r]^{a^{-1}_{Z,U,M}} & Z*(U*M) \ar@{=}[r] & (\alpha(Z)\circ\alpha(U))(M).
}$$
The fact that this is an isomorphisms of objects in $\CZ(\CC_\CM^\ast)$ 
$$(\alpha(Z\otimes U),\delta_{ZU,-})\to (\alpha(Z)\circ\alpha(U),\delta_{Z,-}|\delta_{U,-}) = (\alpha(Z),\delta_{Z,-})\otimes(\alpha(U),\delta_{U,-})$$
amounts to commutativity of the outer square of the following diagram:
$$\xymatrix@C=-15pt{(\alpha(Z)\alpha(U)F)(M) \ar[rrrr]^{(\delta_{Z,-}|\delta_{U,-})_M} \ar@{=}[ddr] \ar[drr]_{1*\delta_{U,F}} \ar[ddddd] &&&& (F\alpha(Z)\alpha(U))(M) \ar@{=}[ddl] \ar[ddddd] \\ && (\alpha(Z)F\alpha(U))(M) \ar[rru]_{\delta_{Z,F}*1} \ar@{=}[dd] && \\& Z*(U*F(M)) \ar[dd]_{a_{Z,U,F(M)}} &  & F(Z*(U*M)) \ar[dd]^{F(a_{Z,U,M})} \ar[ld]^{F_{Z,U*M}^{(2)}}\\
&& Z*F(U*M) \ar[lu]^{1*F_{U,M}^{(2)}} \\
& (ZU)*F(M) \ar@{=}[dl] \ar[rr]_{(F_{ZU,M}^{(2)})^{-1}} && F((ZU)*M) \ar@{=}[dr]\\
(\alpha(ZU)F)(M) \ar[rrrr]_{(\delta_{ZU,F})_M} &&&& (F\alpha(ZU))(M)
}$$
which follows from the coherence properties of $F^{(2)}$. The morphism associated to units $\alpha(\one)(M) \xrightarrow{} \id_{\CM}(M)$ is given by $\one \ast M \xrightarrow{\simeq} M$. It is straightforward to check that all coherence conditions are satisfied. 
\epf

It is worthwhile to point out that the monoidal functor $\tilde{\alpha}$ does not respect the braiding. It maps the braiding of $\CZ(\CC)$ to the antibraiding of $\CZ(\CC_\CM^\ast)$. By slightly modifying the target category, we obtain a braided monoidal functor $\tilde{\alpha}: \CZ(\CC) \to \CZ(\CC_\CM^\vee)$ \cite{eo}. Let $\Fun_{\CC|\CC_\CM^\vee}(\CM, \CM)$ be the category of $\CC$-$\CC_\CM^\vee$-bimodule functors. 
It is clear that there are canonical functors \cite{eo}:
$$
\CZ(\CC)\xrightarrow{L_\CM} \Fun_{\CC|\CC_\CM^\vee}(\CM, \CM)  \xleftarrow{R_\CM} Z(\CC_\CM^\vee).
$$
Then it is an easy exercise to show that the following diagram: 
\be \label{diag:Z(M)}
\xymatrix{ 
& \Fun_{\CC|\CC_\CM^\vee}(\CM, \CM) & \\
\CZ(\CC) \ar[rr]^{\tilde{\alpha}} \ar[ur]^{L_\CM} & & Z(\CC_\CM^\vee) \ar[ul]_{R_\CM} \, .
}
\ee
is commutative. This commutative diagram and Proposition\,\ref{prop:aind-factor} will be used in the proof of Lemma\,\ref{thm:ENO}.

\begin{defn}[see {\cite[Sect.\,6]{da}}] \label{def:full-center}
Let $\CC$ be a monoidal category and let $\CM$ be a $\CC$-module. The {\em full center} $Z(\CM)$ of $\CM$ is the terminal object in the category of pairs $(Z,f)$ where $Z \in \CZ(\CC)$ and $f : \alpha(Z) \to \id_{\CM}$ is a $\CC$-module natural transformation.
Equivalently, $Z(\CM)$ is the terminal object in the comma-category $\alpha\,{\downarrow}\,id_\CM$.
\end{defn}

That $f$ is a $\CC$-module natural transformation means that it is a natural transformation of functors $\CM\to\CM$ such that $f_{U \ast M} = (\id_U \ast f_M) \circ (z_U \ast \id_M)$ for all $U \in \CC$ and $M \in \CM$, cf.\ Definition~\ref{def:C-mod-nat-xfer}. The full center of a module category may or may not exist. In fact, its existence is linked to that of an internal hom: the monoidal functor $\alpha : \CZ(\CC) \to \CC_\CM^\ast$ turns $\CC_\CM^\ast$ into a $\CZ(\CC)$-module, and for $\id_{\CM}$ the identity module functor on $\CM$ we have
\be \label{eq:Z(M)=[id,id]}
  Z(\CM) = [\id_{\CM},\id_{\CM}] \ .
\ee
Indeed, $[\id_{\CM},\id_{\CM}]$ is equally the terminal object in the comma-category $\alpha\,{\downarrow}\,\id_{\CM}$, cf.\ Def. \ref{def:action-internal-hom}. Accordingly, from \eqref{eq:def-int-hom-cl} we get a family of isomorphisms of hom spaces, natural in
$Z \in \CZ(\CC)$,
\be \label{eq:adjoint-of-alpha}
   \Hom_{\CZ(\CC)}(Z,Z(\CM)) \cong
\Hom_{\CC_\CM^\ast}(\alpha(Z),\id_\CM)
\ .
\ee

By Rem.\,\ref{rema:iso-int-hom}\,(i), $\CZ(\CM)$ is an algebra in $\CZ(\CC)$. In fact, Cor.\,\ref{cco} below will show that it is even a {\em commutative} algebra (see also \cite[Prop.\,6.1]{da}). The full center of an algebra $A \in \CC$ is defined in terms of the category $\CC_A$ of right $A$-modules as $Z(A) = Z(\CC_A)$. There is also a direct definition of $Z(A)$ in terms of the algebra $A$, see \cite[Sect.\,4\,\&\,Thm.\,6.2]{da}. In the case that $\CC$ is a modular tensor category, the present definition of $Z(A)$ coincides with the original one \cite[Def.\,4.9]{ffrs3}, cf.\ \cite[Sect.\,8]{da}.

\subsection{The centralizer of a module functor}\label{sec:cent-module-fun}

Let $\CM,\CN$ be left $\CC$-modules and $F: \CM\to\CN$ a $\CC$-module functor. The category $\Fun_\CC(\CM,\CN)$ of $\CC$-module functors from $\CM$ to $\CN$ is a left $\End_\CC(\CN)$-module and a right $\CC_\CM^\ast$-module. The $\alpha$-inductions $\alpha_\CN: \CZ(\CC)\to\End_\CC(\CN)$ and 
$\alpha_\CM: \CZ(\CC)\to\CC_\CM^\ast$ provide the category $\Fun_\CC(\CM,\CN)$ with a structure of a left and right $\CZ(\CC)$-module. The left and right actions are equivalent in the sense that for all $Z \in \CZ(\CC)$ and $F \in \Fun_\CC(\CM,\CN)$ we have an equivalence of $\CC$-module functors
\be
Z \ast F := \alpha_\CN(Z) \circ F ~\cong~   F \circ \alpha_\CM(Z) =: F \ast Z \ .
\ee
More precisely, we have the following

\begin{lemma} \label{lemma:2-actions}
Let $F : \CM \to \CN$ be a $\CC$-module functor. 
Then there is a natural isomorphism $\zeta(F)$ between the functors
\be \label{eq:act}
\xymatrix@=1em{ 
& \CZ(\CC) \ar[rd]_{\alpha_\CN}  \ar[ld]^{\alpha_\CM} & 
\\ 
\CC_\CM^\ast  \ar[rd]^{F\circ\da}
  & \overset{\zeta(F)}{\Longrightarrow} & \End_\CC(\CN) \ar[ld]_{\da\circ F} \ .
\\ 
  & \Fun_\CC(\CM,\CN) &}
\ee
\end{lemma}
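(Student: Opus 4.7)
The plan is to define $\zeta(F)$ directly from the coherence isomorphism $F^{(2)}$ of the $\CC$-module functor $F$. For $(Z,z)\in\CZ(\CC)$ and $M\in\CM$, note that
\[
((F\circ -)\circ\alpha_\CM)(Z)(M) = F(Z\ast M),
\qquad
((-\circ F)\circ\alpha_\CN)(Z)(M) = Z\ast F(M),
\]
and set
\[
\zeta(F)_{(Z,z),M} \;:=\; F^{(2)}_{Z,M}\;:\; F(Z\ast M)\longrightarrow Z\ast F(M).
\]
Since $F$ is a $\CC$-module functor, $F^{(2)}_{Z,M}$ is an isomorphism, so this will give an \emph{isomorphism} at each $(Z,z)$ once the requisite compatibilities are checked.

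Three things need to be verified: (a) for fixed $(Z,z)$, the family $\{F^{(2)}_{Z,M}\}_M$ is natural in $M$; (b) it is compatible with the $\CC$-module structures on the two composite functors, i.e.\ defines a \emph{$\CC$-module} natural transformation, so that $\zeta(F)_{(Z,z)}$ is a morphism in $\Fun_\CC(\CM,\CN)$; (c) the assignment $(Z,z)\mapsto \zeta(F)_{(Z,z)}$ is natural in morphisms $f:(Y,y)\to(Z,z)$ of $\CZ(\CC)$, so that $\zeta(F)$ is indeed a 2-cell of functors $\CZ(\CC)\to\Fun_\CC(\CM,\CN)$. Step (a) is immediate from naturality of $F^{(2)}$ in its second argument. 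Step (c) is also short: a morphism $f:(Y,y)\to(Z,z)$ in $\CZ(\CC)$ is just a morphism $f:Y\to Z$ in $\CC$ (compatible with half-braidings), and the required square commutes by naturality of $F^{(2)}_{-,M}$ in the first argument, which forces $F^{(2)}_{Z,M}\circ F(f\ast\id_M) = (f\ast\id_{F(M)})\circ F^{(2)}_{Y,M}$.

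The main obstacle is step (b). To carry it out, one unfolds the $\CC$-module structure of $F\circ\alpha_\CM(Z)$ on a pair $(X,M)$ using formula \eqref{diag:alpha-ind} for $\alpha_\CM(Z)^{(2)}_{X,M}$ together with $F^{(2)}$, and similarly for $\alpha_\CN(Z)\circ F$. The compatibility condition that $\zeta(F)_{(Z,z)}$ must satisfy then becomes a hexagon-shaped diagram which mixes $F^{(2)}_{Z,X\ast M}$, $F^{(2)}_{X\otimes Z,M}$, $F^{(2)}_{Z\otimes X,M}$, $F^{(2)}_{X,Z\ast M}$ and the half-braiding $z_X$ (plus associators, which I suppress via Convention \ref{conv:f1-noassoc}). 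This diagram is closed by two standard ingredients: the coherence axiom for $F^{(2)}$, which rewrites $F^{(2)}_{X\otimes Z,M}$ and $F^{(2)}_{Z\otimes X,M}$ as iterated compositions involving $F^{(2)}_{X,Z\ast M}$ and $F^{(2)}_{Z,M}$ (resp.\ $F^{(2)}_{Z,X\ast M}$ and $F^{(2)}_{X,M}$), and the naturality of $F^{(2)}$ in its first argument applied to $z_X:Z\otimes X\to X\otimes Z$, which lets $z_X$ be moved past $F^{(2)}$. The bookkeeping is essentially the same as the one already carried out in Proposition~\ref{prop:aind-factor} to check that $\delta_{Z,F}$ in \eqref{hb} is a $\CC$-module natural transformation; in fact, the present $\zeta(F)$ is a bimodule-version of (the inverse of) that half-braiding, with $F$ now allowed to change the underlying $\CC$-module. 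Once step (b) is established, combining (a)--(c) yields the natural isomorphism $\zeta(F)$ displayed in \eqref{eq:act}.
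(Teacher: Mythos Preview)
Your proposal is correct and follows essentially the same approach as the paper: define $\zeta(F)_Z$ componentwise by $F^{(2)}_{Z,M}$, then verify (a) naturality in $M$, (b) that each $\zeta(F)_Z$ is a $\CC$-module natural transformation, and (c) naturality in $Z$. Your observation that step (b) is the same computation as the one for $\delta_{Z,F}$ in Proposition~\ref{prop:aind-factor} (with $F$ now allowed to change the underlying module) is exactly right; indeed $\zeta(F)_Z$ is the inverse of the analogue of $\delta_{Z,F}$, and the paper's large diagram for step (b) is the same diagram as there, adapted to the $\CM\to\CN$ setting.
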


\pf
To define $\zeta(F)$ we need to give, for each $Z \in \CZ(\CC)$, a morphism 
\be  \label{eq:def-zeta}
\zeta(F)_{Z}:F\circ\alpha_\CM(Z)\to\alpha_\CN(Z)\circ F
\ee 
of $\CC$-module functors, i.e.\ each $\zeta(F)_{Z}$ is a $\CC$-module natural transformation (cf.\ Definition~\ref{def:C-mod-nat-xfer}). We define its specialization 
$(\zeta(F)_{Z})_M$ on $M \in \CM$ to be the morphism
\be \label{eq:zeta-via-F2}
  \xymatrix{F(\alpha_\CM(Z)(M)) = F(Z\ast M) \ar[r]^{F_{Z,M}^{(2)}} & Z\ast F(M) = \alpha_\CN(Z)(F(M)) } \ ,
\ee
where $F_{Z,M}^{(2)}$ is the $\CC$-module structure of $F$. The fact that for each $Z \in \CZ$, $\zeta(F)_Z$ is a $\CC$-module natural transformation follows from the commutativity of the diagram
$$
{\small
\xygraph{ !{0;/r5.5pc/:;/u4.5pc/::}
*+{F(\alpha_\CM(Z)(X{\ast}M))}
(
:[r(5)] *+{\alpha_\CN(Z)(F(X{\ast}M))}="1" ^{(\zeta(F)_Z)_{X{\ast}M}}
:[d(2.5)] *+{\alpha_\CN(Z)(X{\ast}F(M))} ="2"^{\alpha_\CN(F^{(2)}_{X,M})}
:[d(1.5)] *+{X{\ast}\alpha_\CN(F(M))\ .}="3" ^{\alpha_\CN(Z)^{(2)}_{X,F(M)}}
:@{=}[u(.5)l] *+{X{\ast}(Z{\ast}F(M))}="4"
:[u(.5)l] *+{(XZ){\ast}F(M)} ="5"_(.4){a_{X,Z,F(M)}}
,
:@{=}[d(.5)r]*+{F(Z{\ast}(X{\ast}M))}
 (
 :[r(3)] *+{Z{\ast}F(X{\ast}M)} ^{F^{(2)}_{Z,X{\ast}M}}
  (
  :[d(1.5)] *+{Z{\ast}(X{\ast}F(M))} ^{1\,F^{(2)}_{X,M}}
   (
   :@{=}"2"
   ,
   :[u(.5)l] *+{(ZX){\ast}F(M)}="6" _(.45){a_{Z,X,F(M)}}
   :"5" ^{z_X 1}
   )
  ,
  :@{=}"1"
  )
 ,
 :[d(.5)r] *+{F((ZX){\ast}M)} _(.4){F(a_{Z,X,M})}
  (
  :"6" ^(.6){F^{(2)}_{ZX,M}}
  ,
  :[d(1.5)] *+{F((XZ){\ast}M)}="7" _{F(z_X 1)}
  :"5" ^(.6){F^{(2)}_{XZ,M}}
  )
 )
,
:[d(1.5)]*+{F(X{\ast}\alpha_\CM(Z)(M))} ^(.7){F(\alpha_\CM(Z)^{(2)}_{X,M})}
 (
 :@{=}[d(.5)r] *+{F(X{\ast}(Z{\ast}M))} 
  (
  :"7" ^(.6){F(a_{ZXM})}
  ,
  :[d(1.5)] *+{X{\ast}F(Z{\ast}M)}="8" _{F^{(2)}_{X,Z{\ast}M}}
  :"4" ^{1\,F^{(2)}_{Z,M}}
  )
 ,
 :[d(2.5)] *+{X{\ast}F(\alpha_\CM(Z)(M))} _{F^{(2)}_{X,\alpha(M)}}
  (
  :@{=}"8"
  ,
  :"3" _{1\,(\zeta(F)_Z)_M}
  )
 )
)
}}
$$
It remains to check that $\zeta(F)$ is a natural transformation, i.e.\ that for all $f : Y \to Z$ in $\CZ(\CC)$ the square
\be
\xymatrix{
F \circ \alpha_{\CM}(Y) \ar[rr]^{F \circ \alpha_{\CM}(f)} \ar[d]^{\zeta(F)_Y}
  && F \circ \alpha_{\CM}(Z) \ar[d]^{\zeta(F)_Z} 
  \\
\alpha_{\CN}(Y) \circ F \ar[rr]^{\alpha_{\CN}(f) \circ F} 
  && \alpha_{\CM}(Z) \circ F 
}
\ee
of $\CC$-natural transformations commutes. Evaluating on $M \in \CM$, this boils down to the identity 
$$
\Big(F(Y\ast M) \xrightarrow{F_{Y,M}^{(2)}} Y \ast F(M) \xrightarrow{f\,1} Z \ast F(M)\Big)
~=~
\Big(F(Y\ast M) \xrightarrow{F(f\,1)} F(Z \ast M) \xrightarrow{F_{Y,M}^{(2)}} Z \ast F(M) \Big) \ ,
$$
which holds because $F$ is a $\CC$-module functor.
\epf

By the above lemma we loose nothing if we restrict ourselves to the left (say) action of $\CZ(\CC)$ on $\Fun_\CC(\CM,\CN)$, so this will be the $\CZ(\CC)$-module structure on $\Fun_\CC(\CM,\CN)$ we use below.

\begin{defn}  
Let $F: \CM \rightarrow \CN$ be a $\CC$-module functor.
The {\em full centralizer} $Z(F)$ of the functor $F$ is defined to be the 
internal hom $[F,F]$ (valued in $\CZ(\CC)$) 
with respect to the $\CZ(\CC)$-action on $\Fun_\CC(\CM,\CN)$.
\end{defn}

As the full centralizer is defined in terms of an internal hom, it may or may not exist depending on the choice of $F$ (and $\CC$, $\CM$, $\CN$). By Rem.\,\ref{rema:iso-int-hom}\,(i), if it exists, $Z(F)$ is a (not necessarily commutative) algebra in $\CZ(\CC)$. The reason for the name `full centralizer' is the relation to the centralizer $Z(f)$ discussed in Section \ref{sec:intro-1} -- we will return to this point with more details in Section \ref{sec:lax-functor} below.

Fix a $\CC$-module functor $F : \CM \rightarrow \CN$, and fix a third $\CC$-module $\CP$. The left and right composition with $F$ defines functors
\be
  F \circ - : \Fun_\CC(\CP,\CM) \to \Fun_\CC(\CP,\CN)
  \quad , \quad
  - \circ F : \Fun_\CC(\CN,\CP) \to \Fun_\CC(\CM,\CP) \ .
\ee
The following lemma shows that these are in fact $\CZ(\CC)$-module functors.

\begin{lemma}
$F \circ -$ and $- \circ F$ with $\CZ(\CC)$-module structure
\be\label{eq:left-right-comp-C-mod-structure}
\begin{array}{rcrcrcl}
  (F \circ -)^{(2)}_{X,H} &\hspace{-.5em}=\hspace{-.5em}& \zeta(F)_X \circ H &\hspace{-.3em}:\hspace{-.3em}& F \circ \alpha_\CM(X) \circ H &\hspace{-.3em}\longrightarrow\hspace{-.3em}& \alpha_\CN(X) \circ F \circ H \ ,\\[.8em]
  (- \circ F)^{(2)}_{X,H} &\hspace{-.5em}=\hspace{-.5em}& \id_{\alpha_\CN(X) H F}  &\hspace{-.3em}:\hspace{-.3em}& \alpha_\CN(X) \circ H \circ F &\hspace{-.3em}\longrightarrow\hspace{-.3em}& \alpha_\CN(X) \circ H \circ F
\end{array}
\ee
are $\CZ(\CC)$-module functors.
\end{lemma}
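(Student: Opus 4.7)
The plan is to verify the three conditions in the definition of a $\CC$-module functor (see Definition~\ref{def:C-module-functor} in the appendix) for each of $F\circ -$ and $-\circ F$, namely naturality of the structure isomorphism, the pentagon compatibility with the action of $\CZ(\CC)$, and the unit compatibility. Here the ambient monoidal category is $\CZ(\CC)$ acting on $\Fun_\CC(\CP,\CM)$, $\Fun_\CC(\CP,\CN)$, $\Fun_\CC(\CN,\CP)$, $\Fun_\CC(\CM,\CP)$ via $\alpha$-induction, so in each case the action ``$X\ast(-)$'' is literally the composition with $\alpha_\CM(X)$ or $\alpha_\CN(X)$.

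The functor $-\circ F$ is the easy case. On objects one has $\alpha_\CN(X)\circ(H\circ F)=(\alpha_\CN(X)\circ H)\circ F$ as a strict identity of composed functors, so taking $(-\circ F)^{(2)}_{X,H}=\id$ is well defined, and the required pentagon and unit diagrams collapse to trivial identities of identity 2-cells; naturality in $X$ and $H$ is automatic.

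For $F\circ -$, I would first observe that naturality of $(F\circ -)^{(2)}_{X,H}=\zeta(F)_X\circ H$ in $X$ is precisely the naturality of $\zeta(F)$ established in Lemma~\ref{lemma:2-actions}, while naturality in $H$ is just horizontal composition of natural transformations. For the pentagon, evaluating the two sides on an object $M\in\CP$ (after feeding $M$ through $H$, and writing $P:=H(M)$) reduces the claim to the identity
\[
  F^{(2)}_{X\otimes Y,\,P}
  \;=\;
  \bigl(\id_X\ast F^{(2)}_{Y,P}\bigr)\circ F^{(2)}_{X,\,Y\ast P}\ ,
\]
(suppressing associators), which is the pentagon coherence of $F^{(2)}$ as a $\CC$-module structure on $F$. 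The unit condition likewise reduces to $F^{(2)}_{\one,P}$ agreeing with the unit constraint of $F$, which is part of the definition of a $\CC$-module functor. Finally, one must check that $\zeta(F)_X\circ H$ really lies in the subcategory of $\CC$-module natural transformations, but this was verified inside the proof of Lemma~\ref{lemma:2-actions} (the large commutative diagram showing $\zeta(F)_Z$ is $\CC$-linear), and $\CC$-linearity is preserved under whiskering by a $\CC$-module functor $H$.

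The only slightly delicate step is matching the various associators and the monoidal structure on $\alpha_\CM,\alpha_\CN$ with the $F^{(2)}$-coherence when unwinding the pentagon; all the ingredients are present in Proposition~\ref{prop:aind-factor} and the explicit formula \eqref{eq:zeta-via-F2}, so this is bookkeeping rather than a genuine obstacle. Hence the main work is simply to rewrite the module-functor coherences for $F$ in the language of whiskered natural transformations in $\Fun_\CC(\CP,\CN)$.
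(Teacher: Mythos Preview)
Your proposal is correct and follows essentially the same approach as the paper: both treat $-\circ F$ as trivial and, for $F\circ -$, reduce the associator and unit coherences to those of $F^{(2)}$ by evaluating on an object $P=H(M)$ and invoking the definition $\zeta(F)_X=F^{(2)}_{X,-}$ from \eqref{eq:zeta-via-F2}. You add explicit remarks on naturality and on $\zeta(F)_X\circ H$ being a $\CC$-module natural transformation, which the paper leaves implicit, but the substance is the same.
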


\pf
We have to check compatibility with associator and unit isomorphisms. For $(- \circ F)^{(2)}$ this is trivial. For $(F \circ -)^{(2)}$, the associator condition reads
\be
\xymatrix@C=.5em{
& F \circ \alpha_\CM(X \otimes Y)  \circ  H \ar[dr]^{~~\zeta(F)_{XY} \circ H}
\\
F  \circ  \alpha_\CM(X) \circ \alpha_\CM(Y)  \circ  H \ar[ur]^{F \circ a^\CM_{X,Y,H-}~~} \ar[d]^{\zeta(F)_X \circ (\alpha_\CM(Y)H)}
 && \alpha_\CN(X \otimes Y) \circ  F \circ H 
 \\
  \alpha_\CN(X) \circ F \circ \alpha_\CM(Y) \circ H \ar[rr]^{\alpha_\CN(X) \circ \zeta(F)_Y \circ H} 
 && \alpha_\CN(X)  \circ \alpha_\CN(Y)  \circ F \circ H \ar[u]^{a^\CN_{X,Y,FH-}}
}
\ee
Applying this to some $P \in \CP$ and substituting the definition of $\zeta$ in \eqref{eq:zeta-via-F2}, this becomes precisely the associator condition for $F^{(2)}$. The unit compatibility follows along the same lines.
\epf

Let $G,G': \CN\to\mathcal{P}$, $H,H':\CL\to\CM$ and $F : \CM\to\CN$ be $\CC$-module functors. 
By the above lemma, $(-) \circ F$ and $F \circ (-)$ are $\CZ(\CC)$-module functors
so that the construction in \eqref{eq:def-F-MN} provides us with morphisms
\be\label{eq:C-mod-F-gives-ZC-morph}
  [-\circ F]_{[G,G']} : [G,G']\to [GF,G'F]
  \quad \text{and} \quad
  [F \circ -]_{[H,H']} : [H, H']\to [FH,FH'] 
\ee
in $\CZ(\CC)$, provided that the internal homs appearing as source and target exist.
These morphisms satisfy a `commutativity' relation stated in the following proposition, which will be instrumental in the further discussion. The commutative diagram below can be visualized as a sewing constraint when interpreted in the CFT setting briefly discussed in Section~\ref{sec:intro-3}, see Figure~\ref{fig:CFT-horizontal-defect-OPE}.

\begin{figure}[bt]
\centerline{\begin{tabular}{@{}c@{\quad}c@{\quad}c@{\quad}c@{\quad}c}
\raisebox{-50pt}{
  \begin{picture}(105,120)
   \put(0,8){\scalebox{0.6}{\includegraphics{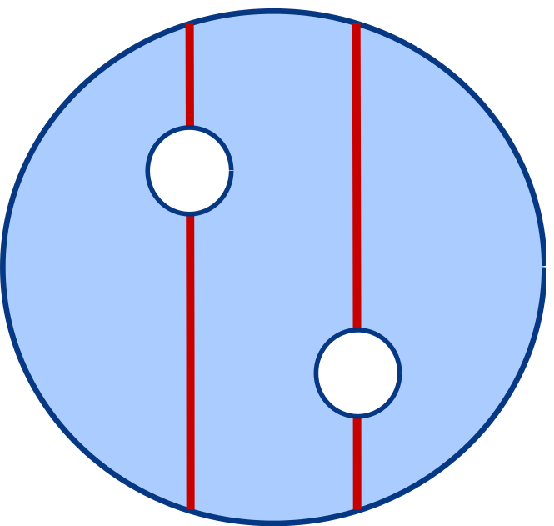}}}
   \put(0,8){
     \setlength{\unitlength}{.75pt}\put(-18,-19){
     \put(50, 32)      {\scriptsize $ G $}
     \put(47, 120)       {\scriptsize $ G' $}
     \put(50, 75)       {\scriptsize $ G $} 
     \put(88, 120)       {\scriptsize $ F' $}
     \put( 88, 75)       {\scriptsize $ F' $}
     \put( 90, 30)       {\scriptsize $ F $}
     \put(37, 55)        {\scriptsize $ \CL $}
     \put(71, 55)       {\scriptsize $ \CM $}
     \put(118, 55)    {\scriptsize $ \CN $} 
     }\setlength{\unitlength}{1pt}}
  \end{picture}}
& $\rightsquigarrow$ &
\raisebox{-50pt}{
  \begin{picture}(105,120)
   \put(0,8){\scalebox{0.6}{\includegraphics{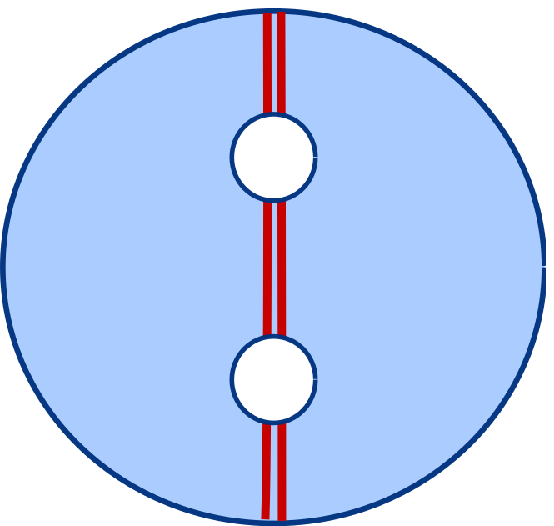}}}
   \put(0,8){
     \setlength{\unitlength}{.75pt}\put(-18,-19){
     \put(53, 120)       {\scriptsize $ G'F' $}
     \put(58, 75)       {\scriptsize $ GF' $} 
     \put(60, 32)      {\scriptsize $ GF $}
      \put(37, 55)        {\scriptsize $ \CL $}
     \put(118, 55)    {\scriptsize $ \CN $} 
     }\setlength{\unitlength}{1pt}}
  \end{picture}}
& $\rightsquigarrow$ &
\raisebox{-50pt}{
  \begin{picture}(105,120)
   \put(0,8){\scalebox{0.6}{\includegraphics{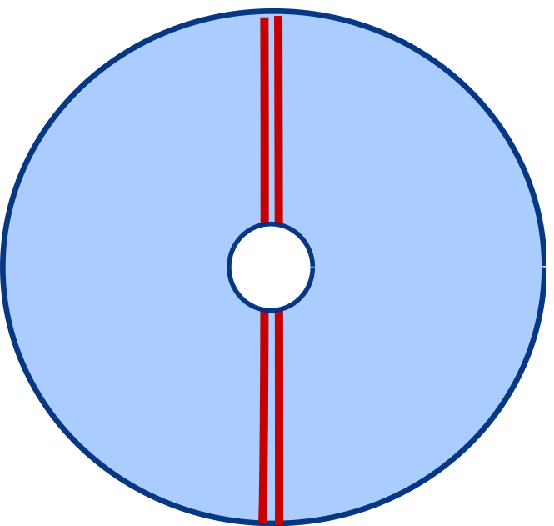}}}
   \put(0,8){
     \setlength{\unitlength}{.75pt}\put(-18,-19){
     \put(53, 115)       {\scriptsize $ G'F' $}
     \put(60, 37)      {\scriptsize $ GF $}
      \put(37, 55)        {\scriptsize $ \CL $}
     \put(118, 55)    {\scriptsize $ \CN $} 
     }\setlength{\unitlength}{1pt}}
  \end{picture}}
\\[5pt]
\raisebox{-50pt}{
  \begin{picture}(105,120)
   \put(0,8){\scalebox{0.6}{\includegraphics{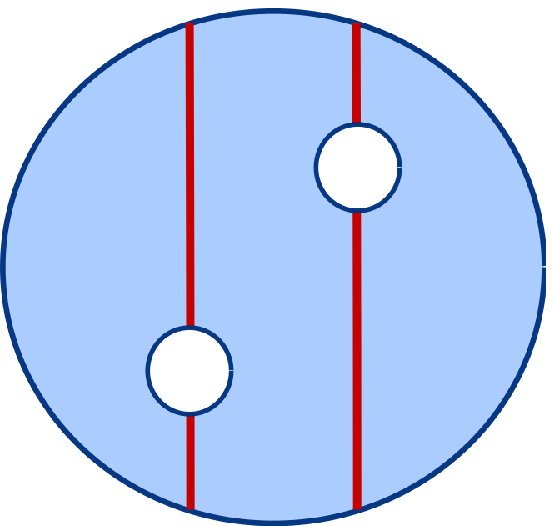}}}
   \put(0,8){
     \setlength{\unitlength}{.75pt}\put(-18,-19){
     \put(50, 32)      {\scriptsize $ G $}
     \put(47, 120)       {\scriptsize $ G' $}
     \put(47, 75)       {\scriptsize $ G' $} 
     \put(88, 120)       {\scriptsize $ F' $}
     \put( 90, 75)       {\scriptsize $ F $}
     \put( 90, 30)       {\scriptsize $ F $}
      \put(37, 55)        {\scriptsize $ \CL $}
     \put(78, 55)       {\scriptsize $ \CM $}
     \put(118, 55)    {\scriptsize $ \CN $} 
     }\setlength{\unitlength}{1pt}}
  \end{picture}}
& $\rightsquigarrow$ &
\raisebox{-50pt}{
  \begin{picture}(105,120)
   \put(0,8){\scalebox{0.6}{\includegraphics{comm-def-fields-a2.eps}}}
   \put(0,8){
     \setlength{\unitlength}{.75pt}\put(-18,-19){
     \put(53, 120)       {\scriptsize $ G'F' $}
     \put(57, 75)       {\scriptsize $ G'F $} 
     \put(60, 32)      {\scriptsize $ GF $}
      \put(37, 55)        {\scriptsize $ \CL $}
     \put(118, 55)    {\scriptsize $ \CN $} 
     }\setlength{\unitlength}{1pt}}
  \end{picture}}
& $\rightsquigarrow$ &
\raisebox{-50pt}{
  \begin{picture}(105,120)
   \put(0,8){\scalebox{0.6}{\includegraphics{comm-def-fields-a3.eps}}}
   \put(0,8){
     \setlength{\unitlength}{.75pt}\put(-18,-19){
     \put(53, 115)       {\scriptsize $ G'F' $}
     \put(60, 37)      {\scriptsize $ GF $}
      \put(37, 55)        {\scriptsize $ \CL $}
     \put(118, 55)    {\scriptsize $ \CN $} 
     }\setlength{\unitlength}{1pt}}
  \end{picture}}
\end{tabular}}
\caption{The CFT sewing constraint corresponding to \eqref{diag:comm} in the conventions used in Section~\ref{sec:intro-3}: We consider a CFT with three possible colors for the world sheet, given by the three $\CC$-modules $\CL$, $\CM$, $\CN$, and four different defect conditions labelled by the functors $F,F',G,G'$. The internal homs $[F,F']$, etc., give the space of defect (changing) fields. The composition of functors amounts to the `fusion' of topological defect lines. The sewing shown in figure a) corresponds to the upper path in \eqref{diag:comm}, and the sewing in figure b) to the lower path. In words it says that $[F,F']$ and $[G,G']$ can both be interpreted as subspaces of the space of defect changing fields from the fused defect $GF$ to $G'F'$, and that fields in these subspaces mutually commute.}
\label{fig:CFT-horizontal-defect-OPE}
\end{figure}

\begin{prop}  \label{prop:comm} 
Let $\CC$ be a monoidal category, let $\CL$, $\CM$, $\CN$ be $\CC$-modules, and let $F, F': \CL \to \CM$ and $G, G': \CM \to \CN$ be $\CC$-module functors such that all the internal homs in \eqref{diag:comm} exist. Then the diagram
\be \label{diag:comm}
\raisebox{2.8em}{\xymatrix@R=1em{
[G,G'] \otimes [F, F']  \ar[dd]_{c_{[G,G'],[F,F']}} \ar[r]^(0.4){a} & [GF', G'F'] \otimes [GF, GF'] \ar[rd]^(.6){\comp_{GF'}} & \\ 
& & [GF, G'F']  \\
[F, F'] \otimes [G, G']  \ar[r]^(0.4){b} & [G'F, G'F'] \otimes [GF, G'F] \ar[ur]_{\hspace{1cm}\comp_{G'F}} &
}}
\ee
commutes. Here $a = (- \circ F')_{[G,G']} \otimes (G \circ -)_{[F,F']}$ and $b = (G' \circ -)_{[F,F']} \otimes (- \circ F)_{[G,G']}$.
\end{prop}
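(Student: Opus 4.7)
The strategy is to apply the universal property of the internal hom $[GF,G'F']$: two morphisms $\phi_1,\phi_2:[G,G']\otimes[F,F']\to[GF,G'F']$ in $\CZ(\CC)$ agree if and only if the composites $\ev_{GF}\circ(\phi_i\ast 1_{GF})$ coincide as $\CC$-module natural transformations $\alpha([G,G']\otimes[F,F'])\circ GF\to G'F'$. Thus it suffices to post-compose $\ev_{GF}$ onto both paths of \eqref{diag:comm} and verify that the two resulting $\CC$-module natural transformations are equal.

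\textbf{Simplification of each path.} For the upper path, use the defining diagram \eqref{diag:def-compos} for $\comp_{GF'}$ with $(N,M,L)=(GF,GF',G'F')$ to replace $\ev_{GF}\circ(\comp_{GF'}\ast 1)$ by $\ev_{GF'}\circ(1\ast\ev_{GF})$. Next, apply the defining property \eqref{eq:def-F-MN} of the induced maps $[-\circ F']_{[G,G']}$ and $(G\circ -)_{[F,F']}$, combined with the explicit module structures listed in \eqref{eq:left-right-comp-C-mod-structure}: the structure morphism of $-\circ F'$ is the identity, while that of $G\circ -$ is $\zeta(G)$, which by \eqref{eq:zeta-via-F2} is simply $G^{(2)}$. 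After this unwinding, the upper composite evaluated at $L\in\CL$ reduces to the horizontal-vertical composite $\ev_{GF'}$ whiskered by $F'$ on the right after $\ev_F$ whiskered by $G$ on the left, i.e.\ the ``vertical composition after horizontal whiskering'' of the universal natural transformations $\ev_F:[F,F']\ast F\to F'$ and $\ev_G:[G,G']\ast G\to G'$ in the order $\ev_G$ \emph{after} $\ev_F$.

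\textbf{The lower path after braiding.} Performing the analogous reduction on the lower path, using \eqref{diag:def-compos} for $\comp_{G'F}$ with $(N,M,L)=(GF,G'F,G'F')$ together with \eqref{eq:left-right-comp-C-mod-structure} applied to $-\circ F$ and $G'\circ -$, one obtains the composite in the opposite order: $\ev_F$ whiskered by $G'$ on the left \emph{after} $\ev_G$ whiskered by $F$ on the right. The essential point is that the braiding $c_{[G,G'],[F,F']}=(y_{[G,G']})_{[F,F']}$, where $y_{[G,G']}$ is the half-braiding of $[G,G']$, precisely produces the switch of order needed to convert the $\alpha$-induced action $[G,G']\ast[F,F']\ast(\cdot)$ into $[F,F']\ast[G,G']\ast(\cdot)$; it is exactly this exchange, made rigorous through \eqref{diag:alpha-ind}, that mediates between the two orderings.

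\textbf{Matching via interchange.} The equality of the two reduced expressions is then the interchange law for 2-cells in the 2-category of $\CC$-modules: the two ways of composing the whiskered natural transformations $G\ev_F$ (from $GF$ to $GF'$) and $\ev_G F'$ (from $GF'$ to $G'F'$) vs.\ $\ev_G F$ (from $GF$ to $G'F$) and $G'\ev_F$ (from $G'F$ to $G'F'$) must produce the same natural transformation $GF\to G'F'$. At the level of a fixed $L\in\CL$, this interchange follows from functoriality of the module action $\ast:\CC\times\CN\to\CN$ applied to the morphisms $(\ev_G)_{FL}$ and $(\ev_F)_L$; the half-braidings of $[G,G']$ and $[F,F']$ enter only through $\alpha$-induction and are already absorbed by the braiding $c$ on the lower path.

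\textbf{Expected obstacle.} No conceptually new ingredient is needed; the main effort lies in the careful bookkeeping of the associators and unit isomorphisms suppressed by Convention \ref{conv:f1-noassoc}, and in tracking how the half-braidings of $[G,G']$ and $[F,F']$ interact with the $\CC$-module structures $F^{(2)},F'^{(2)},G^{(2)},G'^{(2)}$ when unwinding the definitions of $\comp$, $[-\circ F']$ and $(G\circ -)$. Once this is done, the interchange law produces the desired identity and the universal property of $[GF,G'F']$ delivers the commutativity of \eqref{diag:comm}.
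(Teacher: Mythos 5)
Your overall strategy is the same as the paper's: reduce via the universal property of $[GF,G'F']$ to comparing two morphisms $([G,G']\otimes[F,F'])\ast GF\to G'F'$, unwind the definitions of $\comp$, of the induced maps, and of the module structures \eqref{eq:left-right-comp-C-mod-structure}, and use that the braiding is the half-braiding entering the $\alpha$-induced module structure \eqref{diag:alpha-ind}. Up to that point your reductions agree with the paper's intermediate expressions \eqref{eq:FG-comm-aux1} and \eqref{eq:FG-comm-aux2}.

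The gap is in the final ``matching via interchange'' step. After the reduction, the two composites do not differ merely in the order in which $\ev_F$ and $\ev_G$ are applied (that part is indeed handled by ordinary naturality of $\ev_G$, i.e.\ the interchange-type step); they differ in how the $[F,F']$-action is threaded past $\ev_G$. In the upper path $[F,F']$ is pushed inside $G$ by $(G^{(2)})^{-1}$ \emph{before} $\ev_G$ acts, while in the lower path it is braided past $[G,G']$ and pushed inside $G'$ by $(G'^{(2)})^{-1}$ \emph{after} $\ev_G$ acts. The equality of these two routes is not an instance of the interchange law, nor does it follow from functoriality of the action $\ast:\CC\times\CN\to\CN$ applied to $(\ev_G)_{FL}$ and $(\ev_F)_L$: bifunctoriality only commutes morphisms of the form $f\ast 1$ and $1\ast g$, and cannot relate $(\ev_G)_{[F,F']\ast FL}$ to $(\ev_G)_{FL}$ across the structure maps $G^{(2)}$, $G'^{(2)}$ and the braiding. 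The identity you need is precisely the statement that $\ev_G:[G,G']\ast G\to G'$ is a $\CC$-module natural transformation, i.e.\ condition \eqref{eq:c-mod-nat-xfer-condition} with $X=[F,F']$, where the module structure of its source $\alpha([G,G'])\circ G$ is the one built from the half-braiding. So your assertion that the half-braidings are ``already absorbed by the braiding $c$ on the lower path'' and that only functoriality of $\ast$ remains is not a valid justification; this compatibility square is exactly the nontrivial lower-left cell of the paper's diagram \eqref{diag:action-comm}, and invoking \eqref{eq:c-mod-nat-xfer-condition} for $\ev_G$, together with the identification $c_{[G,G'],[F,F']}=\zeta(\alpha([G,G']))_{[F,F']}$ (which you do note), is what actually closes the argument.
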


\pf
By the universal property of internal homs, it is enough to verify the identity of the upper and lower path in the diagram after applying $(-) \ast GF$ to both sides and composing with $\ev_{GF}$. The resulting morphism from the upper path can be rewritten as shown in the following commutative diagram (we abbreviate $L = G \circ -$ and $R = - \circ F'$)
$$
\small{
\xymatrix{
[G,G'] \ast ([F,F'] \ast L(F)) \ar[r]^{[R]11} \ar[d]^{1 (L^{(2)}_{[F,F'],F})^{-1}}
 & [GF',G'F']\ast ([F,F'] \ast L(F)) \ar[d]^{1 (L^{(2)}_{[F,F'],F})^{-1}} \ar[dr]^{1 \, [L]1}
\\
[G,G'] \ast L([F,F'] \ast F)  \ar[d]^{1\,L(\ev_F)}
 & [GF',G'F'] \ast L([F,F'] \ast F)
   \ar[d]^{1 L(\ev_{F})}   
 & [GF',G'F']\ast ([GF,GF'] \ast GF)\ar[d]^{\comp_{GF'}} \ar[dl]^{1 \, \ev_{GF} }
\\
[G,G'] \ast GF' \ar[d]^{(R^{(2)}_{[G,G'],G})^{-1}} \ar[r]^{[R]1}
 & [GF',G'F'] \ast GF' \ar[d]^{\ev_{GF'}}
 & [GF,G'F'] \ast GF \ar[dl]^{\ev_{GF}}
\\
R([G,G'] \ast G) \ar[r]^{R(\ev_G)}
& G'F'
}}
$$
The diagram commutes by functoriality of $\ast$ in both arguments, and by the definitions \eqref{diag:def-compos}, \eqref{eq:def-F-MN} of $\comp$ and $[R]$, $[L]$. Inserting the definitions \eqref{eq:left-right-comp-C-mod-structure} of $L^{(2)}$ and $R^{(2)}$, we find that the lower path in the above diagram is equal to 
\be\label{eq:FG-comm-aux1}\begin{array}{l}
[G,G'] \ast ([F,F'] \ast (G F))
\xrightarrow{1\,\zeta(G)^{-1}_{[F,F']} \circ F}
[G,G'] \ast ( G \circ ([F,F'] \ast F)) \\[.5em]
\qquad 
\xrightarrow{1\,G(\ev_F)}
[G,G'] \ast ( G F')
\xrightarrow{ \ev_G \circ F'}
G' \circ F'
\end{array}\ee
An analogous calculation for the lower path in \eqref{diag:comm} yields the morphism (omitting associators)
\be\label{eq:FG-comm-aux2}\begin{array}{l}
[G,G'] \ast ([F,F'] \ast (G F))
\xrightarrow{c_{[G,G'] , [F,F']} 1}
[F,F'] \ast ([G,G'] \ast (G F)) \\[.5em]
\qquad \xrightarrow{1(\ev_G \circ F)}
[F,F'] \ast (G'F)
\xrightarrow{\zeta(G')^{-1}_{[F,F']} \circ F}
G'([F,F'] \ast F)
\xrightarrow{ G' \circ \ev_F}
G' \circ F'
\end{array}\ee
To establish commutativity of \eqref{diag:comm}, it hence remains to verify that \eqref{eq:FG-comm-aux1} and \eqref{eq:FG-comm-aux2} are equal, i.e.\ that the following diagram commutes,
\be  \label{diag:action-comm}
\xymatrix{
([G, G'] \otimes [F, F']) \ast (GF) \ar[d]_{c_{[G,G'],[F,F']}1} \ar[r]^\cong & [G,G']\ast [F,F'] \ast (GF) 
   \ar[d]^{1 \zeta(G)_{[F,F']}^{-1}} & \\
([F,F'] \otimes [G, G']) \ast (GF) \ar[d]_\cong & [G, G'] \ast (G([F, F']\ast F)) \ar[r]^{\hspace{0.5cm}1(1\ev_F)} \ar[d]^\cong &  [G,G']\ast GF' \ar[d]^\cong  \\
[F,F'] \ast (([G,G']\ast G) \circ F) \ar[r]^{x}  \ar[d]_{1\ev_G1}   & 
([G,G']\ast G)\circ ([F,F']\ast F) \ar[r]^{\hspace{0.5cm}1\ev_F} \ar[d]^{\ev_G1} & ([G,G']\ast G)\circ F' \ar[d]^{\ev_G1} \\
[F, F']\ast G'F \ar[r]^{\zeta(G')_{[F,F']}^{-1}} & G'\circ ([F,F']\ast F) \ar[r]^{\hspace{0.5cm}1\ev_F} &  G'\circ F' 
}
\ee
where $x=\zeta([G,G']\ast G)_{[F,F']}^{-1}$ 
and the map $\zeta([G,G']\ast G)_{[F,F']}$ is defined in (\ref{eq:zeta-via-F2}).
By the definition \eqref{diag:alpha-ind} of the $\CC$-module structure on $\alpha(Z)$, the natural transformation $\zeta(\alpha([G,G']))_{[F,F']}$ is given by the braiding $c_{[G,G'], [F,F']}$. Therefore upper-left subdiagram commutes. Substituting
$\zeta([G,G']\ast G)_{[F,F']} = \big([G,G']\ast G\big)^{(2)}_{[F,F'],-}$ and $\zeta(G')_{[F,F']} = G^{(2)}_{[F,F'],-}$, we see that
the commutativity of the lower-left subdiagram is due to the fact that $[G, G']\ast G \xrightarrow{\ev_G} G'$ is a $\CC$-module natural transformation (see condition \eqref{eq:c-mod-nat-xfer-condition}). Altogether, it follows that \eqref{diag:action-comm} commutes.
\epf

Recall from \eqref{eq:Z(M)=[id,id]} that the full center $Z(\CM)$ of a $\CC$-module $\CM$ is the internal hom $[\id_\CM,\id_\CM]$ in $\CZ(\CC)$. The following two special cases of Proposition\,\ref{prop:comm} will be useful.

\begin{cor}\label{cor:Z-[FF]-[FG]}
For two given $\CC$-module functors $F, G: \CM \rightarrow \CN$, the diagrams
\be  \label{diag:ZN-Cl-FG}
\raisebox{2.8em}{\xymatrix@R=1em{
Z(\CN) \otimes [F, G] \ar[dd]_{c_{Z(\CN), [F,G]}}  \ar[rr]^{[-\circ G]_{[\id,\id]}1} && Z(G) \otimes [F, G] \ar[rd]^{\comp_G}  &  \\
&&& [F, G]  \\
[F, G] \otimes Z(\CN) \ar[rr]^{1\,[-\circ F]_{[\id,\id]}} && [F, G] \otimes Z(F) \ar[ru]_{\comp_F} &  
}}
\ee
and
\be  \label{diag:ZM-Cr-FG}
\raisebox{2.8em}{\xymatrix@R=1em{
[F, G] \otimes Z(\CM)  \ar[dd]_{c_{[F,G],Z(\CM)}}  \ar[rr]^{1\,[F\circ -]_{[\id,\id]}} && [F, G] \otimes Z(F)  \ar[rd]^{\comp_F}  &  \\
&&& [F, G] \\
Z(\CM) \otimes [F, G]  \ar[rr]^{[G\circ -]_{[\id,\id]}1} && Z(G) \otimes [F, G] \ar[ru]_{\comp_G} &
}}
\ee
are commutative, provided all internal homs exist.
\end{cor}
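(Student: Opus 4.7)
My plan is to derive both diagrams in the corollary as direct specializations of Proposition \ref{prop:comm}, by specializing one of the two pairs of functors (either $(G,G')$ or $(F,F')$) to a pair of identity functors.

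For diagram \eqref{diag:ZN-Cl-FG}, I would apply Proposition \ref{prop:comm} with the substitution: prop's $\CL \rightsquigarrow \CM$, prop's $\CM \rightsquigarrow \CN$, prop's $\CN \rightsquigarrow \CN$; prop's $(F,F') \rightsquigarrow (F,G)$ (the corollary's functors $\CM\to\CN$), and prop's $(G,G') \rightsquigarrow (\id_\CN,\id_\CN)$. Under this substitution, $[G,G'] = [\id_\CN,\id_\CN] = Z(\CN)$, $[F,F'] = [F,G]$, while $[GF',G'F'] = [G,G] = Z(G)$, $[GF,GF'] = [F,G]$ for the top row, and $[G'F,G'F'] = [F,G]$, $[GF,G'F] = [F,F] = Z(F)$ for the bottom row. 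The composition morphisms become $\comp_{GF'} = \comp_G$ and $\comp_{G'F} = \comp_F$. It remains only to identify the components of $a$ and $b$: the map $(\id_\CN \circ -)_{[F,G]}$ is the identity on $[F,G]$ (as noted after \eqref{eq:def-F-MN}, $[H]$ is the identity whenever $H$ is the identity functor), so $a$ collapses to $[- \circ G]_{[\id,\id]} \otimes 1$ and $b$ collapses to $1 \otimes [-\circ F]_{[\id,\id]}$. Feeding these identifications into \eqref{diag:comm} recovers exactly \eqref{diag:ZN-Cl-FG}.

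For diagram \eqref{diag:ZM-Cr-FG}, I would apply Proposition \ref{prop:comm} in the dual fashion, this time with prop's $(F,F') \rightsquigarrow (\id_\CM,\id_\CM)$ and prop's $(G,G') \rightsquigarrow (F,G)$. Now $[G,G'] \otimes [F,F'] = [F,G] \otimes Z(\CM)$. In $a$, the factor $(- \circ \id_\CM)_{[F,G]}$ is the identity on $[F,G]$ and $(F \circ -)_{[\id,\id]}$ is the map $Z(\CM)\to Z(F)$ denoted $[F\circ -]_{[\id,\id]}$ in the corollary; correspondingly $\comp_{GF'} = \comp_F$, so the top leg of \eqref{diag:comm} becomes the top leg of \eqref{diag:ZM-Cr-FG}. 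Dually, in $b$ the factor $(- \circ \id_\CM)_{[F,G]}$ is again the identity, while $(G \circ -)_{[\id,\id]}$ yields $[G\circ -]_{[\id,\id]} : Z(\CM) \to Z(G)$, and $\comp_{G'F} = \comp_G$; the braiding preceding $b$ is precisely $c_{[F,G],Z(\CM)}$, so the bottom leg of \eqref{diag:comm} becomes the bottom leg of \eqref{diag:ZM-Cr-FG}.

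There is essentially no obstacle beyond careful bookkeeping: the only non-tautological observation needed is that $[H]_{[-,-]} = \id$ when $H$ is an identity functor (in both the $(-\circ H)$ and $(H\circ -)$ versions), which ensures that the two factors of $a$ and $b$ corresponding to composition with an identity functor drop out. Once the dictionary above is in place, \eqref{diag:ZN-Cl-FG} and \eqref{diag:ZM-Cr-FG} are literally the diagrams \eqref{diag:comm} of Proposition \ref{prop:comm} under the two respective substitutions, and the existence assumption on the internal homs in the corollary is exactly what is required to invoke the proposition.
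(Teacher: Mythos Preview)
Your proposal is correct and matches the paper's approach exactly: the paper simply states that the corollary consists of ``two special cases of Proposition~\ref{prop:comm}'' (with $Z(\CM)=[\id_\CM,\id_\CM]$), and you have supplied precisely the two specializations and the bookkeeping that make this explicit.
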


Let $F: \CM \to \CN$ be a $\CC$-module functor. Recall that $Z(\CM)$, $Z(\CN)$ and $Z(F)$ are all algebras. 
By Lemma~\ref{lemma:F-C-functor}, the morphisms in the cospan
\be \label{eq:ZM-ZF-ZN-cospan}
\raisebox{2.8em}{\xymatrix@R=2em{
& Z(F) \\ Z(\CM)\ar[ru]^{\hspace{-.2cm}[F\circ -]_{[\id,\id]}} && Z(\CN) \ar[lu]_{\,\,[-\circ F]_{[\id,\id]}}
}}
\ee
are algebra homomorphisms in $\CZ(\CC)$. The images of these two morphisms are central in $Z(F)$ in a sense which we will now describe. 

The {\em left center} $C_l(B)$ (resp.\ {\em right center} $C_r(B)$) of an algebra $B$ in braided monoidal category $\CZ$ is the terminal object in the category of morphisms $f:Y\to B$ such that the diagram 
\begin{equation}\label{eq:lcp}
\raisebox{2.5em}{
\xymatrix@R=1em{ Y\otimes B \ar[r]^{f\,1} \ar[dd]_{c_{Y,B}} & B\otimes B \ar[rd]^\mu\\ & & B\\ B\otimes Y \ar[r]_{1\,f} & B\otimes B \ar[ur]_\mu }
}
\quad \Bigg(\text{resp.} ~~
\raisebox{2.5em}{
\xymatrix@R=1em{ B\otimes Y \ar[r]^{1\,f} \ar[dd]_{c_{B,Y}} & B\otimes B \ar[rd]^\mu\\ & & B\\ Y\otimes B \ar[r]_{f\,1} & B\otimes B \ar[ur]_\mu }
}
\Bigg)
\end{equation}
commutes,  where $\mu$ is the multiplication on $B$. The left and right centers -- if they exist -- have unique algebra structures in $\CZ$ such that the morphisms $C_l(B)\to B$ and $C_r(B)\to B$ are algebra homomorphisms in $\CZ$ \cite[Prop.\,5.1]{da}; these algebra structures on $C_l(B)$ and $C_r(B)$ are commutative.

\begin{cor}\label{cco} 
Suppose that the internal homs $Z(\CM)$, $Z(\CN)$ and $Z(F)$ exist, and that the left and right center of $Z(F)$ exist. Then the algebra homomorphism $Z(\CM)\to Z(F)$ (resp.\ $Z(\CN)\to Z(F)$) factors through the right center (resp.\ left center), i.e.\ the following diagram of algebra homomorphisms commutes,
$$\xymatrix{ C_r(Z(F)) \ar[r] & Z(F) & C_l(Z(F)) \ar[l] \\ Z(\CM)\ar[ru] \ar[u] && Z(\CN)\ . \ar[lu] \ar[u] }$$
In particular, for $F = \id_\CM$ we obtain that $Z(\CM)$ is commutative.
\end{cor}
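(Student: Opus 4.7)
The plan is to deduce Corollary~\ref{cco} directly from the specialization $G=F$ of Corollary~\ref{cor:Z-[FF]-[FG]}, recognizing the two diagrams there as exactly the left- and right-center conditions \eqref{eq:lcp} for the algebra $Z(F)$ together with the morphisms coming from $Z(\CN)$ and $Z(\CM)$.

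Concretely, set $G = F$ in \eqref{diag:ZN-Cl-FG}. Then $[F,G] = [F,F] = Z(F)$, and the lower composition becomes $\mu_{Z(F)} \circ (1\otimes \phi_\CN) \circ c_{Z(\CN),Z(F)}$ while the upper composition becomes $\mu_{Z(F)} \circ (\phi_\CN \otimes 1)$, where $\phi_\CN := [-\circ F]_{[\id,\id]} : Z(\CN) \to Z(F)$ and $\mu_{Z(F)}$ is the composition morphism $\comp_F$, i.e.\ the multiplication on $Z(F)$. This is precisely the left-center condition \eqref{eq:lcp} applied to $f = \phi_\CN$, so the universal property of $C_l(Z(F))$ produces a unique morphism $Z(\CN) \to C_l(Z(F))$ making the corresponding triangle commute. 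An entirely analogous argument with $G=F$ in \eqref{diag:ZM-Cr-FG} identifies the diagram with the right-center condition for $\phi_\CM := [F\circ -]_{[\id,\id]} : Z(\CM) \to Z(F)$, producing the factorization through $C_r(Z(F))$.

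To see that these factorizations are algebra homomorphisms (not merely morphisms in $\CZ(\CC)$), I would invoke \cite[Prop.\,5.1]{da} (already cited in the text above the statement): the left and right centers carry unique algebra structures for which $C_l(Z(F))\to Z(F)$ and $C_r(Z(F))\to Z(F)$ are algebra maps, and these algebras are commutative. Since $\phi_\CM$ and $\phi_\CN$ are themselves algebra homomorphisms by Lemma~\ref{lemma:F-C-functor}, and the inclusions of the centers are monic in the relevant sense for the universal property, the induced maps $Z(\CM) \to C_r(Z(F))$ and $Z(\CN) \to C_l(Z(F))$ are forced to be algebra homomorphisms by uniqueness in the defining universal property applied after composing with the multiplication.

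For the last assertion, specialize to $F = \id_\CM$, so that $\CN = \CM$ and $Z(F) = [\id_\CM,\id_\CM] = Z(\CM)$. A quick check with the defining diagram \eqref{eq:def-F-MN} shows that $\phi_\CM = [\id_\CM \circ -]_{[\id,\id]}$ is the identity of $Z(\CM)$ (both sides of \eqref{eq:def-F-MN} collapse since $F=\id$ makes $F^{(2)}$ and $F(\ev)$ trivial). The factorization of $\id_{Z(\CM)}$ through the inclusion $C_r(Z(\CM)) \to Z(\CM)$ forces this inclusion to be a (split) epimorphism, hence an isomorphism, so $Z(\CM) = C_r(Z(\CM))$, which is commutative. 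The main obstacle is only notational bookkeeping: one must match the multiplication $\mu$ on $Z(F)$ used in \eqref{eq:lcp} with $\comp_F$ appearing in Corollary~\ref{cor:Z-[FF]-[FG]}, and track that the universal property delivers an algebra map rather than a bare morphism in $\CZ(\CC)$; both are immediate from the preceding lemmas.
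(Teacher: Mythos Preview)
Your proposal is correct and follows essentially the same approach as the paper: specialize Corollary~\ref{cor:Z-[FF]-[FG]} to $G=F$, recognize the resulting identities as the defining conditions \eqref{eq:lcp} for the left/right center, and invoke the terminal-object universal property. You supply more detail than the paper does on two points the paper leaves implicit---that the factorizations are algebra maps (via \cite[Prop.\,5.1]{da}) and the mechanism by which $F=\id_\CM$ yields commutativity---but the core argument is identical.
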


\pf
By Cor.\,\ref{cor:Z-[FF]-[FG]}, the right (resp.\ left) morphism in the cospan \eqref{eq:ZM-ZF-ZN-cospan} satisfies the defining property \eqref{eq:lcp} in the category of arrows characterizing the left and right center. They therefore have a unique morphism into the terminal object, viz.\ the left/right center.
\epf

\section{Bicategories of commutative algebras} \label{sec:two-cat}

In this section we define two bicategories built from cospans of algebras which will serve as the target for the full center functor to be defined in Section~\ref{sec:center-func} below.  

The first bicategory is $\CAlg(\CZ)$, for $\CZ$ a suitable braided monoidal category. In $\CAlg(\CZ)$, objects are commutative algebras in $\CZ$, 1-morphisms are cospans and 2-morphisms are homomorphisms between cospans. This category will be the target for the full center if we take the source to be $\alg(\CC)$, the category of algebras and algebra homomorhisms in a monoidal category $\CC$ (Definition~\ref{def:1-cat-alg} below), and $\CZ = \CZ(\CC)$ is the monoidal center.

The second bicategory is $\CALGu(\CZ)$, where objects and 1-morphisms are as for $\CAlg(\CZ)$, but now 2-morphisms are defined in terms of certain isomorphism classes of cospans of bimodules. Conjecturally, by leaving bimodule maps as 3-cells, one obtains a tricategory $\CALG(\CZ)$. The bicategory $\CALGu(\CZ)$ will serve as target for the full center if we take the source to be an appropriate subbicategory of $\CC$-modules.

\medskip

\subsection{Coequalizers}

To set the stage, we start with some technical preliminaries. Throughout Section 4, we assume that $\CZ$ is a braided monoidal category which has certain coequalizers, and that these coequalizers are compatible with the tensor product. More precisely, we make

\begin{ass}\label{ass:coeq}
The category $\CZ$ is a braided monoidal category such that
\\[.5em]
(i) given two morphisms $L,R : U \to V$ in $\CZ$ such that there is a common right inverse $\tau : V \to U$, i.e.\ $L \circ \tau = \id_V = R \circ \tau$,  the following coequalizer $C$ exists in $\CZ$,
$$
\xymatrix{ U \ar@<+.7ex>[r]^L \ar@<-.7ex>[r]_R & V \ar[r]^\rho & C} \ .
$$
(ii) the tensor product preserves the coequalizers of (i) in the sense that for all $X \in \CZ$ also
$$
\xymatrix{ U\otimes X \ar@<+.7ex>[r]^{L1} \ar@<-.7ex>[r]_{R1} & V\otimes X \ar[r]^{\rho1} & C\otimes X}
\quad \text{and} \quad
\xymatrix{ X\otimes U \ar@<+.7ex>[r]^{1L} \ar@<-.7ex>[r]_{1R} & X\otimes V \ar[r]^{1\rho} & X\otimes C}
$$
are coequalizers in $\CZ$.
\end{ass}

For example, the assumption holds if the braided monoidal category $\CZ$ is abelian with right exact tensor product; in particular it holds for $\CZ=\Vect_k$. We have the following useful lemma, whose proof we include for the convenience of readers.

\begin{lemma} \label{lem:coeq-iterate}
Let $\xymatrix{ U \ar@<+.7ex>[r]^L \ar@<-.7ex>[r]_R & V \ar[r]^\rho & C}$ and $\xymatrix{ U' \ar@<+.7ex>[r]^L \ar@<-.7ex>[r]_R & V' \ar[r]^\rho & C'}$ be two coequalizers satisfying the conditions of Assumption~\ref{ass:coeq}\,(i). Then also 
$$
  \xymatrix{ U \otimes U' \ar@<+.7ex>[r]^{\hspace{0cm}LL'} \ar@<-.7ex>[r]_{\hspace{0cm}RR'} & V \otimes V'  \ar[r]^{\rho\rho'} & C \otimes C'}
$$ 
is a coequalizer satisfying the conditions of Assumption~\ref{ass:coeq}\,(i).
\end{lemma}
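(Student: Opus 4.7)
The plan is to realize $\rho\otimes\rho'$ as a composite of two coequalizers and then show by a splitting argument that maps coequalizing $L{\otimes}L'$ and $R{\otimes}R'$ also coequalize the ``horizontal'' and ``vertical'' versions separately.

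First, by Assumption~\ref{ass:coeq}(ii) applied to the first coequalizer, $\rho\otimes 1_{V'}:V\otimes V'\to C\otimes V'$ is a coequalizer (in particular an epimorphism), and applied to the second coequalizer, $1_C\otimes\rho':C\otimes V'\to C\otimes C'$ is a coequalizer. Thus $\rho\otimes\rho'=(1_C\otimes\rho')\circ(\rho\otimes 1_{V'})$. Also, $\tau\otimes\tau'$ is clearly a common right inverse of $L\otimes L'$ and $R\otimes R'$, so once we have the coequalizer property the stated conditions of Assumption~\ref{ass:coeq}(i) will be met.

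Next, I would take an arbitrary morphism $f:V\otimes V'\to X$ with $f\circ(L\otimes L')=f\circ(R\otimes R')$ and extract from it both equalities $f\circ(L\otimes 1_{V'})=f\circ(R\otimes 1_{V'})$ and $f\circ(1_V\otimes L')=f\circ(1_V\otimes R')$. The trick is precomposition with the splittings: precomposing with $1_U\otimes\tau'$ collapses the $V'$-factor because $L'\circ\tau'=R'\circ\tau'=\id_{V'}$, giving the first identity; precomposing with $\tau\otimes 1_{U'}$ gives the second. Using the coequalizer $\rho\otimes 1_{V'}$ from step one, $f$ factors uniquely as $f=\bar f\circ(\rho\otimes 1_{V'})$.

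Then I would verify $\bar f\circ(1_C\otimes L')=\bar f\circ(1_C\otimes R')$. Both sides precomposed with the epimorphism $\rho\otimes 1_{U'}$ become $f\circ(1_V\otimes L')$ and $f\circ(1_V\otimes R')$ respectively, which are equal by the previous paragraph; cancelling the epi yields the required equality. Applying the coequalizer property of $1_C\otimes\rho'$ produces a unique $\tilde f:C\otimes C'\to X$ with $\bar f=\tilde f\circ(1_C\otimes\rho')$, and hence $f=\tilde f\circ(\rho\otimes\rho')$.

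Uniqueness of $\tilde f$ follows from the fact that $\rho\otimes 1_{V'}$ and $1_C\otimes\rho'$ are both epimorphisms (being coequalizers), so $\rho\otimes\rho'$ is epi as well. The main subtlety is the extraction of the separate coequalizing conditions from the combined one, which is exactly where the split hypothesis of Assumption~\ref{ass:coeq}(i) is essential; without it, coequalizing $L\otimes L'$ and $R\otimes R'$ would not imply coequalizing $L\otimes 1,R\otimes 1$ and $1\otimes L',1\otimes R'$, and the iteration of the two coequalizers would fail.
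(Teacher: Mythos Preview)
Your proof is correct and follows essentially the same route as the paper's: both use Assumption~\ref{ass:coeq}(ii) to write $\rho\otimes\rho'$ as the composite of the two one-sided coequalizers, use precomposition with the splittings $1\otimes\tau'$ and $\tau\otimes 1$ to extract the separate equalities $f\circ(L\otimes 1)=f\circ(R\otimes 1)$ and $f\circ(1\otimes L')=f\circ(1\otimes R')$, and then factor through the two coequalizers in turn, cancelling an epi at the intermediate step. Your write-up is in fact a bit more explicit about where the split hypothesis enters.
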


\begin{proof}
Clearly, $L \otimes L'$ and $R \otimes R'$ have the common right inverse $\tau \otimes \tau'$, so that we only have to verify the universal property of the coequalizer. The construction is summarized in the diagram
\be
  \xymatrix{ 
  U \otimes U' \ar@<+.7ex>[r]^{\hspace{0cm}LL'} \ar@<-.7ex>[r]_{\hspace{0cm}RR'} 
  & V \otimes V'  \ar[r]^{\rho\,1}  \ar[d]^x
  & C \otimes V'  \ar[r]^{1\,\rho'} \ar@{.>}[dl]_{\exists! \phi}
  & C \otimes C'  \ar@{.>}[dll]^{\exists! \psi}
  \\
  & X
  }
\ee
which we now describe step by step. Suppose that $x \circ (LL') = x \circ (RR')$. Composing from the right with $1\,\tau'$ we see that $x \circ (L\,1) = x \circ (R\,1)$. By Assumption~\ref{ass:coeq}\,(ii), $\rho\,1$ is a coequalizer, and its universal property gives the existence of a unique $\phi$. But, using that also $x \circ (1\,L')  = x \circ (1\,R')$,
\be
  \phi \circ (1\,L') \circ (\rho\,1) 
  =
  \phi \circ (\rho\,1) \circ (1\,L')  
  =
  x \circ (1\,L')  
  =
  x \circ (1\,R')  
  =
  \phi \circ (1\,R') \circ (\rho\,1) \ .
\ee
The universal property of $\rho\,1$ implies $\phi \circ (1\,L') = \phi \circ (1\,R')$. Once more by Assumption~\ref{ass:coeq}\,(ii), we have the coequalizer
\be
\xymatrix{ C\otimes U' \ar@<+.7ex>[r]^{1L'} \ar@<-.7ex>[r]_{1R'} & C\otimes V' \ar[r]^{1\rho'} & C\otimes C'} \ ,
\ee
and its universal property guarantees the existence of a unique $\psi$ such that $\phi = \psi \circ (1\,\rho')$. Altogether we have found a unique $\psi$ such that $x = \psi \circ (\rho\,\rho')$.
\end{proof}

\subsection{Cospans between commutative algebras}

In this subsection we define cospans of algebras and their composition. 

\begin{defn} \label{def:cospan}
A {\em cospan between commutative algebras} in $\CZ$, or {\em cospan} for short, is a triple of algebras $A,B,T$, where $A$ and $B$ are commutative, together with two algebra homomorphisms
$$
\xymatrix@R=1em{ & T \\ A\ar[ru]^a && B \ar[lu]_b}
$$
such that the diagrams
\be\label{eq:cospan-central}
\raisebox{2.5em}{
\xymatrix@R=1em{ T\otimes A \ar[r]^{1\,a} \ar[dd]_{c_{T,A}} & T\otimes T \ar[rd]^\mu\\ & & T\\ A\otimes T \ar[r]_{a\,1} & T\otimes T \ar[ur]_\mu }
}
\quad \text{and} \quad
\raisebox{2.5em}{
\xymatrix@R=1em{ B\otimes T \ar[r]^{b\,1} \ar[dd]_{c_{B,T}} & T\otimes T \ar[rd]^\mu\\ & & B\\ T\otimes B \ar[r]_{1\,b} & T\otimes T \ar[ur]_\mu }
}
\ee
commute.
\end{defn}

\begin{figure}[bt]
\begin{center}
\raisebox{8em}{a)}
\scalebox{0.6}{\includegraphics{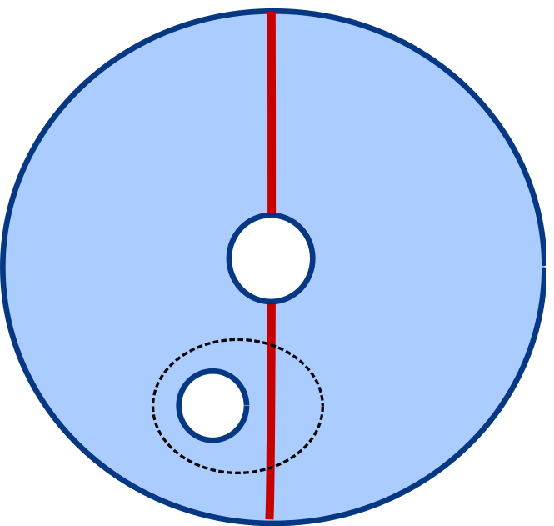}}
\hspace{3em}
\raisebox{8em}{b)}
\scalebox{0.6}{\includegraphics{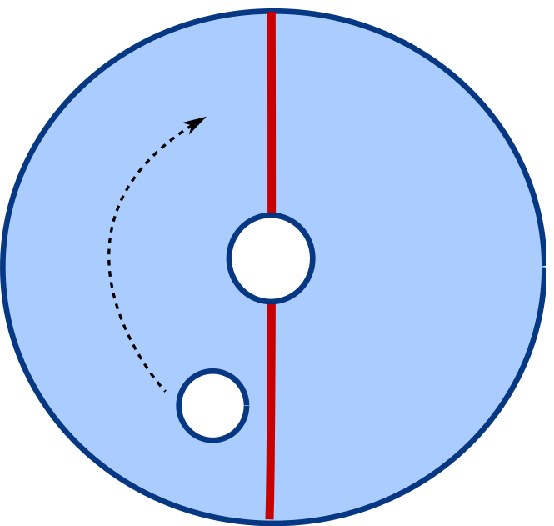}}
\hspace{3em}
\raisebox{8em}{c)}
\scalebox{0.6}{\includegraphics{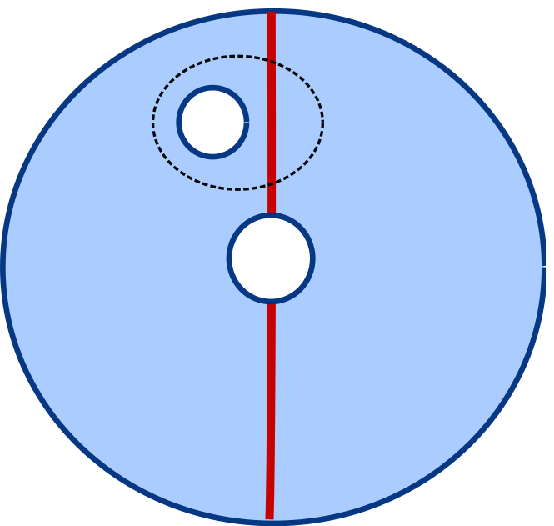}}
\end{center}
\caption{Here we use the same conventions as in the figures in Section~\ref{sec:intro-3}. Figures a), b) and c) above show how the sphere with two in-going and one out-going marked point as shown can be obtained at two different points in moduli space from sewing the building blocks in Figure~\ref{fig:defect-mult+inclusion}. The resulting identity of morphisms in the category $\CD = \mathrm{Rep}(V_L \otimes_\mathbb{C} V_R)$ is the first diagram in \eqref{eq:cospan-central} with $A = B_1$ and $T = D$.}
\label{fig:CFT-right-center-condition}
\end{figure}

\begin{rema}
(i) Comparing to conditions \eqref{eq:cospan-central} and \eqref{eq:lcp}, it follows that if the right and left centers of $T$ exist, the morphisms $a$ and $b$ in a cospan factor through them
\be
\raisebox{2em}{\xymatrix{ C_r(T) \ar[r] & T & C_l(T) \ar[l] \\ A\ar[ru]^a \ar[u] && B \ar[lu]_b \ar[u] } }
\quad .
\ee
(ii) Let us recall the reasoning behind the definition of cospans of commutative algebras from the introduction. There we saw that this structure naturally appears in three (related) examples: It describes how the centers of two algebras are related by the centralizer of an algebra map as in \eqref{cf}; it gives the relation between the full centers of module categories via the centralizer of a module functor as in \eqref{diag:Z-F}; it appears naturally in the context of conformal field theory in the presence of domain walls as in \eqref{eq:algcospan-via-CFT}. For example, in the CFT setting, the commutativity of the first diagram in \eqref{eq:cospan-central} is obtained from the sewing constraint illustrated in Figure~\ref{fig:CFT-right-center-condition}.
\\[.5em]
(iii) The reason that we do not allow {\em all} algebra maps $a$ and $b$ in the cospan $A \xrightarrow{a} T \xleftarrow{b} B$ is two-fold. 
Firstly, in the three examples mentioned in (i), the algebra maps that occur all satisfy the centrality condition (for the first two examples this will be proved in Section~\ref{sec:center-func}). Secondly, we want to define a composition of cospans (see \eqref{eq:cospan-comp} below), and to this end we need an algebra structure on a fibered tensor product $T \otimes_B S$ of two algebras. For this we need the action of $B$ to commute with the multiplication of $T$ and $S$, as we will see in more detail below.
\end{rema}

If $A$ is an algebra in $\CZ$, $M$ is a right $A$-module and $N$ a left $A$-module, Assumption~\ref{ass:coeq} guarantees the existence of the fibered tensor product $M \otimes_A N$ as the coequalizer 
\be
  \xymatrix{ M \otimes A \otimes N \ar@<+.7ex>[r]^{\hspace{0.3cm}L} \ar@<-.7ex>[r]_{\hspace{0.3cm}R} & M \otimes N  \ar[r]^\rho & M \otimes_A N} \ ,
\ee
where $L = \rho_M \otimes \id_N$, the right action of $A$ on $M$, and $R = \id_M \otimes \rho_N$. The common right inverse of $L$ and $R$ is $
\tau = \id_M \otimes \iota_A \otimes \id_N$. 

The aim of this subsection is to define a composition of cospans as in 
\be \label{eq:cospan-comp}
\raisebox{1.5em}{\xymatrix@R=1em{ 
& T && S \\ A\ar[ru]^a && B \ar[lu]_b \ar[ru]^{b'} && C \ar[lu]_c
}}
\quad \leadsto \quad
\raisebox{1.5em}{\xymatrix@R=1em{ 
& T\otimes_BS \\ A\ar[ru]^\alpha && C \ar[lu]_\gamma
}}
\ee
For notational convenience we have written the pair of cospans on the left hand side in the opposite order as indicated by $\cosp(B,C) \times \cosp(A,B)$.
The action of $B$ on $T$ and $S$ is defined via the algebra homomorphisms $b$ and $b'$, and
the two morphisms $\alpha$ and $\gamma$ are given by
\be \label{eq:A-T-C-S-TBS}
\alpha = \Big( A \xrightarrow{a} T \xrightarrow{1\,\iota_S} T\otimes S \xrightarrow{\rho} T\otimes_B S \Big)
~~,\quad   
\gamma = \Big(C \xrightarrow{c} S \xrightarrow{\iota_T\,1} T\otimes S \xrightarrow{\rho} T \otimes_B S\Big) \ .
\ee
That the composed cospan satisfies the properties of Definition~\ref{def:cospan} will be established in three steps.
First we define an algebra structure on $T \otimes_B S$ (Lemma~\ref{lem:TxBS-algebra}) -- that this can be done at all is ensured by the centrality conditions \eqref{eq:cospan-central} satisfied by $b$ and $b'$.
Next we show that
$\alpha$ and $\gamma$ are algebra homomorphisms (Lemma~\ref{lem:TxBS-al-be-hom}). 
Finally we verify that $\alpha$ and $\beta$ make the diagrams
\eqref{eq:cospan-central} commute (Lemma~\ref{lem:TxBS-center-condition}).

\medskip

Given two algebras $T$ and $S$ in a braided monoidal category, the tensor product $T \otimes S$ can be given an algebra structure with multiplication $\mu_{TS}$ and unit $\iota_{TS}$ given by
\be   \label{eq:TS-mu}
\mu_{TS} = \Big[(T\otimes S) \otimes (T\otimes S) \xrightarrow{1\,c_{S,T}1}  T\otimes T\otimes S\otimes S \xrightarrow{\mu_T \mu_S}  T\otimes S\Big]
\quad , \quad 
\iota_{TS} = \Big[1_\CZ \xrightarrow{\iota_T \iota_S}  T\otimes S\Big] \ .
\ee
Alternatively one could use the inverse braiding $c^{-1}_{T,S}$; this leads to an in general inequivalent algebra structure on $T\otimes S$. In the present paper we always choose the multiplication \eqref{eq:TS-mu} on $T \otimes S$.

\begin{lemma} \label{lem:TxBS-algebra}
With the notation of \eqref{eq:cospan-comp}, the fibered tensor product $T\otimes_B S$ has a unique algebra structure such that the coequalizer 
$T \otimes S \xrightarrow{\rho} T \otimes_B S$ becomes an algebra homomorphism.
\end{lemma}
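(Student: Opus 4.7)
The unit $\tilde\iota := \rho \circ (\iota_T \otimes \iota_S)$ is forced on us by demanding that $\rho$ be unital, so there is nothing to construct there. The real content is the multiplication $\tilde\mu : (T \otimes_B S)^{\otimes 2} \to T \otimes_B S$, which must satisfy $\tilde\mu \circ (\rho \otimes \rho) = \rho \circ \mu_{TS}$. My strategy is to produce $\tilde\mu$ by factoring $\rho \circ \mu_{TS}$ through the morphism $\rho \otimes \rho$, which by Lemma~\ref{lem:coeq-iterate} is itself a coequalizer; such a factorization, once it exists, is automatically unique.

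By the proof of Lemma~\ref{lem:coeq-iterate}, factoring through $\rho \otimes \rho$ amounts to two successive factorizations through $\rho \otimes 1_{T \otimes S}$ and $1_{T \otimes S} \otimes \rho$. Concretely, writing $L = (\mu_T \otimes 1_S) \circ (1_T \otimes b \otimes 1_S)$ and $R = (1_T \otimes \mu_S) \circ (1_T \otimes b' \otimes 1_S)$ for the two parallel morphisms $T \otimes B \otimes S \rightrightarrows T \otimes S$ whose coequalizer is $\rho$, I must verify
\[
\rho \circ \mu_{TS} \circ (L \otimes 1_{T \otimes S}) = \rho \circ \mu_{TS} \circ (R \otimes 1_{T \otimes S}),
\]
and the symmetric identity with $1_{T \otimes S} \otimes L$ versus $1_{T \otimes S} \otimes R$ on the other pair of factors.

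Verifying this identity is the heart of the proof and is the main obstacle. The key inputs are the two centrality conditions in the cospans: the left-centrality of $b$ in $T$ (the second diagram of \eqref{eq:cospan-central} for the cospan $A \to T \leftarrow B$, i.e.\ $b$ factors through $C_l(T)$), and the right-centrality of $b'$ in $S$ (the first diagram of \eqref{eq:cospan-central} for $B \to S \leftarrow C$, i.e.\ $b'$ factors through $C_r(S)$). After expanding $\mu_{TS} = (\mu_T \otimes \mu_S) \circ (1_T \otimes c_{S,T} \otimes 1_S)$, the two composites above differ only in whether a $B$-factor is absorbed into the first $T$-slot via $b$ or into the first $S$-slot via $b'$; using the left-centrality of $b$ I can move the $T$-image of the $B$-factor to the boundary of the multiplication in $T$, and analogously the right-centrality of $b'$ handles the $S$-side, so that the defining relation $\rho \circ L = \rho \circ R$ of the coequalizer finally equates the two composites. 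The delicate point is correctly aligning the braidings: $\mu_{TS}$ contains $c_{S,T}$, whereas the centrality conditions introduce $c_{B,T}$ and $c_{S,B}$, and these must be reconciled by naturality of the braiding and the hexagon axiom for $\CZ$.

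Once $\tilde\mu$ has been constructed, the remaining verifications are essentially automatic. Uniqueness is immediate from the universal property of $\rho \otimes \rho$. Associativity of $\tilde\mu$ and its compatibility with $\tilde\iota$ follow by precomposing both sides of each axiom with the appropriate iterated coequalizer (e.g.\ $\rho^{\otimes 3}$ for associativity, which is again a coequalizer by repeated application of Lemma~\ref{lem:coeq-iterate} and hence epi in the relevant categorical sense), reducing the axioms to the corresponding identities for $(T \otimes S, \mu_{TS}, \iota_{TS})$, which hold because $T \otimes S$ is already an algebra in $\CZ$ under \eqref{eq:TS-mu}.
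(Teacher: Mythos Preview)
Your proposal is correct and follows essentially the same approach as the paper: construct $\tilde\mu$ by factoring $\rho \circ \mu_{TS}$ through an iterated coequalizer, with the key verification resting on the centrality conditions for $b$ and $b'$ together with the defining relation $\rho \circ L = \rho \circ R$, and then deduce the algebra axioms from those of $T\otimes S$ via the epi $\rho^{\otimes n}$. The only cosmetic difference is that the paper factors in a single step through the coequalizer $\rho\rho$ of the pair $(LL,RR)$ by introducing the auxiliary multiplication $\mu_{TBS}$ on $T\otimes B\otimes S$ and showing $\mu_{TS}\circ(LL)=L\circ\mu_{TBS}$ and $\mu_{TS}\circ(RR)=R\circ\mu_{TBS}$, whereas you carry out the two-step factorization through $\rho\otimes 1$ and then $1\otimes\rho$; the braiding bookkeeping you flag as delicate is exactly what the paper isolates in its inner pentagon.
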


\pf
Write
\bea \label{eq:TxBS-coeq-LR}
  L &=& \Big( T \otimes B \otimes S \xrightarrow{1\,b\,1} T\otimes T \otimes S \xrightarrow{\mu_T \,1} T \otimes S \Big) \ , \nonumber \\
  R &=& \Big( T \otimes B \otimes S \xrightarrow{1\,b'\,1} T\otimes S \otimes S \xrightarrow{1\,\mu_S} T \otimes S \Big) \ .
\eea
Note that $L$ and $R$ have the common right inverse $\tau = \id_T \otimes \iota_B \otimes \id_S$, so that we may apply
Lemma~\ref{lem:coeq-iterate} iteratively to obtain the three coequalizers (these are all we need)
\begin{subequations}
\bea 
&  \xymatrix{ T \otimes B \otimes S ~ \ar@<+.7ex>[r]^{\hspace{0.3cm}L} \ar@<-.7ex>[r]_{\hspace{0.3cm}R} & ~T \otimes S~  \ar[r]^\rho & ~T \otimes_B S} \ ,\label{eq:three-coequalizers-of-TS-1} \\
&  \xymatrix{ (T \otimes B \otimes S)^{\otimes 2} \ar@<+.7ex>[r]^{\hspace{0.3cm}LL} 
\ar@<-.7ex>[r]_{\hspace{0.3cm}RR} & 
    (T \otimes S)^{\otimes 2}  \ar[r]^{\rho\rho} & (T \otimes_B S)^{\otimes 2}} \ ,  \label{eq:three-coequalizers-of-TS-2} \\
&  \xymatrix{ (T \otimes B \otimes S)^{\otimes 3} \ar@<+.7ex>[r]^{\hspace{0.3cm}LLL} \ar@<-.7ex>[r]_{\hspace{0.3cm}RRR} & 
    (T \otimes S)^{\otimes 3}  \ar[r]^{\rho\rho\rho} & (T \otimes_B S)^{\otimes 3}} \ .
    \label{eq:three-coequalizers-of-TS-3}
\eea
\end{subequations}
The multiplication morphism $\mu : (T\otimes_BS)^{\otimes 2}\to T\otimes_BS$ will be constructed via the universal property of the coequalizer \eqref{eq:three-coequalizers-of-TS-2}. Namely, consider the following diagram,
\be \label{eq:TBS-alg}
\raisebox{2em}{\xymatrix{ 
(T\otimes B\otimes S)^{\otimes 2} \ar[d]_{\mu_{TBS}} \ar@<-.7ex>[r]_{\hspace{0.3cm}RR} \ar@<+.7ex>[r]^{\hspace{0.3cm}LL} & (T\otimes S)^{\otimes 2} \ar[r]^{\rho\rho} \ar[d]_{\mu_{TS}} & (T\otimes_B S)^{\otimes 2} \ar@{-->}[d]^{\exists ! \mu}  \\
T\otimes B\otimes S \ar@<-.7ex>[r]_{\hspace{0.3cm}R} \ar@<+.7ex>[r]^{\hspace{0.3cm}L}  & T\otimes S \ar[r]^{\rho} & T\otimes_B S
}}
\ee
We will show momentarily that the two overlaid squares on the left commute. 
But before that let us see how this 
completes the proof. Since the horizontal diagrams are coequalizer diagrams, and since the commutativity of the overlaid squares implies that $\rho \circ \mu_{TS} \circ (LL) = \rho \circ \mu_{TS} \circ (RR)$, by the universal property of the coequalizer $\rho\rho$ we get the existence and the uniqueness of $\mu$. Commutativity of the right square means that $\rho$ respects the multiplication. The unit of $T\otimes_B S$ is $\iota = \rho \circ (\iota_T \iota_S)$, so $\rho$ respects the unit by definition. 
To see that $\mu$ is associative, compose the associativity condition 
$\mu_{TS} \circ (\mu_{TS}\,1) = \mu_{TS} \circ (1\,\mu_{TS})$ of the algebra $T \otimes S$ with $\rho$ from the left. Using the commutativity of the right subdiagram of (\ref{eq:TBS-alg}), we can show $\mu \circ (\mu\,1) \circ (\rho\rho\rho) = \mu \circ (1\,\mu) \circ (\rho \rho \rho)$. By \eqref{eq:three-coequalizers-of-TS-3}, $\rho\rho\rho$ is a coequalizer, and from its universal property we conclude $\mu \circ (\mu\,1) = \mu \circ (1\,\mu)$, i.e.\ $\mu$ is associative. Similarly, the unit condition for $T \otimes S$ implies that of $T \otimes_B S$.

We now turn to commutativity of the two overlaid diagrams in \eqref{eq:TBS-alg}. We will show the equality 
$\mu_{TS} \circ (LL) = L \circ \mu_{TBS}$; 
the equality $\mu_{TS} \circ (RR) = R \otimes \mu_{TBS}$ 
follows analogously. One first convinces oneself (possibly by using the standard graphical notation) that it is enough to check that
\be
\xymatrix{ (T\otimes B)^{\otimes 2} \ar[rr]^{\hspace{0.4cm}(\mu_T \circ (1b))^{\otimes 2}} \ar[d]_{\mu_{TB}}  && 
T^{\otimes 2} \ar[d]^{\mu_T}  \\
T\otimes B \ar[rr]^{\mu_T \circ (1b)} && T
}
\ee
commutes. This in turn is implied by the commutativity of the diagram 
\be
\xygraph{ !{0;/r5.5pc/:;/u3.5pc/::}
*+{(TB)^2} 
(
:[dd] *+{T^2B^2} ^{1c1}
 (
 :[d] *+{TB} ^{\mu\mu}
 :[ll] *+{T^2}="1" _{1b}
 :[ll] *+{T}="2" _\mu
 ,
 :[ll] *+{T^4}="3" ^{11bb}
  (
  :"1" ^{\mu\mu}
  ,
  :[ul] *+{T^3}="4"  _{1\mu1}
  :"2" ^{\mu \circ (\mu \,1)}
  )
 ,
 :[l(1)u(.5)] *+{T^2BT}="5" _{111b}
 :"3" _{11b1}
 )
,
:[d(.5)l(1)] *+{TBT^2} ^{111b}
 (
 :"5" ^{1c1}
 ,
 :[l(1)u(.5)] *+{T^4}="6" ^{1b11}
 )
,
:"6" _{(1b)^2}
 (
 :[ll] *+{T^2} _{\mu\mu}
 :"2" _\mu
 ,
 :"4" _{1\mu 1}
 )
)
}
\ee
The inner pentagon commutes because by definition $b$ satisfies the second condition in \eqref{eq:cospan-central}. This completes the proof.
\epf

\begin{lemma} \label{lem:TxBS-al-be-hom}
With the notation of \eqref{eq:cospan-comp}, the maps $\alpha$ and $\gamma$ are algebra homomorphisms.
\end{lemma}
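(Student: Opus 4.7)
The plan is to exhibit $\alpha$ and $\gamma$ as compositions of three algebra homomorphisms each, so the result follows from the fact that a composition of algebra homomorphisms is an algebra homomorphism. Recall that
$$
\alpha = \rho \circ (1\,\iota_S) \circ a \ , \qquad \gamma = \rho \circ (\iota_T\,1) \circ c \ .
$$
Here $a:A\to T$ and $c:C\to S$ are algebra homomorphisms by the assumption that the original diagrams are cospans (Definition~\ref{def:cospan}), and $\rho:T\otimes S\to T\otimes_B S$ is an algebra homomorphism by Lemma~\ref{lem:TxBS-algebra}. Thus the whole task reduces to establishing the middle step.

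I would verify directly that $T \xrightarrow{1\,\iota_S} T\otimes S$ (and symmetrically $S \xrightarrow{\iota_T\,1} T\otimes S$) is an algebra homomorphism with respect to the algebra structure \eqref{eq:TS-mu} on $T\otimes S$. Unit preservation is immediate since $(1\,\iota_S)\circ\iota_T = \iota_T\otimes\iota_S = \iota_{TS}$. For compatibility with multiplication, one has to check that
$$
\mu_{TS}\circ\bigl((1\,\iota_S)\otimes(1\,\iota_S)\bigr) \;=\; (1\,\iota_S)\circ\mu_T \ .
$$
Substituting the definition of $\mu_{TS}$, the left hand side contains the composite $(1\,c_{S,T}\,1)\circ(1\,\iota_S\,1\,\iota_S)$, and here the braiding acts on a tensor factor that is the image of $\iota_S:1_\CZ\to S$. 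By naturality of the braiding (together with $c_{1_\CZ,T}=\id_T=c_{T,1_\CZ}$ up to unit isomorphisms), this factor passes through trivially and one is left with $(\mu_T\,\mu_S)\circ(1\,1\,\iota_S\,\iota_S)$, which equals $\mu_T\otimes \iota_S$ by the unit axiom for $S$; i.e.\ exactly $(1\,\iota_S)\circ\mu_T$. The argument for $\iota_T\,1$ is analogous.

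The hard part is essentially this braiding/unit computation, but it is routine once one draws the diagram. No centrality condition on $b,b'$ is needed at this step (the centrality was already consumed in Lemma~\ref{lem:TxBS-algebra} to define the algebra structure on $T\otimes_B S$), so the lemma follows cleanly by assembling the three homomorphisms.
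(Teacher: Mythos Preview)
Your proof is correct and takes essentially the same approach as the paper: both exhibit $\alpha$ and $\gamma$ as compositions of algebra homomorphisms, invoking Lemma~\ref{lem:TxBS-algebra} for $\rho$ and the cospan hypothesis for $a,c$. The only difference is that you spell out in detail why $1\,\iota_S$ and $\iota_T\,1$ are algebra maps with respect to the structure~\eqref{eq:TS-mu}, whereas the paper treats this as evident.
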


\pf
This is clear, as by Lemma~\ref{lem:TxBS-algebra}, the map $\rho$ is an algebra map, and so according to \eqref{eq:A-T-C-S-TBS}, $\alpha$ and $\gamma$ are compositions of algebra maps.
\epf

\begin{lemma} 
\label{lem:TxBS-center-condition}
With the notation of \eqref{eq:cospan-comp}, the maps $\alpha$ and $\gamma$ satisfy condition \eqref{eq:cospan-central}.
\end{lemma}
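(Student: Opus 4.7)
The plan is to reduce the centrality conditions for $\alpha$ and $\gamma$ in $T\otimes_B S$ to centrality conditions for the `prolonged' algebra maps $\alpha' = (1_T \otimes \iota_S)\circ a : A \to T\otimes S$ and $\gamma' = (\iota_T \otimes 1_S)\circ c : C \to T\otimes S$ in the ordinary tensor product algebra $T\otimes S$, and then to deduce these from the centrality of $a$ and $c$ as legs of the two input cospans. By symmetry (swapping the roles of $T$ and $S$, and of left/right legs), it suffices to carry out the argument for $\alpha$.

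First, I would precompose the desired identity $\mu\circ(1\otimes\alpha) = \mu\circ(\alpha\otimes 1)\circ c_{T\otimes_B S,\,A}$ with $\rho\otimes\id_A$. By Assumption~\ref{ass:coeq}(ii), $\rho\otimes\id_A$ is a coequalizer, hence an epimorphism, so it is enough to check the equality after this precomposition. Using that $\rho$ is an algebra homomorphism by Lemma~\ref{lem:TxBS-algebra} (so $\mu\circ(\rho\otimes\rho) = \rho\circ\mu_{TS}$), that $\alpha = \rho\circ\alpha'$ by definition, and the naturality identity $c_{T\otimes_B S,\,A}\circ(\rho\otimes 1) = (1\otimes\rho)\circ c_{T\otimes S,\,A}$, both sides of the desired equality can be rewritten as $\rho$ composed with a morphism $(T\otimes S)\otimes A \to T\otimes S$. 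Thus the problem collapses to verifying
\begin{equation*}
\mu_{TS}\circ(1_{TS}\otimes\alpha') \;=\; \mu_{TS}\circ(\alpha'\otimes 1_{TS})\circ c_{T\otimes S,\,A},
\end{equation*}
i.e.\ that $\alpha'$ satisfies the left-leg centrality condition in the algebra $T\otimes S$.

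To establish this, I would expand $\mu_{TS}$ according to its definition \eqref{eq:TS-mu} and use the hexagon identity $c_{T\otimes S,\,A} = (c_{T,A}\otimes 1_S)\circ(1_T\otimes c_{S,A})$, together with the naturality of the braiding and the unit axiom $\mu_S\circ(\iota_S\otimes 1_S) = \id_S = \mu_S\circ(1_S\otimes\iota_S)$. The factor of $\iota_S$ built into $\alpha'$ then cancels the $S$-components on both sides (the relevant $\mu_S$ trivializes), so all the nontrivial content is carried by the $T$-tensorand. What remains is precisely the identity
\begin{equation*}
\mu_T\circ(1_T\otimes a) \;=\; \mu_T\circ(a\otimes 1_T)\circ c_{T,A},
\end{equation*}
which is the first diagram of \eqref{eq:cospan-central} applied to the input cospan $A\xrightarrow{a} T\xleftarrow{b} B$. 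The analogous argument for $\gamma$ uses instead the right-leg condition for $c$ in the cospan $B\xrightarrow{b'} S\xleftarrow{c} C$, the braiding $c_{C,\,T\otimes S} = (1_T\otimes c_{C,S})\circ(c_{C,T}\otimes 1_S)$, and unitality of $\iota_T$.

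The main technical obstacle is bookkeeping: making sure that all the braidings, associators, and unit isomorphisms introduced when expanding $\mu_{TS}$ and when applying the hexagon identity assemble correctly so that only the centrality of $a$ (respectively $c$) in its target algebra is needed, with no residual `cross' braidings left over. This is most transparently done with standard string-diagram manipulations, but given the conventions already adopted in the paper (Convention~\ref{conv:f1-noassoc}) this amounts to a routine, if tedious, diagram chase.
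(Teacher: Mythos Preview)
Your proposal is correct and follows essentially the same route as the paper: both arguments precompose with $\rho\otimes\id_A$ (using that it is a coequalizer, hence epi), use that $\rho$ is an algebra map to pull the problem back to $T\otimes S$, and then reduce the resulting identity to the centrality of $a$ in $T$ after the $\iota_S$-factor trivializes the $S$-component. The paper packages the middle computation into a single commutative diagram whose central pentagon is exactly the centrality of $a$, whereas you describe the same manipulation algebraically via the hexagon and unit axioms; these are the same proof in different notation.
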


\begin{proof}
We will verify that the first of the two diagrams in \eqref{eq:cospan-central} commutes; the second one can be checked analogously. 
One quickly convinces oneself that the diagram
\be
\xygraph{  !{0;/r5.5pc/:;/u3.5pc/::}
*+{TSA}
(
:[d(4)] *+{ATS}="1" ^(.4){c_{TS,A}}
(
:[r(4)] *+{TSTS} _{a\,\iota_S 11}
  (
  :[r(0)u(2)] *+{TS}="2" _{\mu_{TS}}
  ,
  :[l(1)u(1)] *+{TTSS}="3"  ^{1\,c_{S,T}1}
  :"2" _(.4){\mu_T\,\mu_S}
  )
 ,
 :"3" _{a\,1\,\iota_S 1}
 ,
 :[u(1.5)r(2)] *+{TTS} ^{a\,11}
 :"2" ^{\mu_T 1}
 )
,
:[dr] *+{TAS} ^{1c_{S,A}} 
 (
 :"1" ^{c_{T,A}1}
 ,
 :[d(.5)r(1)] *+{TTS} _{1\,a\,1}
 :"2" ^(.4){\mu_T 1}
 ,
 :[r(2)] *+{TTSS}="4" ^{1\,a\,1\,\iota_S}
 :"2" ^{\mu_T\,\mu_S}
 )
,
:[r(4)] *+{TSTS} ^{11\,a\,\iota_S}
 (
 :"2" ^{\mu_{TS}}
 ,
 :"4" _{1\,c_{S,T} 1}
 )
)
}
\ee
commutes. In particular, the central pentagon commutes because by definition, $a$ makes the first diagram in \eqref{eq:cospan-central} commute. Composing the resulting equality of morphisms $(T \otimes S) \otimes A \to T \otimes S$ with $\rho$, using that $\rho$ is an algebra map and substituting the expression \eqref{eq:A-T-C-S-TBS} for $\alpha$, we obtain
\be
\xymatrix@R=1em{ 
(T\otimes S) \otimes A \ar[rr]^{\rho\,1} \ar[dd]_{\rho\,1} &&
(T\otimes_B S) \otimes A \ar[r]^{1\,\alpha}  & (T\otimes_B S)^{\otimes 2} \ar[rd]^\mu\\ && & &  T\otimes_B S\\ 
(T\otimes_B S) \otimes A \ar[rr]^{c_{ T\otimes_B S,A}}  &&
A \otimes (T\otimes_B S) \ar[r]^{\alpha\,1} & (T\otimes_B S)^{\otimes 2} \ar[ur]_\mu }
\ee
Since by Assumption~\ref{ass:coeq}, $\rho \otimes \id_A$ is a coequalizer, we obtain commutativity of the first diagram in \eqref{eq:cospan-central} (with $T \leadsto T\otimes_B S$ and $a \leadsto \alpha$).
\end{proof}

This completes the proof that the composition \eqref{eq:cospan-comp} is well-defined. We summarize this in the following proposition.

\begin{prop}\label{prop:cospan-composition}
Given two cospans $A \xrightarrow{a} T \xleftarrow{b} B$ and $B \xrightarrow{b'} S \xleftarrow{c} C$ between commutative algebras, then also $A \xrightarrow{\alpha} T \otimes_B S \xleftarrow{\gamma} C$, with $\alpha$ and $\gamma$ as in \eqref{eq:A-T-C-S-TBS}, is a cospan between commutative algebras.
\end{prop}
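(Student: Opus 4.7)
The plan is straightforward assembly: Proposition~\ref{prop:cospan-composition} is nothing more than a consolidation of Lemmas~\ref{lem:TxBS-algebra},~\ref{lem:TxBS-al-be-hom}, and~\ref{lem:TxBS-center-condition}, whose proofs have already been carried out. Definition~\ref{def:cospan} requires three things from a cospan between commutative algebras: that the outer two algebras are commutative, that the middle object carries an algebra structure, and that the two legs are algebra homomorphisms satisfying the centrality condition~\eqref{eq:cospan-central}. I would verify each of these in turn.

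The commutativity of $A$ and $C$ is part of the hypothesis, so nothing needs to be checked. Lemma~\ref{lem:TxBS-algebra} provides the unique algebra structure on $T\otimes_B S$ such that $\rho$ becomes an algebra homomorphism; this is the step where centrality of $b$ and $b'$ in the two input cospans is used decisively, entering as the pentagon that lets the multiplication $\mu_{TS}$ on $T\otimes S$ (defined as in~\eqref{eq:TS-mu}) descend through the coequalizer.

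Lemma~\ref{lem:TxBS-al-be-hom} then gives that $\alpha$ and $\gamma$ are algebra homomorphisms: once $\rho$ is known to be one, both $\alpha$ and $\gamma$ are manifest composites of algebra maps (via the formulas~\eqref{eq:A-T-C-S-TBS}, noting that $X \mapsto X \otimes \iota_S$ and $X \mapsto \iota_T \otimes X$ are algebra maps into $T\otimes S$ for the tensor-product algebra structure). Finally, Lemma~\ref{lem:TxBS-center-condition} establishes the centrality condition~\eqref{eq:cospan-central} for $\alpha$ and $\gamma$, lifting the centrality of $a$ (respectively $c$) through the coequalizer by invoking Assumption~\ref{ass:coeq}(ii) to ensure that $\rho\otimes\id_A$ and $\id_C\otimes\rho$ remain coequalizers, so that the identity of morphisms out of $(T\otimes S)\otimes A$ descends uniquely to the quotient.

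With these lemmas in hand there is no further substantive obstacle; the proposition is essentially a one-line assembly. Were one proving the full statement from scratch, the real difficulty would sit inside Lemma~\ref{lem:TxBS-algebra} (the pentagon showing that the right $B$-action commutes appropriately with the product on $T\otimes S$) and the parallel diagram chase in Lemma~\ref{lem:TxBS-center-condition}, where the braiding, the centrality of $a$, and the compatibility of the $B$-action with $\mu_T$ have to be unpacked simultaneously. Everything else is bookkeeping around the universal property of coequalizers.
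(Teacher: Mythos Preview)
Your proposal is correct and matches the paper's approach exactly: the paper presents Proposition~\ref{prop:cospan-composition} explicitly as a summary of Lemmas~\ref{lem:TxBS-algebra}, \ref{lem:TxBS-al-be-hom}, and~\ref{lem:TxBS-center-condition}, with no additional argument given. Your identification of which lemma handles which requirement of Definition~\ref{def:cospan}, and where the real work lies (the descent through coequalizers using Assumption~\ref{ass:coeq}), is accurate.
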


Finally, we state and prove the following lemma, which we will need later.

\begin{lemma} \label{lem:TBS-universal}
With the notation of \eqref{eq:cospan-comp}, the 
pair of algebra homomorphisms 
$T \xrightarrow{\rho \circ (1\iota)} T\otimes_B S \xleftarrow{\rho \circ (\iota1)} S$
is initial among all pairs of algebra homomorphisms $T \xrightarrow{w} C \xleftarrow{v} S$ which make 
the following two diagrams:
\be \label{eq:T-r-S-l}
\raisebox{2em}{
\xymatrix@R=1em{
& B \ar[dl]_{b} \ar[dr]^{b'} \\
T \ar[dr]_{w} && S \ar[dl]^{v} \\
& C}}
\quad \mbox{and} \quad
\raisebox{2em}{
\xymatrix@R=1em{
S \otimes T \ar[r]^{ v\otimes w}  \ar[dd]_{c_{S, T}} & C \otimes C \ar[rd]^{\mu} & \\
& & C \\
T \otimes S \ar[r]^{ w \otimes v} &   C \otimes C \ar[ru]_{\mu} &  
}}
\ee
commute. That is, for any such $T\xrightarrow{w} C \xleftarrow{v} S$, there is a unique algebra homomorphism $T\otimes_B S \xrightarrow{u} C$ such that the diagram
\be \label{eq:TBS-universal}
\raisebox{3.5em}{\xymatrix@R=1em{
& T\otimes_B S \ar@{-->}[dd]^{\exists !\, u} &  \\
T \ar[ur]^{\rho\circ (1\iota)} \ar[dr]_{w} & & S \ar[ul]_{\rho\circ (\iota1)} \ar[dl]^{v} \\
& C &
}}
\ee
commutes. 
\end{lemma}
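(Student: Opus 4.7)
\medskip\noindent\textbf{Plan of proof.}
The strategy is to produce $u$ by passing through $T\otimes S$: construct an algebra homomorphism $T\otimes S\to C$ out of $w$ and $v$, show that it coequalizes the parallel maps defining $T\otimes_B S$, and then use the universal property of $\rho$ from \eqref{eq:three-coequalizers-of-TS-1}. To rename, let me temporarily write the target algebra of the cospan $T\xrightarrow{w} C\xleftarrow{v} S$ as $X$ to distinguish it from the algebra $C$ appearing in \eqref{eq:cospan-comp}.

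\medskip\noindent\textbf{Step 1: the candidate on $T\otimes S$.}
Set $h:=\mu_X\circ(w\otimes v):T\otimes S\to X$. I claim that $h$ is a homomorphism from the algebra $T\otimes S$ (with multiplication \eqref{eq:TS-mu}) to $X$. Unitality is immediate since $w$ and $v$ are unital. For multiplicativity, expanding $h\circ\mu_{TS}=\mu_X\circ(w\otimes v)\circ(\mu_T\otimes\mu_S)\circ(1\,c_{S,T}\,1)$ and using that $w$ and $v$ are multiplicative, the verification reduces to the identity
\[
  \mu_X\circ(w\otimes v)\circ(1_T\otimes c_{S,T}\otimes 1_S)
  \;=\;
  \mu_X\circ\mu_{X^{\otimes 2}}\circ(w\otimes v\otimes w\otimes v)
\]
on the middle two tensor factors, which by naturality of the braiding is exactly the second hypothesis in \eqref{eq:T-r-S-l}. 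Thus $h$ is an algebra homomorphism.

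\medskip\noindent\textbf{Step 2: coequalizing $L$ and $R$.}
With $L,R$ as in \eqref{eq:TxBS-coeq-LR}, multiplicativity of $w$ and $v$ gives
\[
  h\circ L=\mu_X\circ(\mu_X\otimes 1_X)\circ(w\otimes (w\circ b)\otimes v),
  \qquad
  h\circ R=\mu_X\circ(\mu_X\otimes 1_X)\circ(w\otimes (v\circ b')\otimes v),
\]
and the first condition in \eqref{eq:T-r-S-l}, namely $w\circ b=v\circ b'$, shows $h\circ L=h\circ R$. By the universal property of the coequalizer $\rho$ in \eqref{eq:three-coequalizers-of-TS-1}, there is a unique morphism $u:T\otimes_B S\to X$ in $\CZ$ with $u\circ\rho=h$.

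\medskip\noindent\textbf{Step 3: $u$ is an algebra homomorphism and makes the diagram commute.}
Unitality: $u\circ\iota_{T\otimes_B S}=u\circ\rho\circ(\iota_T\otimes\iota_S)=h\circ(\iota_T\iota_S)=\iota_X$. For multiplicativity, observe that $\rho$ is an algebra map by Lemma~\ref{lem:TxBS-algebra} and, by Lemma~\ref{lem:coeq-iterate} applied to \eqref{eq:three-coequalizers-of-TS-2}, $\rho\otimes\rho$ is a coequalizer and hence an epimorphism. Since
\[
  u\circ\mu_{T\otimes_B S}\circ(\rho\otimes\rho)
  \;=\;u\circ\rho\circ\mu_{TS}
  \;=\;h\circ\mu_{TS}
  \;=\;\mu_X\circ(h\otimes h)
  \;=\;\mu_X\circ(u\otimes u)\circ(\rho\otimes\rho),
\]
using that $h$ is multiplicative (Step~1), we conclude $u\circ\mu_{T\otimes_B S}=\mu_X\circ(u\otimes u)$. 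The commutativity of \eqref{eq:TBS-universal} is then direct: $u\circ\rho\circ(1\,\iota_S)=\mu_X\circ(w\otimes v)\circ(1\,\iota_S)=\mu_X\circ(w\otimes\iota_X)=w$, and similarly for $v$.

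\medskip\noindent\textbf{Step 4: uniqueness.}
Suppose $u':T\otimes_B S\to X$ is any algebra homomorphism making \eqref{eq:TBS-universal} commute. Then $u'\circ\rho:T\otimes S\to X$ is an algebra homomorphism whose restrictions along $1\otimes\iota_S$ and $\iota_T\otimes 1$ are $w$ and $v$, respectively. But any algebra homomorphism $T\otimes S\to X$ is determined by these two restrictions via $f\mapsto\mu_X\circ(f|_T\otimes f|_S)$, so $u'\circ\rho=h=u\circ\rho$; since $\rho$ is an epimorphism (being a coequalizer) we get $u'=u$.

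\medskip
The only step that requires real work is Step~1; once the centrality condition \eqref{eq:T-r-S-l} is correctly identified as the statement that ``$h$ is multiplicative'', everything else is formal manipulation driven by the universal property of $\rho$ and the fact that $\rho\otimes\rho$ is again a coequalizer by Lemma~\ref{lem:coeq-iterate}.
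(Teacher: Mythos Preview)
Your argument is essentially the paper's: build $h=\mu_X\circ(w\otimes v)$, pass to the coequalizer, and verify that the induced $u$ is an algebra map using that $\rho\otimes\rho$ is again a coequalizer. Your uniqueness argument in Step~4 is exactly the content of the paper's final commuting diagram, just phrased more verbally.

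One genuine omission: being \emph{initial} in the stated class requires that the pair $T\xrightarrow{\rho\circ(1\iota)}T\otimes_B S\xleftarrow{\rho\circ(\iota 1)}S$ itself satisfies the two diagrams in \eqref{eq:T-r-S-l}. The paper checks this explicitly at the start of its proof (the first diagram follows from the defining relation $\rho\circ L=\rho\circ R$ precomposed with $\iota_T\,1\,\iota_S$; the second follows from a short computation showing both paths equal $\rho\circ c_{S,T}$). You have only established the universal factorisation, not membership in the class, so as written you have not quite proved initiality. This is easy to add, but it is not automatic.

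A minor point: the displayed equation in Step~1 is ill-typed as written (the domain of $w\otimes v$ does not match the codomain of $1_T\otimes c_{S,T}\otimes 1_S$); what you actually need, and what your prose correctly indicates, is that after expanding both $h\circ\mu_{TS}$ and $\mu_X\circ(h\otimes h)$ via associativity of $\mu_X$, the comparison on the middle two tensor factors reduces to $\mu_X\circ(w\otimes v)\circ c_{S,T}=\mu_X\circ(v\otimes w)$, which is the second hypothesis.
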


\pf
We first check that the pair $T \xrightarrow{\rho \circ (1\iota)} T\otimes_B S \xleftarrow{\rho \circ (\iota1)} S$ itself satisfies the properties \eqref{eq:T-r-S-l}. The defining property of $\rho$ from \eqref{eq:TxBS-coeq-LR} and \eqref{eq:three-coequalizers-of-TS-1} is
\be
\Big( TBS \xrightarrow{1\,b\,1} TTS \xrightarrow{\mu_T 1} TS \xrightarrow{\rho} T \otimes_B S \Big)
~=~
\Big( TBS \xrightarrow{1\,b'\,1} TSS \xrightarrow{1\,\mu_S} TS \xrightarrow{\rho} T \otimes_B S \Big) \ .
\ee
Precomposing this with $\iota_T\,1\,\iota_S$ implies the first condition in \eqref{eq:T-r-S-l}. For the second condition in \eqref{eq:T-r-S-l}, one verifies that the two composed morphisms $S\otimes T \to C \equiv T \otimes_B S$ in the second diagram in \eqref{eq:T-r-S-l} are both equal to $\rho \circ c_{S,T}$. For example, for the upper path
\be
  \mu \circ ( (\rho \circ (1\,\iota_S)) \otimes (\rho \circ (\iota_T 1)))
  =  \rho \circ \mu_{TS} \circ (1\,\iota_S \iota_T 1)
  = \rho \circ c_{S,T} \ ,
\ee
using that $\rho$ is an algebra map (Lemma~\ref{lem:TxBS-algebra}) and the definition \eqref{eq:TS-mu} of $\mu_{TS}$.

Next we construct the map $u: T\otimes_B S \rightarrow C$ and prove that it has the required properties. Because $v$, $w$ are algebra homomorphisms and the first diagram in \eqref{eq:T-r-S-l} is commutative, we have $\mu \circ (w\otimes v) \circ L = \mu \circ (w\otimes v) \circ R$, where $L$ and $R$ are as in \eqref{eq:TxBS-coeq-LR}. Therefore the universal property of $\rho$ implies that there exists a unique $u$ which makes the square in
\be  \label{eq:rho-u-wv-mu}
\xymatrix{
T\otimes B \otimes S \ar@<+.7ex>[rr]^L \ar@<-.7ex>[rr]_R & & T\otimes S \ar[r]^\rho
\ar[d]_{w\, v} & T\otimes_B S \ar@{.>}[d]^{\exists ! \, u} \\
& & C\otimes C \ar[r]^\mu & C
}
\ee
commute. Precomposing the commuting square with $1\,\iota_S$ (resp.\ $\iota_T 1$) and using that $v$, $w$ are algebra maps gives commutativity of the left (resp.\ right) triangle in \eqref{eq:TBS-universal}. As the unit of $T \otimes_B S$ is $\rho \circ (\iota_T \iota_S)$, precomposing the above square with $\iota_T \iota_S$ shows that $u$ preserves the unit. To prove that it preserves the multiplication, we consider the following diagram:
\be \label{diag:u-alg-map-pf} 
\xymatrix{
(T\otimes S)^{\otimes 2} \ar[rr]^{\rho^{\otimes2}} \ar[ddd]_{\rho^{\otimes2}} \ar[ddr]_{(w\,v)^{\otimes 2}} \ar[drr]^{\mu_{TS}} 
  && (T\otimes_B S)^{\otimes 2} \ar[dr]^\mu 
  \\
& & TS \ar[r]^\rho \ar[d]^{w\,v}
& T \otimes_B S \ar[dd]^u
\\
& (C\otimes C)^{\otimes 2} \ar[dr]^{\mu^{\otimes2}} 
& C^{\otimes 2} \ar[dr]^\mu
\\
(T\otimes_B S)^{\otimes 2} \ar[rr]^{u^{\otimes2}}
&& C^{\otimes 2} \ar[r]^\mu
& C
}
\ee
in which the left and right lower square are commutative because of \eqref{eq:rho-u-wv-mu}, 
and the upper subdiagram is commutative because of \eqref{eq:TBS-alg}.  
Using the second commutative diagram in \eqref{eq:T-r-S-l}, it is easy to show that the middle pentagon commutes. 
Therefore, we obtain that the outer subdiagram in \eqref{diag:u-alg-map-pf} is commutative.
By the universal property of $\rho\rho$ (which is a coequalizer by Lemma~\ref{lem:coeq-iterate}), we obtain $u\circ \mu = \mu \circ (u \otimes u)$. Thus $u$ is an algebra homomorphism.

It remains to show that the solution $u$ to \eqref{eq:TBS-universal} is unique. By the uniqueness statement in \eqref{eq:rho-u-wv-mu}, it is enough to show that any algebra homomorphism $u$ making \eqref{eq:TBS-universal} commutative satisfies $u \circ \rho = \mu \circ (w\otimes v)$. This follows from the commuting diagram
\be
\xymatrix{
T\otimes S \ar[rr]_(.4){1\,\iota_S\iota_T 1} \ar@{=}@/_1em/[rrd] \ar@/^2em/[rrrr]^{w\,v} &&
(T\otimes S)^{\otimes2} \ar[r]_(.45){\rho\,\rho} \ar[d]^\mu & (T\otimes_B S)^{\otimes2} \ar[d]^\mu \ar[r]_(.55){u\,u} &  C \otimes C \ar[d]^\mu \\
&& T\otimes S \ar[r]^\rho & T\otimes_B S \ar[r]^u & C\ ,
}
\ee
where the upper cell commutes by \eqref{eq:TBS-universal} and the bottom squares commute since $\rho$ and $u$ are algebra homomorphisms.
\epf

\subsection{The bicategory $\CAlg(\CZ)$ of commutative algebras} \label{sec:CAlg}

In this subsection we introduce the bicategory $\CAlg(\CZ)$, whose objects are commutative algebras in $\CZ$. The morphism category between two such algebras $A$, $B$ is the category $\cosp(A,B)$, which we proceed to describe. 

\begin{defn} \label{def:CAlg(Z)}
Given two commutative algebras $A,B$ in a braided monoidal category $\CZ$ satisfying Assumption~\ref{ass:coeq}, $\cosp(A,B)$ is the following category:
\begin{itemize}
\item objects are cospans $A \rightarrow T \leftarrow B$ of commutative algebras as in Definition~\ref{def:cospan}.
\item a morphism from a cospan $A \rightarrow T \leftarrow B$ to a cospan $A \rightarrow T' \leftarrow B$ is an algebra map $f : T \to T'$ such that the following diagram commutes:
\be
\raisebox{2em}{\xymatrix@R=1em{
& T \ar[dd]^f \\
A \ar[ur]^a \ar[dr]_{a'} && B \ar[ul]_b \ar[dl]^{b'} \\
& T'  }}
\ee
\item the unit morphism for $A \rightarrow T \leftarrow B$ is the identity map $\id_T$, and the composition of morphisms is the composition of algebra maps.
\end{itemize}
\end{defn}

We remark here that in the next subsection we will define a {\em bi}category $\Cosp$ where objects are as above but morphisms will be categories of certain bimodules.

\medskip

The data and conditions defining a bicategory are listed in Definition~\ref{def:bicat}; we give the ingredients of $\CAlg(\CZ)$ in the same order as stated there. 
The objects of $\CAlg(\CZ)$ are commutative algebras in $\CZ$. 
The identity morphism $\one_A$, for $A \in \CZ$, has image in $\cosp(A,A)$ given by
\be \label{eq:CAlg-def-id}
\raisebox{2.5em}{\xymatrix@R=1em{
& A \ar[dd]^{\id} \\
A \ar[ur]^{\id} \ar[dr]_{\id} && A
\ar[ul]_{\id} \ar[dl]^{\id} \\
& A  }}
\qquad .
\ee
The composition functor $\circledcirc_{A,B,C} : \cosp(B,C) \times \cosp(A,B) \to \cosp(A,C)$ acts on objects and morphisms as
\be \label{eq:CAlg-def-comp}
\raisebox{2.5em}{\xymatrix@R=1em{
& T \ar[dd]^f & & S \ar[dd]^g \\
A \ar[ur]^a \ar[dr]_{a'} && B \ar[ul]_{b_1} \ar[dl]^{b_1'} \ar[ur]^{b_2} \ar[dr]_{b_2'} && C \ar[ul]_c \ar[dl]^{c'} \\
& T' && S' }}
\qquad \leadsto
\qquad
\raisebox{2.5em}{\xymatrix@R=1em{
& T \otimes_B S \ar[dd]^{f \otimes_B g} \\
A \ar[ur]^{\alpha} \ar[dr]_{\alpha'} && C \ar[ul]_\gamma \ar[dl]^{\gamma'} \\
& T'  \otimes_B S' }}
\quad .
\ee
Here we make use of the composition of cospans as stated in Proposition\,\ref{prop:cospan-composition}, where also the morphisms starting at $A$ and $C$ in the right diagram are given.
Recall from below \eqref{eq:cospan-comp} that our convention for the order in which we write the composition: $T \otimes_B S$ is the composition $\circledcirc_{A,B,C}(S,T)$.

The associativity natural isomorphism $\alpha_{A,B,C,D} : \circledcirc_{A,B,D} \circ (\circledcirc_{B,C,D} \times \id) \to \circledcirc_{A,C,D} \circ (\id \times \circledcirc_{A,B,C})$ between the two functors
\be
  \cosp(C,D) \times \cosp (B,C) \times \cosp (A,B) \longrightarrow \cosp(A,D)
\ee
acts on objects and morphisms as the canonical natural isomorphisms of the iterated fibered tensor product,
\be \label{eq:CAlg-def-ass}
(\alpha_{A,B,C,D})_{R,S,T} :
T \otimes_B (S \otimes_C R)  \longrightarrow
(T \otimes_B S) \otimes_C R \ .
\ee
The unit isomorphisms $l_{A,B} : \circledcirc_{A,B,B} \circ (\one_B \times \id) \to \id$ and $r_{A,B} : \circledcirc_{A,A,B} \circ (\id \times \one_A) \to \id$ in turn are the canonical isomorphisms
\be \label{eq:CAlg-def-unit}
(l_{A,B})_T : T \otimes_B B \to T
\qquad \text{and} \qquad
(r_{A,B})_T : A \otimes_A T  \to T \ .
\ee
It is then standard to check that these satisfy the coherence conditions of Definition~\ref{def:bicat}. Altogether, we obtain the following result. 
\begin{thm}
Let $\CZ$ be a braided monoidal category satisfying Assumption~\ref{ass:coeq}. Then
the following data defines a bicategory, which we call $\CAlg(\CZ)$:
\begin{itemize}
\item objects are commutative algebras $A,B,\dots$ in $\CZ$ and the category of morphisms from $A$ to $B$ is given by $\cosp(A,B)$,
\item identity, composition, associator and unit isomorphisms are as in \eqref{eq:CAlg-def-id}, \eqref{eq:CAlg-def-comp}, \eqref{eq:CAlg-def-ass}, and \eqref{eq:CAlg-def-unit}.
\end{itemize}
\end{thm}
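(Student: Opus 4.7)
The plan is to verify the pieces of bicategorical data in turn, using Lemma~\ref{lem:TBS-universal} as the central tool. The key observation is that everything in sight is constructed via the universal property of the fibered tensor product, so uniqueness will take care of almost all commutativity and coherence checks.

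First I would check that \eqref{eq:CAlg-def-comp} defines a functor $\circledcirc_{A,B,C}$. On objects this is Proposition~\ref{prop:cospan-composition}. For a pair of $2$-morphisms $f : T \to T'$ and $g : S \to S'$, I would construct $f \otimes_B g$ as the unique algebra map produced by Lemma~\ref{lem:TBS-universal} applied to the algebra homomorphisms $\rho' \circ (f \otimes \iota_{S'}) \circ (1 \iota_S) : T \to T' \otimes_B S'$ and $\rho' \circ (\iota_{T'} \otimes g) \circ (\iota_T 1) : S \to T' \otimes_B S'$, after verifying that the two diagrams in \eqref{eq:T-r-S-l} commute; this uses that $f$, $g$ are cospan morphisms and algebra maps, and that $\rho'$ is an algebra map (Lemma~\ref{lem:TxBS-algebra}). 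Compatibility of $f \otimes_B g$ with the legs $\alpha,\gamma$ is then automatic from \eqref{eq:TBS-universal}. Preservation of identities and of composition of $2$-morphisms follows from the uniqueness clause of Lemma~\ref{lem:TBS-universal}.

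Next I would construct the associator. For cospans $A \xrightarrow{} T \xleftarrow{} B$, $B \xrightarrow{} S \xleftarrow{} C$, $C \xrightarrow{} R \xleftarrow{} D$, I would apply Lemma~\ref{lem:TBS-universal} twice to each side in order to exhibit both $(T \otimes_B S)\otimes_C R$ and $T \otimes_B (S \otimes_C R)$ as initial objects in the category of triples of algebra homomorphisms $T \to X \leftarrow S \to X \leftarrow R$ satisfying the appropriate analogues of \eqref{eq:T-r-S-l}. Initiality on both sides then yields mutually inverse algebra maps, and these are automatically compatible with the $A$ and $D$-legs of the cospans. Naturality in the three cospan arguments again follows from uniqueness. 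The left and right unitors in \eqref{eq:CAlg-def-unit} are constructed analogously: for a cospan $A \xrightarrow{a} T \xleftarrow{b} B$, the map $\mu_T \circ (1 \otimes b) : T \otimes B \to T$ coequalizes $L$ and $R$ in \eqref{eq:TxBS-coeq-LR} thanks to associativity and unitality of $T$, so it descends to an algebra map $T \otimes_B B \to T$ whose inverse is $\rho \circ (1 \otimes \iota_B)$; this identification is clearly a cospan morphism, and naturality is immediate.

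Finally, I would verify the pentagon and triangle axioms. For the pentagon, both composite associators between $((T \otimes_B S) \otimes_C R) \otimes_D Q$ and $T \otimes_B (S \otimes_C (R \otimes_D Q))$ are algebra maps under the four-fold iterated $\otimes$, hence by iterating Lemma~\ref{lem:TBS-universal} four times both are forced to coincide with the unique map out of the common initial object. The triangle axiom follows the same pattern, applied to the unitors.

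The main obstacle is bookkeeping rather than content: one has to check that at every application of Lemma~\ref{lem:TBS-universal} the two hypotheses in \eqref{eq:T-r-S-l} hold, which amounts to repeatedly invoking the centrality condition \eqref{eq:cospan-central} on the cospan legs and the algebra-map property of $\rho$ from Lemma~\ref{lem:TxBS-algebra}. Once these verifications are set up, the coherence axioms reduce to the universal-property mantra ``two maps out of an initial object into the same target are equal''.
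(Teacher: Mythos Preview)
Your plan is correct and in fact considerably more explicit than the paper's own treatment: the paper simply declares the associator and unitors to be ``the canonical natural isomorphisms of the iterated fibered tensor product'' and states that ``it is then standard to check'' the coherence conditions, with no further argument. Your idea of organising everything through Lemma~\ref{lem:TBS-universal} (the initial-object characterisation of $T\otimes_B S$ among pairs of algebra maps) is a clean way to supply those missing details; the paper implicitly relies on the coequalizer-level universal property instead, but these are two faces of the same coin, and your route has the advantage of producing algebra maps directly rather than constructing a linear map and then checking multiplicativity separately.
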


\subsection{The bicategory $\Cosp(A,B)$ of cospans} \label{sec:Cosp(A,B)}

In this subsection we define a bicategory $\Cosp(A,B)$ obtained from the category $\cosp(A,B)$ by replacing the morphisms in Definition~\ref{def:CAlg(Z)} by the category of 2-diagrams, which we proceed to define.

\begin{defn} \label{def:2-diagram+3-cell}  
(i) A {\em 2-diagram} from a cospan $A \rightarrow S \leftarrow B$ to $A \rightarrow T \leftarrow B$ is 
a triple $S \xrightarrow{f} M \xleftarrow{g} T$, where $M$ is
a $T$-$S$-bimodule, $f$ is a right $S$-module map and $g$ is a left $T$-module map such that
the following three diagrams commute:
\be  \label{eq:2diagram-cond}
\raisebox{3.5em}{\xymatrix{
& S \ar[d]^f & \\ A \ar[ru]^{a_1} \ar[rd]_{a_2} & M & B \ar[ul]_{b_1} \ar[dl]^{b_2} \\
& T \ar[u]^g }}
\quad , \quad
\raisebox{3.5em}{\xymatrix{
A \otimes M \ar[dd]_{c_{M,A}^{-1}} \ar[r]^{a_2 1} & T \otimes M \ar[d]^{\text{act}}   \\
&  M \\
M\otimes A \ar[r]^{1\,a_1} & M \otimes S  \ar[u]_{\text{act}}  }}
\quad , \quad
\raisebox{3.5em}{\xymatrix{
B\otimes M \ar[dd]_{c_{B,M}} \ar[r]^{b_2 1} &  T\otimes M  \ar[d]^{\text{act}}   \\
 & M \\
M \otimes B \ar[r]^{1\,b_1} &  M\otimes S \ar[u]_{\text{act}}   }}
\quad .
\ee
(ii) A {\em 3-cell} between two 2-diagrams $S \xrightarrow{f} M \xleftarrow{g} T$ and $S \xrightarrow{f'} M' \xleftarrow{g'} T$ is a $T$-$S$-bimdule map $\delta : M \to M'$ such that the following diagram commutes:
\be
\raisebox{3em}{\xymatrix@R=1em{
& S \ar[ld]_f \ar[rd]^{f'} & \\ M \ar[rr]^\delta && M' \\
& T \ar[lu]^g \ar[ru]_{g'} }}
\ee
Invertible $3$-cells define equivalence classes of $2$-diagrams. We will call such an equivalence class a {\it $2$-cell}. 
\\[.5em]
(iii) The {\em category of 2-diagrams} $\tdiag_{AB}(S,T)$ has 2-diagrams as objects and 3-cells as morphisms. The identity 3-cell for the object $S \xrightarrow{f} M \xleftarrow{g} T$ is the identity map on $M$, the composition of 3-cells is given by composition of bimodule maps.
\end{defn}

Next we will define a composition functor $\circledcirc_{R,S,T} : \tdiag_{AB}(S,T) \times \tdiag_{AB}(R,S) \to \tdiag_{AB}(R,T)$ on objects and morphisms as
\be \label{eq:2diag-vertical-compos}
\raisebox{6.7em}{\xymatrix@C=1em{ 
&&& R \ar[dl]^e \ar[dr]^{e'}  \\
&& M \ar[rr]_{\alpha} && M' \\
A \ar@/^1.5em/[uurrr]^{a_1}\ar[rrr]^{a_2}\ar@/_1.5em/[ddrrr]_{a_3} &&& S \ar[ul]^f \ar[ur]^{f'}  \ar[dl]^g \ar[dr]^{g'}  &&& B
\ar@/_1.5em/[uulll]_{b_1}\ar[lll]_{b_2}\ar@/^1.5em/[ddlll]^{b_3} \\
&& N \ar[rr]^\beta && N' \\
&&& T \ar[ul]^h \ar[ur]^{h'} 
}}
\quad \longmapsto \quad
\raisebox{4.5em}{\xymatrix@C=.3em@R=3em{ 
&&& R \ar[dl]^u \ar[dr]^{u'}  \\
A \ar@/^1em/[urrr]^{a_1}\ar@/_1em/[drrr]_{a_3} && N \otimes_S M \ar[rr]^{\beta \otimes_S \alpha} && N' \otimes_S M' && B
\ar@/_1em/[ulll]_{b_1}\ar@/^1em/[dlll]^{b_3} \\
&&& T \ar[ul]^v \ar[ur]^{v'} 
}}
\ee
where
\begin{eqnarray}
u &=& \Big( R \xrightarrow{\iota_S \, e} S \otimes M \xrightarrow{g\,1} N \otimes M \xrightarrow{\rho} N \otimes_S M \Big) \ ,
\label{eq:2-diag-com-u-def} \\
v &=& \Big( T \xrightarrow{h\,\iota_S} N \otimes S \xrightarrow{1 \, f} N \otimes M \xrightarrow{\rho} N \otimes_S M \Big) \ ,
\label{eq:2-diag-com-v-def}
\end{eqnarray}
and analogous for $u'$ and $v'$. That this assignment is functorial (i.e.\ compatible with units and composition of 3-cells) amounts to the observation that $\id_N \otimes_S \id_M = \id_{N \otimes_S M}$ and $(\alpha' \circ \alpha) \otimes_S (\beta' \circ \beta) = (\alpha' \otimes_S \beta' ) \circ (\alpha \otimes_S \beta)$. It remains to check that the 2-diagrams and the 3-cell occurring on the right hand side of \eqref{eq:2diag-vertical-compos} obey the conditions of Definition~\ref{def:2-diagram+3-cell}. This is accomplished in the following two lemmas.

\begin{lemma}
In the notation of \eqref{eq:2diag-vertical-compos}, the following is a 2-diagram:
\be \label{eq:composed-2-diag-check}
\raisebox{3.5em}{\xymatrix@R=1em{
& R \ar[d]^u & \\ A \ar[ru]^{a_1} \ar[rd]_{a_3} & N \otimes_S M & B \ar[ul]_{b_1} \ar[dl]^{b_3} \\
& T \ar[u]^v }}
\ee
\end{lemma}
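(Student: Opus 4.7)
The verification comprises four checks: (a) $N \otimes_S M$ is a $T$-$R$-bimodule, (b) $u$ is a right $R$-module map and $v$ a left $T$-module map, (c) the triangle identities $u \circ a_1 = v \circ a_3$ and $u \circ b_1 = v \circ b_3$, and (d) the two centrality squares of \eqref{eq:2diagram-cond}.

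For (a) and (b), my plan is to argue as follows. Since $M$ is an $S$-$R$-bimodule and $N$ is a $T$-$S$-bimodule, the right $R$-action on $M$ and the left $T$-action on $N$ each commute with the $S$-actions used to build the coequalizer $\rho$. The universal property of the coequalizer (ensured by Assumption \ref{ass:coeq} and Lemma \ref{lem:coeq-iterate}) therefore descends both actions to $N \otimes_S M$, and they still commute there because they already commute on $N \otimes M$; this yields the $T$-$R$-bimodule structure. That $u$ and $v$ are one-sided module maps is then a short chase from the definitions \eqref{eq:2-diag-com-u-def}--\eqref{eq:2-diag-com-v-def} together with the fact that $e$ is a right $R$-module map and $h$ is a left $T$-module map.

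For (c), I would substitute the definitions of $u$ and $v$ and insert the triangle identities $e \circ a_1 = f \circ a_2$ of the upper 2-diagram and $h \circ a_3 = g \circ a_2$ of the lower one. This reduces $u \circ a_1 = v \circ a_3$ (up to unit isomorphisms) to the identity $\rho \circ (g \otimes f) \circ (\iota_S \otimes a_2) = \rho \circ (g \otimes f) \circ (a_2 \otimes \iota_S)$, which is precisely the statement that $\rho$ balances the $S$-action; this is available because $f$ is a left and $g$ is a right $S$-module map (and $S$ is commutative, so these one-sided structures are canonical). The $B$-side triangle is checked identically.

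For (d), the most delicate step, I would use the universal property of $\rho$ to reduce each centrality square to an equality after precomposing its source with $1_A \otimes \rho : A \otimes (N \otimes M) \to A \otimes (N \otimes_S M)$. The resulting identity then follows by chaining three commuting cells: the centrality square of the $N$-diagram (converting a left $a_3(A)$-action on $N$ into a right $a_2(A)$-action, up to a braiding), the coequalizer identity (transporting this $S$-valued action across $\rho$ from $N$ to $M$), and the centrality square of the $M$-diagram (converting a left $a_2(A)$-action on $M$ into a right $a_1(A)$-action, up to a second braiding). The hexagon axiom assembles the two braidings into the single $c^{-1}_{N \otimes_S M, A}$ demanded by \eqref{eq:2diagram-cond}; the $B$-centrality square works the same way with $c$ in place of $c^{-1}$. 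I expect the principal obstacle to be the clean packaging of this last step -- coordinating the two braidings, the associators, the unit constraints, and the universal property of $\rho$ within one large diagram, in the same spirit as the large commutative diagrams appearing in the proofs of Proposition \ref{prop:comm} and Lemma \ref{lem:TxBS-algebra}.
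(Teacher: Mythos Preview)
Your approach is correct and matches the paper's proof in structure: both verify the module-map properties of $u,v$ directly from the definitions, establish the triangle identities by routing through $a_2:A\to S$ (the paper packages this as a single commutative diagram built around $A\xrightarrow{a_2}S\xrightarrow{\iota\,1\,\iota}SSS$, but the content is the same reduction you describe), and prove the centrality squares by chaining the $N$-centrality, the coequalizer balancing, and the $M$-centrality, then descending along $1\otimes\rho$.

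One correction: your parenthetical ``$S$ is commutative'' is false --- the middle term of a cospan in Definition~\ref{def:cospan} is an arbitrary algebra, not a commutative one. Fortunately your argument does not use this: the facts you actually need are that $f$ is a left $S$-module map and $g$ is a right $S$-module map (which come directly from Definition~\ref{def:2-diagram+3-cell}(i)), and these suffice for the balancing identity $\rho\circ(g\otimes f)\circ(\iota_S\otimes a_2)=\rho\circ(g\otimes f)\circ(a_2\otimes\iota_S)$ without any commutativity assumption on $S$.
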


\pf
Since all maps in the definition of $u$ in \eqref{eq:2-diag-com-u-def} are right $R$-module maps, so is $u$. Analogously, $v$ is a left $T$-module map. The commutativity of the left subdiagram in \eqref{eq:composed-2-diag-check} follows from that of
\be
\raisebox{5em}{\xymatrix{ 
& R \ar[r]^e & 
  M \ar@/^2em/[rr]^{(g \circ \iota)\,1} &
  SS \ar[r]^{gf} &
  NM \ar[dr]^{\rho}
\\
A \ar[ur]^{a_1} \ar[rr]^{a_2} \ar[dr]^{a_3} &&
  S \ar[u]^f \ar[d]^g \ar[r]^{\iota\,1\,\iota} \ar[ur]^{\iota\,1} \ar[dr]_{1\,\iota} &
  SSS \ar[u]_{1\,\mu} \ar[d]^{\mu\,1} \ar[r]^{g\,1\,f} &
  NSM \ar[u]^{1\,\text{act}} \ar[d]_{\text{act}\,1} &
  N \otimes_S M
\\
& T \ar[r]^h & 
  N \ar@/_2em/[rr]_{1\,(f \circ \iota)}  &
  SS \ar[r]^{gf} &
  NM \ar[ur]_{\rho}
}}
\ee
The commutativity of the right subdiagram in \eqref{eq:composed-2-diag-check} can be checked similarly. To see that the second diagram in \eqref{eq:2diagram-cond} commutes first note that
\be
\raisebox{4em}{\xymatrix@R=.7em{ 
ANM \ar[r]^{a_3\,1\,1} \ar[dd]^{c_{N,A}^{-1}1} &
  TNM \ar[dr]^{\text{act}\,1}
\\  
&& NM \ar[dr]^{\rho} 
\\
NAM \ar[r]^{1\,a_2\,1} \ar[dd]^{1\,c_{M,A}^{-1}} &
  NSM \ar[ur]^{\text{act}\,1} \ar[dr]^{1\,\text{act}} &&
  N \otimes_S M 
\\
&& NM \ar[ur]^{\rho} 
\\
NMA \ar[r]^{1\,1\,a_1} &
  NMR \ar[ur]^{1\,\text{act}}
}}
\ee
commutes. Since $\rho$ is a bimodule map, this diagram implies the identity given by the second diagram in \eqref{eq:2diagram-cond}, but precomposed with $A\otimes (N \otimes M) \xrightarrow{1\,\rho} A \otimes (N \otimes_S M)$. The universal property of the coequalizer $1\rho$ now gives the desired identity. The commutativity of the third diagram in \eqref{eq:2diagram-cond} follows analogously. 
\epf

\begin{lemma}
In the notation of \eqref{eq:2diag-vertical-compos}, the following is a 3-cell:
\be
\raisebox{3em}{\xymatrix@R=1.5em{
& S \ar[ld]_u \ar[rd]^{u'} & \\ N\otimes_S M \ar[rr]^{\beta \otimes_S \alpha} && N' \otimes_S M' \\
& T \ar[lu]^v \ar[ru]_{v'} }}
\ee
\end{lemma}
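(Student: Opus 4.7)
The plan is to check the two requirements of Definition~\ref{def:2-diagram+3-cell}\,(ii): first that $\beta \otimes_S \alpha$ is a $T$-$R$-bimodule map between the $T$-$R$-bimodules $N \otimes_S M$ and $N' \otimes_S M'$, and second that the two triangles (on top involving $u,u'$ and on the bottom involving $v,v'$) commute. Note that the apex labeled $S$ in the lemma diagram should read $R$, since both composed 2-diagrams go from the cospan of $R$ to the cospan of $T$.

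First I would construct $\beta \otimes_S \alpha$ via the universal property of the coequalizer defining $N \otimes_S M$. Since $\alpha : M \to M'$ is a 3-cell between the top 2-diagrams, it is in particular a left $S$-module map; likewise $\beta : N \to N'$ is a right $S$-module map. Therefore the composite $\rho' \circ (\beta \otimes \alpha) : N \otimes M \to N' \otimes_S M'$ coequalizes the two parallel arrows $\mathrm{act}_N \otimes 1$ and $1 \otimes \mathrm{act}_M$ defining $\rho$, which yields a unique morphism $\beta \otimes_S \alpha : N \otimes_S M \to N' \otimes_S M'$ with $(\beta \otimes_S \alpha) \circ \rho = \rho' \circ (\beta \otimes \alpha)$. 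Compatibility with the residual left $T$-action (on the $N$-factor) and right $R$-action (on the $M$-factor) follows from the corresponding equivariances of $\beta$ and $\alpha$, after postcomposing with the epimorphism $\rho'$ to reduce the claim to statements on $N \otimes M$.

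Next I would verify the triangle $(\beta \otimes_S \alpha) \circ u = u'$. Using the definition \eqref{eq:2-diag-com-u-def} of $u$ and the intertwining relation $(\beta \otimes_S \alpha) \circ \rho = \rho' \circ (\beta \otimes \alpha)$ just established, I compute
\be
(\beta \otimes_S \alpha) \circ u
~=~ \rho' \circ (\beta \otimes \alpha) \circ (g \otimes 1) \circ (\iota_S \otimes e)
~=~ \rho' \circ \bigl((\beta \circ g) \otimes (\alpha \circ e)\bigr) \circ (\iota_S \otimes \id_R).
\ee
The 3-cell condition for $\beta$ gives $\beta \circ g = g'$, and the 3-cell condition for $\alpha$ gives $\alpha \circ e = e'$; substituting these yields $\rho' \circ (g' \otimes 1) \circ (\iota_S \otimes e') = u'$. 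The second triangle $(\beta \otimes_S \alpha) \circ v = v'$ is checked by the same argument, this time using $\beta \circ h = h'$ and $\alpha \circ f = f'$ via the definition \eqref{eq:2-diag-com-v-def} of $v$.

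There is no real obstacle here: once the universal property of $\otimes_S$ supplies the induced map and its compatibility with the actions, the triangles reduce to identities of compositions on $N \otimes M$ that follow immediately from the fact that $\alpha$ and $\beta$ intertwine the component maps of their respective 2-diagrams. The only point that requires mild care is the bookkeeping of left/right actions so that the pairing with $\iota_S$ on one side and with $e$, $e'$, $f$, $f'$, $g$, $g'$, $h$, $h'$ on the other is placed consistently.
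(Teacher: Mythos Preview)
Your proof is correct and follows exactly the approach the paper has in mind; the paper's own argument is simply the one-line remark ``Immediate from the definition of $u$ and $v$ in \eqref{eq:2-diag-com-u-def} and \eqref{eq:2-diag-com-v-def}, and from the fact that $\alpha$ and $\beta$ in \eqref{eq:2diag-vertical-compos} are 3-cells,'' which you have unpacked in full detail. Your observation that the top vertex in the displayed diagram should read $R$ rather than $S$ is also correct.
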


\pf
Immediate from the definition of $u$ and $v$ in \eqref{eq:2-diag-com-u-def} and \eqref{eq:2-diag-com-v-def}, and from the fact that $\alpha$ and $\beta$ in \eqref{eq:2diag-vertical-compos} are 3-cells.
\epf

The image of the unit functor $\one_S : \one \to \tdiag_{AB}(S,S)$ 
is
\be \label{eq:2diag-identities}
\raisebox{3.5em}{\xymatrix@R=1em{
& S \ar@{=}[d] & \\
A \ar[ru]^{a} \ar[rd]_{a} & S & B \ar[ul]_{b} \ar[dl]^{b} \\
& S \ar@{=}[u] }}
\ee

\begin{prop}  \label{prop:Cosp-AB-bicat}
Let $\CZ$ be a braided monoidal category satisfying Assumption~\ref{ass:coeq} and let $A,B$ be commutative algebras in $\CZ$. Then
the following data defines a bicategory, which we call $\Cosp(A,B)$:
\begin{itemize}
\item objects are cospans $(A \rightarrow S \leftarrow B)$, $(A \rightarrow T \leftarrow B)$, \dots between the commutative algebras $A$ and $B$, and the morphism category $S \to T$ is given by $\tdiag(S,T)$,
\item identities and composition are given by \eqref{eq:2diag-identities} and \eqref{eq:2diag-vertical-compos},
\item the associativity isomorphism 
$\circledcirc_{Q,R,T} \circ (\circledcirc_{R,S,T} \times \id) \to \circledcirc_{Q,S,T} \circ (\id \times \circledcirc_{Q,R,S})$ is the canonical isomorphisms $(N \otimes_S M) \otimes_R L \to N \otimes_S (M \otimes_R L)$,
\item the left and right unit isomorphisms $\circledcirc_{S,T,T} \circ (\one_T \times \id) \to \id$ and $\circledcirc_{S,S,T} \circ (\id \times \one_S) \to \id$ are the canonical isomorphisms $T \otimes_T M \to M$ and $M \otimes_S S \to M$.
\end{itemize}
\end{prop}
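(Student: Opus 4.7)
The plan is to verify the bicategory axioms (as listed in Definition~\ref{def:bicat}) by reducing wherever possible to the well-known bicategory structure on bimodules in $\CZ$ and by using the results established in the two lemmas immediately preceding the proposition.

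First I would note that the composition functor $\circledcirc_{R,S,T}$ is well-defined on objects and morphisms of $\tdiag$: the two preceding lemmas verify that the right-hand side of \eqref{eq:2diag-vertical-compos} is a 2-diagram and that $\beta \otimes_S \alpha$ is a 3-cell. Functoriality is immediate from the corresponding property of $\otimes_S$ on bimodules, namely $\id_N \otimes_S \id_M = \id_{N \otimes_S M}$ and $(\alpha' \circ \alpha) \otimes_S (\beta' \circ \beta) = (\alpha' \otimes_S \beta') \circ (\alpha \otimes_S \beta)$.

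Next I would construct the associator. Given three composable cospans with connecting 2-diagrams having underlying bimodules $L, M, N$, the canonical isomorphism $a_{L,M,N}\colon (N \otimes_S M) \otimes_R L \to N \otimes_S (M \otimes_R L)$ of iterated fibered tensor products is natural in $L, M, N$. The nontrivial point is that $a_{L,M,N}$ is a 3-cell in the sense of Definition~\ref{def:2-diagram+3-cell}, i.e.\ that it intertwines the composed structure maps from $Q$ and from $T$. Iterating the definitions \eqref{eq:2-diag-com-u-def}--\eqref{eq:2-diag-com-v-def}, one sees that the structure maps on either side of $a_{L,M,N}$ reduce to inserting unit morphisms into two of the three slots of a triple fibered tensor product $N \otimes_S M \otimes_R L$ and then applying $\rho$; the canonical associator of bimodules manifestly intertwines these.

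For the unitors, substituting the identity 2-diagram \eqref{eq:2diag-identities} into \eqref{eq:2diag-vertical-compos} one obtains the canonical isomorphism $T \otimes_T M \to M$ (and symmetrically $M \otimes_S S \to M$). The check that this is a 3-cell is straightforward: under the canonical identification $T \otimes_T M \cong M$, the structure maps from $A$ and $B$ constructed via \eqref{eq:2-diag-com-u-def}--\eqref{eq:2-diag-com-v-def} reduce, using the unit and action axioms of $M$ as a $T$-$S$-bimodule, to the original maps $f$ and $g$ of the 2-diagram. Naturality in $M$ is inherited from naturality of the bimodule unitors.

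Finally, the pentagon and triangle coherence axioms are inherited from the corresponding coherence of fibered tensor products of bimodules in $\CZ$: since the associator and unitors of $\Cosp(A,B)$ are given by the canonical bimodule isomorphisms (promoted to 3-cells by the previous step), the pentagon and triangle identities reduce to identities already holding in the bimodule bicategory. The main obstacle is the bookkeeping in the previous two steps, where one must verify that the canonical bimodule associator and unitors are compatible with the additional structure maps from $A, B$ and with the maps $f, g$ distinguishing 2-diagrams from plain bimodules; once this is established, everything else is routine coherence for fibered tensor products.
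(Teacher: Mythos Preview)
Your proposal is correct and follows essentially the same approach as the paper: the paper's proof consists of a single sentence noting that the coherence conditions are immediate implications of those for the fibered tensor product. You supply more detail than the paper does (in particular the verification that the canonical associator and unitors are 3-cells compatible with the structure maps from $A$, $B$, $S$, $T$), but the underlying strategy---reduce everything to the bimodule bicategory---is identical.
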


To establish the proposition it remains to check the coherence conditions. As in the previous subsection, these are immediate implications from those for the fibered tensor product.

\subsection{Composition of cospans}\label{sec:compos-of-cospan}

In this subsection we will define a (non-lax) 2-functor
$$\Coco \equiv \Coco_{A,B,C} : \Cosp(B,C) \times \Cosp(A,B) \to \Cosp(A,C).$$ To this end we will give the data and verify the conditions as stated in Definition~\ref{def:lax-functor}. 

The objects of $\Cosp(B,C) \times \Cosp(A,B)$ are pairs of cospans. The first piece of data of the 2-functor is a map on objects; in this case it is just the composition of cospans defined in \eqref{eq:cospan-comp},
\be \label{eq:laxfun-data1}
\Coco ~:~
\raisebox{1.5em}{
\xymatrix@R=1em@C=1em{ & S && T \\ A\ar[ru]^(.4)a && B \ar[lu]_b \ar[ru]^{b'} && C \ar[lu]_(.4)c}
}
\quad \longmapsto \quad
\raisebox{1.5em}{
\xymatrix@R=1em@C=1em{ & S\otimes_B T \\ A\ar[ru]^(.35)\alpha && C \ar[lu]_(.35)\gamma}
}
\quad ,
\ee
where the ordering convention (putting the element of the second category in the product $\Cosp(B,C) \times \Cosp(A,B)$ in front of the first) is that already used in \eqref{eq:cospan-comp}; we will abbreviate such a pair as $(S,T)$. The right hand side is a cospan by Proposition\,\ref{prop:cospan-composition}.

Objects in the morphism category $\Mor\big( (S,T), (S',T') \big)$ are pairs of 2-diagrams, and morphisms are pairs of 3-cells. The second piece of data is a collection of functors $\Coco_{(S,T),(S',T')} : \Mor\big( (S,T), (S',T') \big) \to \Mor\big( \Coco(S,T) , \Coco(S',T') \big)$. They are given on objects and morphisms as follows
\be \label{eq:laxfun-data2}
\raisebox{3.6em}{\xymatrix@C=.6em{
& & S \ar[ld]^f \ar[rd]_{f'} && && T \ar[ld]^h \ar[rd]_{h'} \\
A \ar@/^.5em/[rru]^a \ar@/_.5em/[rrd]_{a'} & M \ar[rr]^{\phi} & & M' & 
  B \ar@/^.5em/[rru]^d \ar@/_.5em/[llu]_b \ar@/_.5em/[rrd]_{d'} \ar@/^.5em/[lld]^{b'} & N \ar[rr]^\psi & & N' & 
  C \ar@/_.5em/[llu]_c \ar@/^.5em/[lld]^{c'} \\
& & S' \ar[lu]_g \ar[ru]^{g'} && &&T' \ar[lu]_k \ar[ru]^{k'}
}}
~~\longmapsto~~
\raisebox{3.8em}{\xymatrix@C=0em{
&& & S{\otimes_B}T \ar[ld]_(.65){f \otimes_B h} \ar[rd]^(.65){f' \otimes_B h'}  \\ 
A \ar@/^1.5em/[rrru]^{\alpha} \ar@/_1.5em/[rrrd]_{\alpha'}  && M{\otimes_B}N \ar[rr]^{\hspace*{-1em}\phi \otimes_B \psi\hspace*{-1em}} && 
  M'{\otimes_B}N' && C \ar@/_1.5em/[lllu]_\gamma \ar@/^1.5em/[llld]^{\gamma'}\\
&& & S'{\otimes_B}T' \ar[lu]^(.65){g \otimes_B k} \ar[ru]_(.65){g' \otimes_B k'}  
}}
\ee
It is clear that the 3-cell on the right hand side is well-defined (i.e.\ the two triangles involving $\phi \otimes_B \psi$ commute), but for the two 2-diagrams the verification is more involved and is the content of Lemma~\ref{lem:laxfun-2diag-welldef} below. 

In any case, it is clear that $\Coco_{(S,T),(S',T')}$ defines a functor: the identity (i.e.\ the pair consisting of two identity 3-cells) gets mapped to the identity 3-cell, and compatibility with composition amounts to the identity $(\phi' \circ \phi) \otimes_B (\psi' \circ \psi) = (\phi' \otimes_B \psi') \circ (\phi \otimes_B \psi)$, which is a property of $\otimes_B$.

The third piece of data are the unit transformation, a collection of 3-cells $i_{(S,T)} : \one_{\Coco(S,T)} \to \Coco_{(S,T),(S,T)} \circ \one_{(S,T)}$. One quickly checks that both sides are equal to the 2-diagram
\be
\raisebox{3em}{\xymatrix@R=1.5em@C=1em{
& S\otimes_B T \ar[d]^(.5){1} & \\ A \ar@/^.6em/[ru] \ar@/_.6em/[rd] & S\otimes_B T & C \ar@/_.6em/[ul] \ar@/^.6em/[dl]\\
& S\otimes_B T \ar[u]_(.5){1}
}}
\ee
and we will choose
\be \label{eq:laxfun-data3}
  i_{(S,T)} = \id_{S \otimes_B T} \ .
\ee

The fourth and last piece of data are the multiplication transformations, a collection of natural transformations $m$ from
\be
\Coco_{(S_2T_2),(S_3T_3)} 
\Bigg(
\raisebox{2.3em}{\footnotesize \xymatrix@R=1em@C=1em{
& S_2 \ar[d] && T_2 \ar[d] \\
A \ar[ru] \ar[rd] & M' & B \ar[ru] \ar[lu] \ar[rd] \ar[ld] & N' & C \ar[lu] \ar[ld] \\
& S_3 \ar[u] && T_3 \ar[u]}}
\Bigg)
\circledcirc
\Coco_{(S_1T_1),(S_2T_2)}
\Bigg(
\raisebox{2.3em}{\footnotesize \xymatrix@R=1em@C=1em{
& S_1 \ar[d] && T_1 \ar[d] \\
A \ar[ru] \ar[rd] & M & B \ar[ru] \ar[lu] \ar[rd] \ar[ld] & N & C \ar[lu] \ar[ld] \\
& S_2 \ar[u] && T_2 \ar[u]}}
\Bigg)
\ee
to
\be
\Coco_{(S_1T_1),(S_3T_3)} 
\Bigg(
\raisebox{2.3em}{\footnotesize \xymatrix@R=1em@C=1em{
& S_1 \ar[d] && T_1 \ar[d] \\
A \ar[ru] \ar[rd] & M'{\otimes_{S_2}}M & B \ar[ru] \ar[lu] \ar[rd] \ar[ld] & N'{\otimes_{T_2}}N & C \ar[lu] \ar[ld] \\
& S_3 \ar[u] && T_3 \ar[u]}}
\Bigg)
\ee
That is, we have to provide a collection of 3-cells
\be \label{eq:Cosp-comp-3cells}
\xymatrix@C=1em{
& S_1 \otimes_B T_1 \ar[ld] \ar[rd] &  \\
(M'\otimes_B N') \otimes_{S_2 \otimes_B T_2} (M\otimes_B N) \ar[rr]^{\beta_{(M'N'),(MN)}}
&&
(M' \otimes_{S_2} M) \otimes_B (N'\otimes_{T_2} N) 
\\
& S_3 \otimes_B T_3 \ar[lu] \ar[ru] &
}
\ee
natural in $M$, $N$, $M'$, $N'$. The morphisms $\beta_{(M'N'),(MN)}$ are defined via coequalizers in the diagram
\be\label{eq:Cosp-comp-3cells-def}
\raisebox{2.3em}{\xymatrix{
M' N' M N \ar[r]^(.35){\rho_B\rho_B} \ar[d]^{1\,c_{N',M}1} &  (M'\otimes_B N')\otimes (M\otimes_B N) \ar[r]^(.46){\rho_{S_2\otimes_B T_2}}&
(M'\otimes_B N') \otimes_{S_2 \otimes_B T_2} (M\otimes_B N) \ar@{-->}[d]^{\exists! \, \beta_{(M'N'),(MN)}}
\\
M' M  N' N \ar[r]^(.35){\rho_{S_2}\rho_{T_2}}  & (M'\otimes_{S_2} M) \otimes (N'\otimes_{T_2} N) 
\ar[r]^{\rho_B} &
(M' \otimes_{S_2} M) \otimes_B (N'\otimes_{T_2} N) \ .
}}
\ee
In Lemma~\ref{lem:beta}, we will prove existence of $\beta_{(M'N'),(MN)}$ and show that it is an isomorphism and a 3-cell.

\medskip

For the rest of this subsection we will go through the announced lemmas in turn, leading to the proof that $\Coco$ is a 2-functor between bicategories (Proposition\,\ref{prop:C_ABC-lax-functor}).

\begin{lemma} \label{lem:laxfun-2diag-welldef}
In the notation of \eqref{eq:laxfun-data2}, the diagram
\be \label{eq:laxfun-2diag-welldef-aux}
\xymatrix{
& S\otimes_B T \ar[d]^(.55){f \otimes_B h} & \\ 
A \ar@/^.6em/[ru]^\alpha \ar@/_.6em/[rd]_{\alpha'} & M\otimes_B N & 
  C \ar@/_.6em/[ul]_\gamma \ar@/^.6em/[dl]^{\gamma'} \\
& S' \otimes_B T' \ar[u]^(.55){g \otimes_B k}
}
\ee
is a 2-diagram in the sense of Definition~\ref{def:2-diagram+3-cell}.
\end{lemma}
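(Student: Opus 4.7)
The plan is to verify the three defining properties of a 2-diagram listed in Definition~\ref{def:2-diagram+3-cell}: the commuting cospan triangles with $A$ and $C$, and the two braiding conditions with $A$ and $C$. Along the way I would also justify that $M\otimes_B N$ is naturally an $(S\otimes_B T)$-$(S'\otimes_B T')$-bimodule and that $f\otimes_B h$, $g\otimes_B k$ are the appropriate module maps between the composed cospans from \eqref{eq:laxfun-data1}.

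First, I would set up the bimodule structure on $M\otimes_B N$. Here $M$ is an $S$-$S'$-bimodule and, via $b$ and $b'$, also a $B$-$B$-bimodule; likewise $N$ is a $T$-$T'$-bimodule and a $B$-$B$-bimodule via $d,d'$. The coequalizer $M\otimes N \xrightarrow{\rho} M\otimes_B N$ is formed using the right $B$-action on $M$ and the left $B$-action on $N$. The left $(S\otimes_B T)$-action on $M\otimes_B N$ is induced from
$(S\otimes T)\otimes (M\otimes N)\xrightarrow{1\,c_{T,M}\,1} (S\otimes M)\otimes (T\otimes N)\xrightarrow{\mathrm{act}\,\mathrm{act}} M\otimes N \xrightarrow{\rho} M\otimes_B N$
and descent to the quotient $S\otimes_B T$ in the first factor is precisely the centrality condition \eqref{eq:cospan-central} applied to the cospans $A\to S\leftarrow B$ and $A\to T\leftarrow B$; the right $(S'\otimes_B T')$-action is defined symmetrically. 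Assumption~\ref{ass:coeq} together with Lemma~\ref{lem:coeq-iterate} is the tool that allows one to pass to coequalizers in both tensor factors simultaneously.

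Second, the fact that $f\otimes_B h$ is a right $(S'\otimes_B T')$-module map and $g\otimes_B k$ a left $(S\otimes_B T)$-module map follows from the corresponding properties of $f,h,g,k$ in the input 2-diagrams via the universal property of $\rho$. The cospan triangles for $A$ and $C$ in \eqref{eq:laxfun-2diag-welldef-aux} reduce, after unwinding the definitions \eqref{eq:A-T-C-S-TBS} of $\alpha,\alpha',\gamma,\gamma'$, to the commutativity of the triangles in the first diagram of \eqref{eq:2diagram-cond} for the 2-diagrams $(f,M,g)$ and $(h,N,k)$ separately, together with the fact that $\rho$ equalizes left and right $B$-multiplication.

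The remaining and most substantial point is the two braiding squares (second and third diagram of \eqref{eq:2diagram-cond}) for $A$ and $C$ acting on $M\otimes_B N$. I would verify the $A$-square by precomposing with $\rho_{M,N}:M\otimes N\to M\otimes_B N$ on the target and invoking the universal property of the coequalizer to lift identities from $A\otimes M\otimes N$ to $A\otimes (M\otimes_B N)$. On the precomposed level the identity decomposes into the $A$-braiding for $M$ (the second diagram in \eqref{eq:2diagram-cond} for the 2-diagram $(f,M,g)$) and the $A$-braiding for $N$, glued together by naturality and the hexagon axiom for the braiding in $\CZ$ (using $(c|c)_U=(c\otimes 1)\circ(1\otimes c)_U$). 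The $C$-square is entirely analogous. The main obstacle is purely organizational: one has to shuffle several tensor factors through the braiding and ensure that the identity lifts cleanly through $\rho$, which requires the exactness supplied by Assumption~\ref{ass:coeq} and its iteration in Lemma~\ref{lem:coeq-iterate}.
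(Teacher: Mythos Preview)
Your overall strategy matches the paper's: establish the bimodule structure on $M\otimes_B N$, verify the module-map properties of $f\otimes_B h$ and $g\otimes_B k$, check the cospan triangles, and then lift the braiding squares to the non-quotient level and descend via the coequalizer. Two points need correction, one notational and one substantive.

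First, you have the left/right conventions reversed throughout. By Definition~\ref{def:2-diagram+3-cell}, a 2-diagram from $S$ to $S'$ makes $M$ an $S'$-$S$-bimodule (the bottom algebra acts on the left, the top on the right), so $f$ is a right $S$-module map and $g$ a left $S'$-module map. Consequently $M\otimes_B N$ must be an $(S'\otimes_B T')$-$(S\otimes_B T)$-bimodule, $f\otimes_B h$ a right $(S\otimes_B T)$-module map, and $g\otimes_B k$ a left $(S'\otimes_B T')$-module map. This is only a matter of convention, but it needs to be fixed before your diagrams can be read correctly.

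Second, and more substantively, your decomposition of the $A$-braiding square is wrong. You say the identity ``decomposes into the $A$-braiding for $M$ \dots and the $A$-braiding for $N$''. But $N$ is a 2-diagram between cospans $B\to T\leftarrow C$ and $B\to T'\leftarrow C$; its braiding conditions in \eqref{eq:2diagram-cond} involve $B$ and $C$, not $A$. The correct argument, and the paper's, is that the map $\alpha:A\to S\otimes_B T$ factors through $S$ alone (via $a$ and $\iota_T$), and similarly $\alpha'$ factors through $S'$, so the $A$-action on $M\otimes_B N$ only touches the $M$ factor. One then uses the second diagram in \eqref{eq:2diagram-cond} for the 2-diagram $(f,M,g)$, together with plain naturality of the braiding to move $A$ past $N$; no 2-diagram property of $N$ enters. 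Symmetrically, the $C$-square uses only the third diagram in \eqref{eq:2diagram-cond} for the 2-diagram $(h,N,k)$.
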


\begin{proof}
The requirements for a 2-diagram are as follows.
\\[.5em]
\nxt $M\otimes_B N$ is a $S' \otimes_B T'$-$S \otimes_B T$-bimodule:\\
The right action of $S \otimes_B T$ on $M\otimes_B N$ is defined via coequalizers by the following diagram:
\be \label{eq:SxBT-action-def}
\raisebox{2em}{\xymatrix{ 
M{\otimes}B{\otimes}N{\otimes}S{\otimes}B{\otimes}T 
  \ar[d]_{R_{SBT}} \ar@<-.7ex>[rr]_{RR} \ar@<+.7ex>[rr]^{LL}
&& M{\otimes}N{\otimes}S{\otimes}T  \ar[r]^(.4){\rho\rho} \ar[d]_{R_{ST}} & (M{\otimes_B}N) \otimes (S{\otimes_B}T) \ar@{-->}[d]^{\exists ! R_{S\otimes_BT}}  \\
M{\otimes}B{\otimes}N \ar@<-.7ex>[rr]_{R} \ar@<+.7ex>[rr]^{L}  && M\otimes N \ar[r]^(.4){\rho} & M\otimes_B N
}}
\ee
where $R_{SBT}$ is the right action of $S\otimes B\otimes T$ on $M \otimes B \otimes N$. It is defined by using $c_{B \otimes N,S}$ to braid $S$ past $B \otimes N$ and then acting on $M$, and analogously for $B$ (acting on $B$) and $T$ (acting on $N$). The right action of
  $S \otimes T$ 
on $M\otimes N$ is denoted by  $R_{ST}$. The proof that the two overlaid squares (one involving $LL$ and $L$, and the other $RR$ and $R$) commute and that $\sigma$ indeed defines an action is parallel to that of Lemma~\ref{lem:TxBS-algebra} and we omit the details; we just mention that it also uses the fact that commutativity of the $L$-part of the left square uses the left-center property \eqref{eq:cospan-central} of the right $B$-action, and the commutativity of the $R$-square requires \eqref{eq:2diagram-cond} (the middle diagram with $A$ replaced by $B$ and $M$ by $N$) -- the easiest way to see this is to write out the conditions in the standard graphical notation.

The left $S' \otimes_B T'$-action is found analogously, and one checks that the two actions commute.
\\[.5em]
\nxt $f \otimes_B h$ is a right $S \otimes_B T$-module map, and $g \otimes_B k$ is a left $S' \otimes_B T'$-module map:\\
With $R_{ST}$ as above and $\mu_{ST}$ as in \eqref{eq:TS-mu}, one easily checks that the two morphisms
$(f \otimes g) \circ \mu_{ST}$ and $R_{ST} \circ (f \otimes g \otimes \id_{S \otimes T})$ are equal as morphisms $S\otimes T\otimes S\otimes T \to M\otimes N$. Using the universal property of $(-)\otimes_B (-)$ (as defined via coequalizers), one finds $(f \otimes_B g) \circ \mu = R_{S\otimes_ST} \circ ( (f \otimes_B g) \otimes \id_{S \otimes_B T})$ (with $\mu$ from \eqref{eq:TBS-alg} and $R_{S\otimes_ST}$
from \eqref{eq:SxBT-action-def}), as required. The corresponding property for $g \otimes_B k$ follows along the same lines.
\\[.5em]
\nxt The first diagram in \eqref{eq:2diagram-cond} commutes:\\
Commutativity of the left sub-diagram in \eqref{eq:2diagram-cond}, which in the present case is the left sub-diagram in \eqref{eq:laxfun-2diag-welldef-aux}, follows from commutativity of
\be
\raisebox{3.7em}{
\xymatrix{ 
& S \ar[r]^(.35){1\,\iota_T} & 
     S \otimes T  \ar[d]^{f h} \ar[r]^\rho & 
     S \otimes_B T \ar[d]^{f \otimes_B h} \\
A \ar[ur]^a \ar[dr]_{a'} &  &   
  M \otimes N \ar[r]^\rho & M \otimes_B N \\
& S' \ar[r]^(.35){1\,\iota_{T'}} & 
  S' \otimes T' \ar[u]_{g k} \ar[r]^\rho & 
  S' \otimes_B T' \ar[u]_{g \otimes_B k}  \ .
}}
\ee
To see that the hexagon in the above diagram commutes, one uses that $S \xrightarrow{f} M \xleftarrow{g} S'$ is a 2-diagram by assumption (see \eqref{eq:laxfun-data2}), and that 
   $h \circ \iota_T = k \circ \iota_{T'}$. 
The latter identity is obtained by composing the first diagram in \eqref{eq:2diagram-cond} with the unit of $A$ (say), together with the fact that $a_1$ and $a_2$ are algebra maps.

Commutativity of the right sub-diagram in \eqref{eq:2diagram-cond} can be verified similarly.
\\[.5em]
\nxt The second and third diagram in \eqref{eq:2diagram-cond} commute:\\
Consider the diagram
\be
\raisebox{3.5em}{\xymatrix{
A \otimes M \otimes N \ar[dd]_{c_{MN,A}^{-1}} \ar[rr]^{a' \, \iota_{T'}\, 1\,1} && S' \otimes T' \otimes M \otimes N \ar[d]^{L_{S'T'}}   \\
& & M \otimes N \\
M \otimes N \otimes A \ar[rr]^{1\,1 \,a\,\iota_T} && M \otimes N \otimes S \otimes T  \ar[u]_{R_{ST}}  }}
\ee
Using that $S \rightarrow M \leftarrow S'$ is a 2-diagram, one quickly checks that the above diagram commutes (again the standard graphical notation is helpful). Via the universal property of coequalizers one then shows the corresponding statement with $M \otimes_B N$, $S \otimes_B T$ and $S' \otimes_B T'$.
This gives commutativity of the second diagram in \eqref{eq:2diagram-cond}. The arguments for the third diagram are analogous.
\end{proof}

\begin{lemma} \label{lem:beta}
(i) There exists a unique morphism $\beta_{(M'N'),(MN)}$, 
such that \eqref{eq:Cosp-comp-3cells-def} commutes. \\
(ii) $\beta_{(M'N'),(MN)}$ is an isomorphism for all $M,M',N,N'$. \\
(iii) The diagram \eqref{eq:Cosp-comp-3cells} defines a 3-cell.  \\
(iv) $\beta_{(M'N'),(MN)}$
is natural in $(M,N)$ and $(M',N')$.
\end{lemma}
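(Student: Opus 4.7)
\medskip

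\noindent\textbf{Proof plan.} For part (i), the strategy is to apply the universal property of coequalizers twice. Let $\phi = \rho_B \circ (\rho_{S_2}\rho_{T_2}) \circ (1\,c_{N',M}\,1)$ denote the bottom-left composite in \eqref{eq:Cosp-comp-3cells-def}. First I would show that $\phi$ coequalizes both $B$-actions on $M' \otimes N'$ and on $M \otimes N$ in the sense of the two pairs $(L,R)$ implicit in $\rho_B \rho_B$. This uses the centrality conditions \eqref{eq:cospan-central} on the algebra maps $b_2', d_2$ from $B$ into $S_2$ and $T_2$ to slide the $B$-actions through the braiding $c_{N',M}$, together with the middle and right diagrams of \eqref{eq:2diagram-cond} that convert a right $B$-action on $M$ (resp.\ $N'$) into a left $S_2$-action on $M$ (resp.\ left $T_2$-action on $N'$). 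By iterated application of Lemma~\ref{lem:coeq-iterate}, $\rho_B \rho_B$ is a coequalizer, so $\phi$ descends to a unique map $\tilde\phi : (M' \otimes_B N') \otimes (M \otimes_B N) \to (M' \otimes_{S_2} M) \otimes_B (N' \otimes_{T_2} N)$. Next I would verify that $\tilde\phi$ further coequalizes the $S_2 \otimes_B T_2$-action; this boils down to the coequalizing properties already built into $\rho_{S_2}$ and $\rho_{T_2}$, after unpacking the definition \eqref{eq:TS-mu} of the multiplication on $S_2 \otimes T_2$ and its descent to $S_2 \otimes_B T_2$ via Lemma~\ref{lem:TxBS-algebra}. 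The universal property of $\rho_{S_2 \otimes_B T_2}$ then produces the unique $\beta_{(M'N'),(MN)}$.

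For part (ii), I would build a candidate inverse $\beta^{-1}_{(M'N'),(MN)}$ by running exactly the same argument with the inverse braiding $1\,c_{N',M}^{-1}\,1$, so that the roles of the top and bottom composites in \eqref{eq:Cosp-comp-3cells-def} are swapped. The composites $\beta \circ \beta^{-1}$ and $\beta^{-1} \circ \beta$ then each agree with the respective identity morphism after precomposition with the relevant coequalizer out of $M' \otimes N' \otimes M \otimes N$ (because $c_{N',M} \circ c_{N',M}^{-1} = 1$ and $c_{N',M}^{-1} \circ c_{N',M} = 1$); uniqueness in the universal property promotes this to the desired identities.

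For part (iii), both triangles in \eqref{eq:Cosp-comp-3cells} can be checked after precomposition with the coequalizer $\rho_{S_2 \otimes_B T_2} \circ (\rho_B \rho_B)$, reducing the verification to an identity between morphisms out of $M' \otimes N' \otimes M \otimes N$. Substituting the explicit formulas \eqref{eq:2-diag-com-u-def}, \eqref{eq:2-diag-com-v-def} for the maps from $S_1 \otimes_B T_1$ and $S_3 \otimes_B T_3$ into the two objects, both sides reduce to the same composite: one of the factors entering the braiding $c_{N',M}$ has been replaced by an algebra unit, so the braiding becomes trivial. Part (iv) is then immediate from the uniqueness clause in (i): for any morphisms between the respective $M, N, M', N'$, the naturality square for $\beta$ commutes after precomposition with the top coequalizer, by naturality of $c_{-,-}$ and functoriality of $\otimes$, $\otimes_B$, and $\otimes_{S_2 \otimes_B T_2}$, hence it commutes on the nose.

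The main obstacle is the first step of (i), verifying that $\phi$ coequalizes the $B$-actions. This requires careful bookkeeping of how the centrality conditions on $b_2'$ and $d_2$ interact with the bimodule structures of $M, N, M', N'$ through the braiding $c_{N',M}$, and is most transparently carried out in the standard graphical calculus for braided monoidal categories rather than as pasted commutative diagrams.
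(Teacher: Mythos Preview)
Your plan is essentially the same two-step coequalizer descent the paper carries out: first through $\rho_B\rho_B$ to get an intermediate map (the paper's $\beta_1$), then through $\rho_{S_2\otimes_B T_2}$ to obtain $\beta$; the inverse in (ii) is built from $c_{N',M}^{-1}$ exactly as you describe, and (iv) follows from uniqueness. Two remarks. First, in step (i) the key input is the pair of 2-diagram conditions in \eqref{eq:2diagram-cond} for $M,M',N,N'$ (which let you convert $B$-actions on one side into actions on the other, through the braiding); the cospan centrality \eqref{eq:cospan-central} for the maps $B\to S_2$, $B\to T_2$ is not what does the work here, so your emphasis is slightly misplaced, though you do also cite \eqref{eq:2diagram-cond}. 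Second, and more substantively, your treatment of (iii) is incomplete: by Definition~\ref{def:2-diagram+3-cell}\,(ii) a 3-cell must be an $(S_3\otimes_B T_3)$-$(S_1\otimes_B T_1)$-bimodule map, and you only address commutativity of the two triangles. The paper checks the bimodule property separately (again by reducing to representatives before the coequalizers and using that the actions commute with the braiding and the various $\rho$'s). Also, your heuristic that ``one factor in $c_{N',M}$ is replaced by a unit, so the braiding trivializes'' is not quite how the triangle check goes: neither $N'$ nor $M$ becomes a unit under the maps from $S_1\otimes_B T_1$; rather, one lifts the $u$-map to a map $\tilde u$ out of $S_1\otimes T_1$ into $M'N'MN$ and verifies the two paths agree there.
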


The proof of above lemma is long (but straightforward) and irrelevant to the rest of this paper. So we moved it to Appendix\,\ref{app:proof-lem:beta}.

\begin{prop} \label{prop:C_ABC-lax-functor}
$\Coco$ as given above is a (non-lax) $2$-functor. 
\end{prop}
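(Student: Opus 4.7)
The data defining $\Coco$ is fully specified in \eqref{eq:laxfun-data1}--\eqref{eq:laxfun-data3}, together with the family of 3-cells $\beta_{(M'N'),(MN)}$ constructed via the universal property \eqref{eq:Cosp-comp-3cells-def}. By Lemma~\ref{lem:beta}, each $\beta$ is a well-defined natural isomorphism of 3-cells, and functoriality of each $\Coco_{(S,T),(S',T')}$ follows from the analogous property of the fibered tensor product. Since the unit 3-cell $i_{(S,T)}$ is already an identity and each $\beta$ is invertible, it remains only to verify the two unit axioms and the associativity pentagon of a lax 2-functor; non-laxness then follows automatically from the invertibility of $i$ and $\beta$.

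For the unit axioms, the plan is to specialize \eqref{eq:Cosp-comp-3cells-def} to the case when one of the input pairs is an identity. For the right unit, take $(M,N) = \one_{(S_1,T_1)}$, so $M = S_1$ and $N = T_1$. Then the source of $\beta$ becomes $(M' \otimes_B N') \otimes_{S_1 \otimes_B T_1} (S_1 \otimes_B T_1)$ and its target becomes $(M' \otimes_{S_1} S_1) \otimes_B (N' \otimes_{T_1} T_1)$; both are identified with $M' \otimes_B N'$ by canonical unit isomorphisms of fibered tensor products. A direct diagram chase in \eqref{eq:Cosp-comp-3cells-def} before passing to coequalizers, followed by the universal property of $\rho$, shows that $\beta$ agrees with the composite of these canonical unit isomorphisms. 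The left unit axiom is verified symmetrically by taking $(M',N') = \one_{(S_2,T_2)}$.

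For the associativity pentagon, consider three composable pairs $((M_i, N_i))_{i=1,2,3}$, with $M_i$ an $S_{i+1}$-$S_i$-bimodule and $N_i$ a $T_{i+1}$-$T_i$-bimodule. The two paths through the pentagon apply $\beta$ in different orders, corresponding to the two bracketings of a triple fibered tensor product. The strategy is to precompose both paths with the canonical projection from $M_3 \otimes N_3 \otimes M_2 \otimes N_2 \otimes M_1 \otimes N_1$ into the common target of the pentagon, which is itself an iterated coequalizer obtained by applying Lemma~\ref{lem:coeq-iterate} and Assumption~\ref{ass:coeq}(ii) several times. Both paths, once so precomposed, reduce to the same morphism: the braided shuffle that moves every $N_i$ past every $M_j$ with $j<i$, followed by the iterated quotient. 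The universal property of the iterated coequalizer then forces the two paths to agree. The principal obstacle will be the combinatorial bookkeeping of the numerous braidings that emerge from two successive applications of $\beta$; these are reorganized using the hexagon axiom and the naturality of the braiding in $\CZ$, after which the pentagon reduces to the standard coherence for iterated fibered tensor products.
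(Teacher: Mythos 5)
Your proposal is correct and follows essentially the same route as the paper's own proof in Appendix~A.4: reduce everything to the two unit axioms and the associativity coherence, verify each identity at the level of the plain tensor products (where the associativity case becomes a braiding/shuffle computation and the unit cases become canonical isomorphisms), and then descend through the iterated coequalizers via their universal property, with non-laxness following from the invertibility of $\beta$ (Lemma~\ref{lem:beta}) and the fact that the unit transformation is the identity. The only differences are cosmetic (e.g.\ calling the coherence in \eqref{diag:asso-lax} a ``pentagon'' and precomposing with the projection onto the common \emph{source} of that diagram), and they do not affect the argument.
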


The proof of this Proposition is moved to Appendix\,\ref{app:proof-prop:C_ABC-lax-functor}. As a consequence (recall Remark \ref{rema:truncated-functor}), the $2$-functor $\Coco$ automatically defines a $1$-functor 
$$
\underline{\Coco} \equiv \underline{\Coco_{A,B,C}} : \underline{\Cosp}(B,C) \times \underline{\Cosp}(A,B) \to \underline{\Cosp}(A,C).
$$

\subsection{The bicategory \underline{CALG}($\CZ$)} \label{sec:under-CALG-Z}

Consider two commutative algebras $A$, $B$ in $\CZ$ and let $A \rightarrow S \leftarrow B$ and $A \rightarrow T \leftarrow B$ be two cospans between commutative algebras. Two 2-diagrams from $S$ to $T$ are isomorphic iff there is an invertible 3-cell between them.
Recall from Definition~\ref{def:2-diagram+3-cell} the definition of the category $\tdiag_{AB}(S,T)$ of 2-diagrams and 3-cells.
We will denote the set of isomorphism classes of 2-diagrams, i.e. the set of 2-cells, by $\tdiagu_{AB}(S,T)$.

Similarly, we denote by $\Cospu(A,B)$ the category obtained from the bicategory $\Cosp(A,B)$ by taking the same objects but replacing the category of morphisms $\tdiag_{AB}(S,T)$ by the set $\tdiagu_{AB}(S,T)$. With these ingredients, we can now list the data of the bicategory $\CALGu(\CZ)$:
\begin{itemize}
\item The objects are commutative algebras $A,B,\dots \in \CZ$ and the category of morphisms from $A$ to $B$ is given by $\Cospu(A,B)$.
\item The identity morphism $\one_A : \one \to \Cospu(A,A)$ has image 
\be
\raisebox{3.5em}{\xymatrix@R=1em{
& A \ar[d]^{1} & \\ A \ar[ru]^{1} \ar[rd]_{1} & A & A \ar[ul]_{1} \ar[dl]^{1} \\
& A \ar[u]^{1} }}
\quad .
\ee
\item 
The composition functor is induced by the functor $\Coco_{A,B,C}$ from Proposition\,\ref{prop:C_ABC-lax-functor}. We use the same symbol to denote the resulting functor $\Coco : \Cospu(B,C) \times \Cospu(A,B) \to \Cospu(A,C)$. Note that $\Coco$ is well-defined on isomorphism classes of 2-diagrams because, in the notation of \eqref{eq:laxfun-data2}, if $\phi$ and $\psi$ are isomorphism, so is $\phi \otimes_B \psi$.
\item
Denoting an object of $\Cospu(C,D) \times \Cospu(B,C) \times \Cospu(A,B)$ by
\be
\raisebox{1em}{\xymatrix@R=1em@C=1em{
& T  && S && R\\
A \ar[ru] && B \ar[ru] \ar[lu] && C \ar[ru] \ar[lu] && D \ar[lu]
}} \quad ,
\ee
the associativity morphism from the image under $\Coco \circ (\Coco \times \id)$ to the image under $\Coco \circ (\id \times \Coco)$ is a $2$-cell given by the associator of the fibered product as  (cf.\ \eqref{eq:CAlg-def-ass})
\be
\raisebox{3.5em}{\xymatrix{
& T\otimes_B (S \otimes_C R) \ar[d]^{\alpha^{-1}} & \\ 
A \ar[ru] \ar[rd] & (T \otimes_B S)\otimes_C R & D \ar[lu] \ar[ld] \\
& (T \otimes_B S)\otimes_C R \ar@{=}[u] 
}}
\quad ,
\ee
\item
The unit isomorphisms $l_T : T \otimes_B B \to T$ and $r_T : A \otimes_A T  \to T$ are two $2$-cells given by the unit isomorphisms of the fibered product (cf.\ \eqref{eq:CAlg-def-unit})
\be    \label{eq:unit-iso-l-r-CALG}
\raisebox{2.3em}{\xymatrix@C=1em@R=1em{
& T\otimes_B B \ar[d]^{\cong} & \\ 
A \ar[ru] \ar[rd] & T & B \ar[lu] \ar[ld] \\
& T \ar@{=}[u] 
}}
\quad , \quad
\raisebox{2.3em}{\xymatrix@C=1em@R=1em{
& A\otimes_A T \ar[d]^{\cong} & \\ 
A \ar[ru] \ar[rd] & T & B \ar[lu] \ar[ld] \\
& T \ar@{=}[u] 
}}
\quad .
\ee
\end{itemize}
The associativity and unit coherence conditions are again implied by those of the fibered product. Altogether, we have

\begin{thm}
Let $\CZ$ be a braided monoidal category satisfying Assumption~\ref{ass:coeq}. Then the data listed above defines a bicategory, which we denote by $\CALGu(\CZ)$.
\end{thm}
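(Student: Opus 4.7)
The plan is to verify the axioms of a bicategory (Definition~\ref{def:bicat}). By construction, $\Cospu(A,B)$ is a $1$-category for all $A,B$, and most of the underlying structure is inherited from the bicategory $\Cosp(A,B)$ of Proposition~\ref{prop:Cosp-AB-bicat} and from the 2-functor $\Coco_{A,B,C}$ of Proposition~\ref{prop:C_ABC-lax-functor}. Passing to equivalence classes of 3-cells only simplifies the verification, so the task reduces to (i) well-definedness of composition on 2-cells, (ii) well-definedness and naturality of the associator and unit 2-cells, and (iii) the pentagon and triangle coherence conditions.

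For (i), since $\Coco$ is a 2-functor it preserves invertible 3-cells. If $\phi \sim \phi'$ and $\psi \sim \psi'$ (where $\sim$ denotes the equivalence relation induced by invertible 3-cells), then by the description \eqref{eq:laxfun-data2} together with Lemma~\ref{lem:beta}\,(ii), the 3-cell $\phi \otimes_B \psi$ is invertible if and only if each factor is, so the composition descends to isomorphism classes. Functoriality of the resulting map $\Cospu(B,C) \times \Cospu(A,B) \to \Cospu(A,C)$ is inherited from the 1-functorial part of $\Coco$ on $\Cosp \times \Cosp$.

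For (ii), the associator morphism $(T \otimes_B S) \otimes_C R \to T \otimes_B (S \otimes_C R)$ is the unique algebra isomorphism coming from the universal property of iterated coequalizers; it is compatible with the cospan maps from $A$ and $D$ since those are built out of unit maps $\iota$ and the algebra homomorphisms of the individual cospans, all of which manifestly commute with the canonical coequalizer isomorphisms. It therefore defines an invertible 2-diagram, hence an invertible 2-cell of $\CALGu(\CZ)$. Naturality with respect to 2-cells from $\Cospu$ reduces to naturality of the associator of the iterated fibered tensor product of bimodules, which follows from the universal property and the naturality statement in Lemma~\ref{lem:beta}\,(iv). The unit 2-cells \eqref{eq:unit-iso-l-r-CALG} are handled analogously, using the canonical isomorphisms $T \otimes_B B \cong T$ and $A \otimes_A T \cong T$ of algebras.

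For (iii), both the pentagon and the triangle are equalities of 2-cells assembled from these associator and unit isomorphisms. In each case one appeals to the universal property of the relevant iterated coequalizer: the two parenthesisations being compared are both characterised by the same universal property relative to morphisms out of $T \otimes S \otimes R \otimes Q$ (for the pentagon) or $T \otimes B \otimes S$ (for the triangle), so they agree on the nose, not merely up to an invertible 3-cell. The main obstacle is not conceptual but clerical: one must track all associator and unit isomorphisms through iterated coequalizer constructions and verify that they interact correctly with the cospan/bimodule structure. No new idea is required beyond the standard coherence of the fibered tensor product of bimodules in $\CZ$, which is itself guaranteed by Assumption~\ref{ass:coeq} and Lemma~\ref{lem:coeq-iterate}.
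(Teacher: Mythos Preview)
Your proposal is correct and follows the same approach as the paper: the coherence conditions for $\CALGu(\CZ)$ are inherited from the standard coherence of the iterated fibered tensor product, which in turn rests on Assumption~\ref{ass:coeq} and Lemma~\ref{lem:coeq-iterate}. The paper's own proof is in fact just the one-sentence remark preceding the theorem (``The associativity and unit coherence conditions are again implied by those of the fibered product''), so your write-up is a considerably more detailed unpacking of the same idea.

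Two minor quibbles: in (i) you only need the ``if'' direction (invertible $\phi,\psi$ give invertible $\phi\otimes_B\psi$), which is immediate from functoriality of $\otimes_B$ and does not require Lemma~\ref{lem:beta}\,(ii); the ``only if'' is neither needed nor obviously true. Likewise, the naturality in (ii) is the usual naturality of the associator for fibered tensor products of bimodules, not Lemma~\ref{lem:beta}\,(iv) per se (that lemma concerns the interchange isomorphism $\beta$). These are citation slips, not gaps in the argument.
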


The reason for the notation $\CALGu(\CZ)$ is the following conjecture, which we aim to return to in future works.

\begin{conj} \label{conj:tricat-CALG}  
If we take objects to be commutative algebras in $\CZ$ and the bicategories $\Cosp(-,-)$ as morphisms with composition 2-functor
defined as in Proposition\,\ref{prop:C_ABC-lax-functor}, for a suitable choice of coherence morphisms we obtain a tricategory $\CALG(\CZ)$ in the sense of \cite{gps, gurski}.
\end{conj}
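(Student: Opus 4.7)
The plan is to bootstrap the higher coherence of $\CALG(\CZ)$ from the known coherence of iterated fibered tensor products of bimodules over commutative algebras in $\CZ$. The objects, hom-bicategories $\Cosp(A,B)$ (Proposition \ref{prop:Cosp-AB-bicat}), and composition $2$-functors $\Coco_{A,B,C}$ (Proposition \ref{prop:C_ABC-lax-functor}) are already in place. What must be added are the units, associators, unitors, pentagonator and the unit modifications satisfying Gurski's axioms for a tricategory, and these should all be produced by the universal property of the coequalizers defining $\otimes_B$ (Assumption \ref{ass:coeq}) applied to the standard coherence cells of the ambient braided monoidal structure.

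\textbf{Construction of the data.} For each object $A$, define the unit $2$-functor $\one_A\colon \mathbf 1\to\Cosp(A,A)$ to send the unique object to the identity cospan $A\xrightarrow{\id}A\xleftarrow{\id}A$ already appearing in \eqref{eq:CAlg-def-id}. For each quadruple $A,B,C,D$, build an associator pseudonatural equivalence
\[
 a_{A,B,C,D}\colon \Coco_{A,B,D}\circ(\Coco_{B,C,D}\times\id)\;\Rightarrow\;\Coco_{A,C,D}\circ(\id\times\Coco_{A,B,C})
\]
whose component on a triple of cospans $(T,S,R)$ is the $2$-cell represented by the invertible $2$-diagram whose middle bimodule is the canonical isomorphism $(T\otimes_BS)\otimes_CR\xrightarrow{\sim}T\otimes_B(S\otimes_CR)$, equipped with the inherited actions. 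Its pseudonaturality on $1$-cells (2-diagrams) and on $2$-cells should be forced by functoriality of $\otimes_B,\otimes_C$ together with the interchange isomorphisms $\beta_{(M'N'),(MN)}$ of Lemma \ref{lem:beta}; indeed, the coherence square of $a$ with the composition functor translates, after applying coequalizer universal properties, into the naturality of $\beta$. The left and right unit pseudonatural transformations are constructed in the same manner from the unit isomorphisms of \eqref{eq:unit-iso-l-r-CALG}. The pentagonator $\pi$ and the three unitor modifications $\lambda,\mu,\rho$ are then obtained as the unique invertible $3$-cells (i.e.\ bimodule isomorphisms between iterated fibered tensor products) whose existence and invertibility follow from Mac\,Lane's coherence for $\otimes_B$.

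\textbf{Verification and main obstacle.} With the data in hand, one verifies Gurski's axioms: the pentagonator identity on five composable $1$-cells, the three unit axioms, and the two modification axioms linking $\pi$ with $\lambda,\mu,\rho$. In each case the axiom is a commutative diagram of $3$-cells in some $\Cosp(A,E)$; translated through \eqref{eq:Cosp-comp-3cells-def} it reduces, after the coequalizer universal property, to the analogous coherence of iterated fibered tensor products, which is standard. The hard part, which I expect to be the main obstacle, is bookkeeping rather than conceptual: one must simultaneously carry (i) the centrality conditions \eqref{eq:cospan-central} at the level of $1$-cells and the matching conditions \eqref{eq:2diagram-cond} at the level of $2$-diagrams, (ii) the interchange cells $\beta$ through all pastings, and (iii) the braiding $c_{-,-}$ on $\CZ$ that intervenes whenever two copies of $B$-actions cross one another in \eqref{eq:TS-mu} and in the action constructed in \eqref{eq:SxBT-action-def}. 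A cleaner route, which I would pursue in parallel, is to identify $\CALG(\CZ)$ as a locally full sub-tricategory of a tricategory of (possibly non-commutative) algebras, bimodules, bimodule maps, and intertwiners in $\CZ$, cut out by the centrality/commutativity conditions; coherence would then be inherited, leaving only the check that the sub-structure is closed under the associator, unit, pentagonator and unitor cells, which is immediate because all of these are built from the canonical isomorphisms of fibered tensor products that preserve centrality.
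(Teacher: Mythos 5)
The statement you are addressing is not proved in the paper at all: it is stated as Conjecture~\ref{conj:tricat-CALG}, and the authors explicitly defer its verification to future work. So there is no proof of record to compare against, and the only question is whether your proposal itself closes the gap. It does not. What you have written is a plausible outline of the expected data (unit $2$-functors, associator components given by the canonical isomorphisms of iterated fibered tensor products packaged as invertible $2$-diagrams as in \eqref{eq:A-surject}, unitors from \eqref{eq:unit-iso-l-r-CALG}), but every step that constitutes the actual content of the conjecture is asserted rather than carried out. The associators and unitors of a tricategory must be pseudonatural adjoint equivalences: for each triple of $2$-diagrams you must exhibit an invertible $3$-cell filling the naturality square, check its compatibility with $3$-cells and with the composition $2$-functors $\Coco$, and only then define the pentagonator and the unit modifications and verify Gurski's axioms. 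You say these cells ``should be forced by'' functoriality of the relative tensor products together with the interchange isomorphisms $\beta$ of Lemma~\ref{lem:beta}, and that the axioms reduce to ``Mac\,Lane's coherence for $\otimes_B$''. That appeal is not sufficient: $\beta$ is not a mere reassociation --- its construction in \eqref{eq:Cosp-comp-3cells-def} uses the braiding $c_{N',M}$ of $\CZ$, and the $2$-diagram conditions \eqref{eq:2diagram-cond} and centrality conditions \eqref{eq:cospan-central} also involve the braiding. The coherence one needs is for relative tensor products over \emph{different} algebras interleaved via the braiding, which is exactly the delicate point (and the reason one expects only a tricategory rather than something stricter); it is not an instance of Mac\,Lane's theorem for a single monoidal structure, and the paper proves only the specific instances it needs (e.g.\ Lemma~\ref{lem:beta}(iv), Proposition~\ref{prop:C_ABC-lax-functor}), not the full family of compatibilities among the $\beta$'s, the associators of $\otimes_B$, and the centrality data.

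Your fallback route --- realizing $\CALG(\CZ)$ as a locally full sub-tricategory of an ambient tricategory of algebras, bimodules, bimodule maps and intertwiners, and inheriting coherence --- has the same problem in a different guise. The structure you need to embed into is not the standard three-layer algebra/bimodule/map tricategory: $\CALG(\CZ)$ has four layers (commutative algebras; cospans, i.e.\ algebras equipped with two central algebra maps; $2$-diagrams; bimodule maps), with $1$-cell composition given by relative tensor of \emph{algebras} over a commutative base and $2$-cell composition by relative tensor of bimodules. No such ambient tricategory is constructed in the paper or in the references you would be implicitly invoking, and constructing it (with the braiding entering the algebra structure on $T\otimes S$ as in \eqref{eq:TS-mu} and the actions as in \eqref{eq:SxBT-action-def}) is essentially the same task as the conjecture itself; the claim that closure of the sub-structure under the coherence cells is ``immediate'' therefore begs the question. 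In short, your strategy is consistent with what the authors presumably intend, but as it stands it is an announcement of a proof, not a proof: the pseudonaturality $3$-cells, the pentagonator and unit modifications, and the verification of the tricategory axioms in the presence of the braiding and the centrality constraints all remain to be constructed and checked.
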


\begin{rema} 
Although it seems natural to consider all 4-layer of structures in $\CALG(\CZ)$ as a conjectured tricategory mathematically, a 3-cell does not seem to have any natural physical meaning. For applications in 2-dimension RCFT, it is enough to focus our attention to the bicategory $\CALGu(\CZ)$. 
\end{rema}

\medskip

Let us compare the bicategory $\CALGu(\CZ)$ defined in this section to the bicategory $\CAlg(\CZ)$ defined in Section~\ref{sec:CAlg}. Passing from $\CAlg(\CZ)$ to $\CALGu(\CZ)$ amounts to an enlargement of the morphism spaces in the sense that there is a locally faithful, but in general not locally full, 2-functor (cf.\ App.\,\ref{app:bicategories} for definitions)
\be \label{eq:functor-E-CAlg-CALG}
  E : \CAlg(\CZ) \to \CALGu(\CZ) \ .
\ee   
Its data -- in the order of Definition~\ref{def:lax-functor} -- is given by:
\begin{itemize}
\item On objects $E$ is the identity map, $E(A) = A$.
\item The functor $E_{(A,B)} : \cosp(A,B) \to \Cospu(A,B)$ is given by
\be
E_{(A,B)} ~:~ 
\raisebox{2.4em}{\xymatrix@R=1em{
& T \ar[dd]^f \\
A \ar[ur]^a \ar[dr]_{a'} && B \ar[ul]_b \ar[dl]^{b'} \\
& T'  }}
\quad \longmapsto \quad
\raisebox{2.4em}{\xymatrix@R=1em{
& T \ar[d]^f \\
A \ar[ur]^a \ar[dr]_{a'} & T'& B \ar[ul]_b \ar[dl]^{b'} \\
& T' \ar@{=}[u] }}
\ee
\item The unit transformation between the two functors $\one \to \Cospu(A,A)$ and the multiplication transformation between the two functors 
$\cosp (B,C) \times \cosp (A,B) \longrightarrow \Cospu(A,C)$ are both given by identity morphisms.
\end{itemize}
The coherence conditions are then trivially satisfied. Since $i$ and $m$ are isomorphisms, $E$ is indeed a $2$-functor, not just a lax $2$-functor.

\section{The full center as a lax functor} \label{sec:center-func}

In this section, we will present the complete full center construction. In other words, we will put together all the ingredients that is needed for the functoriality of the full center construction $\BZ$. In Section~\ref{sec:module-to-cospan}, we construct the restriction of $\BZ$ on Hom categories and prove its functoriality. In Section~\ref{sec:composition-comp}, we construct the multiplication transformation of $\BZ$. In Section~\ref{sec:lax-functor}, we state the functoriality of the full center construction $\BZ$ in a simple situation when $\CC=\Vect_k$ as a warm-up to the general cases. In Section~\ref{sec:lax-functor-II}, we give a general statement of the functoriality of the full center. 

\medskip
Throughout this section, we assume that $\CC$ is a monoidal category such that $\CZ(\CC)$ satisfies Assumption~\ref{ass:coeq}.

\subsection{From module functors to cospans} \label{sec:module-to-cospan}

Let $\CC$ be a monoidal category and let $\CM,\CN$ be $\CC$-modules. Recall from Section \ref{sec:cent-module-fun} that the category of $\CC$-module functors $\Fun_\CC(\CM,\CN)$ from $\CM$ to $\CN$ is a $\CZ(\CC)$-module. Let
\be
  \CF_{\CM,\CN} \subset \Fun_\CC(\CM,\CN)
\ee
be a subcategory of $\Fun_\CC(\CM,\CN)$ which is $\CZ(\CC)$-closed (recall Definition~\ref{def:C-closed}); in particular, $\CF_{\CM,\CN}$ need not be full or a $\CZ(\CC)$-submodule). We consider $\CF_{\CM,\CN}$ as a bicategory by adding identity 2-morphisms (the associativity and unit isomorphisms are hence also identities).

The aim of this section is to construct a lax 2-functor
$\BZ \equiv \BZ_{\CM,\CN}$ from $\CF_{\CM,\CN}$ to the bicategory $\Cosp(Z(\CM),Z(\CN))$ defined in Section~\ref{sec:Cosp(A,B)}. To a module functor $F \in \CF_{\CM,\CN}$ it assigns the cospan \eqref{eq:ZM-ZF-ZN-cospan}:
\be\label{eq:Z(F)-def}
\BZ(F) := \raisebox{2em}{\xymatrix{
& Z(F) \\ Z(\CM) \ar[ru]^{[F \circ -]_{[\id,\id]}}  &  & Z(\CN)\ar[ul]_{[- \circ F]_{[\id,\id]}}
}} \quad .
\ee
That this is indeed a cospan between commutative algebras, i.e.\ that it obeys the conditions in Definition~\ref{def:cospan}, follows form Lemma~\ref{lemma:F-C-functor} (showing that the two arrows are algebra maps) and Cor.\,\ref{cor:Z-[FF]-[FG]} in the case $F=G$ (showing that the two diagrams in \eqref{eq:cospan-central} commute). 

For each pair $F,G \in  \Fun_\CC(\CM,\CN)$ denote by $\Nat_\CC(F,G)$ the set of $\CC$-module natural transformations from $F$ to $G$. We will understand $\Nat_\CC(F,G)$ as a category with only identity morphisms. If $F,G \in \CF_{\CM,\CN}$, we can define a functor $\BZ_{(F,G)} : \Nat_\CC(F,G) \to \tdiag(\BZ(F),\BZ(G))$ via the following

\begin{lemma} 
Given $\phi \in \Nat_\CC(F,G)$,
the diagram 
\be\label{eq:Z_FG(phi)-def}
\BZ_{(F,G)}(\phi) ~:=~ \raisebox{3.8em}{\xymatrix{
& Z(F) \ar[d]^{[F,\phi]} & \\ Z(\CM) \ar[ru] \ar[rd] & [F,G] & Z(\CN)\ar[ul] \ar[dl]\\
& Z(G) \ar[u]^{[\phi, G]}
}}
\ee
is a 2-diagram (recall Definition~\ref{def:2-diagram+3-cell}). 
\end{lemma}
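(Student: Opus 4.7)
The plan is to verify, in turn, the four pieces of data of a 2-diagram in Definition~\ref{def:2-diagram+3-cell}. For the bimodule structure, I would invoke Remark~\ref{rema:iso-int-hom}(i) to equip $[F,G]$ with a left action $\comp_G$ of $Z(G)=[G,G]$ and a right action $\comp_F$ of $Z(F)=[F,F]$; the two actions commute by associativity of internal-hom composition, Lemma~\ref{lem:composition-properties}(ii) applied with $L=K=F$, $M=N=G$. That $[F,\phi]$ is right $Z(F)$-linear is then a direct instance of the first square of \eqref{eq:hom-comp-compat} in Lemma~\ref{lem:composition-properties}(iii), taking $L=K=M=F$, $M'=G$ and $f=\phi$; dually, $[\phi,G]$ being left $Z(G)$-linear is the second square of \eqref{eq:hom-comp-compat} with $L=M=K=G$, $K'=F$ and $g=\phi$.

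The hexagon commutativity is the one step with real content. The first diagram of \eqref{eq:2diagram-cond} splits into two identities, one over $Z(\CM)$ and one over $Z(\CN)$. For the $Z(\CM)$-identity I would observe that pre-composition with $\phi$ defines a $\CZ(\CC)$-natural transformation $\phi\circ- : F\circ-\Rightarrow G\circ-$ between the $\CZ(\CC)$-module functors $\Fun_\CC(\CM,\CM)\to\Fun_\CC(\CM,\CN)$ introduced just above \eqref{eq:C-mod-F-gives-ZC-morph}, and that its component at $\id_\CM$ is $\phi$ itself. Applying Lemma~\ref{lem:GF-nat-commute} to these data at $M=N=\id_\CM$, and using $(F\circ-)(\id_\CM)=F$ and $(G\circ-)(\id_\CM)=G$, yields exactly
\begin{equation*}
[F,\phi]\circ[F\circ-]_{[\id,\id]}\;=\;[\phi,G]\circ[G\circ-]_{[\id,\id]} \ .
\end{equation*}
The $Z(\CN)$-identity follows by the symmetric application of Lemma~\ref{lem:GF-nat-commute} to the post-composition module functors $-\circ F$, $-\circ G$ with the induced $\CZ(\CC)$-natural transformation $-\circ\phi$, evaluated at $\id_\CN$.

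Finally, the second and third diagrams of \eqref{eq:2diagram-cond} are, after inverting the braiding on one side, literally the content of Corollary~\ref{cor:Z-[FF]-[FG]}: \eqref{diag:ZM-Cr-FG} rearranges into the $Z(\CM)$-square and \eqref{diag:ZN-Cl-FG} into the $Z(\CN)$-square. The main conceptual point throughout is to recognise $\phi$ in two guises -- as a 1-morphism in $\Fun_\CC(\CM,\CN)$ whose image under the bifunctor $[-,-]$ produces $[F,\phi]$ and $[\phi,G]$, and as the data underlying the $\CZ(\CC)$-natural transformations $\phi\circ-$ and $-\circ\phi$ to which Lemma~\ref{lem:GF-nat-commute} applies. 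All other pieces reduce to direct substitution into identities already established in Section~\ref{sec:full-center}.
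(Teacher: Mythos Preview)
Your proof is correct and follows essentially the same route as the paper's: bimodule structure from Remark~\ref{rema:iso-int-hom}(i), module-map properties of $[F,\phi]$ and $[\phi,G]$ from Lemma~\ref{lem:composition-properties}(iii), the hexagon from Lemma~\ref{lem:GF-nat-commute}, and the last two squares from Corollary~\ref{cor:Z-[FF]-[FG]}. Your version is more explicit than the paper's---in particular, you spell out how Lemma~\ref{lem:GF-nat-commute} is to be applied by recognising $\phi\circ-$ and $-\circ\phi$ as $\CZ(\CC)$-module natural transformations between the left/right composition functors of \eqref{eq:left-right-comp-C-mod-structure}, evaluated at the identity---whereas the paper simply cites the lemma.
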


\pf
By Rem.\,\ref{rema:iso-int-hom}\,(i), $[F,G]$ is naturally a $Z(G)$-$Z(F)$-bimodule. By Lemma~\ref{lem:composition-properties}\,(iii), the map $[F,\phi]$ is a right $[F,F]$-module map (set $K=L=F$, $M=G$ and $f = \phi$), while the map $[\phi,G]$ is a left $[G,G]$-module map. The commutativity of \eqref{eq:Z_FG(phi)-def} follows from Lemma~\ref{lem:GF-nat-commute}, and the last two properties in \eqref{eq:2diagram-cond} hold by Cor.\,\ref{cor:Z-[FF]-[FG]}.
\epf

In order to define the multiplication transformation, we need the following construction. By the universal property of the coequalizer and the associativity of the composition (Lemma~\ref{lem:composition-properties}\,(ii)), there exists a unique morphism $\overline{\comp}$ which makes the diagram 
\be \label{eq:underline-comp-def}
\raisebox{2.3em}{
  \xymatrix{ [G,H] \otimes [G,G] \otimes [F,G] \ar@/^5pt/[rr]^{L} \ar@/_5pt/[rr]_{R} && 
  [G,H] \otimes [F,G]  \ar[r]^(.45)\rho \ar[d]^{\comp} & 
  [G,H] \otimes_{[G,G]} [F,G] \ar@{.>}[dl]^{\exists ! \,\overline{\comp}}
  \\
  && [F,H] 
}}
\ee
commute. We can use the coequalizer property of $[H,H] \otimes ([G,H] \otimes_{[G,G]} [F,G])$ and $([G,H] \otimes_{[G,G]} [F,G]) \otimes [F,F]$ (recall Assumption~\ref{ass:coeq}) to define a $[H,H]$-$[F,F]$-bimodule structure on $[G,H] \otimes_{[G,G]} [F,G]$. Along the same lines, one verifies that
\be \label{eq:mult-map-Phi}
  \overline{\comp} : [G,H] \otimes_{[G,G]} [F,G] \longrightarrow [F,H] 
\ee
as defined by \eqref{eq:underline-comp-def}, is a morphism of $[H,H]$-$[F,F]$-bimodules. Let now $F \xrightarrow{\phi} G \xrightarrow{\psi} H$ be two $\CC$-module natural transformations and consider the diagram 
\be \label{eq:Phi-as-3cell}
\raisebox{3.9em}{
\xymatrix{
& & Z(F) \ar[ld]_u \ar[rd]^{u'} & & \\
Z(\CM) \ar@/^1em/[rru] \ar@/_1em/[rrd] & [G,H]\otimes_{[G,G]} [F,G]  \ar[rr]^(0.6){\overline{\comp}} &  & [F, H] & Z(\CN) \ar@/_1em/[llu] \ar@/^1em/[lld] \\
& & Z(H) \ar[lu]^v \ar[ru]_{v'} & & 
}}
\quad ,
\ee
where $u' = [F, \psi \circ \phi]$ and $v' = [\psi \circ \phi, H]$. The maps $u$ and $v$ are obtained from the composition of 2-diagrams $\BZ_{(F,G)}(\phi)$ and $\BZ_{(G,H)}(\psi)$ as in \eqref{eq:2-diag-com-u-def} and \eqref{eq:2-diag-com-v-def}. 

\begin{lemma}
The diagram \eqref{eq:Phi-as-3cell} commutes.
\end{lemma}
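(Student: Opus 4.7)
The plan is to reduce the claimed commutativity to two triangle identities, namely $\overline{\comp}\circ u = u' = [F,\psi\circ\phi]$ and $\overline{\comp}\circ v = v' = [\psi\circ\phi,H]$; commutativity with the outer arrows from $Z(\CM)$ and $Z(\CN)$ will then be automatic because both $\BZ_{(F,H)}(\psi\circ\phi)$ and the composed 2-diagram share the same outer maps, and the composed 2-diagram's outer maps are built from those of $\BZ_{(F,G)}(\phi)$ and $\BZ_{(G,H)}(\psi)$ via the coequalizer $\rho$ in exactly the form needed. So the whole content is the two triangles.

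First I would use the defining property \eqref{eq:underline-comp-def}, namely $\overline{\comp}\circ\rho=\comp$, to push the computation back to $[G,H]\otimes[F,G]$ before passing to the coequalizer. Writing out the definition \eqref{eq:2-diag-com-u-def} for the composed 2-diagram, one has
\be
\overline{\comp}\circ u \;=\; \comp \circ ([G,\psi]\otimes[F,\phi])\circ(\underline{\id_G}\otimes\id_{Z(F)}),
\ee
where $\underline{\id_G}:1_\CZ\to[G,G]=Z(G)$. Then I would apply the first square of \eqref{eq:hom-comp-compat} with $L=M=G$, $K=F$, $f=\psi$ to rewrite $\comp_G\circ([G,\psi]\otimes 1)=[F,\psi]\circ\comp_G$; the left unit triangle in \eqref{eq:unit-int-hom} then collapses $\comp_G\circ(\underline{\id_G}\otimes 1)$ to the identity on $[F,G]$, leaving $[F,\psi]\circ[F,\phi]$, which equals $[F,\psi\circ\phi]=u'$ by functoriality of $[F,-]$ from Lemma \ref{lemma:int-hom-bifunctor}.

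The second triangle is parallel but uses the opposite variance. Starting from \eqref{eq:2-diag-com-v-def}, I would rewrite
\be
\overline{\comp}\circ v \;=\; \comp\circ([\psi,H]\otimes[\phi,G])\circ(\id_{Z(H)}\otimes\underline{\id_G}).
\ee
Applying the second square of \eqref{eq:hom-comp-compat} with $L=G$, $M=H$, $K=G$, $K'=F$, $g=\phi$ gives $\comp_G\circ(1\otimes[\phi,G])=[\phi,H]\circ\comp_G$, and the right unit triangle in \eqref{eq:unit-int-hom} reduces $\comp_G\circ(1\otimes\underline{\id_G})$ to an identity. What remains is $[\phi,H]\circ[\psi,H]=[\psi\circ\phi,H]=v'$, using the contravariant functoriality of $[-,H]$.

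There is no real obstacle here: everything is a bookkeeping exercise combining (i) the universal property \eqref{eq:underline-comp-def} which turns $\overline{\comp}$ into $\comp$, (ii) the compatibility of $\comp$ with the functors $[K,-]$ and $[-,M]$ from Lemma \ref{lem:composition-properties}\,(iii), and (iii) the unit identities \eqref{eq:unit-int-hom}. The only point that would need a brief explicit check is that the outer commutativity with the arrows from $Z(\CM)$ and $Z(\CN)$ in \eqref{eq:Phi-as-3cell} holds — this amounts to observing that the outer arrows of $\BZ_{(F,H)}(\psi\circ\phi)$ factor through $\rho$ by construction of composition of 2-diagrams in \eqref{eq:2diag-vertical-compos}, and then reusing the first identity $\overline{\comp}\circ\rho=\comp$ together with the fact that the outer arrows of \eqref{eq:Z(F)-def} are algebra maps by Lemma \ref{lemma:F-C-functor}.
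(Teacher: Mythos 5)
Your proof is correct and follows essentially the same route as the paper: unwind $u$ and $v$ via \eqref{eq:2-diag-com-u-def}--\eqref{eq:2-diag-com-v-def}, use $\overline{\comp}\circ\rho=\comp$ from \eqref{eq:underline-comp-def}, then combine Lemma~\ref{lem:composition-properties}\,(iii), the unit identity \eqref{eq:unit-int-hom} and functoriality of the internal hom to identify the two composites with $[F,\psi\circ\phi]$ and $[\psi\circ\phi,H]$. The paper likewise reduces the statement to the two triangles (the outer arrows are already handled by the 2-diagram properties established earlier), proving the upper one by exactly this decomposition and noting the lower one is analogous.
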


\begin{proof}
It only remains to show that the two triangles involving $\overline{\comp}$ commute. Substituting the definition of $u$ in \eqref{eq:2-diag-com-u-def}, the upper triangle amounts to commutativity of the diagram
\be
\xymatrix@C=3em{
[F,F] \ar[r]^{[F,\phi]} \ar@/_4em/[drrr]^{[F,\psi \circ \phi]} &
  [F,G] \ar[r]^{\iota_{[G,G]}1} \ar@{=}[dr] &
  [G,G][F,G] \ar[d]^{\comp} \ar[r]^{[G,\psi]1} &
  [G,H][F,G] \ar[d]^{\comp} \ar[r]^\rho  &
[G,H] \otimes_{[G,G]} [F,G] \ar[dl]^{\overline{\comp}}
\\
 &  & [F,G] \ar[r]^{[F,\psi]} & [F,H] 
}
\ee
Here, the left triangle is the unit property \eqref{eq:unit-int-hom} of $\comp$, the middle square is Lemma~\ref{lem:composition-properties}\,(iii), and the right triangle is just the definition of $\overline{\comp}$. Commutativity of the lower triangle in \eqref{eq:Phi-as-3cell} is checked similarly. 
\end{proof}

\begin{prop} \label{prop:comp-inv-Z_mn-lax}
The map $\BZ_{\CM,\CN}$ on objects in \eqref{eq:Z(F)-def} and the collection of functors $\BZ_{(F,G)}$ on morphisms in \eqref{eq:Z_FG(phi)-def} with identities as unit transformations, and with $\overline{\comp}$ as multiplication transformation (recall \eqref{eq:mult-map-Phi} and \eqref{eq:Phi-as-3cell}) defines a lax 2-functor
of bicategories
\be
  \BZ_{\CM,\CN} ~:~ \CF_{\CM,\CN} \longrightarrow \Cosp(Z(\CM),Z(\CN)) \ .
\ee
\end{prop}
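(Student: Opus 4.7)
The ingredients of the lax $2$-functor have all been laid out, and the facts that the proposed values on objects and $1$-morphisms land in $\Cosp(Z(\CM), Z(\CN))$, as well as the $3$-cell property of $\overline{\comp}$, are exactly the contents of the lemmas immediately preceding the proposition. What remains to verify, according to Definition~\ref{def:lax-functor}, is: (i) each $\BZ_{(F,G)}$ is a functor on hom-categories; (ii) naturality of the unit and multiplication transformations in the $\CC$-module natural transformations; (iii) the three lax-coherence axioms (associativity and two unit axioms). Since $\CF_{\CM,\CN}$ is regarded as a bicategory by adjoining only identity $2$-morphisms, (i) and (ii) are automatic, as there are no non-identity $2$-cells to transport through naturality squares.

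The substance of the proof is therefore (iii). The key observation is that, by \eqref{eq:underline-comp-def}, $\overline{\comp}$ is the unique morphism on the coequalizer $[G,H] \otimes_{[G,G]} [F,G]$ induced by the composition morphism $\comp : [G,H] \otimes [F,G] \to [F,H]$. Consequently, each coherence axiom may be reduced, via the uniqueness clause in the universal property of the coequalizers defining $\otimes_{(-)}$, to a corresponding identity for $\comp$ itself already established in Section~\ref{sec:act-int-hom}.

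Explicitly, for the associativity coherence applied to $F \xrightarrow{\phi} G \xrightarrow{\psi} H \xrightarrow{\chi} K$, the two candidate $3$-cells on the iterated fibered product $[H,K] \otimes_{[H,H]} [G,H] \otimes_{[G,G]} [F,G] \to [F,K]$ (one obtained by first composing $\chi$ with $\psi$, the other by first composing $\psi$ with $\phi$) both become, after precomposition with the coequalizer quotient from $[H,K] \otimes [G,H] \otimes [F,G]$, two expressions built from $\comp$ that agree by the associativity $\comp \circ (\comp \otimes 1) = \comp \circ (1 \otimes \comp)$ of Lemma~\ref{lem:composition-properties}(ii), matched with the canonical associator of $\otimes_{(-)}$ supplied by $\Cosp$. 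For the two unit axioms one first notes that $\BZ(\id_F) = \one_{\BZ(F)}$, because $[F, \id_F] = \id_{[F,F]}$ and $[\id_F, F] = \id_{[F,F]}$ by the uniqueness half of Lemma~\ref{lemma:int-hom-bifunctor}. The axioms then assert that $\overline{\comp}$ coincides with the canonical right- and left-unit isomorphisms $[F,G] \otimes_{[F,F]} [F,F] \xrightarrow{\sim} [F,G]$ and $[G,G] \otimes_{[G,G]} [F,G] \xrightarrow{\sim} [F,G]$ inherited from $\Cosp$; precomposed with the coequalizer quotient these reduce to the unit identities $\comp \circ (1 \otimes \underline{\id_F}) = \id_{[F,G]}$ and $\comp \circ (\underline{\id_G} \otimes 1) = \id_{[F,G]}$ of \eqref{eq:unit-int-hom}.

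The main obstacle is purely bookkeeping: keeping track of the bimodule structures on $[-,-]$ over the several algebras $Z(F), Z(G), Z(H), Z(K)$, together with the various coequalizer quotients over them, so as to confidently apply the universal property at each stage. No new diagram chase beyond the properties of $\comp$ proved in Section~\ref{sec:act-int-hom} is required.
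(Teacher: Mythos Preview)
Your proposal is correct and follows essentially the same approach as the paper's proof: reduce the associativity coherence to the associativity of $\comp$ (Lemma~\ref{lem:composition-properties}(ii)) via the universal property of the coequalizer defining $\overline{\comp}$, and reduce the unit coherences to the unit identities \eqref{eq:unit-int-hom}. The paper's proof is more terse---it dispatches the unit conditions in one sentence by pointing to the unit isomorphisms of $\Cosp$---while you spell out explicitly why functoriality and naturality are vacuous (the source bicategory has only identity $2$-cells) and trace the unit coherence back to \eqref{eq:unit-int-hom}; but the substance is the same.
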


\begin{proof}
The unit conditions in Definition~\ref{def:lax-functor} follows from the definition of the unit isomorphisms of $\Cosp(Z(\CM),Z(\CN))$ given in Proposition\,\ref{prop:Cosp-AB-bicat}. It remains to verify the associativity condition in Definition~\ref{def:lax-functor}. This amounts to the statement that the two morphisms $\overline{\comp} \circ (\id \otimes \overline{\comp})$ and $\overline{\comp} \circ (\id \otimes \overline{\comp})$ from $[H,K] \otimes_{[H,H]} [G,H] \otimes_{[G,G]} [F,G]$ to $[F,K]$ (omitting associators) are equal. As $\overline{\comp}$ is defined in terms of $\comp$, this in turn is a consequence of the associativity of $\comp$.
\end{proof}

Recall Remark \ref{rema:truncated-functor}. We have the following result.
\begin{cor} \label{cor:Phi-iso-Z-functor}
If the morphisms $\overline{\comp}$ in \eqref{eq:mult-map-Phi} are isomorphisms for all $F,G,H \in  \CF_{\CM,\CN}$, then $\BZ_{\CM,\CN}$ gives a (non-lax) 2-functor
Moreover, we obtain a 1-functor $\underline{\BZ_{\CM,\CN}}$ between two $1$-categories:
\be
  \underline{\BZ_{\CM,\CN}} ~:~ \CF_{\CM,\CN} \longrightarrow \Cospu(Z(\CM),Z(\CN)) \ .
\ee
\end{cor}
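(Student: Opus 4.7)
The plan is to upgrade the lax $2$-functor $\BZ_{\CM,\CN}$ already constructed in Proposition \ref{prop:comp-inv-Z_mn-lax} to a non-lax one by checking that all its coherence data become isomorphisms under the extra hypothesis. Recall from Appendix \ref{app:bicategories} (Definition \ref{def:lax-functor}) that a lax $2$-functor is a (non-lax) $2$-functor precisely when the unit and multiplication natural transformations are componentwise invertible. Hence essentially nothing new needs to be built; the work reduces to inspecting those structure transformations one at a time.

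First I would observe that the unit transformations of $\BZ_{\CM,\CN}$ are, by construction in the proof of Proposition \ref{prop:comp-inv-Z_mn-lax}, identity $3$-cells in $\Cosp(Z(\CM),Z(\CN))$, so they are trivially invertible. For the multiplication transformations, the component corresponding to a composable pair $F \xrightarrow{\phi} G \xrightarrow{\psi} H$ is the $3$-cell displayed in \eqref{eq:Phi-as-3cell}, whose underlying bimodule map is exactly $\overline{\comp} : [G,H] \otimes_{[G,G]} [F,G] \to [F,H]$. A $3$-cell in $\Cosp(Z(\CM),Z(\CN))$ is, by Definition \ref{def:2-diagram+3-cell}, a bimodule map commuting with the two morphisms from $Z(F)$ and $Z(H)$; its invertibility as a $3$-cell is therefore equivalent to invertibility of the underlying bimodule map, the inverse automatically satisfying the two commuting triangles obtained by inverting those of $\overline{\comp}$. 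Under the assumption that every $\overline{\comp}$ is an isomorphism, all multiplication $3$-cells are invertible, and hence $\BZ_{\CM,\CN}$ is a non-lax $2$-functor.

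For the second assertion, I would invoke Remark \ref{rema:truncated-functor}: a non-lax $2$-functor of bicategories descends to a $1$-functor between the underlying $1$-categories obtained by passing to isomorphism classes of $2$-morphisms. The source bicategory $\CF_{\CM,\CN}$ carries only identity $2$-morphisms by construction, so its truncation is just $\CF_{\CM,\CN}$ viewed as a $1$-category; the truncation of $\Cosp(Z(\CM),Z(\CN))$ is by definition $\Cospu(Z(\CM),Z(\CN))$. Applying Remark \ref{rema:truncated-functor} therefore yields the desired $1$-functor $\underline{\BZ_{\CM,\CN}}$.

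There is no real obstacle beyond bookkeeping: the substantive content is already packaged in Proposition \ref{prop:comp-inv-Z_mn-lax}, Remark \ref{rema:truncated-functor}, and in the equivalence between invertibility of a $3$-cell and invertibility of its underlying bimodule map. The one point deserving explicit mention in a full write-up is that the two families of structure data (unit $3$-cells equal to identities, and multiplication $3$-cells with underlying map $\overline{\comp}$) exhaust all the coherence transformations that need to be checked for the pseudofunctor strengthening.
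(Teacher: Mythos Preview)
Your proposal is correct and follows essentially the same approach as the paper. The paper gives no explicit proof of this corollary beyond the preceding sentence ``Recall Remark \ref{rema:truncated-functor}'', treating the statement as immediate from Proposition \ref{prop:comp-inv-Z_mn-lax} (where the unit transformations are identities and the multiplication transformations are the $3$-cells with underlying map $\overline{\comp}$) together with Remark \ref{rema:truncated-functor}; your write-up simply unpacks this in slightly more detail, including the observation that invertibility of a $3$-cell reduces to invertibility of the underlying bimodule map.
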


\subsection{Compatibility with composition} \label{sec:composition-comp}

For this subsection, we fix three $\CC$-modules $\CL,\CM,\CN$ and three $\CZ(\CC)$-closed subcategories
\be
\CF_{\CL,\CM} \subset \Fun_\CC(\CL,\CM) ~,~~
\CF_{\CM,\CN} \subset \Fun_\CC(\CM,\CN) ~,~~
\CF_{\CL,\CN} \subset \Fun_\CC(\CL,\CN)\ ,
\ee
of which we demand in addition that the image of the composition $\CF_{\CM,\CN} \times \CF_{\CL,\CM} \to \Fun_\CC(\CL,\CN)$ lies in $\CF_{\CL,\CN}$. We want to establish a compatibility condition of the lax 2-functor
$\BZ_{\CM,\CN}$ defined in Proposition~\ref{prop:comp-inv-Z_mn-lax} and the composition of functors and of cospans (see Section~\ref{sec:compos-of-cospan}). Namely, we will give a natural transformation $\Bm$ between the (lax) 2-functors
\be \label{eq:composition-nat-xfer-m-diag}
\raisebox{2em}{\xymatrix{
\CF_{\CM,\CN} \times \CF_{\CL,\CM} \ar[d]_{(-)\circ(-)} \ar[rrr]^(0.34){\BZ_{\CM,\CN} \times \BZ_{\CL,\CM}} &&&
  \Cosp(Z(\CM),Z(\CN)) \times \Cosp(Z(\CL),Z(\CM)) \ar[d]^{ \Coco_{Z(\CL),Z(\CM),Z(\CN)} } \ar@{}[dlll]|{\Bm \swarrow} \\ 
\CF_{\CL,\CN} \ar[rrr]_{\BZ_{\CL,\CN} } &&& \Cosp(Z(\CL),Z(\CN))
}}
\quad .
\ee
According to Definition~\ref{def:nat-trans-fun-bicat}, such a transformation assigns a 1-morphism $\Bm_{(F,G)}$ to each pair $\CL \xrightarrow{F} \CM \xrightarrow{G} \CN$. We will take this 1-morphism to be
\be \label{eq:m_(F,G)-def}
\Bm_{(F,G)} ~:=~ \raisebox{3.8em}{\xymatrix{
& Z(F) \otimes_{Z(\CM)} Z(G) \ar[d]^{m_{F,G}} & \\ 
Z(\CL) \ar[ru]^{a_1} \ar[rd]_{a_2} & Z(G \circ F) & Z(\CN)\ar[ul]_{b_1} \ar[dl]^{b_2} \\
& Z(G \circ F) \ar[u]^{1}
}}
\quad ,
\ee
where $m_{F,G}$ will be defined in \eqref{eq:m_FG-def} below, and we will verify the properties of a 2-diagram in Lemma~\ref{lem:m(F,G)-is-2diag}. Note that $a_1, a_2, b_1, b_2$ are fixed by \eqref{eq:Z(F)-def} and the composition rule in (\ref{eq:cospan-comp}) -- explicit expressions will be given in \eqref{eq:a1-b2_for_m(F,G)} below.

The second piece of data, which corresponds to the data $\sigma_{AB}$ (or $\{ \sigma_f \}$) in \eqref{diag:lax-nat-1cell}, is a collection of 2-morphisms (i.e.\ 3-cells, cf.\ Definition~\ref{def:2-diagram+3-cell}) between two compositions of 2-diagrams. In the present case this boils down to the following: 
Let $F,F' : \CL \to \CM$ and $G,G': \CM \to \CN$. 
For a given pair $\big( F \xrightarrow{\phi} F'\,,\,G \xrightarrow{\psi} G'\big)$ in $\Nat_\CC(F,F') \times \Nat_\CC(G,G')$ we need a 3-cell 
$\Bm_{(\phi, \psi)}$ between the compositions
\be\label{eq:mFGFG-def-precomp}
\raisebox{7.5em}{\xymatrix@C=2em{
& Z(F) \otimes_{Z(\CM)} Z(G) \ar[d]^{[F,\phi] \otimes_{Z(\CM)} [G,\psi]} \\
& [F,F'] \otimes_{Z(\CM)} [G,G'] \\
Z(\CL) \ar@/^3.5em/[uur] \ar@/_2em/[ddr] \ar[r] & 
  Z(F') \otimes_{Z(\CM)} Z(G')  \ar[u]^{[\phi,F'] \otimes_{Z(\CM)} [\psi,G']} 
   \ar[d]^{m_{F',G'}} &
  Z(\CN) \ar[l] \ar@/_3.5em/[uul] \ar@/^2em/[ddl] \\
& Z(G'  F') \\
& Z(G'  F') \ar[u]^1
}}
~~\xrightarrow{\Bm_{(\phi, \psi)}}~~
\raisebox{7.5em}{\xymatrix@C=0.5em{
& Z(F) \otimes_{Z(\CM)} Z(G) \ar[d]^{m_{F,G}} \\
& Z(G  F) \\
Z(\CL) \ar@/^2em/[uur] \ar@/_2em/[ddr] \ar[r] & 
  Z(G  F) \ar[u]^{1} \ar[d]^{[GF,\psi\phi]} &
  Z(\CN) \ar[l] \ar@/_2em/[uul] \ar@/^2em/[ddl] \\
& [GF,G'  F'] \\
& Z(G'  F') \ar[u]^{[\psi\phi,G'F']}
}}
\ee
where the left diagram corresponds to $\sigma_B \circ \mathbf{F}(f)$ in \eqref{diag:lax-nat-2cell} and
the right diagram corresponds to $\mathbf{G}(f) \circ \sigma_A$ 
in \eqref{diag:lax-nat-2cell}. Also, we have not yet carried out the composition of 2-diagrams; this, together with the construction of $\Bm_{(\phi, \psi)}$ will be done below.

The data $\Bm_{(F,G)}$ and $\Bm_{(\phi, \psi)}$ have to satisfy the conditions stated in Definition~\ref{def:nat-trans-fun-bicat}. This is the content of Proposition\,\ref{prop:nat-tran-m} below.

\medskip

To construct $\Bm_{(F,G)}$ and $\Bm_{(\phi, \psi)}$ we first introduce the auxiliary map $n_{F,F',G,G'}$ which is defined by the following coequalizer diagram,
\be \label{eq:nFFGG-def}
\raisebox{2.3em}{
  \xymatrix{ [F,F'] \otimes Z(\CM) \otimes [G,G'] \ar@<+.7ex>[r]^{L} \ar@<-.7ex>[r]_{R} & 
  [F,F'] \otimes [G,G']  \ar[r]^(.45)\rho \ar[d]^{[G'\circ-]_{[F,F']}\, \otimes \,[- \circ F]_{[G,G']}} & 
  [F,F'] \otimes_{Z(\CM)} [G,G'] \ar@{.>}[d]^{\exists ! n_{F,F',G,G'}}
  \\
  & [G'F,G'F']\otimes[GF,G'F] \ar[r]^{\comp} & [GF,G'F'] 
}}
\ee
where the composite morphism at the bottom satisfies the coequalizer property: 
$$
\comp \circ ([G'\circ-]_{[F,F']}\, \otimes \,[- \circ F]_{[G,G']}) \circ L = \comp \circ ([G'\circ-]_{[F,F']}\, \otimes \,[- \circ F]_{[G,G']}) \circ R \ .
$$ 
This follows from the commutative diagram \eqref{eq:C-functor} and the associativity of $\comp$ (Lemma~\ref{lem:composition-properties}).

\begin{lemma} \label{lem:nFGFG-assoc}
Given $\CC$-module functors $F, F' \in \CF_{\CL,\CM}$, $G, G' \in \CF_{\CM,\CN}$ and $H, H'\in 
\CF_{\CN,\CP}$, the morphism $n$ satisfies the following associativity condition:
\be
\raisebox{2.3em}{\xymatrix{
([F,F'] \otimes_{Z(\CM)} [G,G']) \otimes_{Z(\CN)} [H,H']
\ar[rr]^{~~~~~~~n_{F,F',G,G'}\,1} \ar[d]^{1\,n_{G,G',H,H'}}  && 
[GF,G'F']  \otimes_{Z(\CN)} [H,H']
\ar[d]_{n_{GF,G'F',H,H'}}
\\
[F,F'] \otimes_{Z(\CM)} [HG,H'G']
 \ar[rr]^{n_{F,F',HG,H'G'}} && [HGF,H'G'F']
}}
\ee
\end{lemma}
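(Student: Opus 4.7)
The plan is to reduce the claim, via the universal property of iterated coequalizers, to an identity of morphisms out of the ordinary tensor product $[F,F']\otimes[G,G']\otimes[H,H']$, and then to verify that identity from the associativity of $\comp$ together with the $\CZ(\CC)$-functoriality of horizontal whiskering established in Section~\ref{sec:act-int-hom}.

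First, by Assumption~\ref{ass:coeq} and two applications of Lemma~\ref{lem:coeq-iterate}, the canonical morphism $[F,F']\otimes[G,G']\otimes[H,H']\to([F,F']\otimes_{Z(\CM)}[G,G'])\otimes_{Z(\CN)}[H,H']$ is itself a coequalizer, so it suffices to check that the two composites of the square agree after precomposition with it. Using the defining equation \eqref{eq:nFFGG-def} of $n$, the upper-right path expands as $\comp\circ([H'\circ-]_{[GF,G'F']}\otimes[-\circ GF]_{[H,H']})\circ(\comp\otimes\id)\circ(([G'\circ-]_{[F,F']}\otimes[-\circ F]_{[G,G']})\otimes\id)$, while the lower-left path expands as $\comp\circ([H'G'\circ-]_{[F,F']}\otimes[-\circ F]_{[HG,H'G']})\circ(\id\otimes\comp)\circ(\id\otimes([H'\circ-]_{[G,G']}\otimes[-\circ G]_{[H,H']}))$.

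To reconcile them, I would first pull the outer whiskering through the inner $\comp$ using the $\CZ(\CC)$-functor identity \eqref{eq:C-functor} of Lemma~\ref{lemma:F-C-functor}, applied to the $\CZ(\CC)$-module functor $H'\circ(-)$ in the upper path and to $(-)\circ F$ in the lower path. Then I would apply Lemma~\ref{lemma:G-F=GF} to collapse compositions of whiskerings into single ones: $[H'\circ-]\circ[G'\circ-]=[H'G'\circ-]$ on $[F,F']$, and $[-\circ F]\circ[-\circ G]=[-\circ GF]$ on $[H,H']$. Finally, the remaining mixed composites $[H'\circ-]_{[GF,G'F]}\circ[-\circ F]_{[G,G']}$ (upper) and $[-\circ F]_{[H'G,H'G']}\circ[H'\circ-]_{[G,G']}$ (lower) coincide as maps $[G,G']\to[H'GF,H'G'F]$, again by Lemma~\ref{lemma:G-F=GF} applied to the equal $\CZ(\CC)$-module functors $(H'\circ-)\circ(-\circ F)=(-\circ F)\circ(H'\circ-):\Fun_\CC(\CM,\CN)\to\Fun_\CC(\CL,\CP)$ (both sending $G\mapsto H'GF$ on objects and $\alpha\mapsto H'\alpha F$ on morphisms).

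After these rewrites, both composites take the common form of $\comp$ applied to the same triple of maps $[H'G'\circ-]_{[F,F']}\otimes\beta\otimes[-\circ GF]_{[H,H']}$ landing in $[H'G'F,H'G'F']\otimes[H'GF,H'G'F]\otimes[HGF,H'GF]$, where $\beta$ denotes the common mixed whiskering above, and they differ only in how the outer $\comp$ is bracketed --- as $\comp\circ(\comp\otimes\id)$ in the upper-right path and as $\comp\circ(\id\otimes\comp)$ in the lower-left path. The desired equality is then precisely the associativity \eqref{eq:asso-int-hom} of $\comp$ from Lemma~\ref{lem:composition-properties}(ii), instantiated with $N=H'G'F'$, $M=H'G'F$, $K=H'GF$, $L=HGF$. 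The main obstacle is purely bookkeeping: carefully tracking the sources and targets of the many whiskering maps and confirming that each invocation of Lemma~\ref{lemma:G-F=GF} is applied to the correct $\CZ(\CC)$-module functor between the correct functor categories. Once arranged, the result is the expected categorified incarnation of the associativity of the Godement horizontal composition of natural transformations.
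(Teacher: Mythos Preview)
Your argument is correct and follows essentially the same route as the paper's proof, which merely cites the commutative diagram \eqref{eq:C-functor}, the associativity \eqref{eq:asso-int-hom}, and the universal property of the fibered tensor products. Your write-up is a careful expansion of that one-line sketch; the explicit use of Lemma~\ref{lemma:G-F=GF} to collapse and interchange the whiskerings is exactly the bookkeeping the paper leaves implicit.
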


\begin{proof}
It follows easily from the commutative diagram (\ref{eq:C-functor}), the associativity (\ref{eq:asso-int-hom}) and the universal properties of tensor products $\otimes_{Z(\CM)}$ and $\otimes_{Z(\CN)}$. 
\end{proof}

We define the map $m_{F,G}$ in \eqref{eq:m_(F,G)-def} to be
\be\label{eq:m_FG-def}
  m_{F,G} = n_{F,F,G,G} ~:~ Z(F) \otimes_{Z(\CM)} Z(G) \longrightarrow Z(GF) \ .
\ee
For the remainder of this section, it is convenient to abbreviate
\be
  Y := Z(F) \otimes_{Z(\CM)} Z(G) 
  \quad \text{and} \quad
  Y' := Z(F') \otimes_{Z(\CM)} Z(G') \ .
\ee

\begin{lemma} \label{lem:m(F,G)-is-2diag}
(i) $m_{F,G}$ is an algebra homomorphism.\\
(ii) Eqn.\,\eqref{eq:m_(F,G)-def} defines a 2-diagram.
\end{lemma}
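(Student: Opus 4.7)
The algebra structure on $Y$ comes from the tensor product algebra structure \eqref{eq:TS-mu} on $Z(F)\otimes Z(G)$ via the coequalizer $\rho$, which is itself an algebra homomorphism (Lemma~\ref{lem:TxBS-algebra}). Since $\rho\otimes\rho$ is again a coequalizer by Lemma~\ref{lem:coeq-iterate}, it suffices to show that the composite
$$
\tilde m := m_{F,G}\circ\rho \;=\; \comp_{GF}\circ\bigl([G\circ -]_{[F,F]}\otimes[-\circ F]_{[G,G]}\bigr)
$$
is an algebra homomorphism from $Z(F)\otimes Z(G)$ (with multiplication \eqref{eq:TS-mu}) to $Z(GF)$. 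Unitality of $\tilde m$ is immediate: both tensor factors are unital algebra maps (Lemma~\ref{lemma:F-C-functor}) and $\comp_{GF}\circ(\iota\otimes\iota)=\iota$. Multiplicativity, after unfolding \eqref{eq:TS-mu} and using multiplicativity of the two factors separately, reduces to the swap identity
$$
\comp_{GF}\circ\bigl([-\circ F]\otimes[G\circ -]\bigr) \;=\; \comp_{GF}\circ\bigl([G\circ -]\otimes[-\circ F]\bigr)\circ c_{Z(G),Z(F)} \, ,
$$
which is exactly Proposition~\ref{prop:comm} specialised to $F=F'$, $G=G'$ (and is also what makes the cospan condition hold for $\BZ(GF)$).

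\textbf{Plan for (ii).} In \eqref{eq:m_(F,G)-def} the bimodule in the middle is $Z(GF)$ itself, carrying the regular left $Z(GF)$-action together with the right $Y$-action given by $m_{F,G}$ followed by the multiplication on $Z(GF)$; with this choice right-linearity of $m_{F,G}$ is equivalent to its multiplicativity, supplied by~(i), and $\id_{Z(GF)}$ is manifestly a left $Z(GF)$-module map. For the two triangles I would insert the definition~\eqref{eq:A-T-C-S-TBS} of $a_1$, $b_1$ and the definition~\eqref{eq:nFFGG-def} of $m_{F,G}$, collapse the unit via $\comp_{GF}\circ(x\otimes\iota)=x$, and then identify the survivor with $a_2=[(GF)\circ -]_{[\id,\id]}$ (respectively $b_2$) by invoking Lemma~\ref{lemma:G-F=GF} applied to the functor equality $(G\circ -)\circ(F\circ -)=(GF)\circ -$. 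For the two centrality squares in~\eqref{eq:2diagram-cond}, the triangles just proved let one replace $m_{F,G}\circ a_1$ by $a_2$ on the $Y$-action side; the remaining identity is precisely the cospan condition~\eqref{eq:cospan-central} for $\BZ(GF)=(Z(\CL)\to Z(GF)\leftarrow Z(\CN))$, combined with commutativity of $Z(GF)$ from Cor.~\ref{cco} to interchange $c$ with $c^{-1}$.

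\textbf{Main obstacle.} Essentially all the content sits in~(i), and inside (i) the only non-bookkeeping step is matching up the braiding built into~\eqref{eq:TS-mu} with the braiding that Proposition~\ref{prop:comm} eliminates on the $\comp_{GF}$-side. Once that identification is made, both parts of the lemma are routine applications of coequalizer universal properties (Assumption~\ref{ass:coeq}, Lemma~\ref{lem:coeq-iterate}) together with Lemmas~\ref{lemma:F-C-functor} and~\ref{lemma:G-F=GF}.
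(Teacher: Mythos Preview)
Your plan for (i) is correct and close to the paper's argument. The paper packages the same computation via Lemma~\ref{lem:TBS-universal}: it checks that the pair $(w,v)=([G\circ-]_{[F,F]},\,[-\circ F]_{[G,G]})$ satisfies the two hypotheses~\eqref{eq:T-r-S-l} (the first by Lemma~\ref{lemma:G-F=GF}, the second by Proposition~\ref{prop:comm} with $F=F'$, $G=G'$), and then reads off from the construction~\eqref{eq:rho-u-wv-mu} that the resulting algebra map $u$ equals $m_{F,G}$. Your approach unwinds that abstraction and verifies multiplicativity of $\tilde m=m_{F,G}\circ\rho$ directly; after using associativity of $\comp_{GF}$ and multiplicativity of the two factors (Lemma~\ref{lemma:F-C-functor}), the reduction to the swap identity is exactly Proposition~\ref{prop:comm}, and the epi property of $\rho\otimes\rho$ (Lemma~\ref{lem:coeq-iterate}) finishes the job. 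Both routes hinge on the same key input.

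Your plan for (ii) is also essentially right, but the last sentence contains an error: $Z(GF)=[GF,GF]$ is \emph{not} commutative in general, and Cor.~\ref{cco} does not claim it is (the commutativity statement there is for $Z(\CM)=[\id_\CM,\id_\CM]$ only). Fortunately you do not need it. Once you have replaced $m_{F,G}\circ a_1$ by $a_2$ via the triangle, the second condition in~\eqref{eq:2diagram-cond} becomes
\[
\comp_{GF}\circ(a_2\otimes 1)\;=\;\comp_{GF}\circ(1\otimes a_2)\circ c_{Z(GF),Z(\CL)}^{-1}
\quad\text{on}\quad Z(\CL)\otimes Z(GF),
\]
and this is obtained from the right-centrality condition~\eqref{eq:cospan-central} for $\BZ(GF)$ (equivalently Cor.~\ref{cor:Z-[FF]-[FG]}, diagram~\eqref{diag:ZM-Cr-FG}, specialised to $F=G=GF$) simply by precomposing with the isomorphism $c_{Z(GF),Z(\CL)}^{-1}$; no commutativity of $Z(GF)$ is involved. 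The third condition in~\eqref{eq:2diagram-cond} is handled the same way using~\eqref{diag:ZN-Cl-FG}. This is exactly how the paper proceeds.
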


\begin{proof}
(i) Consider the diagram
\be
\raisebox{3.9em}{
\xymatrix{
& [\id_\CM,\id_\CM] \ar[dl]_{[- \circ F]_{[\id,\id]}} \ar[dr]^{[G \circ -]_{[\id,\id]}} \ar[dd]^x \\
[F,F] \ar[dr]_{[G \circ -]_{[F,F]}} && [G,G] \ar[dl]^{[- \circ F]_{[G,G]}} \\
& [GF,GF]}}
\quad ,
\ee
where $x = [G \circ (-) \circ F]_{[\id,\id]}$. 
According to Lemma~\ref{lemma:F-C-functor}, all five maps in the diagram are algebra maps. 
The two triangles involving $x$ commute by Lemma~\ref{lemma:G-F=GF}. This shows that the first diagram in \eqref{eq:T-r-S-l} commutes. The second diagram in \eqref{eq:T-r-S-l} is commutative because of the commutative diagram \eqref{diag:comm} in Proposition\,\ref{prop:comm} (with $F=F'$ and $G=G'$). Thus we can apply Lemma~\ref{lem:TBS-universal}, which provides us with a unique algebra map $u : Y \to Z(GF)$. From the construction of $u$ in \eqref{eq:rho-u-wv-mu} we see that in fact $u = m_{F,G}$.

\medskip\noindent
(ii) The object $Z(GF)$ in $\CZ(\CC)$ becomes a $Z(GF)$-$Y$-bimodule by using the algebra map $m_{F,G}$ to define the $Y$-action. It is then clear that $m_{F,G}$ is a right $Y$-module map (and in any case $1$ is a left $Z(GF)$-module map). Consider the diagram
\be \label{eq:m(F,G)-is-2diag-aux1}
\raisebox{3.8em}{\xymatrix{
&&& Y \ar[d]^{m_{F,G}}  \\ 
Z(\CL) \ar@/^1.2em/[rrru]^{a_1} \ar@/_1.4em/[rrrd]_{a_2} \ar[rr]^{[F \circ -]_{[\id,\id]}} && 
  Z(F) \ar[ru]^{\rho \circ (1\,\iota)} \ar[rd]_{[G \circ -]_{[F,F]}} &
  Z(G  F) & 
  Z(G) \ar[ul]_{\rho \circ (\iota\,1)} \ar[dl]^{[- \circ F]_{[G,G]}} &&
  Z(\CN)\ar@/_1.2em/[ulll]_{b_1} \ar@/^1.4em/[dlll]^{b_2} \ar[ll]_{[- \circ G]_{[\id,\id]}} \\
&&& Z(G  F) \ar[u]^{1}
}}
\quad ,
\ee
where the maps $a_1,a_2,b_1,b_2$ are those in \eqref{eq:m_(F,G)-def}; explicitly they are given by
\begin{eqnarray} 
a_1 &=& \big(~Z(\CL) \xrightarrow{[F \circ -]_{[\id,\id]}} Z(F) \xrightarrow{1\,\iota} Z(F) \otimes Z(G) \xrightarrow{\rho} Y ~\big) \ , \nonumber \\
a_2 &=& \big(~ Z(\CL) \xrightarrow{[(GF) \circ -]_{[\id,\id]}} Z(GF) ~\big) \ , \nonumber \\
b_1 &=& \big(~ Z(\CN) \xrightarrow{[- \circ G]_{[\id,\id]}} Z(G) \xrightarrow{\iota\,1} Z(F) \otimes Z(G) \xrightarrow{\rho} Y ~\big) \ , \nonumber \\
b_2 &=& \big(~ Z(\CN) \xrightarrow{[- \circ (GF)]_{[\id,\id]}} Z(GF) ~\big) \ .
\label{eq:a1-b2_for_m(F,G)}
\end{eqnarray}
The triangles in \eqref{eq:m(F,G)-is-2diag-aux1} involving $a_1, b_1$ commute by definition. Those involving $a_2, b_2$ commute by Lemma~\ref{lemma:G-F=GF}. The remaining cells commute by the observation $u=m_{F,G}$ in part (i), as they just amount to \eqref{eq:TBS-universal}. This establishes the first of the three diagrams in \eqref{eq:2diagram-cond}. The second condition in \eqref{eq:2diagram-cond} follows form the commutativity of the diagram
\be
\raisebox{2.8em}{\xymatrix@C=2em@R=1.5em{
Z(\CL) \, Z(GF) \ar[dd]_{c^{-1}}  \ar[rr]^{a_2\,1} && Z(GF)\,Z(GF) \ar[rd]^{\comp}  &  \\
&&Z(GF)Z(GF) \ar[r]^{\comp} & Z(GF) \\
Z(GF)\,Z(\CL) \ar@/^1em/[urr]^{1\,a_2} \ar[rr]^{1\,a_1} && Z(GF) \,Y \ar[u]^{1\,m_{F,G}} \ar[ru]_{\text{act}} &
}}
\ee
The commutativity of the upper square is the statement of Cor.\,\ref{cor:Z-[FF]-[FG]}, the commutativity of the bottom left triangle is equivalent to that of the left square in \eqref{eq:m_(F,G)-def}, which we already proved. The bottom right triangle is just the definition of the $Y$-action on $Z(GF)$. The third condition in \eqref{eq:2diagram-cond} can be proved similarly. 
\end{proof}

By Lemma~\ref{lem:laxfun-2diag-welldef}, $[F, F']\otimes_{Z(\CM)} [G, G']$ is a $Y'$-$Y$-bimodule. On the other hand, $[GF,G'F']$ is a $Z(G'F')$-$Z(GF)$-bimodule. Since by Lemma~\ref{lem:m(F,G)-is-2diag}, the maps $m_{F,G} : Y \to Z(GF)$ and $m_{F',G'} : Y' \to Z(G'F')$ are algebra maps, we also obtain a $Y'$-$Y$-bimodule structure on $[GF,G'F']$.

\begin{lemma} \label{lemma:m-FF'-GG'}
The map $n_{F,F'; G,G'}$ in \eqref{eq:nFFGG-def} is a $Y'$-$Y$-bimodule morphism. 
\end{lemma}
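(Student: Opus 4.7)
The plan is to establish the right $Y$-module property; the left $Y'$-module property will follow by a symmetric argument using the analogous left-action construction in Lemma~\ref{lem:laxfun-2diag-welldef} together with the algebra map $m_{F',G'}$. By the universal property of the coequalizers $\rho\colon [F,F']\otimes [G,G']\to [F,F']\otimes_{Z(\CM)}[G,G']$ and $\rho\colon Z(F)\otimes Z(G)\to Y$, both of which remain coequalizers after tensoring by Assumption~\ref{ass:coeq}(ii), it suffices to check the compatibility after precomposition with $\rho\otimes\rho$, i.e.\ at the level of morphisms out of $[F,F']\otimes [G,G']\otimes Z(F)\otimes Z(G)$.

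Unpacking the two sides, the pre-coequalizer right $Z(F)\otimes Z(G)$-action on $[F,F']\otimes [G,G']$ is, by the proof of Lemma~\ref{lem:laxfun-2diag-welldef}, the map $(\comp_F\otimes\comp_G)\circ(1\otimes c_{[G,G'],Z(F)}\otimes 1)$. Composing with $n\circ\rho=\comp_{G'F}\circ([G'\circ-]_{[F,F']}\otimes[-\circ F]_{[G,G']})$ and invoking Lemma~\ref{lemma:F-C-functor} (the $\CC$-functoriality of $[G'\circ-]$ and $[-\circ F]$) to move $\comp_F,\comp_G$ past the functorial maps yields a four-fold composite of the morphisms $[G'\circ-]x$, $[G'\circ-]z$, $[-\circ F]y$, $[-\circ F]w$. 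On the other hand, the right $Y$-action on $[GF,G'F']$ pulled back along the algebra map $m_{F,G}$ (Lemma~\ref{lem:m(F,G)-is-2diag}) uses $\tilde m=m_{F,G}\circ\rho=\comp_{GF}\circ([G\circ-]\otimes[-\circ F])$, giving a four-fold composite of $[G'\circ-]x$, $[-\circ F]y$, $[G\circ-]z$, $[-\circ F]w$.

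By associativity of $\comp$ (Lemma~\ref{lem:composition-properties}(ii)), both expressions can be regrouped as iterated compositions of morphisms $GF\to G'F'$, and the difference between them localizes to the swap of the middle pair $[G'\circ-]z\in [G'F,G'F]$ with $[-\circ F]y\in [GF,G'F]$ (LHS) versus $[-\circ F]y$ with $[G\circ-]z\in [GF,GF]$ (RHS). Thus the desired identity reduces to
\[
\comp_{G'F}\circ([G'\circ-]\otimes[-\circ F])\circ c_{[G,G'],Z(F)}
\;=\;
\comp_{GF}\circ([-\circ F]\otimes[G\circ-])
\]
as morphisms $[G,G']\otimes Z(F)\to [GF,G'F]$, which is precisely the content of Proposition~\ref{prop:comm} specialized to $F'=F$. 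The main obstacle is the braiding bookkeeping: the factor $c_{[G,G'],Z(F)}$ enters only from the definition of the right $Y$-action on $[F,F']\otimes_{Z(\CM)}[G,G']$, while the corresponding action on $[GF,G'F']$ carries no such braiding; its role is absorbed exactly by the interchange identity~\eqref{diag:comm}. The left $Y'$-action is treated by the symmetric invocation of Proposition~\ref{prop:comm} (now with $G=G'$), where the braiding entering the left action is absorbed in the same manner.
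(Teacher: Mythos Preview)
Your argument is correct and follows essentially the same approach as the paper: both reduce to the pre-coequalizer level via the universal property of $\rho\otimes\rho$ and identify the commutativity of the resulting diagram with the interchange identity of Proposition~\ref{prop:comm}. The paper's proof is merely a sketch pointing back to the analogous argument in Lemma~\ref{lem:TBS-universal}, while you have written out the intermediate reductions (via Lemma~\ref{lemma:F-C-functor} and the explicit form of the action from Lemma~\ref{lem:laxfun-2diag-welldef}) in more detail, but the core mechanism is identical.
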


\pf
The proof is exactly parallel to that of $u$ in \eqref{diag:u-alg-map-pf} being an algebra map in the proof of Lem\,\ref{lem:TBS-universal} but with $u$ replaced by $n_{F,F'; G,G'}$. We only sketch the idea. Consider a diagram similar to (\ref{diag:u-alg-map-pf}).
	The two maps $\mu$ leading to the bottom right corner are replaced by the left module action $[G'F',G'F']\otimes [GF, G'F']\to [GF, G'F']$, and the commutativity of the central pentagon follows form \eqref{diag:comm}.
Then we obtain the commutativity of the outer subdiagram. By the universality of $\rho^{\otimes 2}$, we are done. 
\epf

Having completed the definition of $\Bm_{(F,G)}$ in \eqref{eq:m_(F,G)-def}, we now turn to $\Bm_{(\phi, \psi)}$ in \eqref{eq:mFGFG-def-precomp}. To start with, we work out the compositions of 2-diagrams implicit in \eqref{eq:mFGFG-def-precomp}. From this we see that $\Bm_{(\phi, \psi)}$ needs to provide a 3-cell for
\be\label{eq:mFGFG-def-withcomp}
\raisebox{3.7em}{\xymatrix@C=1em{
& Y \ar[ld]_u \ar[rd]^{u'} &  \\
Z(G'F') \otimes_{Y'} \big( [F,F'] \otimes_{Z(\CM)} [G,G'] \big) \ar[rr]^{\hspace{0.5cm}\Bm_{(\phi, \psi)}}
&&
[GF,G'F'] \otimes_{Z(GF)} Z(GF)
\\
& Z(G'F') \ar[lu]^v \ar[ru]_{v'} &
}}
\ee
The explicit form of the maps $u,u',v,v'$ follows from \eqref{eq:2-diag-com-u-def} and (\ref{eq:2-diag-com-v-def}) depending on the pair $(\phi, \psi)$. To construct the morphism $\Bm_{(\phi, \psi)}$, consider the coequalizer diagram (we abbreviate $Q  = [F,F'] \otimes_{Z(\CM)} [G,G']$)
\be  \label{diag:def-m'}
\raisebox{2.3em}{\xymatrix{
Z(G'F') \otimes Y' \otimes Q
  \ar@<+.7ex>[r]^{L} \ar@<-.7ex>[r]_{R} & 
Z(G'F')  \otimes Q
  \ar[r]^(.45)\rho \ar[d]^{1\,n_{F,F',G,G'}}  &
Z(G'F') \otimes_{Y'} Q
   \ar@{.>}[d]^{\exists ! m'_{F,F',G,G'}}
\\
& Z(G'F')  \otimes [GF,G'F'] \ar[r]^{\hspace{1cm}\comp} & [GF,G'F']
}}
\ee
For $A$ an algebra and $M$ a left $A$-module, denote by $r$ the isomorphism $A \otimes_A M \to M$, so that
$r^{-1} = M \xrightarrow{\iota_A\,1} A \otimes M \xrightarrow{~\rho~} A \otimes_A M$. We define
\be  \label{eq:mFFGG-def}
m_{F,F',G,G'} :=  r^{-1} \circ m'_{F,F',G,G'}.
\ee
We finally define $\Bm_{(\phi, \psi)}$ in \eqref{eq:mFGFG-def-withcomp} by
\be \label{eq:m-phipsi-def}
  \Bm_{(\phi, \psi)} := m_{F,F',G,G'} = r^{-1} \circ m'_{F,F',G,G'} \ .
\ee

\begin{rema}
Notice that the definition of $\Bm_{(\phi, \psi)}$ in \eqref{eq:m-phipsi-def} is independent of the pair $(\phi, \psi)$. This is due to the speciality of our definition of $\BZ_{(F,G)}(\phi)$ in \eqref{eq:Z_FG(phi)-def}. For this reason, we will also use the notation $m_{F,F',G,G'}$ for $\Bm_{(\phi, \psi)}$ in a few places when we want to emphasize this independence of the pair $(\phi, \psi)$. 
\end{rema}

Following a parallel argument as in the proof of Lem\,\ref{lemma:m-FF'-GG'}, it is easy to see that $m_{F,F',G,G'}$ is a $Z(G'F')$-$Y$-bimodule map.

\begin{lemma}  \label{lem:mFGFG-3cell} 
Eqn.\,(\ref{eq:mFGFG-def-withcomp}) defines a 3-cell.
\end{lemma}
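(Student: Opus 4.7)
Two things must be checked: that $\Bm_{(\phi,\psi)} = m_{F,F',G,G'} = r^{-1}\circ m'_{F,F',G,G'}$ is a $Z(G'F')$-$Y$-bimodule map, and that both triangles in \eqref{eq:mFGFG-def-withcomp} commute. The bimodule property was already noted right after \eqref{eq:mFFGG-def}: it follows from Lemma~\ref{lemma:m-FF'-GG'} by the same coequalizer argument used to show $u$ is an algebra homomorphism in Lemma~\ref{lem:TBS-universal}. So I concentrate on the triangle identities $\Bm_{(\phi,\psi)}\circ u = u'$ and $\Bm_{(\phi,\psi)}\circ v = v'$, with $u,u',v,v'$ read off from \eqref{eq:2-diag-com-u-def} and \eqref{eq:2-diag-com-v-def} applied to the two composed 2-diagrams in \eqref{eq:mFGFG-def-precomp}. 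Since $r\circ\rho = \comp_{GF}$, each triangle is equivalent to an equality of morphisms into $[GF,G'F']$, and the plan is to reduce these to routine manipulations of internal homs.

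The \emph{lower} triangle $m'\circ v = r\circ v'$ is the easier one. Unwinding $v'$ and using the unit property \eqref{eq:unit-int-hom} gives $r\circ v' = [\psi\phi, G'F']$. On the other side, $v$ factors as $\rho\circ(\id\otimes(f'\circ\iota_{Y'}))$ with $f' = [\phi,F']\otimes_{Z(\CM)}[\psi,G']$, so the second expression in \eqref{eq:phi_M-2expressions} together with Lemma~\ref{lemma:F-C-functor} applied to the $\CZ(\CC)$-module functors $G'\circ(-)$ and $(-)\circ F$ computes $n\circ f'\circ\iota_{Y'} = \comp_{G'F}\circ(\underline{G'\phi}\otimes\underline{\psi F}) = \underline{\psi\phi}$; combining with \eqref{diag:def-m'} then yields $m'\circ v = \comp_{G'F'}\circ(\id\otimes\underline{\psi\phi}) = [\psi\phi, G'F']$, as required.

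For the \emph{upper} triangle, both $m'\circ u$ and $r\circ u'$ factor through $\rho_Y:Z(F)\otimes Z(G)\to Y$, so it suffices to compare them after precomposing with $\rho_Y$. Using \eqref{diag:def-m'}, the fact that $m_{F',G'}$ is an algebra map (Lemma~\ref{lem:m(F,G)-is-2diag}), the defining coequalizer \eqref{eq:nFFGG-def} for $n$, and Lemma~\ref{lemma:[F]-funct-1}, one rewrites
\[
m'\circ u\circ\rho_Y \;=\; \comp_{G'F}\circ\bigl([G'F,G'\phi]\otimes[GF,\psi F]\bigr)\circ(\alpha_{G'}\otimes\beta_F) \, ,
\]
with $\alpha_{G'}:=[G'\circ-]_{[F,F]}$, $\beta_F:=[-\circ F]_{[G,G]}$. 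On the other side, Lemma~\ref{lem:composition-properties}(iii) and \eqref{eq:unit-int-hom} give $r\circ u' = [GF,\psi\phi]\circ m_{F,G}$, and the explicit formula $m_{F,G}\circ\rho_Y = \comp_{GF}\circ(\alpha_G\otimes\beta_F)$ (visible in \eqref{eq:rho-u-wv-mu} in the proof of Lemma~\ref{lem:m(F,G)-is-2diag}(i), with $\alpha_G:=[G\circ-]_{[F,F]}$) turns this into $[GF,\psi\phi]\circ\comp_{GF}\circ(\alpha_G\otimes\beta_F)$.

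The remaining equality is assembled from four ingredients: (a) Lemma~\ref{lem:GF-nat-commute} applied to the $\CZ(\CC)$-module functors $G\circ(-),G'\circ(-)$ with natural transformation $\psi\circ(-)$, evaluated at $F$, which gives $[GF,\psi F]\circ\alpha_G = [\psi F, G'F]\circ\alpha_{G'}$; (b) Lemma~\ref{lem:composition-properties}(iii), used to pull $[G'F,G'\phi]$ out of $\comp_{G'F,GF,G'F'}$ and, later, $[GF,\psi F]$ out of $\comp_{GF,GF,G'F}$; (c) the key commutation $\comp_{G'F,GF,G'F}\circ(\id\otimes[GF,\psi F]) = \comp_{GF,GF,G'F}\circ([\psi F,G'F]\otimes\id)$, which is obtained by inserting $\underline{\psi F}:\one\to[GF,G'F]$ and invoking associativity of $\comp$ (Lemma~\ref{lem:composition-properties}(ii)) together with the identifications $\comp\circ(\id\otimes\underline{\psi F}) = [\psi F,G'F]$ and $\comp\circ(\underline{\psi F}\otimes\id) = [GF,\psi F]$ that follow from \eqref{eq:unit-int-hom} and \eqref{eq:phi_M-2expressions}; and (d) the functoriality $[GF,G'\phi]\circ[GF,\psi F] = [GF,\psi\phi]$ combined with $\psi\phi = G'\phi\circ\psi F$. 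Chaining these steps converts the left hand side into the right hand side. The only real obstacle is bookkeeping: one must track consistently the several specializations $\comp_{*,*,*}$ of the composition morphism and the variants $\alpha_G,\alpha_{G'},\beta_F$ throughout.
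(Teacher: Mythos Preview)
Your proof is correct and follows essentially the same approach as the paper's own proof. Both reduce the two triangle identities to equalities of morphisms into $[GF,G'F']$, precompose with the relevant coequalizer map to access the definition of $n_{F,F',G,G'}$, and then finish using Lemma~\ref{lem:GF-nat-commute}, Lemma~\ref{lemma:[F]-funct-1}, and the compatibility and associativity properties of $\comp$ from Lemma~\ref{lem:composition-properties}.

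The organization differs slightly. For the upper triangle the paper packages your steps (a)--(d) into a single commuting diagram (the analogue of \eqref{diag:GF-nat-commute} specialized to the $\CZ(\CC)$-module functors $G\circ(-)$ and $G'\circ(-)$), arriving at the same core identity $\comp\circ([G'\circ-]\otimes[-\circ F])\circ([F,\phi]\otimes[G,\psi]) = [GF,\psi\phi]\circ\comp\circ([G\circ-]\otimes[-\circ F])$ that you assemble piecewise. For the lower triangle the paper first proves the companion identity $n_{F,F',G,G'}\circ([\phi,F']\otimes_{Z(\CM)}[\psi,G']) = [\psi\phi,G'F']\circ m_{F',G'}$ and then uses that $m_{F',G'}$ is unital, whereas you compute $n\circ f'\circ\iota_{Y'}=\underline{\psi\phi}$ directly; your route is a bit more streamlined here. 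One small remark: in your lower-triangle argument the identification $[G'\circ-]\circ\underline{\phi}=\underline{G'\phi}$ (and its companion) uses Lemma~\ref{lemma:[F]-funct-1} together with \eqref{eq:unit-C-functor} and \eqref{eq:phi_M-2expressions}, not only Lemma~\ref{lemma:F-C-functor} as you cite, but this is a minor citation gap, not a mathematical one.
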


\pf
It remains to show that two triangles in \eqref{eq:mFGFG-def-withcomp} are commutative. For the upper triangle, using \eqref{diag:def-m'}, we obtain that 
\be  \label{eq:mu-n-phi-psi}
m_{F,F',G,G'} \circ u = r^{-1} \circ n_{F,F',G,G'} \circ ([F,\phi]\otimes_{Z(\CM)} [G,\psi]).
\ee
On the other hand, it is easy to show that $u'= r^{-1} \circ [GF, \psi\phi] \circ m_{F,G}$. Therefore, it is enough to show that 
$$
n_{F,F',G,G'} \circ ([F,\phi]\otimes_{Z(\CM)} [G,\psi]) = [GF, \psi\phi] \circ m_{F,G}.
$$
Composing both sides by $\rho: Z(F) \otimes Z(G) \to Y$ and using \eqref{eq:nFFGG-def}, we obtain 
\bea
n_{F,F',G,G'} \circ ([F,\phi]\otimes_{Z(\CM)} [G,\psi]) \circ \rho &=& n_{F,F',G,G'} \circ \rho \circ 
([F,\phi]\otimes [G,\psi])  \nn 
	&=& \comp \circ ([G' \circ -]\otimes [- \circ F]) \circ ([F,\phi]\otimes [G,\psi]), \nn
\left[ GF, \psi \phi\right] \circ m_{F,G} \circ \rho &=& [GF,\psi\phi] \circ \comp \circ ([G \circ -]\otimes [- \circ F]).
\eea
Therefore, it is enough to prove 
\be \label{eq:m-3-cell-pf-eq-1}
\comp \circ ([G' \circ -]\otimes [- \circ F]) \circ ([F,\phi]\otimes [G,\psi])
= [GF,\psi\phi] \circ \comp \circ ([G \circ -]\otimes [- \circ F]).
\ee

Consider the following commutative diagram: 
\be  \label{eq:m-3-cell-pf-eq-2}
\xymatrix{
& [F,F] \ast G'F \ar[rd]^{[G' \circ -] 1} &  \\
[F,F]\ast GF \ar[r]^{[G' \circ -]1} \ar[ur]^{1\ast \psi F} \ar[d]_{[G \circ -] 1} & [G'F, G'F]\ast GF \ar[r]^{1\ast \psi F} 
	\ar[d]_{[\psi F, G'F]1} &
[G'F,G'F]\ast G'F \ar[d]^{\ev} \\
[GF, GF]\ast GF \ar[r]^{[GF,\psi F]1} \ar[rd]_{\ev} & [GF, G'F] \ast GF \ar[r]^\ev & G'F \\
& GF \ar[ur]_{\psi F} & 
} 
\ee
where the left square is just \eqref{diag:GF-nat-commute}. The right and bottom square commute by definition, see \eqref{eq:def-fast-gast}. 
Using the first commutative diagram in (\ref{diag:[F]-fun-property}), the first commutative diagram in \eqref{eq:def-fast-gast} and the commutativity of outer subdiagram 
in (\ref{eq:m-3-cell-pf-eq-2}), one can show the commutativity of the following diagram:
\be
\xymatrix{
([F, F] \otimes [G, G]) \ast GF \ar[r] \ar[d]_{([F,\phi]\otimes [G,\psi])1} & GF \ar[d]^{\psi\phi} \\
([F, F'] \otimes [G, G']) \ast GF \ar[r] & G'F'
}
\ee
which implies (\ref{eq:m-3-cell-pf-eq-1}) immediately. Thus we have finished the proof of the commutativity of upper triangle in (\ref{eq:mFGFG-def-withcomp}).

\medskip
For the lower triangle in (\ref{eq:mFGFG-def-withcomp}), expanding the definition of $v, v', m_{F,F',G,G}$, it is enough to prove the commutativity of the following diagram: 
\be  \label{diag:iota-phipsi-m}
\xymatrix{
Z(G'F') \ar[r]^{\hspace{-2cm}1\iota} \ar[dd]_{[\psi\phi, G'F']} & Z(G'F') \otimes (Z(F')\otimes_{Z(\CM)} Z(G'))  \ar[d]^{1([\phi,F']\otimes_{Z(\CM)} [\psi,G'])} \\
& Z(G'F') \otimes ([F,F']\otimes_{Z(\CM)} [G,G'])  \ar[d]^{1n_{F,F';G,G'}} \\
[GF,G'F'] & Z(G'F') \otimes [GF,G'F']. \ar[l]_{\comp}
}
\ee
Using the first commutative diagram in (\ref{diag:[F]-fun-property}), the first commutative diagram in \eqref{eq:def-fast-gast} and the commutativity of outer subdiagram in (\ref{eq:m-3-cell-pf-eq-2}), we obtain the following commutative diagram:
$$
\xymatrix{
([F',F'] \otimes [G',G']) \ast GF \ar[r]^{1(\psi\phi)} \ar[d]_{([\phi,F'][\psi,G'])1}  & ([F',F'] \otimes [G',G']) \ast G'F' \ar[d]  \\
([F,F'] \otimes [G,G']) \ast GF \ar[r] & G'F'
}
$$
which, together with the following commutative diagram:
$$
\xymatrix{
(Z(F')\otimes_{Z(\CM)} Z(G')) \ast GF \ar[r]^{1(\psi\phi)}  \ar[d]_{m_{F',G'}1} & (Z(F')\otimes_{Z(\CM)} Z(G')) \ast G'F' \ar[d]_{m_{F',G'}1} \\
Z(G'F') \ast GF \ar[r]^{1(\psi\phi)}  \ar[d]_{[\psi\phi, G'F']1} & Z(G'F') \ast G'F' 
    \ar[d]^{\ev_{G'F'}} \\
[GF, G'F'] \ast GF \ar[r]^{\ev_{GF}} 
 \ar[r] & G'F'. 
}
$$ 
implies the following identity: 
\be  \label{eq:m-psiphi-psiphi-m}
n_{F,F';G,G'} \circ ([\phi,F'][\psi,G']) = [\psi\phi, G'F'] \circ m_{F',G'}. 
\ee
Applying (\ref{eq:m-psiphi-psiphi-m}) to diagram (\ref{diag:iota-phipsi-m}) and using the fact that $[\psi\phi, G'F']$ is a left $Z(G'F')$-module map and $m_{(F',G')}$ preserves the unit, we obtain the commutativity of (\ref{diag:iota-phipsi-m}) immediately. 

Therefore, we have proved that $\Bm_{(\phi, \psi)} = m_{F,F',G,G}$ is a well-defined 3-cell. 
\epf

We have now gathered all the ingredients to state the main result of this section.

\begin{prop}  \label{prop:nat-tran-m} 
Let $\CC$ be a monoidal category and let $\CL,\CM,\CN$ be $\CC$-modules. 
Let 
$$
\CF_{\CL,\CM} \subset \Fun_\CC(\CL,\CM) ~,~~
\CF_{\CM,\CN} \subset \Fun_\CC(\CM,\CN) ~,~~
\CF_{\CL,\CN} \subset \Fun_\CC(\CL,\CN)
$$
be $\CZ(\CC)$-closed subcategories such that the composition $\CF_{\CM,\CN} \times \CF_{\CL,\CM} \to \Fun_\CC(\CL,\CN)$ lies in $\CF_{\CL,\CN}$. 
The data \eqref{eq:m_(F,G)-def}, \eqref{eq:m_FG-def} and \eqref{eq:mFGFG-def-withcomp}, \eqref{eq:m-phipsi-def} defines a natural transformation $\Bm$ between the lax 2-functors stated in 
\eqref{eq:composition-nat-xfer-m-diag}.
\end{prop}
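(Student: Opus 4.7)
The plan is to verify the coherence axioms of a (lax) natural transformation between lax 2-functors as stated in Definition~\ref{def:nat-trans-fun-bicat}. The ``hard'' work of constructing the data and showing they land in the right categories is already done: Lemma~\ref{lem:m(F,G)-is-2diag} shows $\Bm_{(F,G)}$ is a 2-diagram, Lemma~\ref{lemma:m-FF'-GG'} gives the bimodule property of $n$, and Lemma~\ref{lem:mFGFG-3cell} shows that $\Bm_{(\phi,\psi)}$ is a well-defined 3-cell. What remains are three coherence conditions: naturality in 2-cells, compatibility with identities, and compatibility with composition.

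First, I would dispose of the naturality in 2-cells: since the source bicategory $\CF_{\CM,\CN}\times\CF_{\CL,\CM}$ is given only identity 2-morphisms (by the convention fixed at the start of Section~\ref{sec:module-to-cospan}), this axiom holds vacuously. Second, for the unit coherence, one must show that $\Bm_{(\id_F,\id_G)}=m_{F,F,G,G}$ agrees with the identity 3-cell modulo the unit isomorphisms \eqref{eq:unit-iso-l-r-CALG} of the target and the unit transformations of the two lax 2-functors (which for $\BZ_{\CM,\CN}$ from Proposition~\ref{prop:comp-inv-Z_mn-lax} and for $\Coco$ from Proposition~\ref{prop:C_ABC-lax-functor} are identities). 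Unwinding the definitions \eqref{eq:mFFGG-def} and \eqref{diag:def-m'}, and noting that $n_{F,F,G,G}=m_{F,G}$ by comparing \eqref{eq:nFFGG-def} with \eqref{eq:m_FG-def}, this reduces to the unit property \eqref{eq:unit-int-hom} of $\comp$ together with the canonical isomorphism $Z(GF)\otimes_{Z(GF)}Z(GF)\cong Z(GF)$.

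Third, and this is the main content, the composition coherence: for a composable pair $((\phi,\psi),(\phi',\psi'))$ of 1-cells, I must show that $\Bm_{(\phi'\circ\phi,\psi'\circ\psi)}$ agrees with the horizontal composite of $\Bm_{(\phi,\psi)}$ and $\Bm_{(\phi',\psi')}$ intertwined with the multiplication transformation of $\Coco$ (built from the $\beta$-isomorphisms of Lemma~\ref{lem:beta}) on one side and with the multiplication $\overline{\comp}$ of $\BZ_{\CL,\CN}$ from \eqref{eq:mult-map-Phi} on the other. The key simplification is that $\Bm_{(\phi,\psi)}=m_{F,F',G,G'}$ is \emph{independent} of $(\phi,\psi)$ by construction, so the condition reduces to a single structural identity relating $m_{F,F',G,G'}$, $m_{F',F'',G',G''}$ and $m_{F,F'',G,G''}$ via $\overline{\comp}$ and $\beta$. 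Using the universal property of the coequalizers defining $\otimes_{Z(\CM)}$, $\otimes_{Z(\CN)}$ and the various $\otimes_{Y_i}$, $\otimes_{Z(G_iF_i)}$ iteratively (with Lemma~\ref{lem:coeq-iterate} to cascade these universalities), the identity can be pulled back to an equation of morphisms out of the plain tensor product $[F,F']\otimes[G,G']\otimes[F',F'']\otimes[G',G'']$, where it becomes a direct consequence of the associativity of $n$ (Lemma~\ref{lem:nFGFG-assoc}) combined with the associativity of $\comp$ (Lemma~\ref{lem:composition-properties}(ii)).

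The main obstacle is this last step: the pentagon is phrased inside $\Cosp(Z(\CL),Z(\CN))$ whose 1-morphisms themselves carry bimodule structures over several algebras simultaneously, so the bookkeeping involves threading the $\beta$-isomorphisms through multiple nested fibered tensor products. The cleanest strategy is to observe that every morphism appearing in the pentagon is determined, by coequalizer universality, by its precomposition with the canonical epimorphisms from the free tensor products of internal homs; one then checks the pentagon on that level, where it is an identity among $\comp$-built expressions that follows from the associativity and the compatibility diagrams \eqref{eq:hom-comp-compat} already established in Lemma~\ref{lem:composition-properties}.
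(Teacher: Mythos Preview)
Your overall strategy matches the paper's: dispose of the 2-cell naturality as vacuous, reduce the unit axiom \eqref{diag:lax-nat-axiom-2} to the unit property of $\comp$, and for the composition axiom \eqref{diag:lax-nat-axiom-1} use coequalizer universality to pull the pentagon back to a statement about maps out of unquotiented tensor products.

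There is one inaccuracy in the lemmas you invoke for the last step. Lemma~\ref{lem:nFGFG-assoc} is the \emph{horizontal} associativity of $n$ (three composable pairs $(F,F')$, $(G,G')$, $(H,H')$ across three module categories $\CL,\CM,\CN,\CP$); that is not what axiom \eqref{diag:lax-nat-axiom-1} asks for. The axiom is a \emph{vertical} compatibility: for $F\xrightarrow{\phi}F'\xrightarrow{\phi'}F''$ and $G\xrightarrow{\psi}G'\xrightarrow{\psi'}G''$ one must compare $m_{F,F'',G,G''}$ against the composite built from $m_{F,F',G,G'}$, $m_{F',F'',G',G''}$, the $\beta$-isomorphism of Lemma~\ref{lem:beta}, and $\overline{\comp}$. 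After unwinding via coequalizers (and, as the paper does, passing one step further to the evaluation level $(\,\cdots)\ast GF\to G''F''$), the identity that remains is not a pure associativity of $\comp$: the $\beta$-isomorphism introduces a braiding that must be commuted past the evaluations. The ingredient that handles this is precisely the commutativity \eqref{diag:action-comm} established inside the proof of Proposition~\ref{prop:comm}, which the paper invokes explicitly. So replace your appeal to Lemma~\ref{lem:nFGFG-assoc} by an appeal to \eqref{diag:action-comm} (together with the associativity of $\comp$ and the compatibility \eqref{eq:hom-comp-compat}), and your argument goes through.
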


\pf
It remains to show that properties (\ref{diag:lax-nat-axiom-1}) and (\ref{diag:lax-nat-axiom-2}) hold. 
Because two $i$ maps in (\ref{diag:lax-nat-axiom-2}) are identity maps in this case, the condition (\ref{diag:lax-nat-axiom-2}) is reduced to the condition $m_{F,F, G,G}= r^{-1}\circ l$ for $F=G=\id_{\CM}$,
which holds obviously. For the property (\ref{diag:lax-nat-axiom-1}), we consider $\CC$-module functors $F,F',F'': \CL \to \CM$ and $G,G',G'': \CM \to \CN$, and natural transformations $F\xrightarrow{\phi} F'\xrightarrow{\phi'} F''$ and $G\xrightarrow{\psi} G' \xrightarrow{\psi'} G''$. Let $Y'':=Z(F'')\otimes_{Z(\CM)} Z(G'')$. Then the property (\ref{diag:lax-nat-axiom-1}) amounts to showing that the following composition of maps: 
\bea
&&\hspace{-0.5cm}Z(G''F'') \otimes_{Y''} ([F', F'']\otimes_{Z(\CM)} [G', G'']) 
\otimes_{Y'} ([F, F']\otimes_{Z(\CM)} [G, G'])   \nn
&& \xrightarrow{1\beta} Z(G''F'') \otimes_{Y''} ([F', F''] \otimes_{Z(F')} [F, F']) \otimes_{Z(\CM)} ([G',G'']\otimes_{Z(G')} [G,G']) \nn
&& \xrightarrow{1\otimes_{Y''}\overline{\comp}\otimes_{Z(\CM)}\overline{\comp}} Z(G''F'') \otimes_{Y''} [F,F'']\otimes_{Z(\CM)} [G, G''] \nn
&& \xrightarrow{m_{F,F'', G,G''}} [GF, G''F''] \otimes_{Z(GF)} Z(FG) \ , 
\eea
where $\beta:=\beta_{([F',F''],[G,G'']), ([F,F'], [G,G'])}$ (recall (\ref{eq:Cosp-comp-3cells})) is constructed explicitly in \ref{app:proof-lem:beta}, 
equals to the following composition of maps:
\bea
&&\hspace{-0.5cm}Z(G''F'') \otimes_{Y''} ([F', F'']\otimes_{Z(\CM)} [G, G']) \otimes_{Y'} ([F, F']\otimes_{Z(\CM)} [G, G'])   \nn
&& \xrightarrow{m_{F',F'',G',G''}1} 
[G'F', G''F'']\otimes_{Z(G'F')} Z(G'F') \otimes_{Y'} ([F,F']\otimes_{Z(\CM)} [G,G']) \nn
&& \xrightarrow{1m_{F,F',G,G'}} [G'F', G''F'']\otimes_{Z(G'F')} [GF, G'F'] \otimes_{Z(GF)} Z(GF) \nn
&& \xrightarrow{\overline{\comp}1} [GF, G''F''] \otimes_{Z(GF)} Z(FG)
\eea
Using the commutativity (\ref{diag:action-comm}), it is easy to check that both maps are induced from the same natural map: $(Z(G''F'')\otimes [F',F'']\otimes [G',G''] \otimes [F,F'] \otimes [G,G']) \ast GF \to G''F''$.
\epf

Recall Remark \ref{rema:truncated-nat-transf}. We have the following result. 
\begin{cor} \label{cor:m-iso-m-natxfer}
If the categories $\CF_{\CL,\CM}$, $\CF_{\CM,\CN}$, $\CF_{\CL,\CN}$ all meet the conditions of Cor.\,\ref{cor:Phi-iso-Z-functor}, and if the $\Bm_{(\phi, \psi)}$ in \eqref{eq:m-phipsi-def} are all isomorphisms, then $\Bm$ in Proposition\,\ref{prop:nat-tran-m} defines a natural transformation $\underline{\Bm}$ of functors between (1-)categories as in
(note the underlines)
\be \label{eq:composition-nat-xfer-m-diag-1cat-level}
\raisebox{2em}{\xymatrix{
\CF_{\CM,\CN} \times \CF_{\CL,\CM} \ar[d]_{(-)\circ(-)} \ar[rrr]^(0.34){\underline{\BZ_{\CM,\CN}} \times \underline{\BZ_{\CL,\CM}}} &&&
  \Cospu(Z(\CM),Z(\CN)) \times \Cospu(Z(\CL),Z(\CM)) \ar[d]^{ \underline{\Coco_{Z(\CL),Z(\CM),Z(\CN)}} } \ar@{}[dlll]|{\underline{\Bm}\,\, \swarrow} \\ 
\CF_{\CL,\CN} \ar[rrr]_{\underline{\BZ_{\CL,\CN}} } &&& \Cospu(Z(\CL),Z(\CN))~.
}}
\ee
Here $\underline{\Bm}$ is just a collection of equivalence classes of $2$-diagrams as shown in \eqref{eq:m_(F,G)-def} for all $G\in \CF_{\CM,\CN}$ and $F\in \CF_{\CL,\CM}$. 
\end{cor}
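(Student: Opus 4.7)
The statement is essentially a bookkeeping consequence of the truncation principle flagged in Remark \ref{rema:truncated-nat-transf}: a 2-natural transformation between (non-lax) 2-functors all of whose 2-cell components are invertible descends to a 1-natural transformation between the associated 1-functors obtained by passing to equivalence classes. Let me outline the plan in three steps.

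First, under the first hypothesis, Cor.\ref{cor:Phi-iso-Z-functor} applies to each of $\CF_{\CL,\CM}$, $\CF_{\CM,\CN}$, $\CF_{\CL,\CN}$, so the lax 2-functors $\BZ_{\CM,\CN}$, $\BZ_{\CL,\CM}$, $\BZ_{\CL,\CN}$ are non-lax and descend to 1-functors $\underline{\BZ_{\CM,\CN}}$, $\underline{\BZ_{\CL,\CM}}$, $\underline{\BZ_{\CL,\CN}}$ between the relevant 1-categories. In parallel, the composition 2-functor $\Coco_{Z(\CL),Z(\CM),Z(\CN)}$ of Prop.\ref{prop:C_ABC-lax-functor} descends to a 1-functor $\underline{\Coco_{Z(\CL),Z(\CM),Z(\CN)}}$ by Remark \ref{rema:truncated-functor}. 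Thus both composite functors appearing in \eqref{eq:composition-nat-xfer-m-diag-1cat-level} are well-defined.

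Second, define $\underline{\Bm}$ by setting its component at $(G,F) \in \CF_{\CM,\CN} \times \CF_{\CL,\CM}$ to be the equivalence class $[\Bm_{(F,G)}]$ of the 2-diagram $\Bm_{(F,G)}$ from \eqref{eq:m_(F,G)-def}. This is a morphism in $\Cospu(Z(\CL), Z(\CN))$ from $\underline{\Coco}(\underline{\BZ_{\CM,\CN}}(G), \underline{\BZ_{\CL,\CM}}(F))$ to $\underline{\BZ_{\CL,\CN}}(G \circ F)$. The only nontrivial thing to check is naturality: for every pair of morphisms $\phi \colon F \to F'$ and $\psi \colon G \to G'$ in the source, the two 2-cells in $\Cospu(Z(\CL), Z(\CN))$ obtained by going around the naturality square must coincide. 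Spelled out, these are the equivalence classes of the two composed 2-diagrams appearing on the left and right of \eqref{eq:mFGFG-def-precomp}.

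Third and finally, Prop.\ref{prop:nat-tran-m} (via Lemma \ref{lem:mFGFG-3cell}) provides the 3-cell $\Bm_{(\phi,\psi)} = m_{F,F',G,G'}$ connecting these two 2-diagrams. Under the second hypothesis this 3-cell is invertible, so by the very definition of $\tdiagu$ the two 2-diagrams represent the same class, and the naturality square for $\underline{\Bm}$ commutes. No substantive obstacle arises: the hard work -- verifying that $\Bm_{(F,G)}$ is a 2-diagram (Lemma \ref{lem:m(F,G)-is-2diag}), that $\Bm_{(\phi,\psi)}$ is a 3-cell (Lemma \ref{lem:mFGFG-3cell}), and that the 2-naturality axioms of $\Bm$ hold -- has already been done in Prop.\ref{prop:nat-tran-m}, and the invertibility hypothesis merely collapses the 3-cell data into an equality of equivalence classes at the 1-categorical level. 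The coherence axioms of $\Bm$ as a 2-natural transformation become vacuous once we are at the level of equivalence classes, so no further conditions remain to verify.
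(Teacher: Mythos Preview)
Your proposal is correct and follows precisely the route the paper intends: the paper states this corollary immediately after Proposition~\ref{prop:nat-tran-m} with the one-line justification ``Recall Remark~\ref{rema:truncated-nat-transf}'', and your three steps are exactly a spelled-out version of that truncation principle applied to the present situation.
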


\subsection{The full center as a lax 2-functor, I}\label{sec:lax-functor}

\begin{defn} \label{def:1-cat-alg}
For a monoidal category $\CC$, let $\alg(\CC)$ be the category whose objects are unital associative algebras in $\CC$ and whose morphisms are unital algebra homomorphisms. If $\CC$ is braided, we define $\calg(\CC)$ to be the full subcategory of $\alg(\CC)$ consisting of only commutative $\CC$-algebras. 
\end{defn}

In this subsection, we will construct a lax 2-functor 
$\BZ$ from $\alg(\CC)$ to $\CAlg(\CZ(\CC))$. For concreteness, we start by taking the monoidal category $\CC$ to be $\text{Vect}_k$. In this case, the monoidal center $\CZ(\CC)=\text{Vect}_k$ satisfies Assumption~\ref{ass:coeq} and all the internal homs appearing below exist and can be defined set-theoretically. A more extensive discussion of $\BZ$ in the vector space case can be found in \cite[Sect.\,4.3]{dkr2}.

We will consider $\alg(\text{Vect}_k)$ as a 2-category whose only 2-morphisms are identities. We proceed to give the data of a lax 2-functor 
$$\BZ : \alg(\text{Vect}_k) \to \CAlg(\text{Vect}_k);$$ the statement that this indeed provides a lax 2-functor 
is the content of Theorem~\ref{thm:center-lax-functor-1} below. 
\begin{enumerate}
\item
On objects $A \in \alg(\CC)$ we set
\be \label{eq:center-lax-functor-1-obj}
  \BZ(A) := Z(\mbox{$A$-mod}) ~ \in  \, \text{Vect}_k
\ee
where $\mbox{$A$-mod}$ is the category of right $A$-modules, understood as a left $\text{Vect}_k$-module, and $Z(\mbox{$A$-mod}) \in \text{Vect}_k$ is its full center as in Definition~\ref{def:full-center}. In this case, the full center $Z(\mbox{$A$-mod})$ is nothing but the ordinary center of an algebra $A$, i.e. 
\be  \label{eq:center-vector-sp}
Z(\mbox{$A$-mod}) = Z(A) = \{ z\in A \, | \, za=az, \forall a\in A \}.
\ee  
To see this, note that for any $z\in Z(A)$, assigning to a right $A$-module $M$ the $A$-module endomorphism $m \mapsto m.z$ gives a natural transformation of the identity functor. Conversely, given such a natural transformation, evaluating it on $A$ produces an element in the center of the algebra.
\item 
For a linear map $f : A \to B$ consider the functor $(-) \otimes_{A} {}_fB  : \mbox{$A$-mod} \to \mbox{$B$-mod}$. Here ${}_f B$ is $B$ equipped with the structure of an $A$-$B$-bimodule where $B$ acts by multiplication and $A$ acts by composing with $f$ and then multiplying. We set
\be \label{eq:z-lax1-morph}
  \BZ(f) := \BZ((-) \otimes_{A} {}_fB) ~ \in \cosp(\BZ(A),\BZ(B)) \ ,
\ee
with $\BZ((-) \otimes_{A} {}_fB)$ as defined in \eqref{eq:Z(F)-def} (and for $\cosp$ see Definition~\ref{def:CAlg(Z)}). In this case, $\BZ(f)$ is the cospan $(Z(A) \xrightarrow{f} Z(f) \hookleftarrow Z(B))$, where $Z(f)$ is the centralizer defined as follows:
\be  \label{eq:centralizer-vector-sp}
Z(f) = \{ z \in B | zf(a) = f(a) z, \forall a\in A \} \ ,
\ee 
which is nothing but $Z({}_f B):= \Hom_{A\otimes B^{\op}} ({}_f B, {}_f B)$. 
\item
The unit transformation is just the canonical isomorphism $\BZ(\id_{\text{$A$-mod}}) \simeq \BZ((-) \otimes_A A)$, induced by the natural isomorphism $\id_{\text{$A$-mod}} \xrightarrow{\simeq} (-) \otimes_A A$ between the two endofunctors $\mbox{$A$-mod} \to \mbox{$A$-mod}$. 
\item The multiplication transformation of $\BZ$: 
given two morphism $A \xrightarrow{f} B \xrightarrow{g} C$, let $F = (-) \otimes_{A} {}_fB$ and $G=(-) \otimes_B {}_gC$, there is a natural isomorphism from $G \circ F = (-) \otimes_{A} {}_fB \otimes_B {}_gC$ to $(-) \otimes_C {}_{g \circ f}C$. This induces an isomorphism $Z(G\circ F) \xrightarrow{\sim} Z(g\circ f)$. The multiplication transformation of $\BZ$ is defined by composing this isomorphism with $m_{F,G}$ in \eqref{eq:m_(F,G)-def}. Namely, it is a collection of linear maps (as $2$-morphisms in $\CAlg(\text{Vect}_k)$) defined by the composed morphisms:
$$
Z(f)\otimes_{Z(B)} Z(g) = Z(F) \otimes_{Z(B)} Z(G) \xrightarrow{m_{F,G}} Z(G\circ F) \xrightarrow{\sim} Z(g\circ f)
$$ 
for $A,B \in \alg(\text{Vect}_k)$ and algebra homomorphisms $A \xrightarrow{f} B \xrightarrow{g} C$. In this case, above morphism is nothing but a linear map defined by
\be \label{eq:vect-sp-m}
z \otimes_{Z(B)} z' \mapsto g(z)\cdot z'
\ee
for $z\in Z(f)$ and $z'\in Z(g)$,	see \cite[Lem.\,4.11]{dkr2}.
\end{enumerate}
The associativity property follows from Lemma~\ref{lem:nFGFG-assoc}, and the unit property is straightforward. One can also use the set-theoretical definition given in \eqref{eq:center-vector-sp}, \eqref{eq:centralizer-vector-sp} and \eqref{eq:vect-sp-m} to check these properties directly. Thus altogether we have the following result:
\begin{thm} \label{thm:center-lax-functor-1} 
The assignment $\BZ : \alg(\text{Vect}_k) \to  \CAlg(\text{Vect}_k)$ defined above is a lax 2-functor.
\end{thm}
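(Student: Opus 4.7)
The plan is to instantiate the machinery of Sections \ref{sec:module-to-cospan} and \ref{sec:composition-comp} at $\CC = \text{Vect}_k$, taking $\CM = \mbox{$A$-mod}$ for each algebra $A \in \alg(\text{Vect}_k)$ and, for each algebra map $f : A \to B$, the $\text{Vect}_k$-module functor $F_f := (-) \otimes_A {}_f B$. These functors are closed under composition up to the canonical natural isomorphism $F_g \circ F_f \simeq F_{g \circ f}$, and they generate $\text{Vect}_k$-closed subcategories $\CF_{\mbox{$A$-mod},\mbox{$B$-mod}} \subset \Fun_{\text{Vect}_k}(\mbox{$A$-mod}, \mbox{$B$-mod})$ in the sense of Definition \ref{def:C-closed}. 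Proposition \ref{prop:comp-inv-Z_mn-lax} then supplies the cospan $\BZ(f) = \BZ_{\mbox{$A$-mod},\mbox{$B$-mod}}(F_f)$ in $\cosp(Z(\mbox{$A$-mod}), Z(\mbox{$B$-mod}))$. Under the identifications $Z(\mbox{$A$-mod}) \cong Z(A)$ and $Z(F_f) \cong C_B(f(A))$ recorded in \eqref{eq:center-vector-sp} and \eqref{eq:centralizer-vector-sp}, this reproduces the cospan given in item 2 of the data.

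Since $\alg(\text{Vect}_k)$ carries only identity 2-morphisms, the hom-functors $\BZ_{A,B}$ are determined by their effect on objects. For the unit transformation I would use the canonical $\text{Vect}_k$-module isomorphism $\id_{\mbox{$A$-mod}} \xrightarrow{\simeq} F_{\id_A}$: Remark \ref{rema:iso-int-hom}(ii) and Lemma \ref{lemma:F-C-functor} translate this into an algebra isomorphism $Z(A) \xrightarrow{\simeq} Z(F_{\id_A})$ compatible with the cospan maps, hence an invertible 2-morphism $\one_{Z(A)} \to \BZ(\id_A)$ in $\CAlg(\text{Vect}_k)$. For the multiplication transformation at $A \xrightarrow{f} B \xrightarrow{g} C$, I would compose the map $m_{F_f, F_g}$ of \eqref{eq:m_FG-def} with the algebra isomorphism $Z(F_g \circ F_f) \xrightarrow{\simeq} Z(F_{g \circ f}) \cong Z(g \circ f)$ induced by $F_g \circ F_f \simeq F_{g \circ f}$. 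Set-theoretically this produces the map $z \otimes z' \mapsto g(z) \cdot z'$; that it is a well-defined algebra homomorphism $Z(f) \otimes_{Z(B)} Z(g) \to Z(g \circ f)$ sitting over the cospan cone is the content of Lemma \ref{lem:m(F,G)-is-2diag} transported through the identification, with the centrality conditions \eqref{eq:cospan-central} granted by Corollary \ref{cco}.

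What remains is to verify the unit and associativity coherence axioms of Definition \ref{def:lax-functor}. The unit axiom is a direct unpacking of definitions. The associativity pentagon for a composable triple $A \xrightarrow{f} B \xrightarrow{g} C \xrightarrow{h} D$ asserts that the two iterated multiplication maps $Z(f) \otimes_{Z(B)} Z(g) \otimes_{Z(C)} Z(h) \to Z(hgf)$ agree; this is exactly Lemma \ref{lem:nFGFG-assoc} specialized to $F = F' = F_f$, $G = G' = F_g$, $H = H' = F_h$, combined with the evident associativity of the fibered tensor products underlying $F_{hgf}$. I do not expect a genuine obstacle: the hard technical work of Sections \ref{sec:module-to-cospan} and \ref{sec:composition-comp} already encodes all the coherence needed, so the main residual task is translating between the abstract internal-hom description and the concrete set-theoretic formulas for $Z(A)$, $Z(f)$ and the maps between them. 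This is a careful but routine bookkeeping exercise which in the vector space case can equally be done by hand on generators, since both iterated composites in the pentagon send $z_1 \otimes z_2 \otimes z_3$ to $h(g(z_1)) \cdot h(z_2) \cdot z_3$.
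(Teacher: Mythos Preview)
Your proposal is correct and follows essentially the same approach as the paper: the associativity coherence is deduced from Lemma~\ref{lem:nFGFG-assoc}, the unit coherence is a direct unpacking, and you note (as the paper does) that in $\text{Vect}_k$ the whole thing can equally be verified by hand using the explicit formulas \eqref{eq:center-vector-sp}, \eqref{eq:centralizer-vector-sp}, \eqref{eq:vect-sp-m}. Your write-up is in fact more detailed than the paper's own one-line justification preceding the theorem.
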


For general $\CC$ we define $\BZ$ in the same way, that is, via \eqref{eq:center-lax-functor-1-obj} on objects and via \eqref{eq:z-lax1-morph} on morphisms. We then have:

\begin{thm}  \label{thm:center-lax-functor-C}
For a monoidal category $\CC$ such that $\CZ(\CC)$ satisfies Assumption~\ref{ass:coeq}, if all above internal homs exist in $\CZ(\CC)$, then the assignment $\BZ: \alg(\CC) \to \CAlg(\CZ(\CC))$ is a lax 2-functor. 
\end{thm}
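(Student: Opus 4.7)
The plan is to deduce Theorem \ref{thm:center-lax-functor-C} from the constructions of Sections~\ref{sec:module-to-cospan} and \ref{sec:composition-comp}, applied to the particular case where the $\CC$-modules are categories of right modules over algebras and the module functors are induced by algebra homomorphisms. Since $\alg(\CC)$ is viewed as a $2$-category with only identity $2$-morphisms, the data of $\BZ$ that needs to be specified is: a map on objects, a map on $1$-morphisms, a unit transformation at each object, and a multiplication transformation at each composable pair of $1$-morphisms. The coherence conditions of Definition~\ref{def:lax-functor} then reduce to two axioms, the associator axiom for triples of composable algebra maps and the unit axiom for each $1$-morphism.

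First I would verify the data. The map on objects is $\BZ(A)=Z(\CC_A)$, which by Corollary~\ref{cco} (applied to $F=\id_{\CC_A}$) is a commutative algebra in $\CZ(\CC)$, hence an object of $\CAlg(\CZ(\CC))$. The map on $1$-morphisms sends $f:A\to B$ to the cospan $\BZ(F_f)$ of \eqref{eq:Z(F)-def}, where $F_f=(-)\otimes_A{}_fB:\CC_A\to\CC_B$; by the discussion after \eqref{eq:Z(F)-def}, this is a cospan between commutative algebras in $\CZ(\CC)$, hence a $1$-morphism of $\CAlg(\CZ(\CC))$ (the existence of the internal homs is our hypothesis). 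The unit transformation at $A$ is obtained from the canonical $\CC$-module natural isomorphism $\eta_A:\id_{\CC_A}\xrightarrow{\cong}F_{\id_A}=(-)\otimes_AA$: this induces an algebra isomorphism $Z(\id_{\CC_A})\to Z(F_{\id_A})$ which automatically respects the cospan structure by Remark~\ref{rema:iso-int-hom}\,(ii), yielding a $2$-morphism $\one_{\BZ(A)}\to\BZ(\id_A)$. The multiplication transformation for $A\xrightarrow{f}B\xrightarrow{g}C$ is the composite
\[
   Z(F_f)\otimes_{Z(\CC_B)}Z(F_g)\xrightarrow{m_{F_f,F_g}}Z(F_g\circ F_f)\xrightarrow{Z(\kappa_{f,g})}Z(F_{gf}),
\]
where $m_{F_f,F_g}$ is the algebra map of Lemma~\ref{lem:m(F,G)-is-2diag}\,(i) (the equivalent of equation~\eqref{eq:m_FG-def}) and $\kappa_{f,g}:F_g\circ F_f\xrightarrow{\cong}F_{gf}$ is the canonical $\CC$-module natural isomorphism coming from ${}_fB\otimes_B{}_gC\cong{}_{gf}C$. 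That this composite is a morphism of cospans in the sense of Definition~\ref{def:CAlg(Z)} is exactly the content of Lemma~\ref{lem:m(F,G)-is-2diag}\,(ii) (the commutativity of the left diagram in \eqref{eq:2diagram-cond} after identifying $Z(F_g\circ F_f)$ with $Z(F_{gf})$ through $\kappa_{f,g}$).

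Next I would verify the coherence axioms. The unit axiom amounts to showing that, for $f:A\to B$, the composite of the multiplication transformation with the unit transformation at $A$ (resp.\ $B$) equals the canonical left (resp.\ right) unit isomorphism of the cospan bicategory, cf.\ \eqref{eq:CAlg-def-unit}. This follows from chasing the natural isomorphisms $F_f\circ F_{\id_A}\cong F_f\cong F_{\id_B}\circ F_f$ through the definition of $m$, noting that $m_{\id,F}$ and $m_{F,\id}$ are precisely the canonical unit isomorphisms of the fibered tensor product by construction. The associativity axiom demands that, for $A\xrightarrow{f}B\xrightarrow{g}C\xrightarrow{h}D$, the two composites
\[
   Z(F_f)\otimes_{Z(\CC_B)}Z(F_g)\otimes_{Z(\CC_C)}Z(F_h)\longrightarrow Z(F_{hgf})
\]
agree (modulo the associator of $\otimes_{(-)}$ and of composition of functors). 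This is the specialization of Lemma~\ref{lem:nFGFG-assoc} at $F=F'=F_f$, $G=G'=F_g$, $H=H'=F_h$, together with the compatibility of the natural isomorphisms $\kappa_{f,g}$ with threefold composition, i.e.\ the pentagon $\kappa_{gf,h}\circ(\kappa_{f,g}\circ F_h)=\kappa_{f,hg}\circ(F_f\circ\kappa_{g,h})$, which follows from associativity of the tensor product of bimodules.

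The main obstacle is bookkeeping: all the real content has been established in the lemmas of Sections~\ref{sec:module-to-cospan} and~\ref{sec:composition-comp}, but one must carefully transport those results along the canonical isomorphisms $F_g\circ F_f\cong F_{gf}$, checking that the induced identifications $Z(F_g\circ F_f)\cong Z(F_{gf})$ intertwine the structural morphisms $[F\circ-]_{[\id,\id]}$ and $[-\circ F]_{[\id,\id]}$ that form the cospan legs. Once this is done, the lax $2$-functoriality reduces entirely to Lemmas~\ref{lem:m(F,G)-is-2diag} and~\ref{lem:nFGFG-assoc}, and no further genuine calculation is needed.
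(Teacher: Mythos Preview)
Your proposal is correct and follows essentially the same approach as the paper: the paper simply states that for general $\CC$ one defines $\BZ$ exactly as in the $\Vect_k$ case (via \eqref{eq:center-lax-functor-1-obj}, \eqref{eq:z-lax1-morph}, and $m_{F,G}$ composed with the canonical isomorphism $Z(G\circ F)\cong Z(g\circ f)$), with associativity deduced from Lemma~\ref{lem:nFGFG-assoc} and the unit property called ``straightforward''. Your write-up spells out the bookkeeping with $\kappa_{f,g}$ more carefully than the paper does, but the argument is the same.
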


\subsection{The full center as a lax 2-functor, II} \label{sec:lax-functor-II}

We present the main result of this paper. It simply says that the full center construction gives a lax 2-functor 
 under a few natural conditions. 

\begin{thm} \label{thm:center-lax-functor-2} 
Let $\CC$ be a monoidal category such that $\CZ(\CC)$ satisfies Assumption~\ref{ass:coeq}. Let $\BM(\CC)$ be a locally full sub-bicategory (see App.\,\ref{app:bicategories}) of $\Mod(\CC)$ such that
\begin{enumerate}
\item for all $\CM,\CN \in \BM(\CC)$, the functor sub-category $\CHom_{\BM(\CC)}(\CM,\CN)$ is $\CZ(\CC)$-closed.
\item for all $F,G,H \in \CHom_{\BM(\CC)}(\CM,\CN)$, the morphism $\overline{\comp}$ in \eqref{eq:mult-map-Phi} is an isomorphism.
\item for all $F,F' \in \CHom_{\BM(\CC)}(\CM,\CN)$ and all $G,G' \in \CHom_{\BM(\CC)}(\CM,\CN)$, and for all $F\xrightarrow{\phi} F'$ and $G\xrightarrow{\psi} G'$, the morphism $m_{F,F',G,G'}$ in \eqref{eq:m-phipsi-def} is an isomorphism.
\end{enumerate}
Then the full center construction: 
\begin{itemize}
\item[-] on objects: $\BZ(\CM) = Z(\CM)$ as defined in Definition~\ref{def:full-center}.
\item[-] on morphism categories: $\underline{\BZ_{\CM,\CN}} : \CHom_{\BM(\CC)}(\CM,\CN) \to \Cospu(Z(\CM),Z(\CN))$ between morphism categories where $\underline{\BZ_{\CM,\CN}}$ is given in Cor.\,\ref{cor:Phi-iso-Z-functor}, 
\end{itemize}
	i.e. 
		\be
\BZ: \,\, \xymatrix{ \CM \ar@/^12pt/[rr]^F \ar@/_12pt/[rr]_G & \Downarrow \phi & \CN}
\quad \longmapsto \quad
\raisebox{3em}{\xymatrix@R=1.3em{  
  & Z(F) \ar[d]^{[F, \phi]} & \\ 
  Z(\CM) \ar@/^8pt/[ur]^(0.4){[F \circ -]_{[\id,\id]}} \ar@/_8pt/[rd]_(0.4){[G \circ -]_{[\id,\id]}} & [F,G] & Z(\CN) \ar@/_8pt/[ul]_(0.4){[- \circ F]_{[\id,\id]}} \ar@/^8pt/[dl]^(0.4){[- \circ G]_{[\id,\id]}}  \\
  & Z(G) \ar[u]^{[\phi,G]} 
  }} \ ,
		\ee
together with the multiplication transformation given by $\underline{\Bm}$ defined in Cor.\,\ref{cor:m-iso-m-natxfer} and the unit transformation given by the identity,
defines a lax 2-functor:
\be \label{eq:C-mod-to-CALG}
  \BZ : \BM(\CC) \to \CALGu(\CZ(\CC)) 
\ee
between two bicategories. If in addition
$\underline{\Bm}$ is invertible, $\BZ$ is a (non-lax) $2$-functor. 
\end{thm}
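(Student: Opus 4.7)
The plan is to verify the data and axioms of a lax $2$-functor (Definition~\ref{def:lax-functor}) by assembling the results of Sections~\ref{sec:module-to-cospan} and~\ref{sec:composition-comp}. On objects, $\BZ(\CM)=Z(\CM)$ is a commutative algebra in $\CZ(\CC)$ by Corollary~\ref{cco}, hence an object of $\CALGu(\CZ(\CC))$. On morphism categories, Assumption~(1) guarantees that all internal homs appearing in the construction of $\BZ_{\CM,\CN}$ exist, so Proposition~\ref{prop:comp-inv-Z_mn-lax} applies; Assumption~(2) combined with Corollary~\ref{cor:Phi-iso-Z-functor} then promotes this lax $2$-functor to the honest $1$-functor $\underline{\BZ_{\CM,\CN}}$ into $\Cospu(Z(\CM),Z(\CN))$, which is precisely the morphism category in $\CALGu(\CZ(\CC))$. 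The unit transformation is taken to be the identity, as is visible from the fact that $\BZ(\id_\CM)$ and the identity $1$-morphism on $Z(\CM)$ in $\CALGu$ are both given by the trivial cospan.

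For the multiplication transformation, I would invoke Proposition~\ref{prop:nat-tran-m}, which provides the natural transformation $\Bm$ at the bicategorical level under hypotheses equivalent to~(1). Assumption~(3) together with Corollary~\ref{cor:m-iso-m-natxfer} then ensures that the induced map $\underline{\Bm}$ in \eqref{eq:composition-nat-xfer-m-diag-1cat-level} is well-defined as a natural transformation of $1$-functors between the relevant morphism categories. The components of $\underline{\Bm}$, encoded by the $2$-diagram \eqref{eq:m_(F,G)-def} with the map $m_{F,G}$, give the desired $2$-morphisms $\BZ(G)\circ\BZ(F)\to\BZ(G\circ F)$ in $\CALGu(\CZ(\CC))$.

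What remains is the verification of the associativity and unit coherence axioms of Definition~\ref{def:lax-functor}. The associativity axiom requires, for a composable triple $\CL\xrightarrow{F}\CM\xrightarrow{G}\CN\xrightarrow{H}\CP$, that two ways of combining the components of $\underline{\Bm}$ agree as $2$-cells in $\CALGu$ after identifying iterated fibered products via the associator of $\CALGu$. The underlying identity of morphisms $m_{GF,H}\circ(m_{F,G}\otimes_{Z(\CM)}\id)=m_{F,HG}\circ(\id\otimes_{Z(\CN)}m_{G,H})$ (modulo the obvious associator of fibered products) reduces, via the coequalizer definitions, to the associativity statement of Lemma~\ref{lem:nFGFG-assoc} applied in the special case $F=F'$, $G=G'$, $H=H'$. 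The unit conditions, which involve comparing $m_{\id_\CM,G}$ and $m_{F,\id_\CN}$ against the unit isomorphisms \eqref{eq:unit-iso-l-r-CALG} of $\CALGu$, follow directly from the coequalizer definition \eqref{eq:nFFGG-def} of $n$ together with the unit axiom \eqref{eq:unit-int-hom} for $\comp$, since in each case one factor of the fibered product collapses to the unit of the relevant algebra.

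The main obstacle I anticipate is not any single hard computation, but rather the bookkeeping: the associativity isomorphism in $\CALGu$ is defined at the level of \emph{equivalence classes} of $2$-diagrams, so the associativity axiom has to be verified after representing everything in terms of the $n$ morphisms, keeping careful track of how the isomorphisms $\overline{\comp}$ and $m_{F,F',G,G'}$ (invertible by Assumptions~(2) and~(3)) enter through Cor.\,\ref{cor:Phi-iso-Z-functor} and Cor.\,\ref{cor:m-iso-m-natxfer}. Finally, the upgrade to a non-lax $2$-functor under the additional hypothesis that $\underline{\Bm}$ is invertible is immediate from the definitions: the multiplication $2$-cells are then isomorphisms in $\CALGu(\CZ(\CC))$, which by Rem.\,\ref{rema:truncated-functor} (applied in reverse) is exactly what distinguishes a $2$-functor from a lax one.
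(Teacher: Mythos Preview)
Your proposal is correct and follows essentially the same approach as the paper: the associativity coherence is reduced to Lemma~\ref{lem:nFGFG-assoc} in the special case $F=F'$, $G=G'$, $H=H'$, and the unit coherences are handled by direct inspection. Your write-up is in fact more explicit than the paper's about why the data of the lax $2$-functor is well-defined (invoking Cor.~\ref{cco}, Prop.~\ref{prop:comp-inv-Z_mn-lax}, Cor.~\ref{cor:Phi-iso-Z-functor}, Prop.~\ref{prop:nat-tran-m}, and Cor.~\ref{cor:m-iso-m-natxfer}), whereas the paper's proof takes this as already established and jumps straight to the two coherence conditions.
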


\begin{proof}
We need to show that the associativity \eqref{diag:asso-lax} and unit properties \eqref{diag:unit-lax-l} and \eqref{diag:unit-lax-r} hold. Recall that the multiplication transformation $\underline{\Bm}$ is determined by a collection of morphisms $\{ \Bm_{(F,G)} \}$ (or equivalently $\{ m_{F,G} \}$) defined in (\ref{eq:m_(F,G)-def}), and the unit isomorphisms $l$ and $r$ in $\CALGu(\CZ(\CC))$ are defined in (\ref{eq:unit-iso-l-r-CALG}). The unit properties (\ref{diag:unit-lax-l}) and (\ref{diag:unit-lax-r}) amount to prove that 
$$
\Bm_{(\id_\CM, F)} = l_{Z(F)}\quad \mbox{and} \quad
\Bm_{(F,\id_\CM)} = r_{Z(F)}
$$ 
which are obvious.

The property (\ref{diag:asso-lax}) amounts to the commutativity of the following diagram:
\be
\xymatrix{
(Z(F)\otimes_{Z(\CM)} Z(G)) \otimes_{Z(\CN)} Z(H) \ar[r]^{\hspace{1cm}m_{F,G}1} \ar[d]
& Z(GF) \otimes_{Z(\CN)} Z(H) 
\ar[r]^{\hspace{1cm}m_{GF, H}} & Z(HGF) \ar@{=}[d] \\
Z(F)\otimes_{Z(\CM)} (Z(G) \otimes_{Z(\CN)} Z(H)) \ar[r]^{\hspace{1cm}1m_{G,H}} & Z(F) \otimes_{Z(\CM)} Z(HG) \ar[r]^{\hspace{1cm}m_{F, GH}} & Z(HGF) 
}
\ee
which follows from Lemma~\ref{lem:nFGFG-assoc}.
\end{proof}

\begin{rema}
1) We use the same notation $\BZ$ for the two distinct lax 2-functors 
defined in Theorems \ref{thm:center-lax-functor-C} and \ref{thm:center-lax-functor-2}. This is justified by the following commutative diagram:
\be
\raisebox{2.3em}{\xymatrix{
\alg(\CC) \ar[rr]^{\BZ} \ar[d]^{\mathrm{Mod}} && \CAlg(\CZ(\CC)) \ar[d]_E \\
\BM(\CC) \ar[rr]^{\BZ} && \CALGu(\CZ(\CC)), 
}}
\ee
assuming both $\BZ$ are well-defined. Here the functor $\mathrm{Mod}$ acts on objects and morphisms as $A \mapsto \mbox{$A$-mod}$ and $A \xrightarrow{f} B$ to $(-) \otimes_A {}_fB$; we assume that the locally full subcategory $\BM(\CC)$ of $\Mod(\CC)$ is large enough to contain the image of the functor $\mathrm{Mod}$. The 2-functor 
 $E$ is given in \eqref{eq:functor-E-CAlg-CALG}. Note that the horizontal arrows are lax, while the vertical arrows are non-lax. In any case, both, the upper and lower path in the diagram give
\be
  \big( \, A \xrightarrow{f} B \, \big)
  ~\longmapsto~
  \big( \, Z(\mbox{$A$-mod}) \xrightarrow{\BZ((-) \otimes_A {}_fB)} Z(\mbox{$B$-mod}) \, \big) \ ,
\ee
where $\BZ$ is the cospan in \eqref{eq:Z(F)-def}.
\end{rema}

In Sections \ref{sec:auto} and \ref{sec:fusion-cat}, we will give two cases, in which the conditions in Theorem\,\ref{thm:center-lax-functor-2} are satisfied, so that we obtain $\BZ$ as a (non-lax) 2-functor. 
In general, however, these conditions, in particular the condition 2\ and 3\ in Theorem~\ref{thm:center-lax-functor-2}, are not satisfied. Therefore, we must work with $3$-cells and tricategory in general. We propose the following conjecture. 
\begin{conj}  \label{conj:tricat-laxfun-Z}
Without assuming the condition 2.\ and 3.\ in Theorem~\ref{thm:center-lax-functor-2} for $\BM(\CC)$, 
the assignment $\BZ$: $\CM\to Z(\CM)$, (\ref{eq:Z(F)-def}) and (\ref{eq:Z_FG(phi)-def}), 
together with the lax $2$-functor $\Bm$ given in Proposition\,\ref{prop:nat-tran-m}, can be extended (by adding other coherence morphisms properly) to a lax 3-functor between $\BM(\CC)$ and the conjectural tricategory $\CALG(\CZ(\CC))$ (recall Conjecture\,\ref{conj:tricat-CALG}).
\end{conj}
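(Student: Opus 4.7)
My plan divides into two entwined stages: first upgrade the cospan machinery of Section~\ref{sec:two-cat} into a genuine tricategory $\CALG(\CZ(\CC))$ in the sense of Gordon--Power--Street (Gurski), then package the already-constructed data together with $\overline{\comp}$ and $m_{F,F',G,G'}$ as the coherence 3-cells of a lax 3-functor. I will fix the GPS definition of tricategory and use the fact that $\BM(\CC)$, being a bicategory, is viewed as a tricategory with only identity 3-cells; accordingly a lax 3-functor is completely specified by its action on $k$-cells for $k\le 3$, a lax unit 3-cell, and two families of coherence 3-cells witnessing (i) laxness on vertical composition in each Hom-bicategory and (ii) laxness on horizontal composition of 1-cells.

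For Stage 1 the 0-, 1-, 2-, 3-cells and the Hom-bicategories $\Cosp(A,B)$ of Proposition~\ref{prop:Cosp-AB-bicat} are already at hand. Horizontal composition is the trifunctor $\Coco_{A,B,C}$ of Proposition~\ref{prop:C_ABC-lax-functor}, which I would extend to 3-cells by applying $\otimes_B$ to bimodule morphisms (functoriality of $\otimes_B$ on morphisms is immediate from the universal property of the coequalizer, and Lemma~\ref{lem:coeq-iterate} supplies the higher iterated versions needed for naturality of the associator $\beta$ of Lemma~\ref{lem:beta} with respect to 3-cells). The associator and unitors of $\CALG(\CZ(\CC))$ are the canonical 2-cells of \eqref{eq:CAlg-def-ass}, \eqref{eq:CAlg-def-unit} promoted to pseudo-natural equivalences; the pentagonator and triangulator are the canonical invertible modifications between iterated fibered products. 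Their coherence (the GPS pentagon-of-pentagons and the three non-trivial triangulator identities) reduces to coherence of the iterated fibered-product structure on $\CZ(\CC)$, which follows formally from the universal property of coequalizers and Assumption~\ref{ass:coeq}.

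For Stage 2 the data of $\BZ$ is:
\begin{itemize}
\item objects: $\CM\mapsto Z(\CM)$; 1-cells: $F\mapsto \BZ(F)$ of \eqref{eq:Z(F)-def}; 2-cells: $\phi\mapsto \BZ_{(F,G)}(\phi)$ of \eqref{eq:Z_FG(phi)-def}; 3-cells: identities;
\item \emph{vertical compositor}: for $F\xrightarrow{\phi}G\xrightarrow{\psi}H$, the 3-cell $\theta_{\phi,\psi}$ with underlying bimodule map $\overline{\comp}:[G,H]\otimes_{[G,G]}[F,G]\to[F,H]$ from \eqref{eq:mult-map-Phi};
\item \emph{horizontal multiplicator}: the transformation $\Bm$ of Proposition~\ref{prop:nat-tran-m}, with 2-cell components $\Bm_{(F,G)}$ from \eqref{eq:m_(F,G)-def} and 3-cell components $\Bm_{(\phi,\psi)}=m_{F,F',G,G'}$ from \eqref{eq:m-phipsi-def};
\item unit transformation: the identity, as in Theorem~\ref{thm:center-lax-functor-2}.
\end{itemize}
The verifications to be carried out are then: (a) that $\theta_{\phi,\psi}$ is indeed a 3-cell, i.e.\ a bimodule map compatible with the 2-diagram structure on both sides --- this is proved by composing the defining square of $\overline{\comp}$ in \eqref{eq:underline-comp-def} with $\ev$ and using Lemma~\ref{lem:composition-properties}(iii) and Cor.~\ref{cor:Z-[FF]-[FG]} exactly as in Proposition~\ref{prop:comp-inv-Z_mn-lax}; (b) that $\Bm_{(\phi,\psi)}$ is a 3-cell --- this is already Lemma~\ref{lem:mFGFG-3cell}; (c) the associativity pentagon for $\theta$ (which reduces to associativity of $\comp$ by Lemma~\ref{lem:composition-properties}(ii) applied over the coequalizer); (d) the compatibility of $\theta$ with horizontal composition, which reduces to Lemma~\ref{lem:nFGFG-assoc} promoted to 3-cells via \eqref{eq:C-functor} and the bimodule statement of Lemma~\ref{lemma:m-FF'-GG'}; and (e) the GPS interchangeator axiom relating vertical compositors of $\BZ$ on the two Hom-bicategories $(\CL,\CM)$ and $(\CM,\CN)$ to the 3-cell components of $\Bm$.

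The main obstacle will be axiom (e), the middle-four-interchange coherence. Unlike Theorem~\ref{thm:center-lax-functor-2}, where strictness of $\overline{\comp}$ trivialises this, in the general lax case one must exhibit a single coherence 3-cell filling a cube whose four faces are built out of $\theta$ and $\Bm$ on one side and out of $\theta$ and $\Bm$ in the opposite order on the other. I expect this 3-cell to be forced by the universal property of the internal homs together with Proposition~\ref{prop:comm}, since the analogous strict identity \eqref{diag:comm} already encodes the relevant commutativity of horizontal and vertical composition at the level of $\comp$ and $\gamma$. Concretely, I would test the two composite 3-cells by pre-composing with an evaluation $(-)\ast (GF)$ and then checking equality of morphisms into $G'F'$ via Proposition~\ref{prop:comm}; uniqueness then gives the desired equality of 3-cells. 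The bookkeeping of associators, unitors and pseudo-naturality squares in the GPS framework is where most of the labour will lie, but no new conceptual ingredient beyond the results of Sections~\ref{sec:act-int-hom}--\ref{sec:center-func} should be required; in particular no additional hypothesis on $\CC$ or $\BM(\CC)$ beyond those of Theorem~\ref{thm:center-lax-functor-2} with conditions (2) and (3) dropped.
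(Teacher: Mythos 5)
You should note first that the statement you are proving is Conjecture~\ref{conj:tricat-laxfun-Z}: the paper offers no proof of it, explicitly defers the 3-categorical extension to future work, and even its target $\CALG(\CZ(\CC))$ only exists by Conjecture~\ref{conj:tricat-CALG}. So there is no paper proof to compare against, and your Stage~1 already has to supply something the authors themselves leave open. There your argument is a placeholder rather than a proof: Proposition~\ref{prop:C_ABC-lax-functor} only gives the composition $\Coco_{A,B,C}$ as a 2-functor for fixed $A,B,C$; to obtain a tricategory in the GPS/Gurski sense you must additionally make the associator a pseudonatural (adjoint) equivalence between the two composite 2-functors, assemble the pentagonator as a modification out of the 3-cells $\beta$ of Lemma~\ref{lem:beta}, and verify the $K_5$ and triangulator axioms. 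Your claim that all of this ``follows formally from the universal property of coequalizers and Assumption~\ref{ass:coeq}'' is exactly the step that needs to be carried out, not a reason it can be skipped; the interplay between $\beta$, the braiding, and the module structures is where the content lies, and nothing in Sections~\ref{sec:two-cat}--\ref{sec:center-func} settles it.

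Stage~2 has the same character. You correctly identify the available ingredients --- $\overline{\comp}$ from \eqref{eq:mult-map-Phi}, $\Bm_{(F,G)}$ and $\Bm_{(\phi,\psi)}$, and Propositions~\ref{prop:comp-inv-Z_mn-lax} and \ref{prop:nat-tran-m} together with Lemmas~\ref{lem:nFGFG-assoc} and \ref{lem:mFGFG-3cell} already cover most of your items (a)--(d) --- but the crux, your axiom (e) and more generally the full list of axioms for whatever notion of lax 3-functor you adopt (the paper never fixes one, and the GPS notion of trihomomorphism requires coherence cells that are equivalences, which is precisely what fails once conditions 2 and 3 of Theorem~\ref{thm:center-lax-functor-2} are dropped), is only ``expected'' to follow from Proposition~\ref{prop:comm}. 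Your proposed verification method also has a concrete gap: objects such as $[G,H]\otimes_{[G,G]}[F,G]$ and $(M'\otimes_B N')\otimes_{S\otimes_B T}(M\otimes_B N)$ are coequalizers, not internal homs, so precomposing with $(-)\ast GF$ and invoking the universal property of internal homs does not by itself yield equality of 3-cells; one must argue through the coequalizer presentations and the maps $n$, $m'$, $\beta$ as in the proof of Proposition~\ref{prop:nat-tran-m}, and it is not shown that this suffices for the interchange cube. In short, what you have is a plausible roadmap whose hard coherence verifications are deferred with ``I expect'' and ``should follow''; those are exactly the points that led the authors to state the result as a conjecture, so the proposal cannot be counted as a proof.
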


\section{Automorphisms of full centers} \label{sec:auto}

Given a category $\CD$, we denote the set of isomorphisms in $\Hom_\CD(U,V)$ by $\CHom_\CD^\times(U,V)$, and the maximal groupoid inside $\CD$ by $\CD^\times$. Analogously, if $\BD$ is a bicategory, we denote by $\CHom_\BD^\times(U,V)$ the 1-groupoid consisting of invertible 1-morphisms and 2-isomorphisms, and by $\BD^\times$ the maximum 2-groupoid in $\BD$. 
In Section \ref{sec:inv-cosp-2cell}, we will give a simple characterization of $\CALGu(\CZ)^\times$. 
Then in Section \ref{sec:fun-groupoid}, we will show that the second and third conditions in Theorem \ref{thm:center-lax-functor-2} are automatically satisfied for $\BM(\CC)^\times$. Therefore, assuming only the first condition in Theorem~\ref{thm:center-lax-functor-2} for $\BM(\CC)^\times$, we obtain a (non-lax) 2-functor $\BZ: \BM(\CC)^\times \to \CALGu(\CZ(\CC))^\times$ between 2-groupoids. 

\subsection{Invertible cospans and 2-cells} \label{sec:inv-cosp-2cell}
 
Let $\CZ$ be a braided monoidal category satisfying Assumption~\ref{ass:coeq}.

\begin{prop}   \label{prop:inv-1-2cell}  
In the bicategory $\CALGu(\CZ)$,
\\[.5em]
(i) a 1-morphism (i.e.\ a cospan)
\be \label{eq:inv-cospan}
\raisebox{1.5em}{\xymatrix@R=1em@C=1em{ & T \\ A\ar[ru]^(.35)f && B \ar[lu]_(.35)g}}
\ee 
is invertible iff $f$, $g$ are isomorphisms in $\CZ$.
\\[.5em]
(ii) a 2-cell (i.e.\ an isomorphism class of 2-diagrams)  
\be \label{eq:inv-2cell}
\raisebox{3em}{\xymatrix@R=1em{
& S \ar[d]^{v} & \\ A \ar[ru]^{} \ar[rd]_{} & M & B \ar[ul]_{} \ar[dl]^{} \\
& T \ar[u]^{w} }}
\ee 
is invertible iff $v$, $w$ are isomorphisms in $\CZ$.
\end{prop}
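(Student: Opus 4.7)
My strategy will be to prove (ii) first and derive (i) as a consequence.

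For the "if" direction of (ii), given a 2-diagram $(S \xrightarrow{v} M \xleftarrow{w} T)$ with both $v, w$ invertible in $\CZ$, I will use the 3-cell $v^{-1}: M \to S$ to reduce it to a canonical form $(S \xrightarrow{\id_S} S \xleftarrow{\phi} T)$, where $\phi: T \to S$ is defined by $\phi(t) := v^{-1}(t \cdot_L v(1_S))$ and $S$ carries the $T$-$S$-bimodule structure transported from $M$. Right $S$-linearity of $v$ together with the bimodule associativity of $M$ will yield that $\phi$ is an algebra homomorphism; using that $w$ is also invertible, $\phi$ itself is an isomorphism in $\CZ$. The inverse 2-cell is then represented by the canonical 2-diagram $(T \xrightarrow{\id_T} T \xleftarrow{\phi^{-1}} S)$, and a direct application of the composition formulas \eqref{eq:2-diag-com-u-def}--\eqref{eq:2-diag-com-v-def}, combined with the isomorphism $T \otimes_T S \cong S$ provided by $\phi$, will show that both vertical compositions are 3-cell iso to the appropriate identity 2-diagrams.

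For the "only if" direction of (ii), suppose $\alpha = [S \xrightarrow{v} M \xleftarrow{w} T]$ is invertible with inverse $\alpha^{-1} = [T \xrightarrow{v'} M' \xleftarrow{w'} S]$. The composition $\alpha^{-1} \circledcirc \alpha$ being 3-cell iso to the identity 2-diagram on the source cospan yields a bimodule isomorphism $\xi: M' \otimes_T M \to S$ satisfying $\xi \circ u = \id_S$ where, by \eqref{eq:2-diag-com-u-def}, $u$ factors through $[w'(1_T) \otimes v(s)]$. The right-$S$-linear map $v^{\mathrm{retr}}(m) := \xi([w'(1_T) \otimes m])$ satisfies $v^{\mathrm{retr}} \circ v = \id_S$, exhibiting $v$ as a split mono. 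Exchanging the roles via $\alpha \circledcirc \alpha^{-1}$ and the bimodule iso $\eta: M \otimes_S M' \to T$ gives an analogous left inverse for $v'$. Combining these split monomorphisms with the canonical-form reduction from the "if" direction (applied to $\alpha^{-1}$), one concludes that $v$ and $v'$ are mutually inverse isomorphisms; the argument for $w, w'$ is symmetric.

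For part (i), the "if" direction is short: if $f, g$ are iso, then $T$ is commutative (being iso to $A$ via $f^{-1}$), the inverse cospan is $(B \xrightarrow{g} T \xleftarrow{f} A)$, and the composition reduces via $T \otimes_B T \cong T$ (using that $g$ is iso) to $(A \xrightarrow{f} T \xleftarrow{f} A)$, whose 2-cell to the identity cospan is represented by $(A \xrightarrow{f} T \xleftarrow{\id_T} T)$; its underlying maps are isomorphisms, so part (ii) makes it invertible. For the "only if" direction, invertibility gives an inverse cospan $(B \xrightarrow{g'} T' \xleftarrow{f'} A)$ and invertible 2-cells from the compositions to the identity cospans. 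By part (ii), each such 2-cell is represented by a 2-diagram with iso underlying maps $v, w$. The cospan-compatibility condition (first diagram in \eqref{eq:2diagram-cond}) then forces $\alpha = v^{-1} \circ w$ to be an isomorphism, where $\alpha: A \to T \otimes_B T'$ is the composition map. Since $\alpha = \rho \circ (1 \otimes \iota_{T'}) \circ f$, this yields a left inverse for $f$; the symmetric composition $F \circ F^{-1}$ provides a right inverse, so $f$ is iso, and the argument for $g$ is analogous.

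The main obstacle will be the "only if" direction of (ii): the invertibility hypothesis is encoded in bimodule isomorphisms of fibered tensor products, while the desired conclusion concerns the specific underlying morphisms $v, w$ in $\CZ$. Bridging this gap requires carefully combining both vertical compositions $\alpha \circledcirc \alpha^{-1}$ and $\alpha^{-1} \circledcirc \alpha$, the explicit factorizations in \eqref{eq:2-diag-com-u-def}--\eqref{eq:2-diag-com-v-def}, and the canonical-form reduction from the "if" direction, in order to promote the extracted split monomorphisms to full isomorphisms.
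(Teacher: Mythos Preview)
Your overall strategy---proving (ii) first and deducing (i)---matches the paper, and your ``if'' directions are sound (your canonical-form reduction for (ii) is a valid alternative to the paper's direct construction of the inverse 2-diagram $(T \xrightarrow{w} M \xleftarrow{v} S)$). However, both ``only if'' directions contain a genuine gap.

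In (ii), you correctly extract from $\alpha^{-1}\circledcirc\alpha \cong \one$ a retraction $v^{\mathrm{retr}}$ with $v^{\mathrm{retr}}\circ v = \id_S$, and analogously for $v'$ (and, from the bottom maps, for $w,w'$). But split-mono is not enough, and your stated promotion mechanism does not work: you cannot ``apply the canonical-form reduction to $\alpha^{-1}$'' because that reduction \emph{requires} $v',w'$ to already be isomorphisms, which is precisely what is unknown. The assertion that ``$v$ and $v'$ are mutually inverse isomorphisms'' is also ill-typed ($v\colon S\to M$, $v'\colon T\to M'$). What the paper does instead is show that the composite $v \circ v^{\mathrm{retr}}\colon M\to M$ is itself invertible, by rewriting it---via the identification $M \cong M\otimes_S S \cong M\otimes_S(N\otimes_T M)$ and the bimodule isomorphism $b\colon M\otimes_S N\to T$ coming from the \emph{other} composition $\alpha\circledcirc\alpha^{-1}$---as $(1\otimes_S a)\circ(b^{-1}\otimes_T 1)$, a composite of known isomorphisms. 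This simultaneous use of both $a$ and $b$ is the missing ingredient.

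The same issue recurs in (i): from the composition $T\otimes_B T'$ you correctly deduce that $\alpha = \rho\circ(1\otimes\iota)\circ f$ is an isomorphism, giving $f$ a left inverse. But the ``symmetric composition'' $T'\otimes_A T$ yields an isomorphism factoring through $g$ and $g'$, not a right inverse for $f$. The paper again resolves this by showing $f\circ(f_1^{-1}\circ(1\otimes g'))$ equals $(f_1^{-1}\otimes 1)\circ(1\otimes g_1)$, using the isomorphism $g_1$ obtained from the other composition.
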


\begin{proof}
We will first prove the second statement, then the first.
\\[.5em]
(ii) If $v$ and $w$ are invertible, we will only show that the 2-cell given in (\ref{eq:inv-2cell}) has a right inverse. The proof of the existence of a left inverse is similar. Since $w$ is invertible, then $M$ is also naturally a right $T$-module via
$$ 
M\otimes T \xrightarrow{w^{-1}1} T\otimes T \xrightarrow{\mu} T \xrightarrow{w} M. 
$$
Moreover, this right $T$-action commutes with the left $T$-action on $M$. In this way, $M$ becomes a $T$-$T$-bimodule which is isomorphic to $T$. Similarly, $M$ is also a $S$-$S$-bimodule and $v: S \rightarrow M$ is an isomorphism of $S$-$S$-bimodules. Consider the following 2-cells: 
\be \label{eq:right-inv-2cell}
\raisebox{3em}{\xymatrix@R=1em{
& T \ar[d]^{w} & \\ A \ar[ru]^{} \ar[rd]_{} & M & B \ar[ul]_{} \ar[dl]^{} \\
& S \ar[u]^{v} }}
\ee
The vertical composition $(\ref{eq:right-inv-2cell}) \circ (\ref{eq:inv-2cell})$ gives the target of the following 3-cell: 
$$
\raisebox{3em}{\xymatrix@R=1em{
    & S \ar[ld]_{\rho\circ (w1)\circ (\iota_T v)}
 \ar@{=}[rd] & \\  M\otimes_T M & M\otimes_T T \ar[l]_{\hspace{0.3cm}1\otimes_T w} & 
S \ar[l]_{\hspace{0.3cm} v}  \\
    & S \ar[lu]^{\rho\circ (1w)\circ (v \iota_T)} \ar@{=}[ru]  }}
$$
where we identified $M \otimes_T T = M = T \otimes_T M$.
It is an easy exercise to prove that 
   $\rho \circ (w\otimes 1)\circ (\iota_T \otimes v) = (1\otimes_T w) \circ v = \rho \circ (1\otimes w) \circ (v \otimes \iota_T)$. 
Since $v$ and $w$ are invertible, above $3$-cell is invertible. Thus (\ref{eq:right-inv-2cell}) is the right inverse of (\ref{eq:inv-2cell}). 

Conversely, given an invertible 2-cell (\ref{eq:inv-2cell}) we want to show that both $v$ and $w$ are invertible. By the invertibility assumption, there exists a 2-cell, say given by
$$
\raisebox{3em}{\xymatrix@R=1em{
& T \ar[d]^{x} & \\ A \ar[ru] \ar[rd] & N & B \ar[ul] \ar[dl] \\
& S \ar[u]^{y} }}
$$ 
and isomorphisms $a\se N\otimes_TM\to S$, $b\se M\otimes_SN\to T$, of $S$- and $T$-bimodules respectively, such that $a$ is the inverse to both of the two compositions:
$$
S \xrightarrow{v} M=T\otimes_T M \xrightarrow{x\otimes_T 1} N\otimes_T M, \quad\quad
S \xrightarrow{y} N=N\otimes_T T \xrightarrow{1\otimes_T w} N\otimes_T M
$$
and $b$ is the inverse to both of the two compositions:
$$
T \xrightarrow{w} M=M\otimes_S S \xrightarrow{1\otimes_S y} M\otimes_SN, \quad\quad
T \xrightarrow{x} N =S\otimes_S N \xrightarrow{v\otimes_S 1} M\otimes_S N. 
$$
Therefore, we have $a \circ (x\otimes_T1) \circ v=\id_S$. To prove that $v$ is invertible, it is enough to show $v\circ (a \circ (x\otimes_T1))$ is invertible. 
By the commutativity of the following diagram:
$$
\xymatrix{
T\otimes_T M \ar[r]^{x1} & N\otimes_TM \ar@{=}[d] \ar[r]^a & S \ar[r]^v & M \\
& S\otimes_S N\otimes_T M \ar[r]^{v11} & M\otimes_S N\otimes_T M \ar[r]^{\hspace{0.6cm}1a}  & M\otimes_S S, \ar@{=}[u]
}
$$
we have $v \circ (a \circ (x1)) =(1a) \circ (v11 \circ x1) = 1a \circ (b^{-1}1)$ 
which is invertible.

The proof of the invertibility of $w$ is similar.  
\\[.5em]
(i) If $f,g$ are isomorphisms, then $T$ is also a commutative algebra and $(B \xrightarrow{g} T \xleftarrow{f} A)$
is a 1-morphism in $\CALGu(\CZ)$. Its composition with original cospan (\ref{eq:inv-cospan}) gives the cospan 
$$
\raisebox{1.5em}{\xymatrix@R=1em@C=1em{ & T\otimes_B T \\ A\ar[ru]^(.35)i && B \ar[lu]_(.35)j}}
$$ 
It is easy to show that $i=j$ and $i$ is an isomorphism. We want to show that there is an invertible 2-cell between $(A \xrightarrow{i} T\otimes_B T \xleftarrow{i}A)$ and the identity cospan $(A\xrightarrow{=}A\xleftarrow{=}A)$. It is given as follows:
$$
\raisebox{3em}{\xymatrix@R=1em{
& T\otimes_B T \ar[d]^{i^{-1}} & \\ 
A \ar[ur]^i \ar@{=}[dr]  & A \ar@{=}[d]  & A \ar[ul]_i \ar@{=}[dl]  \\
& A & }}
$$
By the result of (ii) which has been proved, above 2-cell is invertible. Hence, we obtain that (\ref{eq:inv-cospan}) is invertible.

Conversely, if the cospan (\ref{eq:inv-cospan}) is invertible,
then there is a cospan $(B \xrightarrow{g'} G \xleftarrow{f'} A)$ 
such that there exist bimodules $M,N$ and
four isomorphisms: $T\otimes_B G \xrightarrow{a_1} M \xleftarrow{a_2} A$ and $G\otimes_A T \xrightarrow{b_1} N\xleftarrow{b_2} B$ forming the invertible 2-cells:
$$
\raisebox{3em}{\xymatrix@R=1em{
& T\otimes_B G \ar[d]^{a_1} & \\ 
A \ar[ur]^{f_1} \ar@{=}[dr]  & M  & A   \ar[ul]_{f_2} \ar@{=}[dl] \\
& A \ar[u]^{a_2}  & }}
\quad\quad
\raisebox{3em}{\xymatrix@R=1em{
& G\otimes_A T \ar[d]^{b_1} & \\ 
B \ar[ur]^{g_1} \ar@{=}[dr]  & N  & A   \ar[ul]_{g_2} \ar@{=}[dl] \\
& B \ar[u]^{b_2}  & }} \quad ,
$$
where the morphisms $f_1, f_2, g_1, g_2$ are composed morphisms given as follows:
$$
f_1:  A \xrightarrow{f}  T = T\otimes_B B \xrightarrow{1g'}  T\otimes_B G, \quad\quad
f_2:  A \xrightarrow{f'} G = B\otimes_B G \xrightarrow{g1} T\otimes_B G,
$$
$$
g_1: B \xrightarrow{g'}G = G\otimes_A A \xrightarrow{1f}  G\otimes_A T, \quad\quad
g_2: B \xrightarrow{g} T=  A\otimes_A T \xrightarrow{f'1}  G\otimes_A T.
$$
We want to show that both $f$ and $g$ are invertible.  

First, since $a_1$ and $a_2$ are invertible
we must have $f_1=f_2$ and $g_1=g_2$. Moreover, $f_1$ and $g_1$ are both isomorphisms with the inverses given by $a_2^{-1}\circ a_1$ and $b_2^{-1}\circ b_1$, respectively. Secondly, we have $(f_1^{-1}\circ (1g')) \circ f=\id_A$ and $(g_1^{-1} \circ (f'1)) \circ g=\id_B$. It remains to show that $f \circ (f_1^{-1} \circ (1g'))$ 
and $g \circ (g_1^{-1} \circ (f'1))$ are invertible. It is easy to show that the following diagram: 
$$
\xymatrix{
T \cong T\otimes_B B \ar[r]^{1g'} & T\otimes_B G \ar[r]^{f_1^{-1}} \ar[d]^= & A \ar[r]^f & T \\
& T\otimes_B G \otimes_A A \ar[r]^{11f} & T\otimes_B G\otimes_A T \ar[r]^{\hspace{0.5cm}f_1^{-1}1} 
& A\otimes_A T \ar[u]_= \\
}
$$
is commutative. Therefore, we obtain $f \circ (f_1^{-1} \circ (1g')) =  (f_1^{-1}1) \circ ((11f) \circ (1g')) =  (f_1^{-1}1) \circ (1 g_1)$
 which is clearly invertible. Therefore, $f$ is invertible. 

The proof of the invertibility of $g$ is similar. 
\end{proof}

\medskip
In the remaining part of this subsection, we give a simple characterization of $\CALGu(\CZ)^\times$.
Let $A$ and $B$ be two commutative $\CZ$-algebras.  We define the map
\be\label{eq:A-1grpd-def}
  \mathcal{A} ~:~ (A \xrightarrow{f} B) \quad \longmapsto \quad
\raisebox{1.5em}{
\xymatrix@R=1em@C=1em{ & B \\ A\ar[ru]^(.35)f && B \ar@{=}[lu]}
}
\ee
If we view $\Hom_{\alg(\CZ)}(A,B)$ as a category with only identity morphisms, $\mathcal{A}$ automatically defines a functor $\Hom_{\alg(\CZ)}(A,B) \to \CHom_{\CALGu(\CZ)}(A,B)$. Moreover, we have the following result. 

\begin{lemma} 
The functor $\mathcal{A}: \Hom_{\alg(\CZ)}^\times(A,B) \to \CHom_{\CALGu(\CZ)}^\times(A,B)$ is an equivalence. 
\end{lemma}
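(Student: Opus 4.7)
The plan is to verify that $\mathcal{A}$ is essentially surjective and fully faithful; since the source category has only identity 2-morphisms, the latter amounts to showing that $\CHom^\times_{\CALGu(\CZ)}(A,B)$ is (equivalent to) a discrete category with objects indexed by algebra isomorphisms.

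For essential surjectivity, let $(A \xrightarrow{f_1} T \xleftarrow{g_1} B)$ be an invertible 1-morphism. By Proposition~\ref{prop:inv-1-2cell}\,(i), $f_1$ and $g_1$ are isomorphisms in $\CZ$, hence $T$ is isomorphic as an algebra to $B$ via $g_1^{-1}$. Define $f := g_1^{-1} \circ f_1 : A \to B$, which is an algebra isomorphism since $f_1$ and $g_1$ are. I would then exhibit an invertible 2-cell from this cospan to $\mathcal{A}(f)$ by the 2-diagram
\[
\raisebox{3em}{\xymatrix@R=1em{
& T \ar[d]^{g_1^{-1}} & \\ A \ar[ru]^{f_1} \ar[rd]_{f} & B & B \ar[ul]_{g_1} \ar@{=}[dl] \\
& B \ar@{=}[u] }}
\]
where $M = B$ is viewed as a $B$-$T$-bimodule via left multiplication by $B$ and right action through $g_1^{-1}$ followed by multiplication. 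Checking the defining conditions \eqref{eq:2diagram-cond} is straightforward: the left and right triangles commute by construction, the right-$T$-module structure on $v = g_1^{-1}$ follows because $g_1^{-1}$ is an algebra map, and the two centrality squares reduce to the commutativity of $B$ in the braided category $\CZ$. Invertibility of this 2-cell is immediate from Proposition~\ref{prop:inv-1-2cell}\,(ii).

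For fully faithfulness, let $f, f' : A \to B$ be algebra isomorphisms and suppose an invertible 2-cell exists between $\mathcal{A}(f)$ and $\mathcal{A}(f')$, represented by a 2-diagram $B \xrightarrow{v} M \xleftarrow{w} B$ with $M$ a $B$-$B$-bimodule. The right triangle of \eqref{eq:2diagram-cond} forces $v \circ \id_B = w \circ \id_B$, i.e.\ $v = w$. The left triangle then reads $v \circ f = v \circ f'$; but by Proposition~\ref{prop:inv-1-2cell}\,(ii) the map $v$ is an isomorphism, hence $f = f'$. This shows $\mathcal{A}$ is faithful on objects (no morphisms between distinct $\mathcal{A}(f)$, $\mathcal{A}(f')$). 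Finally, when $f = f'$, any such $(M, v, v)$ with $v$ a bimodule isomorphism is related to the identity 2-diagram $(B, \id_B, \id_B)$ by the 3-cell $\delta := v^{-1} : M \to B$, which is automatically a $B$-$B$-bimodule map because $v$ is. Hence every automorphism 2-cell coincides with the identity in $\tdiagu$, showing the automorphism group of $\mathcal{A}(f)$ is trivial.

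The only mildly delicate point is keeping track of the $B$-$T$-bimodule structure on $M = B$ in the essentially surjective step and verifying the centrality diagrams in the presence of a non-symmetric braiding on $\CZ$; this is the step where one must use that $A, B$ are commutative algebras in the braided category $\CZ$. Once these checks are done, combining essential surjectivity with faithfulness and the triviality of automorphism groups yields that $\mathcal{A}$ is an equivalence of groupoids.
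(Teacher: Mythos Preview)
Your proof is correct and follows essentially the same strategy as the paper's: essential surjectivity via an explicit invertible 2-cell built from the isomorphism $g_1^{-1}$, and fullness by observing that the right triangle of \eqref{eq:2diagram-cond} forces $v=w$, whence the left triangle gives $f=f'$ and the resulting bimodule isomorphism $v$ furnishes a 3-cell to the identity. The only cosmetic difference is that the paper takes $M=T$ with maps $(\id_T,\,g_1)$ for the essential-surjectivity step, whereas you take $M=B$ with maps $(g_1^{-1},\,\id_B)$; these are of course isomorphic 2-diagrams.
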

\pf
The functor $\mathcal{A}$ is essentially surjective on objects since, for any invertible cospan 
$(A \xrightarrow{f} T \xleftarrow{g} B)$, there is an invertible 2-cell
\be  \label{eq:A-surject}
\raisebox{3em}{\xymatrix@R=1em{
& T  \ar@{=}[d] & \\ 
A \ar[ur]^{f} \ar[dr]_{g^{-1}f}  & T  & B   \ar[ul]_{g} \ar@{=}[dl] \\
& B \ar[u]^{g}  & }} \qquad .
\ee 

It remains to show that the functor $\mathcal{A}$ is fully faithful. 
The functor $\mathcal{A}$ is trivially faithful, since $\Hom_{\alg(\CZ)}(A,B)$ has only identity morphisms. To see that it is full, let
\begin{equation}\label{2cf}
\raisebox{3em}{\xymatrix@R=1em{
& B  \ar[d]^{\phi} & \\ 
A \ar[ur]^{f} \ar[dr]_{g}  & M  & B   \ar@{=}[ul] \ar@{=}[dl] \\
& B \ar[u]^{\psi}  & }}
\end{equation}
be an arbitrary invertible 2-cell between two cospans in the image of $\mathcal{A}$. It is clear that we must have $\phi=\psi$ which in turn implies $f=g$. Moreover, since $M$ is naturally a $B$-$B$-bimodule and $\phi=\psi$ is bimodule isomorphism, we obtain that above 2-cell is isomorphic (thus identical) to the identity 2-cell. Hence it lies in the image of $\mathcal{A}$.
\epf

The above lemma immediately implies the following

\begin{thm}
Let $\CZ$ be a braided monoidal category satisfying Assumption~\ref{ass:coeq}. Think of the 1-groupoid $\calg(\CZ)^\times$ as a 2-groupoid with only identity 2-morphisms. Then the functor $\mathcal{A}: \calg(\CZ)^\times \rightarrow \CALGu(\CZ)^\times$ in \eqref{eq:A-1grpd-def} is an equivalence of 2-groupoids.
\end{thm}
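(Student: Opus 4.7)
The plan is to invoke the standard characterization of equivalences of bicategories: a $2$-functor between $2$-groupoids is a biequivalence if and only if it is essentially surjective on objects and, for every pair of objects $A,B$, the induced functor on hom-categories is an equivalence of $1$-groupoids.

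First I would make sure $\mathcal{A}$ is genuinely a $2$-functor in the sense of Definition~\ref{def:lax-functor}. On objects it is the identity, and on morphisms it is defined by \eqref{eq:A-1grpd-def}. The multiplication transformation on a composable pair $A \xrightarrow{f} B \xrightarrow{g} C$ should be the invertible $2$-cell
\[
\raisebox{2.5em}{\xymatrix@R=1em{
& C \otimes_B C \ar[d]^{\mu_C}_{\cong} & \\
A \ar[ru] \ar[rd]_{g \circ f} & C & C \ar[lu] \ar@{=}[dl] \\
& C \ar@{=}[u]
}}
\]
arising from the canonical unit isomorphism $C \otimes_B C \to C$ of the fibered tensor product (which is well-defined because $C$ is commutative). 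The unit transformation is the identity. Coherence for associativity and units then reduces to the coherence of $\otimes_B$, already used to set up $\CALGu(\CZ)$. Since both the unit and multiplication transformations are invertible, $\mathcal{A}$ is a (non-lax) $2$-functor.

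Next, essential surjectivity on objects is tautological, as $\mathcal{A}$ is the identity on objects and both $2$-groupoids have commutative algebras in $\CZ$ as their objects.

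Finally, for the local equivalence, observe that if $A,B$ are commutative then $\Hom_{\calg(\CZ)}(A,B) = \Hom_{\alg(\CZ)}(A,B)$. Under this identification the functor on hom-groupoids induced by $\mathcal{A}$ is exactly the functor appearing in the preceding lemma, which has already been shown to be essentially surjective (via \eqref{eq:A-surject}) and fully faithful. Combining the three points yields the claimed equivalence of $2$-groupoids. No step presents a real obstacle here; the only point that requires minimal care is verifying that the structural $2$-cells defining $\mathcal{A}$ as a $2$-functor are invertible, which follows immediately from the unit isomorphism of $\otimes_B$.
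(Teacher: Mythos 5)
Your argument follows the paper's route exactly: the theorem is deduced from the preceding lemma (the local equivalence on invertible hom-groupoids) together with the observation that $\mathcal{A}$ is the identity on objects, and your additional care in checking that the structure $2$-cells make $\mathcal{A}$ a genuine $2$-functor is correct in spirit (the paper leaves this implicit). One small correction: for $A \xrightarrow{f} B \xrightarrow{g} C$ the composite $\mathcal{A}(g)\circledcirc\mathcal{A}(f)$ has tip $B\otimes_B C$ (with $B$ acting on $C$ through $g$), not $C\otimes_B C$, so the multiplication $2$-cell is induced by the canonical unit isomorphism $B\otimes_B C \xrightarrow{\cong} C$ rather than by $\mu_C$; with this replacement, and invertibility of these $2$-cells guaranteed by Proposition~\ref{prop:inv-1-2cell}, everything you say goes through.
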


\subsection{The full center is non-lax on equivalences}  \label{sec:fun-groupoid}

Let $\BM(\CC)$ in Theorem~\ref{thm:center-lax-functor-2} satisfy only the condition 1. For $F,G,H \in \CHom_{\BM(\CC)}^\times(\CM, \CN)$, if $F\cong G$ (or $G\cong H$), it is easy to show that $\overline{\comp}$ defined in (\ref{eq:underline-comp-def}) is an isomorphism. In other words, the second condition in Theorem~\ref{thm:center-lax-functor-2} is automatically satisfied for $\BM(\CC)^\times$. We want to show the the third condition is also satisfied for $\BM(\CC)^\times$. We need a lemma. 

\begin{lemma} \label{lem:F-equiv-gives-ZC-iso} 
For $\CC$-module functors $G,G': \CHom_{\BM(\CC)}(\CN,\mathcal{P})$, $H,H' \in \CHom_{\BM(\CC)}(\CL,\CM)$ and $F \in \CHom_{\BM(\CC)}^\times(\CM,\CN)$, i.e. $F$ is a $\CC$-module equivalence between $\CM$ and $\CN$, the morphisms
\be
  [-\circ F]_{[G,G']} : [G,G']\to [GF,G'F]
  \quad \text{and} \quad
  [F \circ -]_{[H,H']} : [H, H']\to [FH,FH']~, 
\ee
defined in \eqref{eq:C-mod-F-gives-ZC-morph} and (\ref{eq:def-F-MN}), are isomorphisms.
\end{lemma}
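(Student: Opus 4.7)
The plan is to exploit the quasi-inverse of the equivalence $F$ to exhibit an inverse (up to canonical isomorphism) to each of the two maps, using Lemma~\ref{lemma:G-F=GF} for composition and Lemma~\ref{lem:GF-nat-commute} for natural isomorphisms. I focus on $[(-)\circ F]$; the case of $[F\circ(-)]$ is entirely parallel.

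First I would pick a $\CC$-module quasi-inverse $F^{-1}\colon \CN\to\CM$ together with $\CC$-module natural isomorphisms $\alpha\colon F\circ F^{-1}\xrightarrow{\sim}\id_\CN$ and $\beta\colon F^{-1}\circ F\xrightarrow{\sim}\id_\CM$. The assignments $A:=(-)\circ F$ and $B:=(-)\circ F^{-1}$ are $\CZ(\CC)$-module functors between $\Fun_\CC(\CN,\mathcal{P})$ and $\Fun_\CC(\CM,\mathcal{P})$ (for any third $\CC$-module $\mathcal{P}$ containing the relevant functors), by the same argument as in the lemma just above Proposition~\ref{prop:comm}. Postcomposing with $\alpha$ and $\beta$ yields natural isomorphisms of $\CZ(\CC)$-module functors $\Phi\colon B\circ A\xrightarrow{\sim}\id$ and $\Psi\colon A\circ B\xrightarrow{\sim}\id$ (the centrality of the $\CZ(\CC)$-action follows because $\alpha,\beta$ are $\CC$-module natural).

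Next I would apply Lemma~\ref{lemma:G-F=GF} to the composable pair $(A,B)$ of $\CZ(\CC)$-module functors, giving the identity
\[
  [B]_{[A(G),A(G')]}\,\circ\,[A]_{[G,G']}\;=\;[B\circ A]_{[G,G']}
\]
and analogously $[A]\circ[B]=[A\circ B]$. I would then invoke Lemma~\ref{lem:GF-nat-commute} for the natural isomorphism $\Phi\colon B\circ A\to \id$: taking its square with $F\leadsto B\circ A$ and $G\leadsto \id$ yields a commutative diagram whose bottom and right edges, $[\Phi_G,G']$ and $[BA(G),\Phi_{G'}]$, are isomorphisms in $\CZ(\CC)$ (by Remark~\ref{rema:iso-int-hom}(ii), since $\Phi_G,\Phi_{G'}$ are invertible), while the left edge is $[\id]=\id$. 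Hence $[B\circ A]_{[G,G']}$ is an isomorphism. The analogous argument using $\Psi$ shows that $[A\circ B]_{[H,H']}$ is an isomorphism for all $H,H'$.

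Finally, combining the two previous displays, $[A]_{[G,G']}$ has a left inverse (given by an iso composed with $[B]$) and a right inverse (by the analogous argument with $G,G'$ replaced by $A(H),A(H')$), hence is itself an isomorphism. This gives the statement for $[(-)\circ F]$. The statement for $[F\circ(-)]$ is obtained by the same argument applied to the $\CZ(\CC)$-module functors $F\circ(-)$ and $F^{-1}\circ(-)$ between $\Fun_\CC(\CL,\CM)$ and $\Fun_\CC(\CL,\CN)$. The only potential obstacle is bookkeeping: making sure that all the internal homs that appear in the intermediate applications of Lemma~\ref{lemma:G-F=GF} actually exist. This is automatic because $F$ and $F^{-1}$ are equivalences, so every functor of the form $GFF^{-1}$, $G'FF^{-1}$, etc., is isomorphic to one in the closed subcategory under consideration, and by Remark~\ref{rema:iso-int-hom}(ii) the relevant internal homs are transported across the isomorphism.
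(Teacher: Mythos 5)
Your proposal is correct and, at its core, follows the same route as the paper: pick a quasi-inverse $\bar F$ of $F$, use Lemma~\ref{lemma:G-F=GF} to identify $[-\circ\bar F]\circ[-\circ F]$ with $[-\circ F\bar F]$, recognize the latter as conjugation by the natural isomorphism $F\bar F\cong\id_\CN$, and conclude by a two-sided inverse argument; the paper's proof does exactly this. The genuine difference is how the conjugation identity is established: the paper verifies $[-\circ\bar F]_{[GF,G'F]}\circ[-\circ F]_{[G,G']}=[GF\bar F,1\phi^{-1}]\circ[1\phi,G']$ by a direct evaluation diagram (a special case of \eqref{eq:def-fast-gast} plus \eqref{eq:def-F-MN}), whereas you obtain it by applying Lemma~\ref{lem:GF-nat-commute} to the whiskered transformation $\Phi\colon(-)\circ F\bar F\to\id$, viewed as a $\CZ(\CC)$-module natural transformation between $\CZ(\CC)$-module endofunctors of $\Fun_\CC(\CN,\CP)$. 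That shortcut is legitimate, but it silently relies on $\Phi$ (resp.\ the analogous $\Psi$ for $F\circ(-)$) being $\CZ(\CC)$-module natural: for $(-)\circ F$ this is immediate because the module structure \eqref{eq:left-right-comp-C-mod-structure} is the identity, while for $F\circ(-)$ one genuinely needs that $\alpha,\beta$ are $\CC$-module natural transformations (compatibility with $\zeta$), so it is worth writing out that check once. One small index slip to tighten in the endgame: the right inverse you extract from the $\Psi$-argument is a right inverse of $[-\circ F]$ at the pair $(GF\bar F,G'F\bar F)$, not at $(G,G')$; either transport it using the squares of Lemma~\ref{lemma:[F]-funct-1} with the isomorphisms $\Phi_G,\Phi_{G'}$ (which are isomorphisms in $\CZ(\CC)$ by Remark~\ref{rema:iso-int-hom}), or, more cleanly, first conclude that $[-\circ\bar F]_{[GF,G'F]}$ is invertible (it has a left inverse from the $\Psi$-identity and a right inverse from the $\Phi$-identity) and then write $[-\circ F]_{[G,G']}$ as its inverse composed with an isomorphism, which is essentially how the paper wraps up. Finally, the existence bookkeeping is unproblematic in the settings where the lemma is used, since $\bar F$ is again a $1$-morphism of $\BM(\CC)$ and the Hom-categories are $\CZ(\CC)$-closed, so the auxiliary internal homs such as $[GF\bar F,G'F\bar F]$ exist without any transport along isomorphisms.
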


\begin{proof}
Let $\bar{F}$ be a quasi-inverse of $F$. By definition, there are two natural isomorphisms: $F\bar{F} \xrightarrow{\phi}  \id_\CN$ and $\bar{F}F \xrightarrow{\psi} \id_\CM$. 
It is enough to show that the following compositions of morphisms: 
\bea  \label{eq:equ-iso-alg}
&& [G,G'] \xrightarrow{[- \circ F]_{[G,G']}} [GF,G'F] \xrightarrow{[- \circ \bar{F}]_{[GF,G'F]}} [GF\bar{F},G'F\bar{F}] \xrightarrow{[1\phi^{-1}, G']\circ [GF\bar{F},1\phi]} [G,G']  \\
&& [GF,G'F] \xrightarrow{[- \circ \bar{F}]} [GF\bar{F},G'F\bar{F}] \xrightarrow{[- \circ F]} 
[GF\bar{F}F,G'F\bar{F}F]  
\xrightarrow{[11\psi^{-1},G'F] \circ [GF\bar{F}F, 11\psi]} [GF,G'F] 
\nonumber
\eea
equal to identity maps. We will only prove that (\ref{eq:equ-iso-alg}) gives the identity map. The proof for the other one is similar. Recall Remark \ref{rema:iso-int-hom}\,(ii) which says that the morphism $[1\phi^{-1}, G]\circ [GF\bar{F},1\phi]$ is an isomorphism. Hence it is enough to check that 
\be \label{eq:equ-iso-alg-pf-1}
[-\circ \bar{F}]_{[GF,G'F]} \circ [-\circ F]_{[G,G']}= [GF\bar{F}, 1\phi^{-1}] \circ [1\phi, G'].
\ee
This follows from the commutativity of the following diagram: 
$$
\xymatrix{
[G, G'] \ast GF\bar{F} \ar[rr]^{\hspace{-.3cm}[1\phi,G']1} \ar[d]_{11\phi} & & [GF\bar{F}, G'] \ast GF\bar{F} \ar[rr]^{[GF\bar{F},1\phi^{-1}]} \ar[d]^{\ev_{GF\bar{F}}}  & & [GF\bar{F}, G'F\bar{F}] \ast GF\bar{F} \ar[d]^{\ev_{GF\bar{F}}}  \\
[G,G']\ast G \ar[rr]^{\ev_G} &  &  G' \ar[rr]^{1\phi^{-1}} &  & G'F\bar{F} 
}
$$
which is just a special case of the two commutative diagrams in (\ref{eq:def-fast-gast}). 
Notice also that the map $(1\phi^{-1}) \circ \ev_G \circ (11\phi)$ equals to $[G,G']\ast GF\bar{F} \cong ([G,G']\ast G)F\bar{F} \xrightarrow{\ev_G11} G'F\bar{F} $. By (\ref{eq:def-F-MN}) and Lemma \ref{lemma:G-F=GF}, we obtain the identity (\ref{eq:equ-iso-alg-pf-1}) immediately. 
\end{proof}

The following lemma says that the third condition in Theorem~\ref{thm:center-lax-functor-2} is satisfied automatically for $\BM(\CC)^\times$. 
\begin{lemma} \label{lem:FFGG-equiv-nFFGG-iso} 
For $F,F' \in \CHom_{\BM(\CC)}^\times(\CL, \CM)$ and $G,G'\in \CHom_{\BM(\CC)}^\times(\CM,\CN)$, if there are isomorphisms $F\xrightarrow{\phi} F'$ and $G\xrightarrow{\psi} G'$, then the morphism $n_{F,F',G,G'} : [F,F'] \otimes_{Z(\CM)} [G,G'] \to [GF,G'F'] $ defined in \eqref{eq:nFFGG-def} and the morphism $\Bm_{(\phi, \psi)}:=m_{F,F',G,G'}$ defined in (\ref{eq:mFFGG-def}) and (\ref{eq:m-phipsi-def}) are both isomorphisms.
\end{lemma}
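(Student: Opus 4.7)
The plan is to first establish the special case $F=F'$, $G=G'$ of the lemma, i.e.\ that $m_{F,G}:Z(F)\otimes_{Z(\CM)}Z(G)\to Z(GF)$ is an isomorphism when $F,G$ are $\CC$-module equivalences, and then to reduce the general claim to this one. The essential tools are Lemma~\ref{lem:F-equiv-gives-ZC-iso}, which makes the edge maps of the cospans $\BZ(F)$, $\BZ(G)$ and $\BZ(GF)$ isomorphisms in $\CZ(\CC)$, and Remark~\ref{rema:iso-int-hom}(ii) together with Lemma~\ref{lemma:int-hom-bifunctor}, which makes $[F,\phi]:Z(F)\to[F,F']$, $[G,\psi]:Z(G)\to[G,G']$ and $[GF,\psi\phi]:Z(GF)\to[GF,G'F']$ isomorphisms.

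For the special case, the invertibility of the edges of $\BZ(F)$ and $\BZ(G)$ combined with Proposition~\ref{prop:inv-1-2cell}(i) shows that these cospans, and hence their composition $\BZ(G)\circledcirc\BZ(F)$, are invertible 1-morphisms in $\CALGu(\CZ(\CC))$. Applying Proposition~\ref{prop:inv-1-2cell}(i) in the converse direction to the composed cospan shows that its edge $a_1:Z(\CL)\to Y=Z(F)\otimes_{Z(\CM)}Z(G)$ from \eqref{eq:a1-b2_for_m(F,G)} is an isomorphism. The same lemma makes the edge $a_2=[(GF)\circ -]_{[\id,\id]}:Z(\CL)\to Z(GF)$ of $\BZ(GF)$ an isomorphism. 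The triangle $m_{F,G}\circ a_1=a_2$, which appears as the upper part of the first 2-diagram condition verified in \eqref{eq:m(F,G)-is-2diag-aux1}, then forces $m_{F,G}=a_2\circ a_1^{-1}$ to be an isomorphism.

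To handle the isomorphism of $n_{F,F',G,G'}$ in the general case, I would use the identity
\be
n_{F,F',G,G'} \circ \big([F,\phi]\otimes_{Z(\CM)}[G,\psi]\big) = [GF,\psi\phi] \circ m_{F,G}
\ee
obtained by combining \eqref{eq:m-3-cell-pf-eq-1} (proved in Lemma~\ref{lem:mFGFG-3cell}) with the defining coequalizer relations \eqref{eq:nFFGG-def} for $n_{F,F',G,G'}$ and $m_{F,G}=n_{F,F,G,G}$, together with the universal property of $\rho_{[F,F],[G,G]}$. The right-hand side is a composition of isomorphisms by the special case and the second input above, while $[F,\phi]\otimes_{Z(\CM)}[G,\psi]$ is an isomorphism by functoriality of the fibered tensor product applied to the $Z(\CM)$-module isomorphisms $[F,\phi]$ and $[G,\psi]$ (the required $Z(\CM)$-linearity coming from Lemma~\ref{lem:composition-properties}(iii)); hence $n_{F,F',G,G'}$ is an isomorphism.

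Finally, for $m_{F,F',G,G'}=r^{-1}\circ m'_{F,F',G,G'}$, I would precompose the defining relation \eqref{diag:def-m'} with $\iota_{Z(G'F')}\otimes\id_Q$ and use the unit property of the left action $\comp$ to obtain $m'_{F,F',G,G'}\circ\big(\rho\circ(\iota\otimes 1)\big)=n_{F,F',G,G'}$. Since by the special case $m_{F',G'}:Y'\to Z(G'F')$ is an algebra isomorphism, $Z(G'F')$ is isomorphic to $Y'$ as a right $Y'$-module, giving a canonical isomorphism $\rho\circ(\iota\otimes 1):Q\to Z(G'F')\otimes_{Y'}Q$. Combined with the isomorphism of $n_{F,F',G,G'}$ just established, this yields that $m'_{F,F',G,G'}$, and hence $m_{F,F',G,G'}$, is an isomorphism. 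The main obstacle I anticipate is not conceptual but organizational: keeping straight the various algebra actions on the internal homs involved so that all the fibered tensor products of isomorphisms are manifestly well-defined isomorphisms; this bookkeeping is straightforward via Lemma~\ref{lem:composition-properties}(iii) and functoriality of the coequalizers but needs to be carried out carefully.
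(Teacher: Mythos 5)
Your proof is correct, but it takes a genuinely different route from the paper's. The paper proves invertibility of $n_{F,F',G,G'}$ directly: by Lemma~\ref{lem:F-equiv-gives-ZC-iso} the map $[G'\circ -]_{[F,F']}\otimes[-\circ F]_{[G,G']}$ is an isomorphism, hence induces an isomorphism $f$ of coequalizers $[F,F']\otimes_{Z(\CM)}[G,G']\to[G'F,G'F']\otimes_{Z(G'F)}[GF,G'F]$, and then $n_{F,F',G,G'}=g\circ f$ with $g$ a composite of $\overline{\comp}$ (an isomorphism in this degenerate case) and maps induced by $[GF,G'\phi^{-1}]$, cf.\ \eqref{diag:h-iso}; invertibility of $m_{F,F',G,G'}$ is then read off from \eqref{eq:mu-n-phi-psi} together with invertibility of $u$. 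You instead first isolate the special case $m_{F,G}$ and prove it bicategorically, via Proposition~\ref{prop:inv-1-2cell}(i) applied to the cospans $\BZ(F)$, $\BZ(G)$, their composite and $\BZ(GF)$, combined with the triangle $m_{F,G}\circ a_1=a_2$ extracted from \eqref{eq:m(F,G)-is-2diag-aux1}; you then bootstrap to general $n_{F,F',G,G'}$ through the identity $n_{F,F',G,G'}\circ([F,\phi]\otimes_{Z(\CM)}[G,\psi])=[GF,\psi\phi]\circ m_{F,G}$ (established inside the proof of Lemma~\ref{lem:mFGFG-3cell}, so there is no circularity), and finally obtain $m'_{F,F',G,G'}$, hence $m_{F,F',G,G'}$, from the unit relation $m'\circ\rho\circ(\iota\,1)=n$ together with the right-$Y'$-module isomorphism $Z(G'F')\cong Y'$ coming from the special case applied to $F',G'$. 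Both arguments are sound; the paper's is more self-contained at the level of internal-hom and coequalizer manipulations, while yours reuses the machinery already built in Sections 4--6 and avoids constructing a new commutative diagram, at the price of the bookkeeping you flag: to see that $[F,\phi]\otimes_{Z(\CM)}[G,\psi]$ is invertible you must check that the inverse pair $([F,\phi^{-1}],[G,\psi^{-1}])$ also descends to the fibered product, and for the second factor this requires compatibility with the \emph{left} $Z(\CM)$-action on $[G,G']$, which follows from Lemma~\ref{lem:composition-properties}(iii) only after identifying the left and right actions via the braiding as in Corollary~\ref{cor:Z-[FF]-[FG]}; this is routine but should be stated explicitly.
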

\begin{proof}
Let $F\xrightarrow{\phi} F'$ and $G\xrightarrow{\psi} G'$ be two natural isomorphisms.
Using Lemma \ref{lem:F-equiv-gives-ZC-iso} and the commutative diagram (\ref{eq:nFFGG-def}), it is easy to obtain the following commutative diagram
\be  \label{diag:h-iso}
\xymatrix{
[F,F']\otimes [G,G'] \ar[r]^\rho \ar[d]_{[G'\circ -]\otimes [-\circ F]}^\cong & [F,F']\otimes_{Z(\CM)} [G,G']  \ar[d]^f \ar[rd]^{n_{F,F';G,G'}} &  \\
[G'F, G'F'] \otimes [GF, G'F] \ar[r]^{\hspace{-0.5cm}\rho} & [G'F, G'F'] \otimes_{Z(G'F)} [GF, G'F] \ar[r]^{\hspace{1.5cm}\overline{\comp}}
 \ar[d]_{[G'F,G'\phi^{-1}]1} & [GF, G'F'] \ar[d]^{[GF,G'\phi^{-1}]} \\
& [G'F, G'F] \otimes_{Z(G'F)} [GF, G'F] \ar[r]^{\hspace{1.5cm}\cong} & [GF, G'F] 
}
\ee
where $f$ is given by the universal property of coequalizer. It is easy to show that $f$ is an isomorphism. Then $n_{F,F';G,G'}=g\circ f$,
where $g$ is the composition of the three isomorphisms at the bottom of diagram \eqref{diag:h-iso},
 is an isomorphism. 

Recall the identity (\ref{eq:mu-n-phi-psi}). Notice that the morphism $u$ in (\ref{eq:mu-n-phi-psi}) (defined by (\ref{eq:2-diag-com-u-def})) is an isomorphism in our case. Therefore, $m_{F,F',G,G'}$ is also an isomorphism.
\end{proof}

Altogether, we obtain the following result. 
\begin{thm}  \label{thm:funct-eq} 
$\BZ : \BM(\CC)^\times \to \CALGu(\CZ(\CC))^\times$ is a (non-lax) 2-functor between two bi-groupoids.
\end{thm}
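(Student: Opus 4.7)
The plan is to combine Theorem~\ref{thm:center-lax-functor-2} with the recently established Lemmas~\ref{lem:F-equiv-gives-ZC-iso} and~\ref{lem:FFGG-equiv-nFFGG-iso}, then verify that the image lies in the maximal 2-groupoid on the target side using Proposition~\ref{prop:inv-1-2cell}. The assumption on $\BM(\CC)^\times$ carried over from Theorem~\ref{thm:center-lax-functor-2} is only condition~1, i.e.\ that all relevant internal homs exist. The other two hypotheses of Theorem~\ref{thm:center-lax-functor-2} are automatic on $\BM(\CC)^\times$: in any morphism category $\CHom_{\BM(\CC)}^\times(\CM,\CN)$ every 2-morphism is an isomorphism, so applying Lemma~\ref{lem:F-equiv-gives-ZC-iso} (with suitable identifications $F=G$ and $G=H$ or using a natural isomorphism to identify the functors involved in $\overline{\comp}$) forces $\overline{\comp}$ of \eqref{eq:mult-map-Phi} to be an isomorphism, and Lemma~\ref{lem:FFGG-equiv-nFFGG-iso} directly gives that each $\Bm_{(\phi,\psi)} = m_{F,F',G,G'}$ is an isomorphism.

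With these inputs, Theorem~\ref{thm:center-lax-functor-2} produces a (non-lax) 2-functor $\BZ : \BM(\CC)^\times \to \CALGu(\CZ(\CC))$. It remains to show that this 2-functor factors through the maximal 2-groupoid $\CALGu(\CZ(\CC))^\times \subset \CALGu(\CZ(\CC))$. For this I would check the two invertibility criteria from Proposition~\ref{prop:inv-1-2cell}. First, on a $\CC$-module equivalence $F : \CM \to \CN$, the cospan $\BZ(F) = (Z(\CM) \to Z(F) \leftarrow Z(\CN))$ has structure maps $[F \circ -]_{[\id,\id]}$ and $[- \circ F]_{[\id,\id]}$; these are isomorphisms by Lemma~\ref{lem:F-equiv-gives-ZC-iso} applied to $G=G'=\id_\CN$ and $H=H'=\id_\CM$ respectively. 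By Proposition~\ref{prop:inv-1-2cell}(i), $\BZ(F)$ is an invertible 1-morphism. Second, on a $\CC$-module natural isomorphism $\phi : F \to G$, the 2-diagram $\BZ_{(F,G)}(\phi)$ of \eqref{eq:Z_FG(phi)-def} has vertical maps $[F,\phi]$ and $[\phi,G]$; by Remark~\ref{rema:iso-int-hom}(ii) (extended in the obvious way by functoriality of $[-,-]$ from Lemma~\ref{lemma:int-hom-bifunctor}) both of these are isomorphisms in $\CZ(\CC)$, so by Proposition~\ref{prop:inv-1-2cell}(ii) the corresponding 2-cell is invertible in $\CALGu(\CZ(\CC))$.

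Putting the two checks together with the already-established 2-functor property gives the restricted 2-functor $\BZ : \BM(\CC)^\times \to \CALGu(\CZ(\CC))^\times$, and since $\BM(\CC)^\times$ is by definition a 2-groupoid and the target 2-groupoid is as well, this is the asserted equivalence-preserving 2-functor between bi-groupoids. The only real work is the invertibility check on 1-morphisms: one must be careful that the isomorphism statement of Lemma~\ref{lem:F-equiv-gives-ZC-iso} applies to the structure maps of the cospan $\BZ(F)$, which it does since these arise precisely from $[F \circ -]$ and $[- \circ F]$ evaluated on the identity internal hom $[\id,\id] = Z(-)$. The remaining associativity, unit, and bi-groupoid coherence conditions for $\BZ$ are simply inherited from Theorem~\ref{thm:center-lax-functor-2} and require no additional argument.
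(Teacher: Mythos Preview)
Your proposal is correct and follows essentially the same approach as the paper, which states the theorem as an immediate consequence of the preceding lemmas; you are in fact more explicit than the paper in verifying via Proposition~\ref{prop:inv-1-2cell} that the image of $\BZ$ lands in the maximal 2-groupoid. One minor point: your invocation of Lemma~\ref{lem:F-equiv-gives-ZC-iso} for condition~2 is slightly misplaced (that lemma concerns $[F \circ -]$ and $[- \circ F]$, not $\overline{\comp}$), but your alternative reasoning---using the natural isomorphism $\phi : F \to G$ to reduce $\overline{\comp}$ to the unit isomorphism---is exactly what the paper means by ``if $F \cong G$ \dots it is easy to show.''
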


\begin{rema}  {\rm
We can reformulate the main result in \cite{morita} in terms of the 2-functor
$\BZ$ in Theorem~\ref{thm:funct-eq}. Assume that $\CC$ is a modular tensor category. We define a bicategory $\mathbf{ssFA}(\CC)$ of special symmetric Frobenius algebras in $\CC$: 
\bnu 
\item $1$-morphisms between objects $A$ and $B$ are $A$-$B$-bimodules in $\CC$.
\item $2$-morphisms between two $A$-$B$-modules $M$ and $N$ are bimodule maps. 
\enu
Then there is a fully faithful embedding $\mathbf{ssFA}(\CC) \hookrightarrow \Mod(\CC)$. 
Let $\mathbf{ssFA}(\CC)^\times$ be 2-groupoid consisting of all isomorphisms in $\mathbf{ssFA}(\CC)$. Composing this embedding with $\BZ$, we obtain a 2-functor
 $\mathbf{ssFA}(\CC)^\times \to \CALGu(\CZ(\CC))^\times$. Let $\underline{\mathbf{ssFA}(\CC)^\times}$ and $\underline{\CALGu(\CZ(\CC))^\times}$ be the truncated $1$-groupoids (see Definition~\ref{def:truncated-cat}). The main result of \cite{morita} is that the Picard group of a special symmetric Frobenius algebra $A$ in $\CC$ is isomorphic to the  group of automorphisms of its full center of $Z(A)$. In terms of $\BZ$, this result can be restated as follows: the functor $\underline{\BZ}: \underline{\mathbf{ssFA}(\CC)}^\times \to \underline{\CALGu(\CZ(\CC))^\times}$ is an equivalence between two groupoids. A categorification of this result was proven in \cite{ENO2009} (see also \cite{kitaev-kong} for its physical meaning).
} 
\end{rema}

\section{Fusion categories}  \label{sec:fusion-cat}

In this section, we assume that $\CC$ is a fusion category over a ground field $k$ \cite{eno05}. 
Namely, $\CC$ is a $k$-linear semisimple rigid monoidal category with finitely many isomorphism classes of simple objects and finite dimensional spaces of morphisms such that the unit object $\one$ is simple and $\text{End}(\one) \cong k$.

Let $\Mod^o(\CC)$ be the subcategory of $\Mod(\CC)$ consisting of indecomposable semisimple $\CC$-modules and $k$-linear $\CC$-module functors. One easily checks that indecomposable semisimple $\CC$-modules are automatically finite, i.e.\ have a finite number of simple objects and finite dimensional morphism spaces.
All $\CC$-modules in $\Mod^o(\CC)$ are $\CC$-closed, and all $\CC$-module functors between such $\CC$-modules are automatically exact. The main goal of this section is to show that all three conditions in Theorem~\ref{thm:center-lax-functor-2} are satisfied for $\BM(\CC)=\Mod^o(\CC)$. We will also show that the multiplication transformation $\underline{\Bm}$ is an isomorphism, so that $\BZ$ is a non-lax 2-functor.

\medskip
Throughout this section, we assume $\CM, \CN\in \Mod^o(\CC)$. Let $M$ be an non-zero object in $\CM$. The internal hom $[M,M]$ is a $\CC$-algebra. The following result is due to Ostrik \cite{ostrik}.
\begin{thm}
\label{thm:ostrik}
The functor $[M, -]: \CM \to \CC_{[M,M]}$ defined by $N \mapsto [M, N]$ is an equivalence.   
\end{thm}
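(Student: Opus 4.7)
The plan is to recognise $[M,-]$ as the comparison functor of a monadic adjunction and invoke Beck's theorem. The adjunction $(-\ast M) \dashv [M,-]$ between $\CC$ and $\CM$ (Remark following Definition~\ref{def:action-internal-hom}) induces a monad $T$ on $\CC$ given by $TX = [M, X \ast M]$. Because $\CC$ is fusion, every $X \in \CC$ has a right dual, so Lemma~\ref{lemma:beta-iso} provides a natural isomorphism $\gamma_X : X \otimes [M,M] \xrightarrow{\sim} [M, X \ast M]$. This identifies $T$ with the monad $X \mapsto X \otimes [M,M]$, whose Eilenberg--Moore category is precisely $\CC_{[M,M]}$. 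Using the right $[M,M]$-module structure on $[M,N]$ from Remark~\ref{rema:iso-int-hom}(i), the refined functor $[M,-] : \CM \to \CC_{[M,M]}$ is exactly Beck's comparison functor; by Beck's monadicity theorem it is therefore an equivalence once I verify that (i) $[M,-]$ is conservative and (ii) $\CM$ has, and $[M,-]$ preserves, coequalizers of $[M,-]$-split pairs.

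Condition (ii) is immediate: since $\CM$ is semisimple and $k$-linear with finite-dimensional Hom spaces, it has all finite (co)limits, and every $k$-linear additive functor defined on it is automatically exact; in particular $[M,-]$ preserves all coequalizers, not merely split ones.

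For condition (i) I would study the full subcategory $\CN = \{N \in \CM : [M,N] = 0\}$. By the adjunction, $N \in \CN$ iff $\Hom_\CM(X \ast M, N) = 0$ for all $X \in \CC$; as $[M,-]$ is exact, $\CN$ is closed under subobjects, quotients and extensions. Moreover, using $[M, X \ast N] \cong X \otimes [M, N]$ (a second application of Lemma~\ref{lemma:beta-iso}), $\CN$ is closed under the $\CC$-action, hence a $\CC$-submodule of $\CM$. By semisimplicity, $\CN$ admits a $\CC$-linear complement, and by the indecomposability of $\CM$ either $\CN = 0$ or $\CN = \CM$. The latter is excluded because the morphism $\underline{\id_M} : \one_\CC \to [M,M]$ is nonzero, whence $M \notin \CN$. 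Therefore $\CN = 0$, which in a semisimple abelian category forces the exact functor $[M,-]$ to reflect isomorphisms.

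The main obstacle is the conservativity step (i), where both the indecomposability of $\CM$ and (critically) the rigidity of $\CC$ enter: duality is what guarantees that $\CN$ is $\CC$-stable via Lemma~\ref{lemma:beta-iso}, while indecomposability is what then forces $\CN = 0$. Once (i) and (ii) are secured, Beck's theorem delivers the equivalence $\CM \simeq \CC_{[M,M]}$ with no further calculation.
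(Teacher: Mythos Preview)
The paper does not give a proof of this theorem: it is quoted as a result of Ostrik \cite{ostrik} and used as a black box. Your monadicity argument is essentially Ostrik's original proof, and it is correct. The identification of the monad $[M,(-)\ast M]$ with $(-)\otimes[M,M]$ via Lemma~\ref{lemma:beta-iso} is exactly the right mechanism, and the Eilenberg--Moore comparison functor is indeed $N\mapsto [M,N]$ with the module structure of Remark~\ref{rema:iso-int-hom}(i).

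One step deserves a word more. You assert that ``by semisimplicity, $\CN$ admits a $\CC$-linear complement'', but semisimplicity alone only gives an \emph{additive} complement $\CN'$; you still need $\CN'$ to be $\CC$-stable. This is where rigidity enters a second time: if a simple $L'\in\CN'$ had $X\ast L'$ meeting $\CN$ in a simple $L$, then $0\neq\Hom_\CM(L,X\ast L')\cong\Hom_\CM({}^\vee\!X\ast L,L')$, yet ${}^\vee\!X\ast L\in\CN$ by the $\CC$-stability you already proved, contradicting $L'\in\CN'$. With this clause inserted, indecomposability forces $\CN=0$ as you claim, and the rest of the Beck verification goes through.
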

We have the following lemma: 
\begin{lemma}  \label{lemma:ev-M-iso}  
Let $M, N\in \CM$ and $M\neq 0$. The following map
\be \label{eq:MNM=N}
\overline{\ev_M}: [M, N] \otimes_{[M, M]} M \rightarrow N
\ee
which is induced from the evaluation map $\ev_M: [M, N] \otimes M \rightarrow N$, is an isomorphism. 
\end{lemma}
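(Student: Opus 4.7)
The plan is to identify $\overline{\ev_M}$ as the counit of an adjunction which, by Ostrik's Theorem~\ref{thm:ostrik}, must be an adjoint equivalence. Write $A = [M,M]$ throughout.

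First I would construct a functor $T : \CC_A \to \CM$ by $T(X) = X \otimes_A M$, where the right-hand side is the coequalizer of
$$X \otimes A \otimes M \rightrightarrows X \otimes M$$
taken in $\CM$, with the two parallel arrows being the right $A$-action on $X$ tensored with $\id_M$ and $\id_X$ tensored with the left $A$-action $\ev_M : A \otimes M \to M$ on $M$. Such coequalizers exist because the semisimple $\CC$-module $\CM$ has finitely many isomorphism classes of simples and hence all finite colimits; functoriality of $T$ is then automatic from the universal property.

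The key step is to upgrade the defining adjunction $\Hom_\CM(X \otimes M, N) \cong \Hom_\CC(X, [M,N])$ of the internal hom to a natural isomorphism
$$\Hom_\CM(X \otimes_A M, N) \;\cong\; \Hom_{\CC_A}(X, [M,N]),$$
exhibiting $T \dashv [M,-]$. A morphism $f : X \otimes M \to N$ factors through the coequalizer $\rho : X \otimes M \to X \otimes_A M$ if and only if it coequalizes the two parallel arrows above; translating this condition across the internal-hom adjunction and using the defining diagram \eqref{diag:def-compos} of $\comp$, it becomes precisely the statement that the transpose $\tilde f : X \to [M,N]$ intertwines the right $A$-action on $X$ with the canonical right $A$-action on $[M,N]$ from Remark~\ref{rema:iso-int-hom}\,(i), i.e.\ that $\tilde f$ is a morphism in $\CC_A$. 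The counit of this adjunction at $N$ is obtained by transporting $\id_{[M,N]}$ across this isomorphism, and tracing the construction shows that it equals $\overline{\ev_M}$.

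Finally, by Ostrik's Theorem~\ref{thm:ostrik} the right adjoint $[M,-] : \CM \to \CC_A$ is already an equivalence. An equivalence admits a quasi-inverse which is automatically both its left and right adjoint, and adjoints are unique up to natural isomorphism, so the left adjoint $T$ constructed above must coincide with this quasi-inverse. Hence $T \dashv [M,-]$ is an adjoint equivalence and its counit $\overline{\ev_M}$ is a natural isomorphism. The main technical obstacle is the middle step: carefully matching the coequalizer condition on $f : X \otimes M \to N$ with right $A$-linearity of the transpose $\tilde f$ requires a diagram chase through the universal property of $[M,N]$ and the formula for $\comp$ in \eqref{diag:def-compos}; once this is in place, the rest of the argument is abstract nonsense about adjoint equivalences.
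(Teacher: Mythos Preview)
Your argument is correct and takes a genuinely different route from the paper's proof. The paper proceeds concretely: it reduces to the case of simple $N$ by semisimplicity, observes that $\overline{\ev_M}$ is nonzero (since $\ev_M$ corresponds to $\id_{[M,N]}$ under the adjunction), and then shows separately that the source $[M,N]\otimes_{[M,M]}M$ is isomorphic to $N$ as an object by applying the equivalence $[M,-]$ and invoking Lemmas~\ref{lemma:beta-iso} and~\ref{lem:composition-properties}; Schur's lemma then forces the nonzero map between isomorphic simples to be an isomorphism.

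Your approach is more structural: you build the left adjoint $T=(-)\otimes_A M$ to $[M,-]$, identify $\overline{\ev_M}$ as the counit, and conclude by the general fact that any adjoint of an equivalence is an adjoint equivalence. This avoids the reduction to simples and the separate object-level isomorphism computation; in exchange it requires the (routine but not entirely trivial) verification that the coequalizer condition on $f:X\otimes M\to N$ matches right $A$-linearity of its transpose, which you correctly trace to the defining property~\eqref{diag:def-compos} of $\comp$. Your argument also makes transparent why the result holds: it is purely formal once Ostrik's theorem is in hand, whereas the paper's proof re-derives part of this formality by hand for the specific object $[M,N]$. Note that your approach in fact yields the stronger statement that $\overline{\ev_M}$ is a \emph{natural} isomorphism in $N$, which the paper's proof does not directly give.
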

\pf
Because $\CM$ is semisimple, it is enough to prove $\overline{\ev_M}$ in (\ref{eq:MNM=N}) is an isomorphism when $N$ is simple. Let thus $N$ be simple. Notice first that $[M, N]\neq 0$ by Theorem~\ref{thm:ostrik}. Then the map $[M,N]\otimes M \xrightarrow{\ev_M} N$ is non-zero because it is the image of $\id_{[M,N]}$ under the isomorphism (\ref{eq:def-int-hom-cl}) for $X=[M,N]$.
So $\overline{\ev_M}$ is also nonzero. Then it is enough to show that 
$[M, N] \otimes_{[M,M]} M \cong N$ as objects. 
Since the functor $[M,-] : \CM \to \CC_{[M,M]}$ is an equivalence thus preserves coequalizers, and by Lemma\,\ref{lemma:beta-iso} and Lemma\,\ref{lem:composition-properties}, we have $[M, [M, N]\otimes_{[M, M]} N]  \cong [M, N]\otimes_{[M, M]} [M,M]$ and the latter is naturally isomorphic to $[M,N]$.
\epf

\begin{prop} \label{prop:ev-M-iso}
Let $M, N\in \CM$ and $M\neq 0$. The following map
\be \label{eq:MNLM=LN}
\overline{\comp_M}: [M, N] \otimes_{[M,M]} [L, M] \rightarrow [L, N],
\ee
which is induced from $\comp_M: [M, N] \otimes [L, M] \to [L, N]$, is an $[N, N]$-$[L, L]$-bimodule isomorphism. 
\end{prop}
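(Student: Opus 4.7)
The plan is to break the proof into three parts: (a) existence and uniqueness of $\overline{\comp_M}$, (b) verifying it is a bimodule map, (c) verifying it is an isomorphism. Parts (a) and (b) are straightforward consequences of the results already collected in Lemma~\ref{lem:composition-properties}; the substantive content lies in part (c), which is where the fusion category hypothesis enters.

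For (a), I would observe that the coequalizer presentation of $[M,N]\otimes_{[M,M]}[L,M]$ involves the two parallel morphisms $L=\comp_M\otimes\id$ and $R=\id\otimes\comp_M$ on $[M,N]\otimes[M,M]\otimes[L,M]$. The equality $\comp_M\circ L = \comp_M\circ R$ is precisely the instance $K=M$ of the associativity diagram \eqref{eq:asso-int-hom} in Lemma~\ref{lem:composition-properties}(ii), so the universal property yields a unique $\overline{\comp_M}$ making the triangle commute. For (b), the $[N,N]$-action on $[L,N]$ is defined via $\comp_N$, and the compatibility of $\overline{\comp_M}$ with the left $[N,N]$-action boils down to the identity $\comp_N\circ(\id\otimes\comp_M)=\comp_M\circ(\comp_N\otimes\id)$, which is again associativity of $\comp$; the right $[L,L]$-compatibility is analogous. (Alternatively one can apply Lemma~\ref{lem:composition-properties}(iii) directly.)

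For (c), the key idea is to apply $[L,-]$ to the isomorphism $\overline{\ev_M}:[M,N]\otimes_{[M,M]}M\xrightarrow{\sim} N$ of Lemma~\ref{lemma:ev-M-iso} and compare with the coequalizer presentation of $[M,N]\otimes_{[M,M]}[L,M]$. Because $\CC$ is a fusion category, every object of $\CC$ has a (two-sided) dual, so Lemma~\ref{lemma:beta-iso} supplies, for every $X\in\CC$, a natural isomorphism
\[
\gamma_X:\,X\otimes[L,M]\xrightarrow{\sim}[L,X\ast M].
\]
Moreover, the module category $\CM\in\Mod^o(\CC)$ is semisimple and $[L,-]:\CM\to\CC$ is both right adjoint to $-\ast L$ and, by rigidity of $\CC$ together with semisimplicity of $\CM$, left adjoint to $-\ast {}^\vee L$ (or, more concretely, $[L,-]$ is exact in the finite semisimple setting). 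Hence $[L,-]$ preserves the coequalizer computing $[M,N]\otimes_{[M,M]}M$. Applying $[L,-]$ to that coequalizer and then using $\gamma_X$ termwise (naturally, by \eqref{diag:X-X'-beta}) transforms it into the coequalizer defining $[M,N]\otimes_{[M,M]}[L,M]$, with the resulting map onto $[L,N]$ identified (via the universal property) with $\overline{\comp_M}$ — the identification uses the factorization of $\comp_M$ through $\gamma_{[M,N]}$ recorded in Lemma~\ref{lem:composition-properties}(i).

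The main obstacle I anticipate is the bookkeeping in step (c): one must check that the two overlaid squares defining the coequalizers correspond under the isomorphisms $\gamma_X$, i.e.\ that $\gamma_{[M,N]}$ intertwines the right $[M,M]$-action on $[M,N]$ (composed with $\id_{[L,M]}$) with the morphism $[L,-]$ applied to the right $[M,M]$-action on $[M,N]\ast M$, and similarly for the action on the second tensorand. Both identities are instances of the naturality \eqref{diag:X-X'-beta} of $\gamma$ together with the definition \eqref{diag:def-compos} of $\comp$, but they need to be written out carefully. Once this identification is in place, the universal property of coequalizers forces $\overline{\comp_M}$ to coincide with the composite isomorphism, completing the proof.
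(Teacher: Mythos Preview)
Your proposal is correct and follows essentially the same route as the paper: factor $\overline{\comp_M}$ as $[L,\overline{\ev_M}]$ composed with an isomorphism coming from $\gamma_{[M,N]}$ (Lemma~\ref{lemma:beta-iso}, using rigidity of $\CC$) and the fact that $[L,-]$ preserves the relevant coequalizer, then invoke Lemma~\ref{lemma:ev-M-iso}. The only minor difference is in how you justify that $[L,-]$ preserves the coequalizer: you argue via exactness from semisimplicity, whereas the paper invokes Theorem~\ref{thm:ostrik} directly to say $[L,-]$ is an equivalence (tacitly assuming $L\neq 0$, the case $L=0$ being trivial); either justification is fine.
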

\pf
As before we identify $\CM=\CC_{[M,M]}$.
Recall the map (\ref{diag:def-beta}). By the universal property of $-\otimes_{[M,M]}-$, we obtain a map 
$\overline{\gamma_{[M, N]}}$ making the following diagram:  
\be
\xymatrix{
[M, N] \otimes [L, M] \ar[r]^{\gamma_{[M,N]}} \ar[d]_{\rho} & [L, [M,N]\otimes M] \ar[d]^{[L, \rho]} \\
[M, N] \otimes_{[M,M]} [L, M] \ar[r]^{\overline{\gamma_{[M,N]}}} & 
[L, [M,N] \otimes_{[M,M]} M]\, .
}
\ee
commutative. By Lemma \ref{lemma:beta-iso} and the rigidity of $\CC$, $\gamma_{[M,N]}$ is an isomorphism. Since $[L,-]$ is an equivalence (again by Theorem \ref{thm:ostrik}), also $[L,\rho]$ is a co-equalizer. This allows one to define $\overline{\gamma_{[M,N]}^{-1}}$ in $\CC_{[L,L]}$, so in particular in $\CC$, showing that $\overline{\gamma_{[M,N]}}$ is an isomorphism. 
The commutativity of the diagram (\ref{diag:beta-Nev=ev}) implies that the following diagram: 
\be
\xymatrix{
& [L, [M, N]\otimes_{[M, M]} M] \ar[rd]^{[L,\overline{\ev_M}]} & \\
[M,N]\otimes_{[M,M]} [L,M] \ar[ur]^{\overline{\gamma_{[M,N]}}}  \ar[rr]^{\overline{\comp_M}} & & [L, N]
}
\ee
is also commutative. By Theorem \ref{thm:ostrik} and Lemma \ref{lemma:ev-M-iso}, $[L,\overline{\ev_M}]$ is an isomorphism. 
Thus also $\overline{\comp_M}$ is an isomorphism, and by Lemma \ref{lem:composition-properties} it is a $[N, N]$-$[L, L]$-bimodule map. 
\epf

When $\CC$ is a fusion category, by \cite{eno05}, both categories $\CZ(\CC)$ and $\CC_\CM^\vee$ are fusion categories. (Recall from Section \ref{sec:full-center-module} that $\CC_\CM^\vee$ is the same category as $\CC_\CM^\ast$ but with the opposite tensor product.) As a consequence, the category $\CZ(\CC)$ satisfies Assumption~\ref{ass:coeq}. 

\smallskip
It is well-known that $\CZ(\CC) \cong \CZ(\CC_\CM^\vee)$ (see e.g.\ \cite{eo}). 
Let $\CM$ and $\CN$ be semisimple indecomposable $\CC$-modules. The category $\Fun_\CC(\CM, \CN)$ is naturally a $\CC_\CN^\ast$-$\CC_\CM^\ast$-bimodule. 
By \cite[Thm.\,3.31]{eo}, $\Fun_\CC(\CM, \CN)$ is a semisimple indecomposable module over $\CC_\CM^\ast$ (or $\CC_\CN^\ast$), and hence in particular finite. 
Via $\alpha$-induction, the category $\Fun_\CC(\CM, \CN)$ is also a finite semisimple $\CZ(\CC)$-module. As a consequence, $\Fun_\CC(\CM, \CN)$ is also $\CZ(\CC)$-closed. Therefore, the first condition in Theorem~\ref{thm:center-lax-functor-2} is satisfied for $\BM(\CC)=\Mod^o(\CC)$.  

\smallskip
To verify the second condition in Theorem~\ref{thm:center-lax-functor-2} we need a couple of technical lemmas.

A functor $F:\CZ\to\CC$ is {\em dominant} if every object of $\CC$ is a retract (i.e. a direct summand) of an object of the form $F(X)$ for some object $X$ of $\CC$.

\ble
Let $\CM$ be an indecomposable $\CC$-module. Then $F:\CZ\to\CC$ is a dominant monoidal functor.
Then the pullback $F^*(\CM)$ is an indecomposable $\CZ$-module.
\ele
\bpf
Indecomposability of $\CC$-module $\CM$ can be reformulated as follows: for a fixed $M\in\CM$ any $N\in\CM$ is a retract of $X*M$ for some $X\in\CC$.
To see the indecomposability of the $\CZ$-module $F^*(\CM)$ take the same $M\in\CM$. Then since by dominance of $F$ any $X\in\CC$ is a retract of $F(Z)$ for some $Z\in\CZ$ we have that any 
$N\in\CM$ is a retract of $F(Z)*M$ for some $Z\in\CZ$.
\epf

\ble
For a fusion category $\CC$, the forgetful functor $F: \CZ(\CC) \to \CC$ is dominant. 
\ele
\bpf
Let $I: \CC \to \CZ(\CC)$ be the right adjoint functor of $F$. By \cite[Lem.\,3.5]{dmno}, 
$I(\one)$ is a connected separable commutative algebra in $\CZ(\CC)$. By \cite[Lem.\,3.2]{ENO2008}, $I$ defines a monoidal equivalence from $\CC$ to the category $\CZ(\CC)_{I(\one)}$ of $I(\one)$-modules in $\CZ(\CC)$, and
the following diagram
$$
\xymatrix{
\CZ(\CC)  &   & \CZ(\CC)_{I(\one)}  \ar[ll]_{forget} \\
& \CC \ar[lu]^I \ar[ur]_{\simeq_\otimes}^{I} &
}
$$
is commutative. Notice that the left adjoint functor of the forgetful functor $\CZ(\CC)_{I(\one)} \to \CZ(\CC)$ is given by $-\otimes I(\one)$, which is dominant because $I(\one)$ is separable. Then it is clear that $F$ is dominant. 
\epf

Thus in particular $\Fun_\CC(\CM, \CN)$ is an indecomposable $\CZ(\CC)$-module. Therefore, we can apply Proposition\,\ref{prop:ev-M-iso} to the case of the semisimple indecomposable $\CZ(\CC)$-module $\Fun_\CC(\CM, \CN)$. We obtain, for $k$-linear $\CC$-module functors $F, G, H: \CM \to \CN$, the morphism $\overline{\comp}:[G, H] \otimes_{[G,G]} [F,G] \xrightarrow{} [F, H]$ is an isomorphism. 
In other words, the second condition in Theorem~\ref{thm:center-lax-functor-2} is satisfied for $\BM(\CC)=\Mod^o(\CC)$.

\medskip
Now we will show that the third condition in Theorem~\ref{thm:center-lax-functor-2} is satisfied for $\BM(\CC)=\Mod^o(\CC)$. Namely, the morphism $m_{F,F',G,G'}$ in \eqref{eq:m-phipsi-def} is an isomorphism. Before we prove this result, we need a few lemmas. 

Given $\CC$-module functors $\mathcal{L} \xrightarrow{F, F'} \CM \xrightarrow{G,G'} \CN$, 
the internal homs $[F, F']_{\CC_\CM^\ast}$ and $[G, G']_{\CC_\CM^\ast}$ in $\CC_\CM^\ast$ are defined by, \bea
\text{Hom}_{\Fun_\CC(\CM, \CN)}(X\circ F, F') &\cong& \text{Hom}_{\CC_\CM^\ast}(X, [F, F']_{\CC_\CM^\ast})\nn
\text{Hom}_{\Fun_\CC(\CM, \CN)}(G \circ X, G') &\cong& \text{Hom}_{\CC_\CM^\ast}(X, [G, G']_{\CC_\CM^\ast})  \nonumber
\eea
for $X\in \CC_\CM^\ast$. We have the canonical evaluation maps: 
$$
[F,F']_{\CC_\CM^\ast} \circ F \xrightarrow{\ev_F}  F',  \quad\quad G \circ [G,G']_{\CC_\CM^\ast}  \xrightarrow{\ev_G} G'. 
$$ 
Another way to understand $[F, F']_{\CC_\CM^\ast}$ and $[G, G']_{\CC_\CM^\ast}$ is given in the following Lemma.
\begin{lemma} \label{lemma:GG-GdG}  {\rm 
Let $F^\vee : \CM \to \CL$ be the left adjoint of $F$ and
$G^\vee: \CN\to \CM$ be the right adjoint of $G$. We have 
$$
[F, F']_{\CC_\CM^\ast} \simeq F' \circ F^\vee, \quad \quad [G,G']_{\CC_\CM^\ast}\simeq G^\vee \circ G'.
$$ 
}
\end{lemma}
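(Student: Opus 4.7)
The plan is to establish both isomorphisms directly from the defining universal property of the internal hom in $\CC_\CM^\ast$ via Yoneda's lemma, by realizing the relevant hom-adjunctions at the level of $\CC$-module natural transformations induced by the adjunctions $F^\vee \dashv F$ and $G \dashv G^\vee$.

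For the first isomorphism, I would construct, for each $X \in \CC_\CM^\ast$, a bijection
\be
\Phi_X : \Hom_{\Fun_\CC(\CL,\CM)}(X \circ F, F') \xrightarrow{\cong} \Hom_{\CC_\CM^\ast}(X, F' \circ F^\vee)
\ee
as follows. Let $\eta : \id_\CM \to F \circ F^\vee$ and $\epsilon : F^\vee \circ F \to \id_\CL$ denote the unit and counit of $F^\vee \dashv F$. Given $\alpha : X \circ F \to F'$, set
\be
\Phi_X(\alpha)_M \,=\, \Big(X(M) \xrightarrow{X(\eta_M)} X(F(F^\vee M)) \xrightarrow{\alpha_{F^\vee M}} F'(F^\vee M)\Big),
\ee
and define the putative inverse $\Psi_X$ by sending $\beta : X \to F' \circ F^\vee$ to the natural transformation with components $F'(\epsilon_L) \circ \beta_{F(L)}$. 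Once the bijection is shown to be natural in $X$, combining it with the defining property of $[F,F']_{\CC_\CM^\ast}$ and invoking Yoneda gives the desired isomorphism $F' \circ F^\vee \simeq [F,F']_{\CC_\CM^\ast}$ in $\CC_\CM^\ast$.

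The verification then splits into three parts: (i) that the formulas produce $\CC$-module natural transformations (not merely natural transformations); (ii) that $\Phi_X$ and $\Psi_X$ are mutually inverse, which reduces to the standard triangle identities $(F \epsilon) \circ (\eta F) = \id_F$ and $(\epsilon F^\vee) \circ (F^\vee \eta) = \id_{F^\vee}$; and (iii) that $\Phi_X$ is natural in $X \in \CC_\CM^\ast$, which is immediate from the formula. The second isomorphism $[G,G']_{\CC_\CM^\ast} \simeq G^\vee \circ G'$ is proved in an entirely parallel fashion, but using the opposite adjunction $G \dashv G^\vee$ with unit $\eta' : \id_\CM \to G^\vee \circ G$ and counit $\epsilon' : G \circ G^\vee \to \id_\CN$; the bijection sends $\alpha : G \circ X \to G'$ to $\beta_M = G^\vee(\alpha_M) \circ \eta'_{X(M)}$, with inverse $\alpha_M = \epsilon'_{G'(M)} \circ G(\beta_M)$.

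The main obstacle I anticipate is step (i), namely verifying the compatibility of these constructions with the $\CC$-module structures. The key ingredient here is that, in our setting of semisimple indecomposable $\CC$-modules over a fusion category, the left/right adjoint of a $\CC$-module functor carries a canonical $\CC$-module structure uniquely determined by the condition that the unit and counit be $\CC$-module natural transformations (recall the discussion of adjoints of module functors in \cite{eo}). Granted this, the $\CC$-module compatibility of $\Phi_X(\alpha)$ and $\Psi_X(\beta)$ follows from a short diagram chase combining the $\CC$-module structures of $F$, $F^\vee$, $F'$ and $X$ with the centrality condition \eqref{eq:c-mod-nat-xfer-condition} satisfied by $\eta$, $\epsilon$ and $\alpha$. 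The remainder of the argument is the standard transport of the $\Hom$-adjunction from $\Fun(\CM,\CN)$ to $\Fun_\CC(\CM,\CN)$.
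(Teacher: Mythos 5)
Your proposal is correct and follows essentially the same route as the paper: the paper verifies directly that $\epsilon G'$ (resp.\ its analogue for $F$) endows $G^\vee\circ G'$ with the universal property of the internal hom, constructing $\bar f = (G^\vee f)\circ(\eta X)$ and proving uniqueness via the triangle identities, which is exactly your bijection $\Phi_X$, $\Psi_X$ phrased as a universal-property check rather than a natural isomorphism plus Yoneda. Your extra attention to the $\CC$-module structure on adjoints (via \cite{eo}) is a point the paper leaves implicit, but it does not change the argument.
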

\pf
We will only show the prove of  $[G,G']_{\CC_\CM^\ast}\simeq G^\vee \circ G'$. The proof of the other isomorphism is similar. 

It is enough to show that $G\circ G^\vee \circ G' \xrightarrow{\epsilon G'} G'$, where $G\circ G^\vee \xrightarrow{\epsilon} \id_\CN$  is the counit of the adjoint pair $(G,G^\vee)$, satisfies the following universal property: 
$$
\xymatrix{
& G \circ (G^\vee G') \ar[rd]^{\epsilon G'} & \\
G\circ X \ar[rr]^f \ar@{.>}[ur]^{\exists !\, 
G\bar{f}}  & & G'
}
$$
One can construct $\bar{f}$ as the composition $X\xrightarrow{\eta X} G^\vee \circ G \circ X \xrightarrow{G^\vee f} G^\vee \circ G'$ where $\eta: \id_\CM \to G^\vee G$ is the unit of the adjunction pair $(G, G^\vee)$. Let $g$ be another morphism $g: X\to G^\vee G'$ such that $f=\epsilon G' \circ Gg$. Then we have 
$$
\bar{f} = (G^\vee f) \circ (\eta X) = (G^\vee \epsilon G') \circ (G^\vee G g) \circ (\eta X) 
= (G^\vee \epsilon G') \circ (\eta G^\vee G') \circ g =g. 
$$
We have proved the uniqueness of $\bar{f}$.  
\epf

For $Z\in \CZ(\CC)$ and $F,F'\in \Fun_\CC(\CM, \CN)$, notice that
\bea
&&\Hom_{\Fun_\CC(\CM, \CN)}(Z\ast F, F') \simeq  \Hom_{\CC_\CM^\ast}( \alpha(Z), [F, F']_{\CC_\CM^\ast})
\simeq \Hom_{\CZ(\CC)}(Z, \alpha^\vee([F, F']_{\CC_\CM^\ast})),  \nn
&&\Hom_{\Fun_\CC(\CM, \CN)}(G \ast Z, G') \simeq \Hom_{\CC_\CM^\ast}( \alpha(Z), [G, G']_{\CC_\CM^\ast}) \simeq \Hom_{\CZ(\CC)}(Z, \alpha^\vee([G, G']_{\CC_\CM^\ast})). \nonumber
\eea
Therefore, we obtain
\be
[F, F'] \cong \alpha^\vee([F,F']_{\CC_\CM^\ast}) \quad \mbox{and} \quad [G,G'] \cong \alpha^\vee([G, G']_{\CC_\CM^\ast}).
\ee
\begin{lemma} \label{lemma:ev-map-factors}
{\rm 
The evaluation maps $[F, F'] \ast F \xrightarrow{\ev_F} G$ and $G\circ [G,G'] \xrightarrow{\ev_G} G'$ factor  as follows: 
\bea
&&[F, F'] \ast F \cong \alpha\alpha^\vee([F,F']_{\CC_\CM^\ast}) \circ F \xrightarrow{\epsilon\circ F} 
[F,F']_{\CC_\CM^\ast} \circ F \xrightarrow{\ev_F} F',  \nn
&& G \ast [G,G'] \cong G \circ \alpha\alpha^\vee([G,G']_{\CC_\CM^\ast})
\xrightarrow{G\circ \epsilon} G\circ [G,G']_{\CC_\CM^\ast} \xrightarrow{\ev_G} G',
\eea
where $\epsilon: \alpha\alpha^\vee \to \id_{\CC_\CM^\ast}$ is the counit of the adjoint pair $(\alpha, \alpha^\vee)$. 
}
\end{lemma}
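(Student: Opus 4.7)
The plan is to verify both factorizations by tracing the defining universal properties of $\ev_F$ and $\ev_G$ through the chain of natural isomorphisms that identifies the internal hom $[F,F']$ in $\CZ(\CC)$ with $\alpha^\vee$ applied to the internal hom $[F,F']_{\CC_\CM^\ast}$ in $\CC_\CM^\ast$ (and similarly for $G,G'$). The only data we use is the universal property \eqref{eq:def-int-hom-cl} together with the standard formula expressing the inverse of an adjunction bijection via the counit.

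First I would observe that $\ev_F : [F,F']\ast F \to F'$ is, by the universal property of the internal hom in $\CZ(\CC)$, the image of $\id_{[F,F']}$ under the inverse of the bijection $\Hom_{\Fun_\CC(\CM,\CN)}(Z\ast F, F') \cong \Hom_{\CZ(\CC)}(Z,[F,F'])$ specialized at $Z=[F,F']$. Under the identifications $Z\ast F = \alpha(Z)\circ F$ and $[F,F']\cong \alpha^\vee([F,F']_{\CC_\CM^\ast})$ recalled before the statement of the lemma, this bijection is exhibited as the composite of the two bijections
\begin{equation*}
\Hom_{\Fun_\CC}(\alpha(Z){\circ} F, F') \;\xrightarrow{\cong}\; \Hom_{\CC_\CM^\ast}(\alpha(Z),[F,F']_{\CC_\CM^\ast}) \;\xrightarrow{\cong}\; \Hom_{\CZ(\CC)}(Z, \alpha^\vee[F,F']_{\CC_\CM^\ast}),
\end{equation*}
where the inverse of the first bijection sends $\bar f : \alpha(Z) \to [F,F']_{\CC_\CM^\ast}$ to $\ev_F^{\CC_\CM^\ast}\circ (\bar f\circ \id_F)$ (definition of $[F,F']_{\CC_\CM^\ast}$), and the inverse of the adjunction bijection is the standard formula $\tilde f\mapsto \epsilon_Y\circ \alpha(\tilde f)$ in terms of the counit $\epsilon : \alpha\alpha^\vee\to\id_{\CC_\CM^\ast}$.

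Tracing $\id_{\alpha^\vee[F,F']_{\CC_\CM^\ast}}$ backwards through these two bijections yields, at the intermediate stage, $\epsilon_{[F,F']_{\CC_\CM^\ast}}\circ\alpha(\id) = \epsilon_{[F,F']_{\CC_\CM^\ast}}$, and then $\ev_F^{\CC_\CM^\ast}\circ (\epsilon_{[F,F']_{\CC_\CM^\ast}}\circ F)$, which is exactly the claimed factorization of $\ev_F$. For $\ev_G$ the argument is entirely parallel, using this time the defining bijection $\Hom_{\Fun_\CC}(G\circ X, G')\cong\Hom_{\CC_\CM^\ast}(X,[G,G']_{\CC_\CM^\ast})$, whose inverse sends $\bar f$ to $\ev_G^{\CC_\CM^\ast}\circ(G\circ\bar f)$, together with the identification $G\ast Z \cong G\circ\alpha(Z)$ supplied by $\zeta(G)$ from Lemma~\ref{lemma:2-actions}. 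The same trace of $\id$ through the adjunction produces the factorization via $G\circ\epsilon_{[G,G']_{\CC_\CM^\ast}}$.

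I do not expect a conceptual obstacle; the only care required is bookkeeping of the canonical identifications (between $Z\ast F$ and $\alpha(Z)\circ F$, between $G\ast Z$ and $G\circ\alpha(Z)$, and between $[F,F']$ and $\alpha^\vee[F,F']_{\CC_\CM^\ast}$, and analogously for $G,G'$) so as to be certain that the composite bijection one traces is genuinely the bijection that defines $\ev_F$ (respectively $\ev_G$) via the universal property \eqref{eq:def-int-hom-cl}. Once this is pinned down, each factorization is an immediate consequence of the counit-formula for the inverse of the adjunction isomorphism.
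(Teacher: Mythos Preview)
Your proposal is correct and follows essentially the same approach as the paper: both arguments invoke the universal property of $([F,F'],\ev_F)$ (resp.\ $([G,G'],\ev_G)$) together with the universal property of the counit $\epsilon:\alpha\alpha^\vee\to\id$ of the adjunction $(\alpha,\alpha^\vee)$. The paper's proof is simply a terser version of what you spell out by explicitly tracing $\id$ through the composite bijection.
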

\pf
It follows immediately from the universal property of $(\alpha\alpha^\vee, \alpha\alpha^\vee\xrightarrow{\epsilon} \id)$: 
$$
\xymatrix{
& \alpha\alpha^\vee(Y) \ar[rd]^{\epsilon_Y}  & \\
\alpha(X) \ar[rr]^f \ar@{.>}[ru]^{\exists ! \alpha(\bar{f}) }& & Y 
}
$$
and that of $([F,F'], \ev_F)$ and that of $([G,G'], \ev_G)$.  
\epf

\begin{lemma} \label{thm:ENO}  {\rm 
The right adjoint $\alpha^\vee$ gives a monoidal equivalence between 
$\CC_\CM^\vee$
and the category of $Z(\CM)$-modules in $\CZ(\CC)$.  In particular, we have a canonical isomorphism:
\be  \label{eq:alpha-FG}
[F,F'] \otimes_{Z(\CM)} [G,G'] \simeq \alpha^\vee( [G, G']_{\CC_\CM^\ast} \circ [F, F']_{\CC_\CM^\ast} ).
\ee
}
\end{lemma}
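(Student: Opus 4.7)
The plan is to reduce the statement to two standard facts: (i) that $\tilde\alpha : \CZ(\CC) \to \CZ(\CC_\CM^\vee)$ from Proposition~\ref{prop:aind-factor} is a braided monoidal equivalence when $\CC$ is fusion and $\CM$ is indecomposable semisimple, and (ii) that the right adjoint of the forgetful functor $U : \CZ(\CC_\CM^\vee) \to \CC_\CM^\vee$ induces a monoidal equivalence between $\CC_\CM^\vee$ and modules over the commutative algebra $U^\vee(\id_\CM)$. Combining these with the identification $Z(\CM) = \alpha^\vee(\id_\CM)$ will yield the first claim, and the formula \eqref{eq:alpha-FG} will then be a purely formal consequence of monoidality.

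First I would record that $\alpha^\vee$ exists (both $\CZ(\CC)$ and $\CC_\CM^\ast$ are fusion, so all linear functors between them have two-sided adjoints) and that $Z(\CM) = [\id_\CM,\id_\CM] = \alpha^\vee(\id_\CM)$ by the adjunction \eqref{eq:adjoint-of-alpha}. For (i), the commutative diagram \eqref{diag:Z(M)} expresses $L_\CM = R_\CM \circ \tilde\alpha$; for $\CC$ fusion and $\CM$ indecomposable semisimple the two functors $L_\CM$ and $R_\CM$ are equivalences by the standard ENO/EO theorem (this is the braided version of the result quoted from \cite{eo} just above the diagram), whence $\tilde\alpha$ is a braided monoidal equivalence. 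For (ii), $U$ is a strong monoidal functor between fusion categories that is dominant (the forgetful functor from the Drinfeld center is always dominant in the fusion setting), so its right adjoint $U^\vee$ is lax monoidal, $A := U^\vee(\id_\CM)$ is a connected separable commutative algebra in $\CZ(\CC_\CM^\vee)$, and $U^\vee$ lifts to a monoidal equivalence $\CC_\CM^\vee \xrightarrow{\simeq} (A\text{-mod in }\CZ(\CC_\CM^\vee))$; this is a special case of \cite[Lem.\,3.2]{ENO2008} and the analysis in \cite[Lem.\,3.5]{dmno} that was already invoked above.

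Next I would transport (ii) along the equivalence of (i). Since $\alpha = U \circ \tilde\alpha$ (as monoidal functors $\CZ(\CC) \to \CC_\CM^\vee$, bearing in mind the opposite of the tensor product on $\CC_\CM^\ast$), we have $\alpha^\vee = \tilde\alpha^{-1} \circ U^\vee$; under $\tilde\alpha$ the algebra $A$ corresponds to $Z(\CM)$, and the equivalence $\CC_\CM^\vee \simeq A\text{-mod}$ in $\CZ(\CC_\CM^\vee)$ is transported to a monoidal equivalence $\alpha^\vee : \CC_\CM^\vee \xrightarrow{\simeq} Z(\CM)\text{-mod in }\CZ(\CC)$. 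Because the tensor product on the target is the relative tensor product $\otimes_{Z(\CM)}$, monoidality of $\alpha^\vee$ reads
\be
  \alpha^\vee(X)\otimes_{Z(\CM)}\alpha^\vee(Y) \;\cong\; \alpha^\vee(X\otimes_{\CC_\CM^\vee} Y) \;=\; \alpha^\vee(Y\circ X)\,,
\ee
natural in $X,Y\in\CC_\CM^\ast$. Specialising to $X=[F,F']_{\CC_\CM^\ast}$ and $Y=[G,G']_{\CC_\CM^\ast}$ and recalling $[F,F']\cong\alpha^\vee([F,F']_{\CC_\CM^\ast})$ and $[G,G']\cong\alpha^\vee([G,G']_{\CC_\CM^\ast})$ established just before the statement gives \eqref{eq:alpha-FG}.

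The main obstacle will be the careful bookkeeping of tensor orientations: $\alpha$ is a monoidal functor into $\CC_\CM^\ast$ but must be reinterpreted into $\CC_\CM^\vee$ for the braided lift $\tilde\alpha$ to go into $\CZ(\CC_\CM^\vee)$ (as already noted after Proposition~\ref{prop:aind-factor}); this flip is exactly what turns composition $Y\circ X$ on the right-hand side of \eqref{eq:alpha-FG} into the tensor product of $\CC_\CM^\vee$, and getting the sign conventions right is the only subtle point. Everything else (that $\tilde\alpha$ is an equivalence, that $U^\vee$ lands in $A$-modules and is an equivalence, that the multiplication of $A$ matches the algebra structure on $Z(\CM)$ coming from $\comp$) is either already cited in the paper or a direct consequence of Frobenius reciprocity together with the fact that $A$-modules in a fusion category with connected separable $A$ are again fusion, which makes the equivalence automatic once it is shown to be fully faithful on a generator.
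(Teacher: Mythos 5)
Your proposal is correct and follows essentially the same route as the paper's proof: invoke \cite[Lem.\,3.2]{ENO2008} for the forgetful functor from $\CZ(\CC_\CM^\vee)$, use that $L_\CM$ and $R_\CM$ in \eqref{diag:Z(M)} are equivalences (so $\tilde\alpha$ is a monoidal equivalence), and identify $\alpha^\vee(\id_\CM)=Z(\CM)$ via \eqref{eq:adjoint-of-alpha}. The only difference is that you spell out the transport of the module-category equivalence and the derivation of \eqref{eq:alpha-FG} from monoidality more explicitly than the paper does, which is fine.
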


\pf
Consider the forgetful functor $\CZ(\CC_\CM^\ast) \to \CC_\CM^\ast$. According to \cite[Lem.\,3.2]{ENO2008}, its adjoint $I$ induces an equivalence of $\CC_\CM^\ast$ and $I(\id_{\CM})$-modules in $\CZ(\CC_\CM^\ast)$. Since $\CC$ is fusion, the functors $L_\CM$ and $R_\CM$ in (\ref{diag:Z(M)}) are equivalences \cite{eo}. Therefore, the functor $\tilde{\alpha}$ defined in Proposition\,\ref{prop:aind-factor} is also a monoidal equivalence. 
Thus, $\CC_\CM^\ast$ is equivalent to $\alpha^\vee(\id_{\CM})$-modules in $\CZ(\CC)$. But from \eqref{eq:adjoint-of-alpha} we see that $\alpha^\vee(\id_{\CM}) = \CZ(\CM)$.
\epf

\begin{lemma}  \label{lemma:u=iso}  {\rm
The morphism $n_{F,F',G,G'} : [F,F'] \otimes_{Z(\CM)} [G,G'] \to [GF, G'F']$ defined in the diagram (\ref{eq:nFFGG-def}) is an isomorphism. Moreover, $m_{F,F',G,G'}$ defined in (\ref{eq:m-phipsi-def}) is an isomorphism.  
}
\end{lemma}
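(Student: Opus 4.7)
My plan is to use the monoidal equivalence $\alpha_\CM^\vee : \CC_\CM^\vee \xrightarrow{\simeq} \{Z(\CM)\text{-modules in } \CZ(\CC)\}$ from Lemma~\ref{thm:ENO} to translate $n_{F,F',G,G'}$ into a morphism between $\alpha_\CM^\vee$-images of $\CC$-module endofunctors of $\CM$, where it becomes the identity and is thus visibly an isomorphism. Throughout, I shall use that $\CC$ is fusion and $\CM, \CN \in \Mod^o(\CC)$, so every $\CC$-module functor between them admits both a left and a right adjoint.

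First, combining (\ref{eq:alpha-FG}) with Lemma~\ref{lemma:GG-GdG}, I would identify
\[
[F,F'] \otimes_{Z(\CM)} [G,G'] \;\simeq\; \alpha_\CM^\vee\bigl([G,G']_{\CC_\CM^\ast} \circ [F,F']_{\CC_\CM^\ast}\bigr) \;\simeq\; \alpha_\CM^\vee\bigl(G^\vee G' F' F^\vee\bigr),
\]
with $F^\vee$ the left adjoint of $F$ and $G^\vee$ the right adjoint of $G$. Next, I would exhibit the same description for the target: for any $Z \in \CZ(\CC)$, the natural isomorphism $\zeta(G)_{\alpha_\CM(Z)} : G \circ \alpha_\CM(Z) \simeq \alpha_\CN(Z) \circ G$ from Lemma~\ref{lemma:2-actions} together with the adjunctions for $F$ and $G$ yield
\[
\Hom_{\CZ(\CC)}(Z, [GF, G'F']) \;\simeq\; \Hom(\alpha_\CN(Z) \circ GF, G'F') \;\simeq\; \Hom(\alpha_\CM(Z), G^\vee G' F' F^\vee),
\]
so by Yoneda and the $(\alpha_\CM, \alpha_\CM^\vee)$-adjunction we get $[GF, G'F'] \simeq \alpha_\CM^\vee(G^\vee G' F' F^\vee)$.

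The final step is to verify that under these two identifications $n_{F,F',G,G'}$ becomes the identity on $\alpha_\CM^\vee(G^\vee G' F' F^\vee)$. I would do this by unpacking the defining coequalizer equation (\ref{eq:nFFGG-def}): under $\alpha_\CM^\vee$ and Lemma~\ref{lemma:GG-GdG}, the morphism $[G' \circ -]_{[F,F']}$ translates into whiskering $F' F^\vee$ on the left by the unit $\id_\CM \to (G')^\vee G'$ of the $((G')^\vee, G')$-adjunction, while $[- \circ F]_{[G,G']}$ translates into whiskering $G^\vee G'$ on the right by the unit $\id_\CM \to F F^\vee$. The composition morphism $\comp$ then unpacks, via Lemma~\ref{lemma:ev-map-factors} identifying $\ev$ with composition precomposed with the counit of $(\alpha, \alpha^\vee)$, into composition using the counit $(G')^\vee G' \to \id_\CM$ (or $F^\vee F \to \id_\CL$); the triangle axioms of the adjunctions then force the composite to be the identity on $G^\vee G' F' F^\vee$.

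The main obstacle will be precisely this third step, namely carefully tracking the unit/counit calculus through the equivalences and keeping straight which adjoint appears where, particularly around the interplay between $\comp$ and Lemma~\ref{lemma:ev-map-factors}. Once $n$ is established to be an isomorphism, invertibility of $m_{F,F',G,G'}$ follows readily: specializing to $F = F'$ and $G = G'$ with identity natural transformations shows that $m_{F,G} = n_{F,F,G,G}$ is invertible, so in particular the algebra map $Y' \to Z(G'F')$ induced by $m_{F',G'}$ is an isomorphism. Then in the coequalizer (\ref{diag:def-m'}) defining $m'_{F,F',G,G'}$, the invertibility of $1 \otimes n_{F,F',G,G'}$ together with $m_{F',G'}$ being an iso reduces $Z(G'F') \otimes_{Y'} Q$ to $Z(G'F') \otimes_{Z(G'F')} [GF, G'F'] \simeq [GF, G'F']$, under which $m'_{F,F',G,G'}$ identifies with the canonical left-unit isomorphism; hence $m_{F,F',G,G'} = r^{-1} \circ m'_{F,F',G,G'}$ is an isomorphism.
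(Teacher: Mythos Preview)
Your proposal is correct and follows essentially the same route as the paper. Both arguments identify source and target of $n_{F,F',G,G'}$ with $\alpha_\CM^\vee(G^\vee G' F' F^\vee)$ via (\ref{eq:alpha-FG}), Lemma~\ref{lemma:GG-GdG}, the $\zeta$-isomorphism, and the $(F,F^\vee)$, $(G,G^\vee)$ adjunctions, and then verify compatibility with $n$; the paper packages this as a Yoneda chain on $\Hom_{\CZ(\CC)}(X,-)$ and traces the image of $\id_X$ step by step (using Lemma~\ref{lemma:ev-map-factors} at the key point, as you also anticipate), arriving at $n_{F,F',G,G'}$. Your deduction of invertibility of $m_{F,F',G,G'}$ from that of $m_{F',G'}=n_{F',F',G',G'}$ is identical to the paper's.
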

\pf
We consider the following sequence of natural isomorphisms: 
\bea
\text{Hom}_{\CZ(\CC)}(X, [F,F']\otimes_{Z(\CM)} [G,G'])
&\overset{\eqref{eq:alpha-FG}}{\cong}& \text{Hom}_{Z(\CC)}(X, \alpha_{\mathcal{M}}^\vee([G, G']_{\CC_\CM^\ast} \circ [F, F']_{\CC_\CM^\ast} )) 
\nonumber \\[.5em]
&\cong& \text{Hom}_{\mathrm{Fun}_\CC(\CM, \CM)}( \alpha_{\mathcal{M}}(X), [G, G']_{\CC_\CM^\ast} \circ [F, F']_{\CC_\CM^\ast}) 
\nonumber \\[.1em]
&\overset{\text{Lem.\,\ref{lemma:GG-GdG}}}\cong& \text{Hom}_{\mathrm{Fun}_\CC(\CM, \CM)}( \alpha_{\mathcal{M}}(X), (G^\vee \circ G') \circ [F, F']_{\CC_\CM^\ast}) 
\nonumber \\[.1em]
&\overset{\text{Lem.\,\ref{lemma:beta-iso}}}\cong& \text{Hom}_{\mathrm{Fun}_\CC(\CM, \CM)}( \alpha_{\mathcal{M}}(X), [F, (G^\vee \circ G') \circ F']_{\CC_\CM^\ast}) 
\nonumber \\[.5em]
&\cong& \text{Hom}_{\mathrm{Fun}_\CC(\mathcal{L}, \CM)}( \alpha_{\mathcal{M}}(X)) \circ F, G^\vee \circ G' \circ F') 
\nonumber \\[.5em]
&\cong& \text{Hom}_{\mathrm{Fun}_\CC(\mathcal{L}, \CN
)}( G \circ \alpha_{\mathcal{M}}(X) \circ F, G'\circ F') \nn
&\overset{\text{Lem.\,\ref{lemma:2-actions}}}\cong& \text{Hom}_{\mathrm{Fun}_\CC(\mathcal{L}, \CN)}(X \ast (G\circ F),  G'\circ F') 
\nonumber \\[.5em]
&\cong& \text{Hom}_{\CZ(\CC)}(X, [GF,G'F']) \ .   \label{eq:long-iso}
\eea
Now let us take $X=[F,F']\otimes_{Z(\CM)} [G,G']$. We examine the image of $\id_X$ in (\ref{eq:long-iso}) step by step. In the 1st step, we obtain the following composition of maps: 
\bea
[F,F']\otimes_{Z(\CM)} [G,G'] &\cong& \alpha_\CM^\vee([F, F']_{\CC_\CM^\ast}) \otimes_{Z(\CM)} 
\alpha^\vee ([G, G']_{\CC_\CM^\ast}) \nn
&\xrightarrow{\eta}& \alpha_\CM^\vee \alpha_\CM (\alpha_\CM^\vee([F, F']_{\CC_\CM^\ast}) \otimes_{Z(\CM)} 
\alpha_\CM^\vee ([G, G']_{\CC_\CM^\ast})) \nn
&\cong& \alpha_\CM^\vee( \alpha_\CM\alpha_\CM^\vee ([G, G']_{\CC_\CM^\ast}) \circ 
\alpha_\CM\alpha_\CM^\vee([F, F']_{\CC_\CM^\ast})) \nn
&\xrightarrow{\alpha_\CM^\vee(\epsilon \epsilon)}& 
\alpha_{\CM}^\vee([G, G']_{\CC_\CM^\ast} \circ [F, F']_{\CC_\CM^\ast})  \nonumber
\eea
In the 2nd step, we obtain
\bea
\alpha_\CM([F,F']\otimes_{Z(\CM)} [G,G']) &\cong& 
\alpha_\CM (\alpha_\CM^\vee([F, F']_{\CC_\CM^\ast}) \otimes_{Z(\CM)} \alpha_\CM^\vee ([G, G']_{\CC_\CM^\ast}))  \nn
&\cong& \alpha_\CM\alpha_\CM^\vee ([G, G']_{\CC_\CM^\ast}) \circ \alpha_\CM \alpha_\CM^\vee([F, F']_{\CC_\CM^\ast}) \nn
&\xrightarrow{\epsilon\epsilon}& [G, G']_{\CC_\CM^\ast} \circ [F, F']_{\CC_\CM^\ast}  \nonumber
\eea
Using Lemma \ref{lemma:ev-map-factors}, we obtain in the 6th step the following morphism: 
$$
G\circ \alpha_\CM([F,F']\otimes_{Z(\CM)} [G,G']) \circ F = 
(G \ast [G,G']) \circ ([F,F'] \ast F) \xrightarrow{\ev_G \ev_F} G'F'.
$$
In the 7th step, we obtain the canonical action 
$$
([F,F'] \otimes [G,G']) \ast GF \to G'F'.
$$ 
Then we obtain $n_{F,F',G,G'}$ in the last step by the definition of $n_{F,F',G,G'}$. Therefore, $n_{F,F',G,G'}$ is an isomorphism. 

\medskip
It remains to show that $m_{F,F',G,G'}$ is an isomorphism. By (\ref{eq:m-phipsi-def}), it is enough to prove that $m'_{F,F',G,G'}$ is an isomorphism. By the definition of $m'_{F,F',G,G'}$ in (\ref{diag:def-m'}), it is enough to show that the map $Z(G'F') \otimes_{Y'} [GF, G'F'] \xrightarrow{f} [GF,G'F']$, which is naturally induced by the universal property of $\otimes_{Y'}$ from the map $Z(G'F') \otimes [GF, G'F'] \xrightarrow{\comp} [GF,G'F']$ in (\ref{diag:def-m'}), is an isomorphism. Since we have proved that $n_{(F,F',G,G')}$ is an isomorphism, in particular, we have that
$$
n_{F',F',G',G'}=m_{F',G'}: Y'=Z(F')\otimes_{Z(\CM)} Z(G') \to Z(G'F')
$$ 
is an algebra isomorphism. By re-examining the action of $Y'$ on $[GF, G'F']$ and $Z(G'F')$, it is easy to see that it factors through the action of $Z(G'F')$ via the algebra isomorphism $m_{F',G'}$. Therefore, we have the following factorization of $f$:
$$
\xymatrix{
Z(G'F') \otimes_{Y'} [GF, G'F'] \ar[rr]^{\cong}  \ar[rd]_{f} & & Z(G'F') \otimes_{Z(G'F')} [GF, G'F'] \ar[ld]^{\cong} \\
&  [GF, G'F']  &
}
$$
We obtain that $f$ is an isomorphism. 
\epf

Since the $m_{F,G} = n_{F,F,G,G}$ are isomorphisms, the multiplication transformation (see \eqref{eq:m_(F,G)-def} and \eqref{eq:composition-nat-xfer-m-diag-1cat-level}) is an isomorphism.
Altogether, we have shown that all three conditions in Theorem~\ref{thm:center-lax-functor-2} are satisfied for $\BM(\CC)=\Mod^o(\CC)$.
\begin{thm}  \label{thm:fusion-cat-Z-functor}
Let $\CC$ be a fusion category over a ground field $k$. Then 
$$\BZ: \Mod^o(\CC) \rightarrow \CALGu(\CZ(\CC))$$ 
defines a (non-lax) 2-functor between bicategories. 
\end{thm}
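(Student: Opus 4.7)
The plan is to verify the three hypotheses of Theorem~\ref{thm:center-lax-functor-2} for $\BM(\CC) = \Mod^o(\CC)$, and in addition to show that every component of the multiplication transformation $\underline{\Bm}$ is invertible so that $\BZ$ is not merely lax. The preparatory work in this section is already organized in exactly this direction, so the proof reduces to assembling the preceding lemmas.

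First I would record that $\CZ(\CC)$ satisfies Assumption~\ref{ass:coeq}: since $\CC$ is fusion, by \cite{eno05} both $\CZ(\CC)$ and $\CC_\CM^\ast$ are fusion categories, hence abelian with exact tensor product, giving all required coequalizers. This immediately yields condition~1 of Theorem~\ref{thm:center-lax-functor-2}: for $\CM,\CN\in\Mod^o(\CC)$, the category $\Fun_\CC(\CM,\CN)$ is a finite semisimple (indecomposable) $\CC_\CM^\ast$-module by \cite[Thm.\,3.31]{eo}, and $\alpha$-induction together with Proposition~\ref{prop:aind-factor} makes it a $\CZ(\CC)$-module, so all internal homs in $\CZ(\CC)$ exist and the whole of $\Fun_\CC(\CM,\CN)$ is $\CZ(\CC)$-closed.

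For condition~2, I would apply Proposition~\ref{prop:ev-M-iso} with the semisimple module taken to be $\Fun_\CC(\CM,\CN)$ regarded as a $\CZ(\CC)$-module. The input needed is that this module is indecomposable, for which I would invoke the dominance of the forgetful functor $\CZ(\CC)\to\CC$: the right adjoint $I$ to the forgetful functor sends $\one$ to a connected separable commutative algebra \cite[Lem.\,3.5]{dmno}, and by \cite[Lem.\,3.2]{ENO2008} $I$ induces a monoidal equivalence $\CC\simeq\CZ(\CC)_{I(\one)}$, whence dominance follows since $I(\one)$ is separable. Combined with the earlier lemma that the pullback of an indecomposable module along a dominant monoidal functor remains indecomposable, this gives indecomposability of $\Fun_\CC(\CM,\CN)$ over $\CZ(\CC)$, and then Proposition~\ref{prop:ev-M-iso} yields that $\overline{\comp}$ is an isomorphism.

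The main obstacle is condition~3, the invertibility of $m_{F,F',G,G'}$, and this is precisely the content of Lemma~\ref{lemma:u=iso}. I would reproduce its strategy: using Lemma~\ref{thm:ENO} to rewrite $[F,F']\otimes_{Z(\CM)}[G,G']$ as $\alpha_\CM^\vee\!\big([G,G']_{\CC_\CM^\ast}\circ[F,F']_{\CC_\CM^\ast}\big)$, then unfolding the internal homs in $\CC_\CM^\ast$ via Lemma~\ref{lemma:GG-GdG} and using Lemma~\ref{lemma:beta-iso} (which requires rigidity, hence the fusion hypothesis) to assemble a chain of natural isomorphisms $\Hom_{\CZ(\CC)}(X,[F,F']\otimes_{Z(\CM)}[G,G'])\cong\Hom_{\CZ(\CC)}(X,[GF,G'F'])$; tracking the identity through this chain and applying Lemma~\ref{lemma:ev-map-factors} identifies the resulting Yoneda morphism with $n_{F,F',G,G'}$, proving it is invertible. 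The conclusion for $m_{F,F',G,G'}$ then follows from its definition~\eqref{eq:m-phipsi-def} as a factorization through $n_{F',F',G',G'}=m_{F',G'}$, which is an isomorphism by the same argument specialised to $F=F'$, $G=G'$.

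Finally, since every $m_{F,G}$ is an isomorphism, every component $\Bm_{(F,G)}$ of the multiplication transformation is represented by a 2-diagram with invertible vertical maps, hence by Proposition~\ref{prop:inv-1-2cell}\,(ii) defines an invertible 2-cell in $\CALGu(\CZ(\CC))$. Hence $\underline{\Bm}$ is invertible, and the last clause of Theorem~\ref{thm:center-lax-functor-2} upgrades $\BZ$ from a lax to a genuine 2-functor.
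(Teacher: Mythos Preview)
Your proposal is correct and follows essentially the same route as the paper: Section~\ref{sec:fusion-cat} is organized precisely as you describe, verifying the three hypotheses of Theorem~\ref{thm:center-lax-functor-2} via Proposition~\ref{prop:ev-M-iso} (using indecomposability of $\Fun_\CC(\CM,\CN)$ over $\CZ(\CC)$ through dominance of the forgetful functor) and Lemma~\ref{lemma:u=iso}, and then concluding non-laxness from the invertibility of each $m_{F,G}$. Your invocation of Proposition~\ref{prop:inv-1-2cell}\,(ii) to justify that $\underline{\Bm}$ is invertible is a slight elaboration beyond what the paper spells out, but it is the right justification.
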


\begin{rema} {\rm
In a rational CFT, the category $\CC$ is not only a fusion category but also a modular tensor category. In these cases, $\CZ(\CC) = \CC_+\boxtimes \CC_-$. The full center $Z(\CM)$ of a $\CC$-module $\CM$ in $\Mod^o(\CC)$ is the bulk fields (or a closed CFT) associated to $\CM$. 
}
\end{rema}

\appendix

\section{Appendix}

\subsection{Module categories} \label{app:module-cat}

Here we briefly state our conventions for module categories, some references are \cite{qu,janelidze:2001,ostrik}.

\begin{defn} \label{def:mod-cat}
Let $\CC$ be a monoidal category with tensor unit $1_\CC$. A {\em left module category} over $\CC$, or $\CC$-{\em module} for short, is a category $\CM$ together with
\begin{enumerate}
\item {\em action functor}: A functor $\ast : \CC \times \CM \to \CM$.
\item {\em associator}: A natural isomorphism $a : \ast \circ (\id_\CC \times \ast) \to \ast \circ (\otimes \times \id_\CM)$ between functors $\CC \times \CC \times \CM \to \CM$. That is, 
a family of isomorphisms $a_{X,Y,M} : X \ast (Y \ast M) \to (X\otimes Y) \ast M$, natural in 
$X,Y \in \CC$ and $M \in \CM$.
\item {\em unit isomorphism}: a natural isomorphism $l : 1_\CC \ast (-) \to \id_\CM$ between endofunctors on $\CM$. That is, a family of isomorphisms $l_M : 1_\CC \ast M \rightarrow M$, natural in $M \in \CM$.
\end{enumerate}
These must satisfy the coherence conditions
\begin{enumerate}
\item {\em pentagon}: The diagram\\[-1em]
\be
\xygraph{ !{0;/r4.5pc/:;/u4.5pc/::}[]*+{X*(Y*(Z*M))} (
  :[u(.7)r(1.5)]*+{(X\otimes Y)*(Z*M)} ^{a_{X,Y,Z*M}}
  :[d(.7)r(1.5)]*+{((X\otimes Y)\otimes Z)*M}="r" ^{a_{XY,Z,M}}
  ,
  :[r(.5)d(.8)]*+!R(.3){X*((Y\otimes Z)*M)} _{\id_X*a_{Y,Z,M}}
  :[r(2)]*+!L(.3){(X\otimes(Y\otimes Z))*M} _{a_{X,YZ,M}}
  : "r" _{\alpha_{X,Y,Z}*\id_M}
)}
\ee
commutes for all $X,Y,Z \in \CC$, $M \in \CM$. Here $\alpha_{X,Y,Z}$ is the associator of $\CC$.
\item {\em triangle}: The diagram
\be
  \xymatrix@R=1em@C=.7em{X \ast (1_\CC \ast M) \ar[dr]_{\id_X \ast l_M} \ar[rr]^{a_{X,1_\CC,M}} && (X \otimes 1_\CC) \ast M \ar[dl]^{r_X \ast \id_M} \\
  & X \ast M }
\ee
commutes for all $X \in \CC$, $M \in \CM$. Here $r_{X}$ is the right unit isomorphism of $\CC$.
\end{enumerate}
\end{defn}

\begin{defn} \label{def:C-module-functor}
Let $\CC$ be a monoidal category and let $\CM$, $\CN$ be $\CC$-modules. A $\CC$-{\em module functor} from $\CM$ to $\CN$ is a functor $F : \CM \rightarrow \CN$ together with 
\begin{itemize}
\item a natural isomorphism $F^{(2)} : F \circ \ast \rightarrow \ast \circ (\id_\CC \times F)$ between functors $\CC \times \CM \to \CN$.
That is, a family of isomorphisms $F_{X,M}^{(2)} : F(X\ast M)\to X \ast F(M)$, natural in $X \in \CC$, $M \in \CM$.
\end{itemize}
These must satisfy the compatibility conditions
\begin{enumerate}
\item {\em associator compatibility}: The diagram\\[-1em]
\be
\xygraph{ !{0;/r4.5pc/:;/u4.5pc/::}[]*+{F(X\ast (Y\ast M))} (
  :[u(.7)r(1.5)]*+{F((X\otimes Y)\ast M)} ^{F(a_{X,Y,M})}
  :[d(.7)r(1.5)]*+{(X\otimes Y)\ast F(M)}="r" ^{F^{(2)}_{XY,M}}
  ,
  :[r(.5)d(.8)]*+!R(.3){X\ast F(Y\ast M)} _{F^{(2)}_{X,Y\ast M}}
  :[r(2)]*+!L(.3){X\ast (Y\ast F(M))} _{\id_X\ast F^{(2)}_{Y,M}}
  : "r" _{a_{X,Y,F(M)}}
)}
\ee
commutes for all $X,Y \in \CC$ and $M \in \CM$.
\item {\em unit compatibility}: The diagram
\be
  \xymatrix@R=1em@C=.7em{F(1_\CC \ast M) \ar[dr]_{F(l_M)} \ar[rr]^{F^{(2)}_{1_\CC,M}} && 1_\CC \ast F(M) \ar[dl]^{l_{F(M)}} \\
  & F(M) }
\ee
commutes for all $M \in \CM$. 
\end{enumerate}
\end{defn}

\begin{defn} \label{def:C-mod-nat-xfer}
Let $\CC$ be a monoidal category, $\CM$, $\CN$ be $\CC$-modules, and $F, G : \CM \to \CN$ be $\CC$-module functors. A $\CC$-{\em module natural transformation} from $F$ to $G$ is a natural transformation $\phi : F \rightarrow G$ such that the diagram
\be \label{eq:c-mod-nat-xfer-condition}
\xymatrix{
F(X \ast M) \ar[r]^{F^{(2)}_{X,M}} \ar[d]^{\phi_{X\ast M}} & X \ast F(M) \ar[d]^{\id_X \ast \phi_{M}} \\
G(X \ast M) \ar[r]^{G^{(2)}_{X,M}} & X \ast G(M) \\
}
\ee
commutes.
\end{defn}

\subsection{Bicategories} \label{app:bicategories}

In this appendix we recall the definition of bicategories and related notions, see \cite{be} or \cite{Leinster:1998}. Let $\one$ be a category with only one object and only the identity morphism.

\begin{defn} \label{def:bicat} 
A bicategory $\mathbf{S}$ consists of a set of objects (in a given universe) and a category of morphisms $\CHom(A,B)$ for each pair of objects $A$ and $B$ together with 
\bnu
\item {\it identity morphism}: $\one_A:  \one \to \Mor(A,A)$ for all $A\in \mathbf{S}$.
\item {\it composition functor}: 
\bea
\circledcirc_{A,B,C}: \CHom(B, C) \times \CHom(A, B) &\to& \CHom(A, C) \nn
(T, S) &\mapsto& T\circ S   \nonumber 
\eea
\item {\it associativity isomorphisms}: for $A, B, C, D\in \mathbf{S}$, there is a natural isomorphism $\alpha_{(A,B,C,D)}$, or $\alpha$ for simplicity,
$$
\alpha:  \circledcirc_{A,C,D} \circ (\circledcirc_{A,B,C} \times \id) \rightarrow 
\circledcirc_{A,B,D} \circ (\id \times \circledcirc_{B,C,D}),
$$
which consists of a family of morphisms $\{ \alpha_{(S,T,U)} \}_{S\in \CHom(A,B), T\in \CHom(B,C), U\in \CHom(C,D)}$. Occasionally, we will also abbreviate $\alpha_{(S,T,U)}$ as $\alpha$ in some diagrams (for example (\ref{diag:asso-lax})) for simplicity. 

\item {\it left and right unit isomorphisms}:
$l_{(A,B)}: \circledcirc_{A,A,B} \circ (\one_A \times \id ) \to \id$ and 
$r_{(A,B)}: \circledcirc_{A,B,B} \circ (\id \times \one_B)\to  \id$. These two natural transformations, also denoted by $l$ and $r$ for simplicity, consist of the following two families of maps 
$\{ \one_A \circ T \xrightarrow{l_T} T\}_{T\in \CHom(A, B)}$ and $\{ T \circ \one_B \xrightarrow{r_T} T\}_{T\in \CHom(A, B)}$. 
\enu
satisfying the following coherence conditions:
\bnu

\item {\it associativity coherence}: 
\be  \label{diag:asso-bicat}
\xymatrix{
((S \circ T)\circ U) \circ V \ar[rr]^{\alpha_{(S,T,U)}1}  
\ar[d]_{\alpha_{(ST,U,V)}}  & & (S\circ (T \circ U)) \circ V \ar[d]^{\alpha_{(S, TU, V)}} \\
(S\circ T) \circ (U\circ V) \ar[rd]_{\alpha_{(S,T,UV)}} &  & S\circ ((T\circ U) \circ V) \ar[ld]^{1\alpha_{(T,U,V)}} \\
& S\circ (T \circ (U\circ V)) &  
} 
\ee

\item {\it identity coherence}: 
\be  \label{diag:triangle-bicat}
\xymatrix{
(S \circ \one_B) \circ T  \ar[rd]_{r_S 1} \ar[rr]^{\alpha_{(S,\one_B, T)}} & & S\circ (\one_B \circ T) \ar[ld]^{1l_T}  \\
& S\circ T & 
}
\ee
\enu
\end{defn}

\begin{defn} \label{def:truncated-cat}
For a bicategory $\mathbf{S}$, we can truncate it to a $1$-category $\underline{\mathbf{S}}$ by ignoring 2-morphisms and defining $1$-morphisms in $\underline{\mathbf{S}}$ by the equivalence classes of $1$-morphisms in $\mathbf{S}$.  
\end{defn}

\begin{defn} \label{def:lax-functor}
Let $\bfC$ and $\bfD$ be two bicategories. A lax functor (or a lax $2$-functor) $\bfF: \bfC \rightarrow \bfD$ is a quadruple $\bfF=(F, \{\bfF_{(A,B)} \}_{A,B\in \bfC}, i, m)$ where 
\bnu
\item $F$ is a map of objects $X \mapsto F(X)$ for each object $X$ in $\bfC$; 
\item $\bfF_{(A,B)}: \CHom_\bfC(A, B)\to \CHom_\bfD(F(A), F(B))$ for each pair of objects $A,B\in \bfC$ is functor;
\item {\it unit transformation}: $\forall A$, $i_A:  \one_{F(A)} \to \bfF_{(A,A)} \circ \one_A$ where $\one_{F(A)}$ and $\bfF_{(A,A)} \circ \one_A$ are functors: $\one \to \CHom_\bfD(F(A), F(A))$;
\item {\it multiplication transformation}: 
$m: \circledcirc_\bfD \circ (\bfF_{(B,C)} \times \bfF_{(A,B)}) \to \bfF_{(A,C)} \circ \circledcirc_\bfC$, i.e.\ a collection of morphisms $m_{S,T}: \bfF_{(B,C)}(S) \circ \bfF_{(A,B)}(T) \to \bfF_{(A,C)}(S \circ T)$ natural in
$S \in \CHom_\bfC(B,C), T\in \CHom_\bfC(A, B)$.
\enu
satisfying the following commutative diagrams: 
\bnu
\item {\it associativity}: for $S \in \CHom_\bfC(C,D), T\in \CHom_\bfC(B,C), U\in \CHom_\bfC(A,B)$, 
\be \label{diag:asso-lax}
\xymatrix{
(\bfF_{(C,D)}(S) \circ \bfF_{(B,C)}(T)) \circ \bfF_{(A,B)}(U) \ar[r]^\alpha \ar[d]_{m1} 
& \bfF_{(C,D)}(S) \circ (\bfF_{(B,C)}(T) \circ \bfF_{(A,B)}(U)) \ar[d]^{1m} \\
\bfF_{(B,D)}(S\circ T) \circ \bfF_{(A,B)}(U) \ar[d]_m & \bfF_{(C,D)}(S) \circ \bfF_{(A,C)}(T\circ U) \ar[d]^m\\
\bfF_{(A,D)}((S\circ T) \circ U)
   \ar[r]^{\bfF_{(A,D)}(\alpha)} & 
\bfF_{(A,D)}(S\circ (T\circ U)) 
}
\ee
\item {\it unit properties}: for $S\in \CHom_\bfC(A, B)$, 
\be  \label{diag:unit-lax-l}
\xymatrix{
\one_{F(B)} \circ \bfF_{(A,B)}(S) \ar[r]^{\hspace{0.5cm}l_{F(S)}} \ar[d]_{i_B1}  & \bfF_{(A,B)}(S) \\
\bfF_{(B,B)}(\one_B) \circ \bfF_{(A,B)}(S) \ar[r]^m & \bfF_{(A,B)}(\one_B \circ S) \ar[u]_{\bfF_{(A,B)}(l_S)} 
}
\ee
\be  \label{diag:unit-lax-r}
\xymatrix{
\bfF_{(A,B)}(S) \circ \one_{F(A)} \ar[r]^{\hspace{0.5cm}r_{F(S)}}
    \ar[d]_{1i_A}  
& \bfF_{(A,B)}(S) \\
 \bfF_{(A,B)}(S) \circ \bfF_{(A,A)}(\one_A)  \ar[r]^m & \bfF_{(A,B)}(S\circ \one_A)\, . \ar[u]_{\bfF_{(A,B)}(r_S)} 
}
\ee
\enu
\end{defn}

If we reverse all arrows, we obtain the notion of an {\it oplax functor} (or an oplax $2$-functor). Given a lax functor $\bfF$, if the natural transformations $i_A, \forall A\in \bfC$ and $m$ are actually isomorphisms, then $\bfF$ is called a functor or a $2$-functor. 

Let $P$ be a property of a functor between $1$-categories like full, faithful, essentially surjective, etc. We say that a (lax, oplax or neither) functor is {\em locally} $P$, if for all objects $A,B$ the functors $\bfF_{(A,B)}$ have property $P$. 

If $\mathbf{S}$ is a bicategory and $\mathbf{S'}$ is a sub-bicategory (i.e.\ a subset of objects and collection of subcategories $\mathbf{S}'(A,B)$ for each $\mathbf{S}(A,B)$, such that the resulting embedding $\iota : \mathbf{S} \to \mathbf{S'}$ defines a (locally faithful) functor), then $\mathbf{S'}$ is {\em locally full} if the embedding functor $\iota$ is locally fully faithful. Less precisely said, a locally full sub-bicategory may be missing some objects and $1$-morphisms, but it will still contain all 2-morphisms between any two $1$-morphisms.

\begin{rema} \label{rema:truncated-functor}
If the unit and multiplication transformations of a lax $2$-functor $\bfF: \bfC \to \bfD$ are isomorphisms, 
we naturally obtain a $1$-functor $\underline{\bfF}: \underline{\bfC} \to \underline{\bfD}$ 
between two $1$-categories. In other words, a $2$-functor $\bfF$ automatically defines a $1$-functor $\underline{\bfF}$.
\end{rema}

\begin{defn}  \label{def:nat-trans-fun-bicat}  
A natural transformation $\sigma: \bfF \rightarrow \bfG$ between two lax functors $\bfF, \bfG: \bfC \to \bfD$ between two bicategories $\bfC$ and $\bfD$ contains the following data:
\bnu
\item 1-cell $F(A) \xrightarrow{\sigma_A} G(A)$; 
\item a natural transformation 
\be  \label{diag:lax-nat-1cell}
\xymatrix{ 
\bfC(A, B) \ar[rr]^{\bfF_{(A,B)}} \ar[dd]_{\bfG_{(A,B)}}  & & \bfD(F(A), F(B)) \ar[dd]^{(\sigma_B)_\ast}  \\
&  \swarrow \, \sigma_{AB} &  \\
\bfD(G(A), G(B)) \ar[rr]_{(\sigma_A)^\ast} && \bfD(F(A), G(B)). 
}
\ee
i.e for each 1-cell $A \xrightarrow{f} B$, we assign a 2-cell 
$\sigma_f: \sigma_B \circ \bfF(f) \to \bfG(f) \circ \sigma_A$ such that for all 2-cells $ f\xrightarrow{\phi} g$ in $\bfC$ we have the following commutative diagram:
\be  \label{diag:lax-nat-2cell}
\xymatrix{
\sigma_B \circ \bfF(f) \ar[r]^{\sigma_f}  \ar[d]_{1\bfF(\phi)} & \bfG(f) \circ \sigma_A \ar[d]^{\bfG(\phi)1} \\
\sigma_B \circ \bfF(g) \ar[r]^{\sigma_g}  & \bfG(g) \circ \sigma_A \, ,
}
\ee
\enu
satisfying the following axioms: 
\bnu
\item Omitting ``$\circ$" in the following diagram: For all $A \xrightarrow{f} B \xrightarrow{g} C$ in $\bfC$,
\be  \label{diag:lax-nat-axiom-1}
\xymatrix{
\sigma_C (\bfF(g) \bfF(f)) \ar[d]_{1m}  \ar[r]^{\alpha^{-1}} &  (\sigma_C \bfF(g) ) \bfF(f) \ar[r]^{\sigma_g 1} &
 (\bfG(g) \sigma_B)  \bfF(f) \ar[r]^{\alpha} &  \bfG(g)  (\sigma_B \bfF(f))  \ar[d]^{1\sigma_f}    \\ 
\sigma_C \bfF(g \circ f) \ar[r]^{\sigma_{gf}} & \bfG(g \circ f)  \sigma_A & 
(\bfG(g) \bfG(f))  \sigma_A \ar[l]_{m1} & \bfG(g)  (\bfG(f)  \sigma_A) \ar[l]_{\alpha^{-1}}
} 
\ee

\item For all $A \in \bfC$,
\be \label{diag:lax-nat-axiom-2}
\xymatrix{
\sigma_A \circ \one_{F(A)} \ar[d]_{1i_{F(A)}} \ar[r]^{\hspace{0.5cm}r} & \sigma_A \ar[r]^{\hspace{-0.5cm}l^{-1}} & \one_{G(A)} \circ \sigma_A 
\ar[d]^{i_{G(A)}1} \\
\sigma_A \circ F\one_A \ar[rr]^{\sigma_{\one_A}} & & G\one_A \circ \sigma_A 
}
\ee

\enu
\end{defn}

\begin{rema} \label{rema:truncated-nat-transf}
If each $\sigma_f$ is an isomorphism for all $1$-cells $A \xrightarrow{f} B$, then we obtain naturally an ordinary natural transformation $\underline{\sigma}: \underline{\bfF} \to \underline{\bfG}$ where $\underline{\bfF}, \underline{\bfG}: \underline{\bfC} \to \underline{\bfD}$ are two ordinary functors between two $1$-categories. 
\end{rema}

\subsection{Proof of Lemma~\ref{lem:beta}}  \label{app:proof-lem:beta}

\begin{proof} 
(i) We abbreviate $S \equiv S_2$ and $T \equiv T_2$ in this proof. Consider the following diagram:
\be \label{diag:exists-3-cell-m-1}
\raisebox{4em}{\small\xymatrix{
(M'BN')(SBT)(MBN) \ar@<+.7ex>[r]^{L_B^3}\ar@<-.7ex>[r]_{R_B^3} & 
  (M'N')(ST)(MN) \ar[r]^(.35){\rho_B^3} \ar@<+.7ex>[d]^{R_{ST}}\ar@<-.7ex>[d]_{L_{ST}}  & 
  (M'\otimes_B N')(S\otimes_B T)(M\otimes_B N)
\ar@{-->}@<+.7ex>[d]^{\exists! \,R_{S\otimes_B T}}\ar@{-->}@<-.7ex>[d]_{\exists! \,L_{S\otimes_B T}}  
\\
(M' B N') (M B N) \ar@<+.7ex>[r]^{L_B^2}\ar@<-.7ex>[r]_{R_B^2} 
& M' \otimes N' \otimes M \otimes N  \ar[r]^(.4){\rho_B^2}  \ar[d]_{1c_{N',M}1}  
&
   (M'\otimes_B N') \otimes (M\otimes_B N) \ar[dd]^{\rho_{S\otimes_B T}} 
   \ar@{-->}[ldd]_{\exists !\, \beta_1} 
\\
& M'\otimes M \otimes N' \otimes N \ar[d]_{\rho_B \circ (\rho_S\rho_T)} 
& 
 \\
& 
(M'\otimes_S M)\otimes_B (N'\otimes_T N) & (M'\otimes_B N') \otimes_{S \otimes_B T} (M\otimes_B N) 
\ar@{-->}[l]_{\exists ! \, \beta} 
}}
\ee
where $L_B$ and $R_B$ are the left and right action of $B$, and $L_{ST}$, $R_{ST}$ are the actions of $S \otimes T$ (involving a braiding), and $L_{S\otimes_BT}$ and $R_{S\otimes_BT}$ are given by the universal properties of $\rho_B^3$, which is the coequalizer of the pair $(L_B^3, R_B^3)$, 
and $\rho_{S\otimes_BT}$ is the coequalizer of the pair $(L_{S\otimes_BT}, R_{S\otimes_BT})$,
and $\rho_S\rho_T$ is the coequalizer of the pair $(L_SL_T,R_SR_T)$. 

Now we explain how to construct the maps $\beta_1$ and $\beta$. First, using graphic calculus and the defining properties of 2-diagram (\ref{eq:2diagram-cond}), we can check that 
\be
\rho_B \circ (\rho_S\rho_T) \circ (1c_{N'M}1) \circ L_B^2
=
\rho_B \circ (\rho_S\rho_T) \circ (1c_{N'M}1) \circ R_B^2.
\ee
Since $\rho_B^2$ is the coequalizer of the pair $(L_B^2, R_B^2)$, we obtain the unique map $\beta_1$ in Diagram (\ref{diag:exists-3-cell-m-1}) such that 
\be \label{eq:beta-1}
\beta_1 \circ \rho_B^2 = \rho_B \circ (\rho_S\rho_T) \circ (1c_{N'M}1).
\ee 
Secondly, using graphic calculus, it is easy to see that 
$\rho_B \circ (\rho_S\rho_T) \circ (1c_{N'M}1) \circ L_{ST}=
\rho_B \circ (\rho_S\rho_T) \circ (1c_{N'M}1) \circ R_{ST}$
which implies immediately that 
\be \label{eq:const-m-1}
\beta_1 \circ (\rho_B^2 \circ L_{ST}) = \beta_1 \circ (\rho_B^2 \circ R_{ST}).
\ee
Since $\rho_B^3$ is epi, it is easy to see that $\rho_{S\otimes_BT}$ is also the coequalizer of the pair $(L_{S\otimes_BT}\circ \rho_B^3, R_{S\otimes_BT}\circ \rho_B^3)$. By the commutativity of the upper square in Diagram (\ref{diag:exists-3-cell-m-1}), we obtain that 
$\rho_{S\otimes_BT}$ is also the coequalizer of the pair $(\rho_B^2\circ L_{ST}, \rho_B^2\circ R_{ST})$.
Then the identity (\ref{eq:const-m-1}) implies that the existence and the uniqueness of morphism $\beta$, which is shown in diagram in (\ref{diag:exists-3-cell-m-1}), such that 
\be  \label{eq:const-m-2}
\beta_1 = \beta \circ \rho_{S\otimes_BT}. 
\ee
We define $\beta_{(M'N'),(MN)}:=\beta$.

\medskip

\noindent
(ii) We will prove that $\beta$ is an isomorphism by giving an explicit construction of the inverse map $\beta_3$ of $\beta$ below similar to that of $\beta_1$ and $\beta$. Consider the following diagram:
\be \label{eq:diag:exists-3-cell-m-2}
\raisebox{4em}{\small\xymatrix{
(M'SM)B(N'TN) \ar@<+.7ex>[r]^{L_S\,1\,L_T}\ar@<-.7ex>[r]_{R_S\,1\,R_T} & 
  (M'M)B(N'N) \ar[r]^(.35){\rho_S\,1\,\rho_T} \ar@/^5pt/[d]^{1\,R_{B}1}\ar@/_5pt/[d]_{1\,L_{B}1}  & 
  (M'\otimes_S M) \otimes B \otimes (N' \otimes_{T} N)
\ar@{-->}@/^5pt/[d]^{\exists! \,R}\ar@{-->}@/_5pt/[d]_{\exists! \,L}  
\\
(M'SM)(N'TN) \ar@<+.7ex>[r]^{L_SL_T}\ar@<-.7ex>[r]_{R_SR_T} 
& M' \otimes M \otimes N' \otimes N  \ar[r]^(.4){\rho_S\rho_T} \ar[d]_{1c_{N',M}^{-1}1}  &
  (M'\otimes_S M) \otimes (N' \otimes_{T} N) \ar[dd]^{\rho_B} \ar@{-->}[ldd]_{\exists !\, \beta_2}
\\
& M'\otimes N' \otimes M \otimes N \ar[d]_{\rho_{S\otimes_BT}\circ \rho_B^2}  & \\
&(M'\otimes_B N') \otimes_{S \otimes_B T} (M\otimes_B N)  & 
(M' \otimes_{S} M) \otimes_B (N'\otimes_{T} N) \ar@{-->}[l]_{\exists !\, \beta_3}
}}
\ee
where $L$ and $R$ are given by the universal property of $\rho_S\,1\,\rho_T$ as the coequalizer of the pair $(L_S\,1\,L_T, R_S\,1\,R_T)$. The construction of $\beta_2$ and $\beta_3$ follow from the similar argument as that of $\beta_1$ and $\beta$. First, using graphic calculus, we would like to prove the following identity:
\be  \label{eq:const-m-3}
\rho_{S\otimes_BT} \circ \rho_B^2 \circ (1c_{N',M}^{-1}1) \circ (L_SL_T)
=
\rho_{S\otimes_BT} \circ \rho_B^2 \circ (1c_{N',M}^{-1}1) \circ (R_SR_T). 
\ee
It follows by first composing both sides by the isomorphism $(111c_{T,M}1) \circ (1c_{N', SM}1)$, then applying the commutativity of upper square in (\ref{diag:exists-3-cell-m-1}). The identity (\ref{eq:const-m-3}), together with the universal property of $\rho_S\rho_T$, implies the existence and uniqueness of $\beta_2$ such that
\be \label{eq:const-m-4}
\beta_2\circ (\rho_S\rho_T)=\rho_{S\otimes_BT} \circ \rho_B^2 \circ (1c_{N',M}^{-1}1).
\ee 
Secondly, using graphic calculus and definition properties of 2-diagram (\ref{eq:2diagram-cond}), it is easy to prove that the following identity:
$\rho_{S\otimes_BT} \circ \rho_B^2 \circ (1L_B1)=\rho_{S\otimes_BT} \circ \rho_B^2 \circ (1R_B1)$
holds. By (\ref{eq:const-m-4}), we obtain 
\be \label{eq:const-m-5}
\beta_2 \circ ( (\rho_S\rho_T) \circ (1L_B1) ) 
=\beta_2 \circ ( (\rho_S\rho_T) \circ (1R_B1) ). 
\ee
On the other hand, $\rho_B$ is the coequalizer of the pair $(L,R)$, hence also the coequalizer of 
the pair $(L\circ (\rho_S1\rho_T), R \circ (\rho_S1\rho_T))$, and that of the pair $((\rho_S\rho_T) \circ (1L_B1), (\rho_S\rho_T) \circ (1R_B1))$. Then (\ref{eq:const-m-5}), together with the universal property of $\rho_B$, implies the existence and uniqueness of $\beta_2$ such that 
\be \label{eq:beta-4-3}
\beta_3 \circ \rho_B = \beta_2. 
\ee

\medskip
To see that $\beta$ is invertible, it is enough to prove that 
\be  \label{eq:m-invertible}
\beta \circ \beta_3=\id_{(M'\otimes_SM) \otimes_B (N'\otimes_TN)}
\quad \mbox{and} \quad 
\beta_3 \circ \beta = \id_{(M'\otimes_B N') \otimes_{S\otimes_B T} (M\otimes_B N) }.
\ee 
We will only prove the second
identity in (\ref{eq:m-invertible}). 
The proof for the second identity in (\ref{eq:m-invertible}) is exactly same. We have
\bea
\beta_3 \circ \beta_1 \circ \rho_B^2 &\overset{\eqref{eq:beta-1}}{=}& \beta_3 \circ \rho_B \circ (\rho_S\rho_T) \circ (1c_{N',M}1) \nn
&\overset{\eqref{eq:beta-4-3}}{=}& \beta_2 \circ (\rho_S \rho_T) \circ (1c_{N',M}1) \nn
&\overset{\eqref{eq:const-m-4}}{=}& \rho_{S\otimes_BT} \circ \rho_B^2 \circ (1c_{N',M}^{-1}1) \circ (1c_{N',M}1) \nn
&=& \rho_{S\otimes_BT} \circ \rho_B^2 \ . 
\eea
By the fact that $\rho_B^2$ is epi, we obtain $\beta_3 \circ \beta_1 = \rho_{S\otimes_BT}$, which further implies the identity $\beta_3 \circ \beta \circ \rho_{S\otimes_BT}=\rho_{S\otimes_BT}$. Again by the fact that $\rho_{S\otimes_BT}$ is epi, we obtain the second
identity in (\ref{eq:m-invertible}). 

\medskip
\noindent 
(iii) According to Definition~\ref{def:2-diagram+3-cell}\,(iii) we need to check that $\beta_{(M'N'),(MN)}$ is an $(S_3 \otimes_B T_3)$-$(S_1 \otimes_B T_1)$-bimodule map, 
and that the two sub-diagrams in \eqref{eq:Cosp-comp-3cells} commute. For convenience, we again set $S:=S_2, T:=T_2$.

To prove that $\beta$ is an $S_1 \otimes_B T_1$-module map, we need prove the commutativity of the lower square of the following diagram: 
\be  \label{diag:m-module-map}
\xymatrix{
(M'\otimes N') (M\otimes N) (S_1\otimes T_1) \ar[rr]^{\rho_B^3}  \ar[d]_{f} & & (M'\otimes_B N') (M\otimes_BN) (S_1\otimes_BT_1) \ar[d]_{\rho_{S\otimes_BT}} \\
(M'\otimes_S M)\otimes_B (N'\otimes_TN)(S_1\otimes_BT_1) \ar[d]_{R_{S_1\otimes_BT_1}}  \ar[d]_{}& &
(M'\otimes_B N')\otimes_{S\otimes_BT}(M\otimes_BN)(S_1\otimes_BT_1)
\ar[d]^{R_{S_1\otimes_BT_1}}
\ar[ll]_{\beta1} \\
(M'\otimes_S M) \otimes_B
(N'\otimes_TN) && 
(M'\otimes_B N')\otimes_{S\otimes_BT}(M\otimes_BN) \ar[ll]_{\beta}
}
\ee
where $f:=\rho_B^2\circ (\rho_S\rho_T1)\circ ((1c_{N',M}1)1)$.
The commutativity of the upper square is just the lower square in \eqref{diag:exists-3-cell-m-1}.
Using the definition of $R_{S_1\otimes_BT_1}$, it is easy to see that we can switch the order of the right action and $f$ in 
the composed map $R_{S_1\otimes_BT_1} \circ f$, i.e.
$$
R_{S_1\otimes_BT_1} \circ f = \rho_B^2\circ (\rho_S\rho_T1)\circ (1c_{N',M}1) \circ R_{S_1\otimes T_1}.
$$
Similarly, we have
$$
R_{S_1\otimes_BT_1} \circ \rho_{S\otimes_BT} \circ \rho_B^3 =
\rho_{S\otimes_B T} \circ \rho_B^2 \circ R_{S_1\otimes T_1}. 
$$
Then, by the commutativity of the lower square in (\ref{diag:exists-3-cell-m-1}), it is clear that the outer square in (\ref{diag:m-module-map}) is also commutative. Then the commutativity of the lower square follows from the fact that a coequalizer map is an epimorphism. 

Similarly, we can show that $\beta$ is an $S_3 \otimes_B T_3$-module map. We omit the details. Therefore, $\beta$ is a $(S_3 \otimes_B T_3)$-$(S_1 \otimes_B T_1)$-bimodule map. 

\medskip
To show that the upper sub-diagrams in \eqref{eq:Cosp-comp-3cells} commute, we consider the following diagram
\be \label{eq:m-up-triangle-pf}
\xymatrix{
S_1\otimes T_1 \ar[rr]^{\rho_B} \ar[d]_{\tilde{u}} & & S_1\otimes_B T_1 \ar[d]^{u}  \\
(M'\otimes N') \otimes (M\otimes N) \ar[rr]^{\rho_B^2} \ar[d]_{\rho_B\circ (\rho_S\rho_T) \circ (1c_{N',M}1)}& & (M'\otimes_B N') \otimes (M\otimes_B N)
\ar[d]_{\rho_{S\otimes_B T}} \\
(M'\otimes_SM) \otimes (N'\otimes_TN) &&
(M'\otimes_SM) \otimes_{S\otimes_BT} (N'\otimes_TN) \ar[ll]_{\beta}
}
\ee
where $u$ is defined as the $u$ in (\ref{eq:2-diag-com-u-def}) and $\tilde{u}$ is defined 
similarly as $u$ such that the upper square in (\ref{eq:m-up-triangle-pf}) commutes. The lower square in (\ref{eq:m-up-triangle-pf}) is nothing but the commutative lower square in (\ref{eq:diag:exists-3-cell-m-2}). Notice that the right column $\rho_{S\otimes_BT} \circ u$ in (\ref{eq:m-up-triangle-pf}) is just the left arrow in the upper triangle in \eqref{eq:Cosp-comp-3cells}. Moreover, the left column in (\ref{eq:m-up-triangle-pf}) can be shown to give the right arrow in the upper triangle in \eqref{eq:Cosp-comp-3cells} composed with $\rho_B: S_1\otimes T_1 \to S_1\otimes_BT_1$. Since $\rho_B$ is an epimorphism, we obtain that the upper triangle in \eqref{eq:Cosp-comp-3cells} commutes. 

The proof of the commutativity of the lower triangle in \eqref{eq:Cosp-comp-3cells} is similar. We omit it. 

\medskip
\noindent (iv) The naturality of $\beta_{(M'N'),(MN)}$  amounts to commutativity of the diagram (write $S \equiv S_2$ and $T \equiv T_2$)
\be
\raisebox{2.3em}{\xymatrix{
(M' \otimes_B N') \otimes_{S \otimes_B T} (M\otimes_B N) 
  \ar[rr]^{\beta_{(M'N'),(MN)}} \ar[d]_{(\phi' \otimes_B \psi')\otimes_{S\otimes_B T}(\phi\otimes_B \psi)}  & &
(M'\otimes_{S} M) \otimes_B (N'\otimes_{T} N) 
  \ar[d]^{(\phi' \otimes_S \phi) \otimes_B (\psi'\otimes_{T} \psi)} 
\\
(\tilde{M}'\otimes_B \tilde{N}') \otimes_{S \otimes_B T} (\tilde{M} \otimes_B \tilde{N}) \ar[rr]^{\beta_{(\tilde M'\tilde N'),(\tilde M\tilde N)}} & &
(\tilde{M}' \otimes_{S} \tilde{M}) \otimes_B (\tilde{N}'\otimes_{T} \tilde{N}) \ ,
}}
\ee
where $M\xrightarrow{\phi}\tilde{M}$, $M'\xrightarrow{\phi'} \tilde{M}'$, $N\xrightarrow{\psi} \tilde{N}'$ and $N'\xrightarrow{\psi'} \tilde{N}'$ are 3-cells. Commutativity holds before passing to the tensor products over $B$ and $S \otimes_B T$, so that the commutativity of the above diagram follows from the universal property of the coequalizers.
\end{proof}

\subsection{Proof of Proposition~\ref{prop:C_ABC-lax-functor}}  \label{app:proof-prop:C_ABC-lax-functor}

\begin{proof}
Below we establish the associativity \eqref{diag:asso-lax} and identity conditions \eqref{diag:unit-lax-l}, \eqref{diag:unit-lax-r}. Once this is done, it is clear that $\Coco$ will be a 2-functor
 from $\Cosp(B,C) \times \Cosp(A,B)$ to $\Cosp(A,C)$, as the associativity morphisms are  isomorphisms by Lemma~\ref{lem:beta} and unit morphisms \eqref{eq:laxfun-data3} are just identities.
\\[.5em]
\nxt associativity condition:\\
We need to pick four objects in the source category $\Cosp(B,C) \times \Cosp(A,B)$, say
\be
\raisebox{1.5em}{\xymatrix@R=1em@C=1em{ & S_i && T_i \\ A\ar[ru] && B \ar[lu] \ar[ru] && C \ar[lu]}}
\qquad ; ~~ i=1,2,3,4 \ .
\ee
Next we pick three 1-morphisms, each between the object of index $i$ and $i+1$. That is, we pick an object in $\tdiag_{BC}(T_i,T_{i+1}) \times \tdiag_{AB}(S_i,S_{i+1})$ for $i=1,2,3$:
\be\label{eq:lax-functor-proof-aux1}
\raisebox{2.6em}{\xymatrix@R=1em@C=1em{
& S_i \ar[d] && T_i \ar[d] \\
A \ar[ru] \ar[rd] & M_{i+1,i} & B \ar[ru] \ar[lu] \ar[rd] \ar[ld] & N_{i+1,i} & C \ar[lu] \ar[ld] \\
& S_{i+1} \ar[u] && T_{i+1} \ar[u]}}
\qquad ; ~~ i=1,2,3 \ .
\ee
Then we need to prove commutativity of the diagram \eqref{diag:asso-lax}. Substituting the definition of $\Coco_{(S,T),(S',T')}$ in \eqref{eq:laxfun-data2} makes this a somewhat cumbersome expression. For the left hand column of \eqref{diag:asso-lax} this results in
\be
\raisebox{4em}{\xymatrix{
\big[ (M_{43} \otimes_B N_{43}) \otimes_{S_3 \otimes_B T_3} (M_{32} \otimes_B N_{32}) \big]
\otimes_{S_2 \otimes_B T_2} (M_{21} \otimes_B N_{21})
\ar[d]^{\beta_{(M_{43}N_{43}),(M_{32}N_{32})}\,1}
\\
\big[ (M_{43} \otimes_{S_3} M_{32}) \otimes_B (N_{43} \otimes_{T_3} N_{32}) \big]
\otimes_{S_2 \otimes_B T_2} (M_{21} \otimes_B N_{21})
\ar[d]^{\beta_{((M_{43} \otimes_{S_3} M_{32})\, (N_{43} \otimes_{T_3} N_{32})),(M_{21}\, N_{21})}}
\\
\big[ (M_{43} \otimes_{S_3} M_{32}) \otimes_{S_2} M_{21} \big] \otimes_B
\big[ (N_{43} \otimes_{T_3} N_{32}) \otimes_{T_2} N_{21} \big]  \ ,
}}
\ee
where $\beta$ was defined in \eqref{eq:Cosp-comp-3cells-def}.
For the right hand column, one obtains
\be
\raisebox{4em}{\xymatrix{
(M_{43} \otimes_B N_{43}) \otimes_{S_3 \otimes_B T_3} \big[ (M_{32} \otimes_B N_{32}) 
\otimes_{S_2 \otimes_B T_2} (M_{21} \otimes_B N_{21}) \big]
\ar[d]^{1\,\beta_{(M_{32}\, N_{32}),(M_{21}\, N_{21})}}
\\
(M_{43} \otimes_B N_{43}) \otimes_{S_3 \otimes_B T_3} \big[ (M_{32} \otimes_{S_2} M_{21}) 
\otimes_B (N_{32} \otimes_{T_2} N_{21}) \big]
\ar[d]^{\beta_{(M_{43}\, N_{43} ), ((M_{32} \otimes_{S_2} M_{21})\, (N_{32} \otimes_{T_2} N_{21}))}}
\\
\big[ M_{43} \otimes_{S_3} (M_{32} \otimes_{S_2} M_{21}) \big] \otimes_B
\big[ N_{43} \otimes_{T_3} (N_{32} \otimes_{T_2} N_{21}) \big]  \ .
}}
\ee
The top and bottom entry of the two columns are linked by the obvious associator isomorphism.
To show that the hexagon \eqref{diag:asso-lax} commutes in the present case, one first considers
the corresponding diagram before passing to the fibered products, which by 
\eqref{eq:Cosp-comp-3cells-def} reads
(omitting all associators)
\be
\raisebox{3.7em}{\xymatrix{
M_{43} \, N_{43} \,  M_{32} \,  N_{32} \, M_{21} \, N_{21}
\ar[d]^{1\,c_{N_{43} ,  M_{32}}\,1\,1\,1}
\ar[r]^{=}
&
M_{43} \, N_{43} \,  M_{32} \,  N_{32} \, M_{21} \, N_{21}
\ar[d]^{1\,1\,1c_{N_{32} ,  M_{21}}\,1}
\\
M_{43} \, M_{32} \,  N_{43} \,  N_{32} \, M_{21} \, N_{21}
\ar[d]^{1\,1\,c_{N_{43} \otimes  N_{32},M_{21}}\,1}
&
M_{43} \, N_{43} \,  M_{32} \,  M_{21} \, N_{32} \, N_{21}
\ar[d]^{1\,c_{N_{43} , M_{32} \otimes  M_{21}}\,1\,1}
\\
M_{43} \, M_{32} \,  M_{21} \, N_{43} \,  N_{32} \, N_{21}
\ar[r]^{=}
&
M_{43} \, M_{32} \,  M_{21} \, N_{43} \,  N_{32} \, N_{21}
}}
\ee
This diagram indeed commutes (by the properties of the braiding), 
so that the associativity coherence can be established 
using the universal property of coequalizers.
\\[.5em]
\nxt identity condition:\\
We use the notation \eqref{eq:lax-functor-proof-aux1} for $i=1,2$. Recall the identity 2-diagram from \eqref{eq:2diag-identities}. The first identity condition, given in \eqref{diag:unit-lax-l}, expands to 
\be
\raisebox{3.7em}{\xymatrix{
(S_2 \otimes_B T_2) \otimes_{S_2 \otimes_B T_2} (M_{21} \otimes_B N_{21})
  \ar[d]^{=}
  \ar[rrr]^{\lambda_{M_{21} \otimes_B N_{21}}}
&&&
  M_{21} \otimes_B N_{21}
  \ar[d]^{\lambda^{-1}_{M_{21}} \otimes_B \lambda^{-1}_{N_{21}}}
\\
(S_2 \otimes_B T_2) \otimes_{S_2 \otimes_B T_2} (M_{21} \otimes_B N_{21})
\ar[rrr]^{\beta_{(S_2,T_2),(M_{21},N_{21})}}
&&&
(S_2 \otimes_{S_2} M_{21}) \otimes_B (T_2 \otimes_{T_2} N_{21}) \ ,
}}
\ee
where $\lambda_M$, for a left $A$-module $M$, is the canonical isomorphism $A \otimes_A M \to M$.
Upon taking the inverses of the vertical arrows, one again finds that the square commutes before taking the fibered tensor products (i.e.\ one finds an equality of two morphisms $S_2 \otimes T_2 \otimes M_{21} \otimes N_{21} \to M_{21} \otimes N_{21}$). As above, one then employs the universal property of coequalizers to show \eqref{diag:unit-lax-l}. The second condition, given in \eqref{diag:unit-lax-r}, is checked analogously.
\end{proof}

\newcommand\arxiv[2]      {\href{http://arXiv.org/abs/#1}{#2}}
\newcommand\doi[2]        {\href{http://dx.doi.org/#1}{#2}}
\newcommand\httpurl[2]    {\href{http://#1}{#2}}

\small

\end{document}